%

 \documentclass[11pt]{article}
 \usepackage{bm,mathtools}
  \usepackage{amsfonts}
 \usepackage[a4paper, margin=2cm]{geometry}
  \usepackage{enumitem}
 \newtheorem{theorem}{Theorem}[section]
 \newtheorem{lemma}[theorem]{Lemma}
 \newtheorem{proposition}[theorem]{Proposition}
 \newtheorem{corollary}[theorem]{Corollary}

 \newenvironment{proof}{{\it Proof}. }{\hfill\END\\[0.5ex]}


  \newcommand{\specialcell}[2][c]{%
  \begin{tabular}[#1]{@{}c@{}}#2\end{tabular}}

\newcommand{\firstReviewer}[1]{#1}
\newcommand{\secondReviewer}[1]{#1}
\newcommand{\otherCorrections}[1]{#1}

\usepackage{amsmath}
\usepackage{amssymb}

\usepackage[normalem]{ulem}

\usepackage{tikz}
\usetikzlibrary{arrows,decorations.markings,arrows.meta}
\usetikzlibrary{babel}
\usepackage{pgfplots}
\pgfplotsset{compat=1.16}

\newcommand{\R}{\mathbb{R}}

\newcommand{\be}{\begin{equation}}
\newcommand{\ee}{\end{equation}}

\newcommand{\x}{{\bm{x}}}
\newcommand{\y}{{\bm{y}}}
\newcommand{\z}{{\bm{z}}}
\newcommand{\rr}{\bm{r}}
\newcommand{\nn}{\bm{n}}

\newcommand{\Hi}{{\textsf{H}}}

\newcommand{\Vii} {{\bm{V}}}
\newcommand{\Kii} {{\bm{K}}}
\newcommand{\Wii} {{\bm{W}}}
\newcommand{\SLii}{{{\bm{SL}\,}}}
\newcommand{\DLii}{{{\bm{DL}\,}}}  
\newcommand{\Yiim}{{\bm{Y}_-}}
\newcommand{\Yiip}{{\bm{Y}_+}}
\newcommand{\Yii} {{\bm{Y}}}
\newcommand{\Tiip}{{\bm{T}_+}}
\newcommand{\Tiim}{{\bm{T}_-}}
\newcommand{\Tii} {{\bm{T}}}
\newcommand{\Sii} {{\bm{S}}}

\newcommand{\RD}  {{\bm{R}}^{{\rm D}}}
\newcommand{\RN}  {{\bm{R}}^{{\rm N}}}

\newcommand{\Hiib}{{\mathbf{H}}}
\newcommand{\Liib}{{\mathbf{L}}}

 \makeatletter\@addtoreset{equation}{section}\makeatother

\newcommand{\END}{\hfill$\Box$}

\newenvironment{remark}%
{\refstepcounter{theorem}\noindent%
\textbf{Remark \thetheorem.}}%
{\hfill $\Box$ \\[0.75ex]}

\usepackage{fancyhdr}
\bibliographystyle{plain}
\title{Boundary integral equation methods for the solution of scattering and transmission 2D elastodynamic problems}
\date{April 12, 2022}
\author{V\'{\i}ctor Dom\'{\i}nguez\thanks{Dep. Ingenier\'{i}a Matem\'atica e Inform\'atica, Universidad P\'{u}blica de Navarra. Campus de Tudela 31500 - Tudela, Spain, e-mail: victor.dominguez@unavarra.es.}\and Catalin Turc\thanks{  Department of
Mathematical Sciences and Center for Applied Mathematics and Statistics, New Jersey  Institute of Technology,
Univ. Heights. 323 Dr. M. L. King Jr. Blvd, Newark, NJ 07102, USA, e-mail: catalin.c.turc@njit.edu.}}
\allowdisplaybreaks

 \begin{document}
\maketitle
\begin{abstract}
  We introduce and analyze various Regularized Combined Field Integral Equations (CFIER) formulations of time-harmonic Navier equations in media with piece-wise constant material properties. These formulations can be derived systematically starting from suitable coercive approximations of Dirichlet-to-Neumann operators (DtN), and we present a periodic pseudodifferential calculus framework within which the well posedness of CIER formulations can be established. We also use the DtN approximations to derive and analyze Optimized Schwarz (OS) methods for the solution of elastodynamics transmission problems. The pseudodifferential calculus we develop in this paper relies on careful singularity splittings of the kernels of Navier boundary integral operators which is also the basis of high-order Nystr\"om quadratures for their discretizations. Based on these high-order discretizations we investigate the rate of convergence of iterative solvers applied to CFIER and OS formulations of scattering and transmission problems.  We present a variety of numerical results that illustrate that the CFIER methodology leads to important computational savings over the classical CFIE one, whenever iterative solvers are used for the solution of the ensuing discretized boundary integral equations. Finally, we show that the OS methods are competitive in the high-frequency high-contrast regime.
   \newline \indent
  \textbf{Keywords}: Time-harmonic Navier scattering and transmission problems, boundary integral equations, preconditioners, domain decomposition methods.\\
   
 \textbf{AMS subject classifications}: 
 65N38, 35J05, 65T40, 65F08
\end{abstract}

\section{Introduction}

Numerical solutions of elastodynamics scattering and transmission problems based on boundary integral equation (BIE) formulations enjoy certain attractive features over their volumetric counterparts, mainly on account of the dimensional reduction and the explicit enforcement of radiation conditions at infinity. Robust BIE formulations of elastodynamics scattering and transmission problems can be derived via the combined field strategy. Furthermore, in the case of impenetrable scattering problems, alternative robust regularized formulations can be constructed based on the incorporation of approximations of Dirichlet-to-Neumann operators~\firstReviewer{\cite{chaillat2015approximate,chaillat2020analytical,DaLo:2015}}. These formulations are intrinsically more suitable for iterative solvers in the high-frequency regime, where they lead to important computational savings over the classical combined field formulations. 

We pursue in this paper the construction of regularized BIE formulations of both penetrable and impenetrable elastodynamics scattering problems in two dimensions. While for the impenetrable case we use similar ideas to construct regularized formulations as those introduced in~\cite{chaillat2015approximate,chaillat2020analytical}, in the penetrable case we follow the blueprint introduced in~\cite{boubendir2015regularized} for Helmholtz problems. In a nutshell, we use principal symbols of periodic pseudodifferential operators to construct approximations of DtN operators which are then readily incorporated in a scheme to construct well posed BIE formulations of scattering and transmission elastodynamic problems. The DtN approximations we use are square root Fourier multipliers, and they have the distinct advantage that (a) their implementation is straightforward in the trigonometric interpolation framework and (b) the analysis of both impenetrable and penetrable regularized formulations can be performed in the same vein. In addition, we explore these DtN approximations to formulate Optimized Schwarz methods for a domain decomposition approach for transmission elastodynamics problems. Just like in the Helmholtz case~\cite{boubendir2017domain}, we establish rigorously that the Schwarz iteration operators are compact perturbations of the identity in the case of smooth interfaces of material discontinuity, which allows us to prove the well posedness of the Optimized Schwarz approach. 

While the big picture does not differ a great deal from the elastodynamics to the Helmholtz case with respect to the well posedness of regularized formulations of scattering and transmission problems, the details are considerably more involved in the elastodynamics case, notwithstanding the fact that one has to deal with vector (as opposed to scalar) quantities. A first notable difference is encountered in the double layer \otherCorrections{boundary integral operators (BIOs)} which are no longer compact operators in the elastodynamics case, not even for smooth boundaries. Consequently, the DtN pseudodifferential principal symbol calculus is more complicated in elastodynamics, as the contributions arising from the double layer BIOs cannot be any longer ignored. We use in this paper the periodic pseudodifferential calculus~\cite{hsiao2008boundary} which in conjunction with logarithmic singularity splittings for the kernels of the four elastodynamics BIOs allows us to compute their principal symbols in the sense of pseudodifferential operators.  These calculations are the basis on which we construct our approximations of the DtN operators. However, these kernel logarithmic singularity splittings are quite cumbersome~\cite{chapko2000numerical}, significantly more so than their Helmholtz counterparts. Owing to these additional complications, we chose to present the full details of these calculations in an appendix to this paper.

We employ Nystr\"om discretizations for the numerical solution of the various BIEs derived in this paper. We pursue the classical Nystr\"om method based on trigonometric interpolation, logarithmic kernel singularity splittings and the classical Kussmaul and Martensen~\cite{kusmaul,martensen} quadratures for the analytic resolution of periodized logarithmic singularities. This discretization strategy leads to high-order BIE solvers for elastodynamics scattering and transmission problems in the case of smooth boundaries. \secondReviewer{However, since the Kussmaul-Martensen discretizations rely on global interpolation, they are not compatible with fast methods such as the Fast Multipole Methods~\cite{hao2014high} which may limit their appeal.  Other high-order discretization strategies that rely on use of panels (e.g. Alpert quadratures~\cite{alpert1999hybrid}) and which are compatible with fast methods are currently being explored.} One distinct advantage of the use of trigonometric interpolation is facilitating a straightforward implementation of Fourier multipliers, and thus of the DtN approximations we construct on the  basis of the periodic pseudodifferential calculus. \secondReviewer{We follow here the common practice in the community of using square root Fourier multiplier approximations of DtN operators~\cite{AntoineX,Antoine,DaLo:2015} which tend to deliver the best results in the high-frequency regime on account of their nearly optimal treatment of modes in the Fourier space transition regime from propagating to evanescent modes.  The implementation of those non local operators is significantly more challenging when panel discretizations are used, and the prevalent strategy resorts to Pad\'e approximations of square roots~\cite{AntoineX,Antoine}, which, in turn, requires solutions of certain elliptical problems on the boundary. The latter feature Laplace-Beltrami operators in the case of surfaces in 3D, and as such, they are challenging to solve in the context of Nystr\"om discretizations and are the subject of ongoing research.}

 The paper is organized as follows: in Section~\ref{setting} we introduce the Navier equations that govern elastodynamics waves in two dimensions; in Section~\ref{auxr}  we present the boundary layer potentials and integral operators associated with the Navier equations and we construct  their principal parts in the sense of pseudodifferential operators.  An essential part of this construction are detailed calculations of the kernels of the BIOs involved, with a special emphasis being given to their splitting into regular and singular parts. An exhaustive account of these factorizations is presented in the Appendix. \firstReviewer{In Sections~\ref{BIEf}-\ref{BIEt} we construct and analyze regularized BIE formulations for both penetrable and impenetrable scattering problems; 
 Nystr\"om discretizations of the elastodynamics BIOs as well as a variety of numerical results that illustrate the high-order convergence of these methods and the iterative behavior of the various formulations considered in this paper are presented in Section \ref{Nd}.  
 }

 

\section{ Navier equations and boundary integral operators} 
\label{setting} 

In this section we present the Navier equations and their associated four \otherCorrections{boundary integral operators (BIOs)}. We will pay special attentions to the  kernels of such operators for two reasons: (a) to derive the principal part (in the pseudodifferential sense) of the underlying operators; and (b) to describe precisely a factorization of these kernels into regular and singular parts which is subsequently exploited by spectral Kress Nystr\"om methods. Although the latter methods were introduced in the 80s for the Helmholtz equation and then extended in the 90s for the elastodynamic equations, the complexity of the functions involved makes  the splitting technique quite difficult in the elastodynamics case, and many details are unfortunately missing from the literature. Some references can be found in  \cite{chapko2000numerical} and \cite[Section 4]{DoSaSa:2015}. In the former paper, the emphasis is placed on the hypersingular operator, and we actually follow some of the notations introduced in that work. On the other hand, we find in the second paper a brief description of all four of the BIO kernels but the regular and singular factorizations of the kernels is omitted. Our main objective is to write in \firstReviewer{an} appropriate framework a certain integration by parts formula for the hypersingular Navier BIO which is an essential piece in the numerical method object of this paper.

\subsection{Navier equation}

Let us introduce first some notation for geometric quantities. We will  use the following notation for points ${\bm x}=(x_1,x_2)\in\mathbb{R}^2$. Any vector is understood to be a column vector, in such a way that
\[
 \x^\top \y=\x\cdot\y=x_1y_1+x_2y_2,\quad  \x\: \y^\top=\begin{bmatrix} 
 x_1 y_1&x_1 y_2\\
 x_1 y_2&x_2 y_2
 \end{bmatrix}.
\]
We will also denote 
\[
 \rr:=\x-\y,\quad r:=|\rr|=|\x-\y|.
\] 
For any sufficiently regular   curve  $\Gamma$ (Lipschitz is enough),  the unit outward pointing vector  is well defined and will be denoted henceforth by $\nn=(n_1,n_2)$. Consider also  $\bm{t}=(-n_2,n_1)$ the unit tangent vector obtained by rotating $\nn$ clockwise by ninety degrees. The curve $\Gamma$ will be assumed to be simply connected unless  we state otherwise.  

Let ${\bf u}=(u_1,u_2):\mathbb{R}^2\to   \mathbb{R}^2$ be a vector function. For a linear isotropic and homogeneous elastic medium with Lam\'e constants $\lambda$ and $\mu$ such that $\lambda>-\mu$,  the strain and the stress tensor are defined as 
\begin{eqnarray*}
 \bm{\epsilon}({\bf u})&:=&\frac{1}2 (\nabla {\bf u}+(\nabla {\bf u})^\top)
 =\begin{bmatrix}
  \partial_{x_1} u_1& \tfrac{1}2\left(\partial_{x_1} u_2+\partial_{x_2} u_1\right)     \\
   \tfrac{1}2\left(\partial_{x_1} u_2+\partial_{x_2} u_1\right) &                                                    
   \partial_{x_2} u_2
     \end{bmatrix}\\
 \bm{\sigma}_{\lambda,\mu}({\bf u})&:=&2\mu \bm{\epsilon}({\bf u})+\lambda (\nabla \cdot{\bf u}) I_2
\end{eqnarray*} 
where $I_2$ is the identity matrix of order 2. The time-harmonic elastic wave (Navier) equation is given by
\[
 {\nabla}\cdot\bm{\sigma}_{\lambda,\mu}({\bf u})+\omega^2{\bf u}=\mu\Delta {\bf u}+(\lambda+\mu)\nabla(\nabla\cdot {\bf u})+\omega^2 {\bf u}=0 
\]
where the frequency $\omega\in\mathbb{R}^+$ and the {\rm divergence} operator  ``$\otherCorrections{\nabla \cdot}$'' is applied row-wise \secondReviewer{(here we assume the density $\rho$ is equal to $1$)}. We often alleviate the notation and will simply write  $ \bm{\sigma}$ for the stress tensor if the context make clear which values for the Lam\'e constants are being considered.

The trace operator on $\Gamma$ is denoted by $\gamma_\Gamma {\bf u}$ whereas  the normal stress tensor, or traction operator, is 
given by 
\begin{equation}\label{eq:def:T}
 T_\Gamma{\bf u}\otherCorrections{:}=\bm{\sigma}({\bf u})\nn=\lambda (\nabla\cdot  {\bf u}) \nn+2\mu (\nn\cdot {\nabla}){\bf u}-\mu(\nabla\times {\bf u})
 \begin{bmatrix}
&-1\\
1&\end{bmatrix}{\bm n}. 
\end{equation}

Let then $\Omega_-$ and $\Omega_+=\mathbb{R}^2\setminus\overline{\Omega}_+$ be  the interior and exterior of  $\Gamma$. We will study in this paper BIE formulations for the following boundary value problems:
\begin{enumerate}[label=(\alph*)]
\item Solution of the exterior Dirichlet/Neumann, unpenetrable domain,  time-harmonic Navier equation:
\begin{equation}\label{eq:NavD} 
 \left|
 \begin{array}{rcl}
   \multicolumn{3}{l}{{\bf u} \in \Hiib_{\rm loc}^1(\Omega_+):=H_{\rm loc}^1(\Omega_+)\times H_{\rm loc}^1(\Omega_+), }  \\
   {\nabla}\cdot\bm{\sigma}({\bf u} )+\omega^2{\bf u}  &=&0 \firstReviewer{\quad \text{in }\Omega_+,}\\ 
 \multicolumn{3}{l}{\text{DC/NC on $\Gamma$},}\\
 \multicolumn{3}{l}{\otherCorrections{\rm +RC}}.
 \end{array}
 \right.
\end{equation}
 Here $H^1_{\rm loc}(\Omega_+)$ is the space of functions which are locally in the Sobolev space $H^1(\Omega_+)$,  DC stands for Dirichlet condition, i.e., 
 \[
   \gamma_\Gamma {\bf u} ={\bf f},  
 \]
  and NC, for Neumann condition, 
  \[
  T_\Gamma{\bf u}=\bm{\lambda}.
 \]
 Finally, RC stands for the radiation condition, or Kupradze condition, at infinity cf. \cite{KupGeBa:1979} or \cite[Ch. 2]{AmKaLe:2009}.  Hence, if  
 ${\bf u}_{p}$ and ${\bf u}_s$ are  the longitudinal and transversal wave defined as
\begin{equation}\label{eq:up:us}
{\bf u}_p:=-\frac{1}{k_p^2}\nabla\ \nabla\cdot{\bf u}\qquad {\bf u}_s:={\bf u}-{\bf u}_p
\end{equation}
 with 
\begin{eqnarray}
 k_p^2&:=&\frac{\omega^2}{\lambda+2\mu},\quad k_s^2:=\frac{\omega^2}{\mu},\label{eq:k_p:k_s}
 \end{eqnarray}
 the associated wave numbers, 
we require that  uniformly in ${\bm x}$, with $\widehat{\bm{x}}:=\bm{x}/|\bm{\x}|$, 
 \begin{subequations}\label{eqs:RadCondition}
\begin{equation}\label{eqs:RadCondition:01}
\frac{\partial {\bf u}_{p}}{\partial \widehat{\bm{x}}}(\bm{x}) - i k_{p} {\bf u}_p(\bm{x}),\ 
\frac{\partial {\bf u}_{s}}{\partial \widehat{\bm{x}}}(\bm{x}) - i k_{s} {\bf u}_s(\bm{x})  
= o(|{\bm{x}}|^{-1/2}). 
\end{equation}
An equivalent formulation is given by
 \begin{equation}\label{eqs:RadCondition:02}
\bm{\sigma}({\bf u}_p)(\bm{x})\cdot\widehat{\bm{x}}-ik_p(\lambda+2\mu)  {\bf u}_p(\bm{x}),
\quad 
\bm{\sigma({\bf u}_s)}(\bm{x})\cdot\widehat{\bm{x}}-ik_s \mu  {\bf u}_s(\bm{x})
= o(|{\bm{x}}|^{-1/2})\firstReviewer{,}
\end{equation}
which in turns implies 
\begin{equation}\label{eqs:RadCondition:03}
  {\bf u}_p(\bm{x})\cdot {\bf u}_s(\bm{x}),\quad (\bm{\sigma({\bf u}_s)}(\bm{x}) \widehat{\bm{x}})\cdot(
  \bm{\sigma({\bf u}_p)}(\bm{x})\widehat{\bm{x}}) = o(|{\bm{x}}|^{-1})\firstReviewer{.}
\end{equation}
\end{subequations}
We refer to \cite[Ch. III, \S 2]{KupGeBa:1979} for a proof of these results in $\mathbb{R}^3$ which can be easily adapted to our two-dimensional case. 
 

 Both problems are uniquely solvable (see, for 2D and 3D problems, \cite{BrPa:2008}, \cite[Ch. 3]{KupGeBa:1979} or \cite[Ch. 2]{AmKaLe:2009}). 

\item Transmission problems:  For different material properties $(\lambda_+,\mu_+)$ in $\Omega_+$ and $(\lambda_-,\mu_-)$ in $\Omega_-$ we seek a solution of 
\begin{equation}\label{eq:Transmission:0}
 \left|
 \begin{array}{rcl}
 \multicolumn{3}{l}{  {\bf u}_- \in \bm{H}^1(\Omega_-),\quad {\bf u}_+ \in \bm{H}_{\rm loc}^1(\Omega_+),}\\
   {\nabla}\cdot\bm{\sigma}_-({\bf u}_-)+\omega_-^2{\bf u}_-  &=&0 \quad \mbox{in }\Omega_-,\\ 
   {\nabla}\cdot\bm{\sigma}_+({\bf u}_+ )+\omega_+^2{\bf u}_-  &=&0 \quad \mbox{in }\Omega_+,\\ 
 \gamma_\Gamma {\bf u}_+-\gamma_\Gamma {\bf u}_- &=& -\gamma_\Gamma {\bf u}^{\rm inc},\\ 
 \otherCorrections{T_{+,\Gamma}}{\bf u}_+- \otherCorrections{T_{-,\Gamma}} {\bf u}_- &=& - \otherCorrections{T_{+,\Gamma}}{\bf u}^{\rm inc},\\
 \multicolumn{3}{l}{\text{+ RC}.}
 \end{array}
 \right.
\end{equation} We have used above the $\pm$ signs in the 
normal stress tensor ($\Tii_{\pm,\Gamma}$) to clarify the domain from which it is applied,  be it the exterior/interior domain with respect to $\Gamma$ (that is, $\Omega_+$ or $\Omega_-$). We will keep using this notation whenever the context is not clear enough to resolve this aspect.  \firstReviewer{We also implicitly assumed that the densities $\rho_\pm=1$ in both media.}

On the other hand, the function 
${\bf u}_{\rm inc}$ appearing in the right-hand-side above is  a solution of the exterior Navier problem in a neighborhood of $\overline{\Omega}_+$ (typically a shear or a pressure {plane} wave). \firstReviewer{This problem is known to be uniquely solvable. We refer the reader to \cite[Corolary 2.7]{CoSt:1990},\cite[Ch. 2]{AmKaLe:2009} or \cite[Chapter III]{KupGeBa:1979}}.


\end{enumerate}
 
We finish this section by introducing the exterior/interior Dirichlet-to-Neumann operator,  which will play an essential role in this work: for $\bm{g}:\Gamma\to\mathbb{C}$ sufficiently smooth (we will give detailed conditions later) we  define
\begin{equation}\label{eq:defYs}
 \Yii_{+,\Gamma}{\bf g} := \Tii_{+,\Gamma} {\bf u}_+,\quad 
 \Yii_{-,\Gamma}{\bf g} := \Tii_{-,\Gamma} {\bf u}_-.  
\end{equation}
where $ {{\bf u}_+} $ satisfies the RC and 
\[
 \bm{\sigma}_\pm({\bf u}_\pm)+\omega_\pm^2{\bf u}_\pm =0,\quad \gamma_{{\Gamma}}{\bf u}_\pm={\bf g}. 
\]
Let us emphasize that the exterior DtN operator $\Yii_{+,\Gamma}$ is well defined for all frequencies $\omega$, unlike the interior DtN operator $\Yii_{-,\Gamma}$, which fails to be properly defined whenever $\omega^2$ is an eigenvalue for the Dirichlet problem of the Navier differential operator in the bounded domain $\Omega_{-}$. 
Notice that the second transmission condition can be rewritten now as
\[
 \Yii_{+,\Gamma}\gamma_{ \Gamma}  {\bf u}_+-\Yii_{-,\Gamma}\gamma_{ \Gamma} {\bf u}_-= -\otherCorrections{T_{+,\Gamma}}{\bf u}^{\rm inc}
\]
assuming that  the inner DtN operator is  properly defined.

\section{Boundary integral operators for elastodynamics}

\label{auxr}

\subsection{Fundamental solution. Boundary layer potentials and  integral operators}
\otherCorrections{We briefly summarize the more relevant results of the Calder\'on calculus associated with layer potentials and boundary integral operators for two-dimensional elastodynamics. We refer the reader to \cite{CoSt:1990}, \cite[Ch. 2]{KupGeBa:1979}, \cite[\S 2.4]{AmKaLe:2009} and \cite{DoSaSa:2015} for exhaustive studies on this topic. While the first three references deal only with the 3D case, we point out that all of the results established in those works can be easily adapted to the 2D case.}

The fundamental solution of the time-harmonic elastic wave equation is given by the $2\times 2$ matrix function 
\begin{eqnarray*}
 \Phi(\x,\y)&:=&\frac{1}{\mu}\phi_0(k_s r)I_2+\frac{1}{\omega^2}\nabla_{\x}\nabla_{\x}^\top(\phi_0(k_s r)-\phi_0(k_p r)),
 \quad  r =|\x-\y| 
\end{eqnarray*}
where 
\begin{eqnarray}
 \phi_j(z)&:=&\frac{i}4 H_j^{(1)}(z). \label{eq:Hj}
 \end{eqnarray}
In the expression above, $H_j^{(1)}$ denotes the Hankel function of order $j$ and first kind, so that for the particular case $j=0$, $\phi_0(k\firstReviewer{r})$ is just the fundamental solution of the Helmholtz equation  $\Delta+k^2$. 
  
 Using the fundamental solution $\Phi(\x,\y)$ of the Navier equation we can define the single and double layer potentials. Specifically, for a given density (vector) function $\bm{\lambda}_\Gamma$ defined on $\Gamma$, the single layer potential is defined as
\begin{equation}\label{eq:single}
  (\SLii_\Gamma{\bm{\lambda}_\Gamma})(\z):=\int_\Gamma \Phi(\z,\y){\bm \lambda}_\Gamma(\y)\,{\rm d}\y,\quad \z\in\mathbb{R}^2\setminus\Gamma.
\end{equation}
 Analogously, for a given density (vector) function $\bm{g}_\Gamma$  defined on $\Gamma$, the double layer potential is defined as
\begin{equation}\label{eq:double}
  (\DLii_\Gamma \bm{g}_\Gamma)(\z):=\int_\Gamma \left[ \Tii_{\y,\Gamma} \Phi(\z,\y)\right]^\top\bm{g}_\Gamma(\y)\,{\rm d}\y,\quad \z\in\mathbb{R}^2\setminus\Gamma
\end{equation}
where $\Tii_{\y,_\Gamma}\Phi(\z,\y)$ is the normal stress tensor applied column-wise to $\Phi(\z,\y)$ with respect to the  $\y$ variable.

\otherCorrections{We then have the following representation formula which is referred to as the Somigliana identity 
\begin{equation}\label{eq:Somigliana:1}
 {\bf u}_{\pm} = \pm \DLii_\Gamma \gamma_{\Gamma} {\bf u}_{\pm}  \mp \SLii_\Gamma T_{\Gamma} {\bf u}_\pm, 
\end{equation}
for any ${\bf u}_{\pm}$   solution of the homogeneous Navier equation in $\Omega_\pm$ satisfying additionally the radiation condition in the exterior domain. }

The single layer potential is continuous in $\mathbb{R}^2$, and thus, assuming that $\z=\x+\varepsilon {\bf n}(\x),\ \x\in\Gamma$ and taking the limit as $\varepsilon\to 0$ in equation~\eqref{eq:single} we can define the single layer operator
\begin{equation}\label{eq:singleV}
  (\Vii_\Gamma{\bm \lambda}_\Gamma)(\x):=\int_\Gamma \Phi(\x,\y){\bm \lambda}_\Gamma(\y)\,{\rm d}\y,\quad \x\in\Gamma.
  \end{equation}
 The application of the traction operator $\Tii_{\Gamma}$ to the single layer potential $\SLii_\Gamma\bm{\lambda}_\Gamma $ gives rise to jump discontinuities
\[
\lim_{\varepsilon\to 0}\Tii_{\Gamma}(\SLii_\Gamma\bm{\lambda}_\Gamma)(\x\pm \varepsilon {\bf n}(\x))=\mp {\bm \lambda}_\Gamma(\x)+(\Kii^\top_\Gamma \bm{\lambda}_\Gamma)({\x}),\quad \x\in\Gamma
\]
where the adjoint double layer operator is defined as 
\[
(\Kii_\Gamma^\top {\bm \lambda}_\Gamma)({\x}):=\mathrm{p.v.}\,\int_\Gamma 
 \Tii_{\x,\Gamma} \Phi(\x,\y) 
{\bm \lambda}_\Gamma(\y)\,{\rm d}\y\firstReviewer{.}
\]
 {The integral above is singular and it has to be understood in the sense of Cauchy principal value (which is what $\mathrm{p.v.}\,$ stands for).} 
   The double layer potential $\DLii_\Gamma $ undergoes a jump discontinuity across $\Gamma$ so that
\[
\lim_{\varepsilon\to 0}(\DLii_\Gamma{\bm{g}_\Gamma})(\x+\varepsilon {\bf n}(\x))-\lim_{\varepsilon\to 0}(\DLii_\Gamma{\bm{g}_\Gamma})(\x-\varepsilon {\bf n}(\x))={\bm{g}_\Gamma}(\x),\quad \x\in\Gamma\firstReviewer{,}
\]
and
\[
\lim_{\varepsilon\to 0}(\DLii_\Gamma{\bm{g}_\Gamma})(\x+\varepsilon {\bf n}(\x))+\lim_{\varepsilon\to 0}(\DLii_\Gamma{\bm{g}_\Gamma})(\x-\varepsilon {\bf n}(\x))=2(\Kii_\Gamma \bm{g}_\Gamma)({\x}),\quad \x\in\Gamma
\]
where the \otherCorrections{double layer operator}  is defined explicitly as
\[
(\Kii_\Gamma\bm{g}_\Gamma)({\x}):= {\mathrm{p.v.}}\int_\Gamma  \left[  \Tii_{\y,\Gamma} \Phi(\z,\y)\right]^\top \bm{g}_\Gamma(\y)\,{\rm d}\y.
\]
Finally, applying the traction operator $T_\Gamma $ to the double layer potential $\DLii_\Gamma {\bm{g}_\Gamma}$ we obtain \otherCorrections{the hypersingular operator}
\[
\lim_{\varepsilon\to 0}(T_\Gamma \DLii_\Gamma{\bm{g}_\Gamma})(\x\pm \varepsilon {\bf n}(\x))=(\Wii_\Gamma \bm{g}_\Gamma)({\x}),\quad \x\in\Gamma
\]
where the  boundary integral operator \firstReviewer{(BIO)}  $\Wii_\Gamma $ is defined as
\[
 (\Wii_\Gamma\bm{g}_\Gamma)({\x}):=\operatorname{f.p.}\int_\Gamma   \Tii_{\x,\Gamma} \big[\Tii_{\y,\Gamma} \Phi(\rr)\big]  ^\top  {\bm g}_\Gamma({\y})\,{\rm d}\y.
\]
The kernel $W(\x,\y)$ defined above is strongly singular (that is it behaves like $\mathcal{O}(|\x-\y|^{-2})$ as $\y\to\x$), and as such the integral in its definition must be interpreted in a Hadamard finite part sense (which it is what ``{f.p.}'' stands for in the expression above). 
  
%
 
\begin{remark}\label{remark:C:01} It is a well established result that the matrix \otherCorrections{BIO}
\[
 {\cal C}_\Gamma:\begin{bmatrix}
           \Kii_\Gamma & -\Vii_\Gamma\\
           \Wii_\Gamma& - \Kii_\Gamma^\top
          \end{bmatrix}: \Hiib^{1/2}(\Gamma)\times \Hiib^{-1/2}(\Gamma) \to \Hiib^{1/2}(\Gamma)\times \Hiib^{-1/2}(\Gamma)
\]
is continuous. Here $\Hiib^{\pm 1/2}(\Gamma):= H^{\pm 1/2}(\Gamma)\times H^{\pm 1/2}(\Gamma)$ are the standard Sobolev spaces in which Dirichlet and Neumann traces on the boundary are considered. Furthermore~\cite{chaillat2020analytical}, 
\[
 {\cal C}^2_\Gamma=\frac{1}{4}{\cal I}
 \]
where ${\cal I}= \begin{bmatrix}
                  \bm{I}&\\
                        &\bm{I}
                 \end{bmatrix}
$ is the identity operator in $\Hiib^{s}(\Gamma)\times \Hiib^{t}(\Gamma)$. This makes the operators
\[
 \frac{1}{2} {\cal I}\pm {\cal C}_\Gamma
\]
projections in the space in $\Hiib^{1/2}(\Gamma)\times \Hiib^{-1/2}(\Gamma)$, known  in the literature  as the exterior/interior Calder\'{o}n operators. Since 
\[
{\bf u}_{\pm} = \pm \DLii_\Gamma\bm{g}_\Gamma \mp \SLii_\Gamma{\bm \lambda}_\Gamma \quad {\Leftrightarrow} \quad 
 \left(\frac{1}{2} {\cal I}\pm {\cal C}_\Gamma\right)
 \begin{bmatrix}
 {\bm g}_\Gamma\\                                              
 {\bm \lambda}_\Gamma                                               
 \end{bmatrix}                                              
=
 \begin{bmatrix}
 \gamma_{\Gamma} {\bf u}_{\pm}\\        
 {T}_{\Gamma}{\bf u}_{\pm}
 \end{bmatrix}    
\]
we have
\[
 \left(\frac{1}{2} {\cal I}\pm {\cal C}_\Gamma\right)
  \begin{bmatrix}
 \gamma_{\Gamma} {\bf u}_{\pm}\\        
 {T}_{\Gamma}{\bf u}_{\pm}    
 \end{bmatrix}    =   \begin{bmatrix}
 \gamma_{\Gamma} {\bf u}_{\pm}\\        
 {T}_{\Gamma}{\bf u}_{\pm}    
 \end{bmatrix},\  \text{or   equivalently } \ 
(\gamma_{\Gamma} {\bf u}_\pm, {T}_{\Gamma}{\bf u}_\pm)\in \mathop{\rm Ker}    
  \left( \tfrac{1}{2} {\cal I}\mp  {\cal C}_\Gamma\right).
\]

Let us end this remark by stating the convention we have followed so far: We will use boldface roman letters (as ${\bf f}$ or $\bm{g}$) for functions that naturally belong to $\Hiib^{1/2}(\Gamma)$ whereas Greek letters will be used to represent functions (or distributions) in  $\Hiib^{-1/2}(\Gamma)$ ---without excluding the possibility that for Navier problems with regular enough boundary data the functions involved also turn out to be more regular even though $\Gamma$ is only Lipschitz.  
\end{remark}

\subsection{Parameterized operators and functional spaces}

In this section we introduce parameterized versions of the kernels of the Navier layer potentials and their associated \otherCorrections{BIOs}. We will assume from now on that there exists a $2\pi-$periodic smooth regular  parameterization of the closed curve $\Gamma$, ${\bf x}:\R\to\Gamma$. We then denote
\[
  {\bm \nu }(t) = (x'_2(t),-x'_1(t))= (\nn\circ{\bf x})(t)|{\bf x}'(t)|, 
\]
the parameterized  normal 
tangent field  to $\Gamma$.

We will adopt the following convention: 
\[
 \Hiib^{1/2}(\Gamma)\times \Hiib^{-1/2}(\Gamma)  \ni ({\bm g}_\Gamma,\bm{\lambda}_\Gamma)   \quad \rightsquigarrow\quad 
  ( {\bm g},\bm{\lambda} ) := ({\bm g}_\Gamma\circ{\bf x},(\bm{\lambda}_\Gamma\circ{\bf x}){|\bf x'}| )\in 
   \Hiib^{1/2} \times \Hiib^{-1/2} .
\]
Here, 
\[
\Hiib^{s}:= H^{s}\times H^{s},\quad 
 H^{s}:=\left\{ \varphi\in{\cal D}'(\R) \ :\ \varphi(\cdot+2\pi)=\varphi,\quad \|\varphi\|
 _{s}<\infty\right\}
\]
with 
\[
\|\varphi\|_{s}^2 :=|\widehat{\varphi}(0)|^2+ \sum_{n\ne 0} |n|^{2s}|\widehat{\varphi}(n)|^2,\qquad  \widehat{\varphi}(n):=\int_{0}^{2\pi}\varphi(t) \exp(-i n t)\,{\rm d}t. 
\]
We extend actually such identification for any $s\ge 0$:
\[
\Hiib^{s}(\Gamma)\ni \bm{g}_\Gamma \  \rightsquigarrow\ 
 {\bm g} :=  {\bm g}_\Gamma\circ{\bf x} \in \Hiib^s,\qquad 
\Hiib^{-s}(\Gamma)\ni \bm{\varphi}_\Gamma \ \rightsquigarrow\   
 {\bm \varphi} := ({\bm \varphi}_\Gamma\circ{\bf x}) \:|{\bf x'}| \in \Hiib^{-s}.\quad 
\]
When $s=0$, i.e. $\Liib(\Gamma):=L^2(\Gamma)\times L^2(\Gamma)$ we can choose between either of the two identifications above depending on what is most appropriate for our purposes.
 
Notice that  with this convention
\[
\int_\Gamma  \bm{g}_\Gamma\cdot \bm{\lambda}_\Gamma  = 
\int_0^{2\pi} \bm{g} \cdot \bm{\lambda} =: \langle \bm{g},\bm{\varphi}\rangle ,\quad \bm{g} =\bm{g}_\Gamma\circ{\bf x},\quad \bm{\lambda}=(\bm{\lambda}_\Gamma\circ{\bf x})\: |{\bf x'}|. 
\]

For any ${\bf u}:\Omega_\pm\to\mathbb{C}^2$ sufficiently smooth we follow this convention and thus define the $2\pi-$periodic vector fields: 
\begin{equation}\label{eq:def:T:2}
 \gamma{\bf u}: = {\bf u}\circ{\bf x},\quad 
 T{\bf u} := (\Tii_{\Gamma }{\bf u}\circ{\bf x})\,|{\bf x}| = (\bm{\sigma}({\bf u})\circ{\bf x}) \bm{\nu}
\end{equation}
which are the (parameterized) trace and normal stress tensor. With these notations in hand, we then introduce the parameterized version of the Navier layer potentials 
 \begin{eqnarray}\label{eq:singleP}
  (\SLii{\bm{\lambda}})({\bm{z}})&:=&\int_0^{2\pi}\Phi({\bf z},{\bf x}(t)) {\bm \lambda}(t)\,{\rm d}t,\quad \z\in\mathbb{R}^2\setminus\Gamma,\\
\label{eq:doubleP} 
  (\DLii \bm{g})(\z)&:=& 
  \int_0^{2\pi}   \left[\Tii_t\Phi(\z,{\bf x}(t))\right]^\top\,\bm{g}(t)\,{\rm d}t ,\quad\z\in\mathbb{R}^2\setminus\Gamma,
\end{eqnarray} 
\firstReviewer{
where
\begin{equation}\label{eq:Tii}
 \qquad
  \Tii_t\Phi(\cdot,{\bf x}(t)):=
   \left.\Tii_{\y,\Gamma} \Phi(\cdot,\y)\right|_{{\bf y}={\bf x}(t)}|{\bf x}'(t)|,
\end{equation}}
and their  associated four \otherCorrections{BIOs}
\begin{subequations}\label{eq:BLO}
\begin{eqnarray}
   (\Vii{\bm \lambda})({{t}})&:=&\int_0^{2\pi}  V({\tau},t) {\bm \lambda}(t)\,{\rm d}t= \int_0^{2\pi}  \Phi({\bf x}({\tau}),{\bf x}(t)){\bm \lambda}(t)\,{\rm d}t,
   \label{eq:BLO:SL}\\
   (\Kii\bm{g})({\tau})      &:=&\mathrm{p.v.}\,\int_0^{2\pi}   K({\tau},t)\,\bm{g}(t)\,{\rm d}t
 =     {\rm p.v}\int_0^{2\pi}    \left[ \Tii_{t} \Phi({\bf x}({\tau}),{\bf x}(t))\right]^\top \,\bm{g}(t)\,{\rm d}t,\label{eq:BLO:DL}
 \\
   (\Kii^\top {\bm \lambda})({\tau}) &:=&\mathrm{p.v.}\,\int_0^{2\pi}   K^\top (t,{\tau})\,\bm{\lambda}(t)\,{\rm d}t,\label{eq:BLO:AdDL}
  =  {\rm p.v}\int_0^{2\pi}     \Tii_{{\tau}} \Phi({\bf x}({\tau}),{\bf x}(t))  \,\bm{\lambda}(t)\,{\rm d}t,\\
   (\Wii{\bm g})({\tau}) &:=&  \operatorname{f.p.}\int_0^{2\pi}   W({\tau},t)\,\bm{g}(t)\,{\rm d}t,\label{eq:BLO:Hyp}
    =  
   \operatorname{f.p.}\int_0^{2\pi}   \Tii_{{\tau}} \big[\Tii_{t} \Phi({\bf x}({\tau})\otherCorrections{,}{\bf x}(t))\big]^\top  \,\bm{g}(t)\,{\rm d}t.
\end{eqnarray}
\end{subequations}
\firstReviewer{(Here, and similarly to \eqref{eq:Tii}, $ \Tii_\tau \Psi({\bf x}(\tau),\cdot):=
   \left.\Tii_{\x,\Gamma} \Psi( \x,\cdot)\right|_{{\x}={\x}(\tau)}|{\bf x}'(\tau)|,$  for any $2\times 2$ matrix function 
   $\Psi:\Gamma\times \Gamma\to\mathbb{C}^{2\times 2}$).}
Observe that with these definitions, $\Kii$ and $\Kii^\top$ are effectively transpose of each other: 
\[
 \langle \Kii\bm{g},\bm{\lambda}\rangle=
 \langle \bm{g},\Kii^\top \bm{\lambda}\rangle,\quad \forall {\bm g}\in \Hiib^s,\ \forall \bm{\lambda}\in \Hiib^{-s}. 
\]

As in Remark \ref{remark:C:01} the (parameterized) version 
\[
 {\cal C}:\begin{bmatrix}
           \Kii & -\Vii\\
           \Wii & - \Kii^\top
          \end{bmatrix}: \ \Hiib^{1/2} \times \Hiib^{-1/2}  \to \Hiib^{ 1/2} \times \Hiib^{-1/2} 
\]
can be shown to satisfy the  related properties: 
\[
 {\cal C}^2 = \frac{1}4{\cal I}  \quad (\pm \frac{1}2{\cal I}+{\cal C})^2 = \pm \frac{1}2{\cal I}+{\cal C}
\]
and that a similar relation can be established between functional densities defined on $\Gamma$ and boundary traces of their associated combined field layer potentials. Indeed, if we define
\[
{\bf u}_{\pm} = \pm \DLii \bm{g}  \mp \SLii {\bm \lambda}  \quad \text{in $\Omega_\pm$},
\]
we have 
\[
 \left(\frac{1}{2} {\cal I}\pm {\cal C}\right)
  \begin{bmatrix}
 \gamma_{\pm} {\bf u}\\        
 {T}_{\pm}{\bf u}    
 \end{bmatrix}    =   \begin{bmatrix}
 \gamma_{\pm} {\bf u}\\        
 {T}_{\pm}{\bf u}    
 \end{bmatrix},\  \text{or   equivalently } \ 
(\gamma_{\pm} {\bf u}, {T}_{\pm}{\bf u})\in \mathop{\rm Ker}    
  \left( \tfrac{1}{2} {\cal I}\mp  {\cal C}\right).
\]
In particular, the  Somigliana identity  \otherCorrections{cf. \eqref{eq:Somigliana:1}}  looks similar in this framework: 
\begin{equation}\label{eq:Somigliana:2}
{\bf u}_{\pm} = \pm \DLii \gamma_{\pm } {\bf u}  \mp \SLii T_\pm {\bf u}. 
\end{equation}

\firstReviewer{\subsection{Principal symbol pseudodifferential calculus}}
Our next result establishes the fact that the difference between the kernels of the four parametrized BIOs associated with the time harmonic Navier equations (i.e. elastodynamics) and their static counterparts (i.e. elasticity) exhibit integrable singularities  only, and the nature of the latter can be made explicit:

\begin{proposition}\label{prop:reg:sing:decomposition}
 There exist smooth  $2\pi-$periodic $2\times 2$ matrix functions
 \[
A_{\rm log},\quad  A_{\rm reg},\quad  B _{\rm reg},\quad  C_{\rm log},\quad D_{\rm reg}  
 \]
 so that
 \begin{eqnarray*}
  V(\tau,t)&=& V_0(\tau,t)+ A_{\rm log}(\tau,t) \sin^2\frac{\tau-t}2 \log r + A_{\rm reg}(\tau,t)\firstReviewer{,} \\
  K(\tau,t)&=& K_0(\tau,t)+ B_{\rm log}(\tau,t) \sin (\tau-t) \log r   + B_{\rm reg}(\tau,t)\firstReviewer{,} \\
  W(\tau,t)&=& W_0(\tau,t)+ C_{\rm log}(\tau,t)   \log r   + D_{\rm reg}(\tau,t)\firstReviewer{,}
 \end{eqnarray*}
where
 \begin{eqnarray*}
V_0(\tau,t)&:=&- \frac{\lambda+3\mu}{4\pi(\lambda+2\mu)\mu}\log r \: I_2+\frac{\lambda+\mu}{4\pi(\lambda+2\mu)\mu} G(\rr)\firstReviewer{,}\\
K_0(\tau,t)&:=&\frac{\mu}{\lambda+2\mu} \left(\frac{\partial}{\partial t}
        \frac{1}{2\pi }\log r\right) \: \begin{bmatrix}
                                         &-1\\
                                         1&
                                        \end{bmatrix}
+\frac{1}{2\pi r^2}\left({\bm \nu}(t)\cdot\rr\right) \left(\frac{\mu}{\lambda+2\mu}I_2+
 2\frac{\lambda+\mu}{\lambda+2\mu}G(\rr)
 \right)\firstReviewer{,}\\
W_0(\tau,t)&:=&   ={-\frac{\mu(\lambda+\mu)}{\lambda+2\mu} \frac{\partial^2}{\partial \tau\, \partial t}\frac{1}{{\pi}}\left(-\log r\:I_2+G(\rr)\right)} 
 \end{eqnarray*}
with  
 \[
\rr :={\bf x}(\tau)-{\bf x}(t),\quad  r := |\rr|=|{\bf x}(\tau)-{\bf x}(t)|,\quad  G(\rr) :=\frac{1}{r^2}\rr \rr^\top =
\begin{bmatrix}
 r_{1}^2&r_{1} r_{2}\\
 r_{1}r_{2}&r_{2}^2
\end{bmatrix}.
 \]
\end{proposition}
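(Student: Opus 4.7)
The strategy is to propagate the classical Frobenius splitting of the Hankel function
\[
 \phi_0(kr) = -\frac{1}{2\pi}\,J_0(kr)\,\log r + \Psi(k^2r^2,\log k),
\]
with $J_0$ even analytic, $J_0(0)=1$, and $\Psi$ analytic in $k^2r^2$, through the four Navier kernels, extracting the elastostatic contributions as the $\omega^2\to 0$ residues and then controlling the remainder. For the single layer, substitution into the definition of $\Phi$ produces $-\tfrac{1}{2\pi\mu}\log r\, I_2$ from the first summand; for the second summand I would use the analytic expansion
\[
 J_0(k_s r)-J_0(k_p r) = (k_s^2-k_p^2)\, r^2\, h(r^2;k_s,k_p),
\]
with $h$ analytic and $h(0;0,0)=-\tfrac{1}{4}$, combine it with the elementary identity $\nabla_\x\nabla_\x^\top(r^2\log r)=(2\log r+1)I_2+2G(\rr)$, and invoke $(k_s^2-k_p^2)/\omega^2=(\lambda+\mu)/(\mu(\lambda+2\mu))$ to recover exactly the $V_0$ of the statement. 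All residual terms either are analytic in $r^2$ or carry an extra factor of $r^2$ in front of $\log r$, which, combined with the standard parameterization identity $r^2=4\sin^2((\tau-t)/2)\,q(\tau,t)$ (with $q$ smooth and strictly positive), delivers the $A_{\rm log}(\tau,t)\sin^2((\tau-t)/2)\log r$ and $A_{\rm reg}(\tau,t)$ pieces.

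For $K$ and $K^\top$ I would apply the traction operator $T_\y$ (respectively $T_\x$) to the same splitting of $\Phi$. The $r^{-1}$-type principal behaviour comes exclusively from derivatives of $\log r$; writing everything in parameterized form forces the geometric factor $\bm{\nu}(t)\cdot\rr$ to appear, and since this factor vanishes linearly on the diagonal, the smooth prefactor multiplying $\log r$ inherits the $\sin(\tau-t)$ factor advertised for $B_{\rm log}$. Reducing the $\omega$-dependent coefficients as before and collecting the surviving leading terms yields the explicit $K_0$; the residual $B_{\rm log}\sin(\tau-t)\log r+B_{\rm reg}$ collects higher-order terms from $J_0$ and from tangential derivatives of $r^2\log r$.

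The hypersingular operator $W$ is the main obstacle: naively $T_\x T_\y \Phi$ is of order $r^{-2}$, whereas the proposition asserts only a logarithmic singularity beyond the explicit $W_0$. The remedy, foreshadowed in the introduction and following \cite{chapko2000numerical}, is a Maue-type integration-by-parts identity that converts $T_\x T_\y$ acting on $\Phi$ into tangential derivatives $\partial_\tau\partial_t$ acting on a logarithmic kernel, at the price of smooth multiplicative factors and lower-order contributions. The target $W_0$ is already presented in this form, as $\partial^2_{\tau t}$ of $-\log r\,I_2+G(\rr)$ with the correct Lam\'e prefactor; once it is subtracted, the remainder inherits only a $\log r$ singularity, which, after the same periodic-log conversion used above, organises into $C_{\rm log}(\tau,t)\log r+D_{\rm reg}(\tau,t)$. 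The verification is mechanical but quite involved, since it requires running all three BIOs through the same Frobenius--Maue algebra and matching coefficients against $V_0$, $K_0$, and $W_0$; as the authors note, the bulk of this bookkeeping is naturally deferred to the appendix.
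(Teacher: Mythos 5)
Your proposal is correct in substance and follows essentially the same route as the paper, whose proof of this proposition is the explicit computation recorded in Lemmas \ref{lemma:Ap:01}--\ref{lemma:Ap:03} together with Remark \ref{remark:3.4}: split each $\phi_j$ as $-\frac{1}{2\pi}J_j\log(\cdot)$ plus a smooth part, push this through the factorization $\Phi=\Phi_1 I_2+\Phi_2 G$ and the traction identities, and identify the elastostatic kernels as the retained singular parts; your coefficient check recovering $V_0$ is exactly the computation behind Lemma \ref{lemma:Ap:01}. Where you genuinely differ is the hypersingular kernel: the paper does not invoke a Maue-type integration by parts at this stage, but applies $T_\tau$ directly to the already-split double-layer kernel via the fields $U_1,U_2$ and $R_1,R_2,R_3$, and the fact that nothing more singular than $\log r$ survives once $W_0$ is subtracted is then read off from the explicit coefficient expansions of Lemma \ref{lemma:Ap:03}; in your version this decisive cancellation is asserted rather than argued, and the Maue identity you lean on is itself a nontrivial piece of bookkeeping (it is what the Nystr\"om discretization, not this proposition, exploits). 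A cleaner justification available within your own framework is that $\Phi-V_0$ consists of smooth terms plus terms of type (smooth)$\,r^2\log r$ and (smooth)$\,\rr\rr^\top\log r$, so applying the two first-order traction operators can produce at worst a $\log r$ singularity, which is precisely the claim for $W-W_0$. One small imprecision: the $\sin(\tau-t)$ factor in $B_{\rm log}$ comes from the linear vanishing of $\rr$ (hence of $U_1,U_2$) on the diagonal, as in Remark \ref{remark:3.4}; the factor $\bm{\nu}(t)\cdot\rr$ you single out actually vanishes quadratically there.
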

\begin{proof} We refer the reader to Appendix and Remark \ref{remark:3.4}. 
\end{proof} 
Notice that in the previous results we have used the same symbols ${\bm r}$ and $r$ to represent different but very closely related quantities. We believe that this similarity justifies this slight abuse of notation.

Let us stress some consequences of the previous result.  The (matrix) functions $V_0(t,\tau),$ $K_0(t,\tau)$, $K^\top_0(t,\tau)$ and $W_0(t,\tau)$ are the kernels of the corresponding \otherCorrections{BIOs} (single layer, double layer, adjoint double layer and hypersingular operator) for the elasticity layer operators. Hence, if we introduce the integral operators
\begin{eqnarray*}
 \Lambda \varphi &:=& -\frac{1}{2\pi}\int_{0}^{2\pi}\log\Big(4 e^{-1} \sin^2\frac{\cdot-t}2\Big)\varphi(t)\,{\rm d}t\firstReviewer{,}\\
 \label{eq:def:H}\\
\Hi g   &:=&  - i\Lambda g' +\widehat{g}(0) = \mathrm{p.v.}\, \frac{1}{2\pi i}\int_{0}^{2\pi} \cot\frac{t-\cdot}2 g(t)\,{\rm d}t +\frac{1}{2\pi}\int_0^{2\pi}  g(t)\,{\rm d}t,  
\end{eqnarray*}
 which are actually Fourier multiplier operators whose action is explicitly given by 
\begin{equation}\label{eq:Lambda1}
 \Lambda g = \widehat{g}(0) + \sum_{n\ne 0} \frac{1}{|n|}\widehat{g}(n) \exp(in\,\cdot),\quad \Hi {g} = -\sum_{n<0} \widehat{g}(n) \exp(in\,\cdot)+
 \sum_{n\ge 0} \widehat{g}(n) \exp(in\,\cdot),   
\end{equation}
together with the matrix operators 
\begin{equation}\label{eq:LambdasH}
\bm{\Lambda} =\begin{bmatrix}
               \Lambda& \\
               & \Lambda  
              \end{bmatrix},\quad \bm{\Lambda}^{-1} =\begin{bmatrix}
               \Lambda^{-1}& \\
               & \Lambda^{-1}  
              \end{bmatrix},\quad 
\bm{H} = 
              \begin{bmatrix}
               & -\Hi\\
               \Hi &
              \end{bmatrix}=\bm{H}^\top
\end{equation}
the following regularity results can be easily established, cf. Remark \ref{remark:3.4}
\begin{equation}\label{eq:principal:symbols}
 \begin{aligned}
 \Kii-\otherCorrections{\alpha}\bm{H}&:\bm{H}^s\to\bm{H}^{s+2}\firstReviewer{,}&
 \Vii-\otherCorrections{\beta}\bm{\Lambda}&:\bm{H}^s\to\bm{H}^{s+3}\firstReviewer{,}\\ 
 \Wii-\otherCorrections{\delta}\bm{\Lambda}^{-1}&:\bm{H}^s\to\bm{H}^{s+1}\firstReviewer{,}&
 \Kii^\top-\otherCorrections{\alpha}\bm{H}&:\bm{H}^s\to\bm{H}^{s+2} \firstReviewer{,}
 \end{aligned}
 \end{equation}
 where  
\[
  \alpha := \frac{i \mu }{2 (\lambda +2 \mu )}\otherCorrections{\in  i\mathbb{R}^+},\quad \beta := \frac{\lambda +3 \mu }{4 \mu  (\lambda +2 \mu )}\otherCorrections{>0}, \quad \delta :=  -\frac{\mu  (\lambda +\mu )}{\lambda +2 \mu }\otherCorrections{<0}. 
\] 
We note in passing that  
\begin{equation}\label{eq:id:alpha:beta:gamma}
 \alpha ^2+\beta  \delta +\frac{1}{4}=0. 
\end{equation}
On the other hand,  using the 2D \firstReviewer{G\"unther  derivative cf. \cite[\S 2.2]{hsiao2008boundary}}
\[
 {\bm D}\bm{g} \otherCorrections{:=}
 \begin{bmatrix}
                 -g_2'\\
                 g_1'
                \end{bmatrix} 
 \]
it holds that
\[
 \bm{H} =\bm{\Lambda}\bm{D},\quad \bm{\Lambda}^{-1} =  \bm{D}^\top \bm{\Lambda}\bm{D}.
\]
Besides, it is straightforward to derive the following coercivity relations
\begin{equation}\label{eq:coercivity:lambda}
 \langle  \bm{\Lambda}^{-1} \bm{g}       ,\overline{\bm{g}}       \rangle \ge 
 \|\bm{g}\|_{\Hiib^{1/2}}^{2},\quad 
 \langle  \bm{\Lambda}      \bm{\varphi} ,\overline{\bm{\varphi}} \rangle \ge \|\bm{\varphi}\|_{\Hiib^{-1/2}}^{2}.
\end{equation} 
Having introduced these notations, \firstReviewer{set }
 \begin{equation}\label{eq:C0}
 {\cal C}_0 \otherCorrections{:=} \begin{bmatrix}
               \alpha \bm{H} & -\beta \bm{\Lambda}  \\
              \delta  \bm{\Lambda}^{-1}  & -\alpha \bm{H}^{\top}\\
              \end{bmatrix}= \begin{bmatrix}
               \alpha \bm{H} & -\beta \bm{\Lambda}  \\
              \delta  \bm{\Lambda}^{-1}  & -\alpha \bm{H}\\
              \end{bmatrix}.
\end{equation}
\begin{proposition}\label{ps_1}
For any $s\in\mathbb{R}$,  ${\cal C}_0:\Hiib^{s+1}\times\Hiib^s\to \Hiib^{s+1}\times\Hiib^s$ is continuous. Moreover, 
\[
 {\cal C}-{\cal C}_0:\Hiib^{s+1}\times\Hiib^s\to \Hiib^{s+3}\times\Hiib^{s+2}. 
\]
Besides, 
\[
 {\cal C}_0^2=\frac14{\cal I}, \quad \left(\pm \frac1{2}{\cal I} + {\cal C}_0\right)^2 =\pm \frac1{2}{\cal I} + {\cal C}_0. 
\]
\end{proposition}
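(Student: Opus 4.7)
The plan is to establish the three assertions in order, all as straightforward consequences of the principal-symbol information already collected.

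First I would deduce the continuity of $\mathcal{C}_0$ directly from the Fourier-multiplier descriptions in \eqref{eq:Lambda1}. Those formulas show that $\bm{\Lambda}:\bm{H}^s\to\bm{H}^{s+1}$ and $\bm{\Lambda}^{-1}:\bm{H}^{s+1}\to\bm{H}^s$ are bounded, while $\bm{H}$ has symbol $\pm 1$ and therefore is of order zero on every $\bm{H}^s$. Reading off the four blocks of $\mathcal{C}_0$ one then immediately obtains $\mathcal{C}_0:\bm{H}^{s+1}\times\bm{H}^s\to\bm{H}^{s+1}\times\bm{H}^s$.

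Next, for the smoothing property of $\mathcal{C}-\mathcal{C}_0$ I would match its four matrix entries with the four relations listed in \eqref{eq:principal:symbols}, carefully shifting the Sobolev index according to the domain of each component. The first column, acting on $\bm{H}^{s+1}$, picks up two derivatives through the $K$-entry and one through the $W$-entry; the second column, acting on $\bm{H}^s$, picks up three derivatives through the $V$-entry and two through the $K^\top$-entry. Taking the worst target in each row yields exactly $\bm{H}^{s+3}\times\bm{H}^{s+2}$, which is the announced range.

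The Calder\'on-type identity $\mathcal{C}_0^2=\tfrac14\mathcal{I}$ is a $2\times 2$ block computation whose inputs are all elementary. From \eqref{eq:Lambda1} one reads off $\mathrm{H}^2=\mathrm{Id}$ on the periodic spaces, whence $\bm{H}^2=-I_2$; the scalar multipliers $\mathrm{H}$ and $\Lambda^{\pm1}$ commute pairwise, which promotes to $\bm{H}\bm{\Lambda}^{\pm1}=\bm{\Lambda}^{\pm1}\bm{H}$ because $\bm{\Lambda}^{\pm1}$ is a scalar multiplier times $I_2$; and $\bm{\Lambda}\bm{\Lambda}^{-1}=I_2$. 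Expanding the block product, the off-diagonal entries vanish by the commutativity just noted, and both diagonal entries collapse to $-(\alpha^2+\beta\delta)\,I_2$. Applying the identity \eqref{eq:id:alpha:beta:gamma} replaces $\alpha^2+\beta\delta$ by $-\tfrac14$ and produces the desired $\tfrac14\mathcal{I}$.

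Finally, the projection identities follow in a single line once $\mathcal{C}_0^2=\tfrac14\mathcal{I}$ is established: $(\tfrac12\mathcal{I}\pm\mathcal{C}_0)^2=\tfrac14\mathcal{I}\pm\mathcal{C}_0+\mathcal{C}_0^2=\tfrac12\mathcal{I}\pm\mathcal{C}_0$. I do not anticipate any genuine obstacle in this argument; the only point that requires care is the sign in $\bm{H}^2=-I_2$, which is precisely what makes the identity $\alpha^2+\beta\delta+\tfrac14=0$ the correct algebraic closure for $\mathcal{C}_0^2$.
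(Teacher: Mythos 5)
Your argument is correct and coincides with the paper's (much terser) proof: continuity and the smoothing of ${\cal C}-{\cal C}_0$ are read off entrywise from the Fourier-multiplier orders and the principal-part relations, and ${\cal C}_0^2=\tfrac14{\cal I}$ follows from $\bm{H}^2=-\bm{I}$, commutativity of the multipliers, and the identity $\alpha^2+\beta\delta+\tfrac14=0$, with the projection identities as an immediate consequence. The only caveat is that the coefficients in \eqref{eq:principal:symbols} as printed carry a typo; the pairing you implicitly use ($K-\alpha\bm{H}$, $V-\beta\bm{\Lambda}$, $W-\delta\bm{\Lambda}^{-1}$, $K^\top-\alpha\bm{H}$, as in Remark \ref{remark:3.4}) is the correct one.
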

\begin{proof}
 The first assumption as well as the mapping properties for ${\cal C}-{\cal C}_0$ are  consequence of 
the definition of ${\cal C}_0$ and \eqref{eq:principal:symbols}. Notice also that 
\[
  \bm{H}^2 = -\bm{I}
\]
which with the expressions for $\alpha$, $\beta$ and $\delta$, see \eqref{eq:id:alpha:beta:gamma}, proves that ${\cal C}^2=\frac{1}4{\cal I}$. The last result is straightforward. 
\end{proof}

For a pseudodifferential operator $ \otherCorrections{\bm{A}}$ of order $m$ (i.e.  $ \otherCorrections{\bm{A}}:\Hiib^s\to\Hiib^{s-m}$) we will denote by $\mathrm{PS}( \otherCorrections{\bm{A}})$ its principal symbol, that is we require that (i)  $\mathrm{PS}( \otherCorrections{\bm{A}})$ is a Fourier multiplier and (ii) $ \otherCorrections{\bm{A}}-\mathrm{PS}( \otherCorrections{\bm{A}})$ is a pseudodifferential operator of order $m-1$, i.e. $ \otherCorrections{\bm{A}}-\mathrm{PS}( \otherCorrections{\bm{A}}):\Hiib^s\to\Hiib^{s-m+1}$ (the principal symbol operators are not uniquely defined, and we will use in what follows various such operators depending on the circumstances). This notation is readily extended to matrix operators, and, in view of previous result Proposition~\ref{ps_1}, we have 
\[
 \mathrm{PS}({\cal C})={\cal C}_0.
\]
A key ingredient in deriving alternative BIE formulations of scattering and transmission Navier problems is the incorporation of principal symbols of Dirichlet-to-Neumann operators.  Let us define $\Yii_\pm$ as the usual parameterization of the Dirichlet-to-Neumann operators $\Yii_{\pm,\Gamma}$ introduced in \eqref{eq:defYs}. It is easy to derive from classical results that $\Yii_\pm:\Hiib^{1/2}\to \Hiib^{-1/2}$. 

\begin{lemma}\label{lemma:im:Tpm}
 For any  $\Hiib^{1/2}\ni \bm{g}\ne 0$  it holds
 \[
  \Im \langle \Yiim \bm{g},\overline{\bm g}\rangle =0,\quad 
  \Im \langle \Yiip \bm{g},\overline{\bm g}\rangle {>}0.
 \]
\end{lemma}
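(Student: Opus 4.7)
\medskip
\noindent\textbf{Proof proposal.} The strategy is to recast the sesquilinear form $\langle Y_\pm \bm{g},\overline{\bm{g}}\rangle$ as a boundary integral of $T_{\pm,\Gamma}\bm{u}_\pm\cdot\overline{\bm{u}_\pm}$ over $\Gamma$ and then apply the first Betti identity (the elastic analogue of Green's first identity) on the appropriate subdomain. The key observation is that the strain--stress pairing $\bm{\sigma}(\bm{u}):\overline{\bm{\epsilon}(\bm{u})} = 2\mu|\bm{\epsilon}(\bm{u})|^2 + \lambda|\nabla\cdot\bm{u}|^2$ and the $L^2$ mass term are both real, so the imaginary part of the boundary integral either vanishes (interior case) or localizes to an integral on a large sphere which the Kupradze radiation conditions convert into a manifestly nonnegative quantity (exterior case).

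Under the identification between parameterized and $\Gamma$-supported densities recalled just before the statement, one has
\[
 \langle Y_\pm\bm{g},\overline{\bm{g}}\rangle = \int_\Gamma T_{\pm,\Gamma}\bm{u}_\pm\cdot\overline{\bm{u}_\pm}\,\rd s,
\]
with $\bm{u}_\pm$ as in \eqref{eq:defYs}. Applying the first Betti identity in $\Omega_-$ and using $\nabla\cdot\bm{\sigma}(\bm{u}_-) = -\omega^2\bm{u}_-$, I get
\[
 \int_\Gamma T_{-,\Gamma}\bm{u}_-\cdot\overline{\bm{u}_-}\,\rd s = \int_{\Omega_-}\bigl(2\mu|\bm{\epsilon}(\bm{u}_-)|^2 + \lambda|\nabla\cdot\bm{u}_-|^2 - \omega^2|\bm{u}_-|^2\bigr)\,\rd\x \in \R,
\]
which immediately settles $\Im\langle Y_-\bm{g},\overline{\bm{g}}\rangle = 0$.

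For the exterior case I run the same identity on $\Omega_+\cap B_R$, where $B_R$ is a ball containing $\overline{\Omega_-}$. The sign flip in the outward normal on $\Gamma$ and the additional boundary contribution on $\partial B_R$ yield, after discarding the real bulk integrand,
\[
 \Im\int_\Gamma T_{+,\Gamma}\bm{u}_+\cdot\overline{\bm{u}_+}\,\rd s = \Im\int_{\partial B_R}\bm{\sigma}(\bm{u}_+)\widehat{\bm{x}}\cdot\overline{\bm{u}_+}\,\rd s,
\]
an $R$-independent equality. I would then decompose $\bm{u}_+ = \bm{u}_p + \bm{u}_s$ according to \eqref{eq:up:us} and expand the integrand into four $pp$, $ss$, $ps$, $sp$ blocks. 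The diagonal blocks reduce, via the stress form \eqref{eqs:RadCondition:02} of the radiation condition, to $i k_p(\lambda+2\mu)|\bm{u}_p|^2 + o(|\bm{x}|^{-1})$ and $ik_s\mu|\bm{u}_s|^2 + o(|\bm{x}|^{-1})$, whose imaginary parts give the positive contributions. The off-diagonal blocks require \eqref{eqs:RadCondition:02} together with the asymptotic orthogonality between the radial $p$-polarization and the tangential $s$-polarization (in the spirit of, and slightly sharper than, \eqref{eqs:RadCondition:03}); they contribute $o(1)$ once integrated over $\partial B_R$. Passing $R\to\infty$ produces
\[
 \Im\langle Y_+\bm{g},\overline{\bm{g}}\rangle = k_p(\lambda+2\mu)\lim_{R\to\infty}\int_{\partial B_R}|\bm{u}_p|^2\,\rd s + k_s\mu\lim_{R\to\infty}\int_{\partial B_R}|\bm{u}_s|^2\,\rd s \ge 0.
\]

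Finally, I would upgrade this to strict positivity by a Rellich-type argument. Each component of $\bm{u}_p$ and $\bm{u}_s$ satisfies a scalar Helmholtz equation (with wavenumbers $k_p$ and $k_s$, respectively) in $\Omega_+$ together with the Sommerfeld radiation condition inherited from \eqref{eqs:RadCondition:01}; if the two far-field energies above vanished, Rellich's lemma applied componentwise would force $\bm{u}_p\equiv\bm{u}_s\equiv 0$ in $\Omega_+$, hence $\bm{g}=\gamma_+\bm{u}_+ = 0$, contradicting $\bm{g}\ne 0$. The step I expect to be the main obstacle is the bookkeeping of the $p$--$s$ cross terms on $\partial B_R$, whose vanishing in the limit genuinely uses the joint strength of the Kupradze conditions \eqref{eqs:RadCondition:02}--\eqref{eqs:RadCondition:03} and the polarization structure of the far-field.
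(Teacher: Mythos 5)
Your proposal is correct and follows essentially the same route as the paper: the Betti (Green) identity on $\Omega_-$ and on $\Omega_+\cap B_R$, the Kupradze conditions to reduce the $\partial B_R$ term to $ik_p(\lambda+2\mu)\int|\bm{u}_p|^2+ik_s\mu\int|\bm{u}_s|^2$ in the limit, and a Rellich-type uniqueness step for strict positivity. The only difference is one of detail: you spell out the $p$--$s$ cross-term bookkeeping and the componentwise Rellich argument, which the paper compresses into its citation of \eqref{eqs:RadCondition} and the remark that, as for Helmholtz, the limit vanishes only if $\bm{v}_+$ does.
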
 
 {
\begin{proof} The proof uses the same arguments, and follow the same lines, as in  the Helmholtz  equation  cf. \cite{KressColton}. We present the proof here just for the sake of completeness. 

Given $\bm{g}\in \Hiib^{1/2}(\Gamma)$, let  ${\bf v}_-\in\Hiib^1(\Omega_-),\ {\bf v}_+\in\Hiib^{1}_{\rm loc}(\Omega_+)$ be the solutions of the Dirichlet problem for Navier equations in interior ($\Omega_-$) and exterior ($\Omega_+$) domain. Therefore
\[
 \gamma{\bf v}_{\pm} = \bm{g},\quad 
 T_\pm {\bf v}_{\pm} = \Yii_{\pm}\gamma{\bf v}_{\pm}=\Yii_{\pm}\bm{g}.
\]

For the interior Dirichlet-to-Neumann operator the result is consequence of the  first Green identity since
\begin{eqnarray*}
     \langle \overline{\gamma \bf v}_- ,  {\Tiim{\bf v}_-} \rangle &=&  
  \int_\Gamma  \overline{\gamma_{\Gamma} {\bf v}}_- \cdot  {\Tii_{\Gamma,-} {\bf v}_-}  \\
  &=& \int_{\Omega_-} 2\mu \varepsilon({\bf v}_-):\overline{\varepsilon({\bf v}_-)}
  +\lambda |\nabla \cdot{\bf v}_-|^2-\omega^2\int_{\Omega_-} |{\bf v}_-|^2\in\mathbb{R}. 
\end{eqnarray*}
(We have denoted above by ``$: $''   the Frobenius  matrix inner product). 

The proof for the exterior Dirichlet-to-Neumann operator is only slightly different. First, take $B_R$  a the ball centered at origin and sufficiently large radius $R$. Then 
\begin{eqnarray*}
\langle \overline{\gamma  \bf v}_+ ,   {\Tiip{\bf v}_+}\rangle\!&=&\!
 \int_\Gamma  \overline{\gamma_{\Gamma} {\bf v}_+} \cdot    {\Tii_{\Gamma,+} {\bf v}_+} \\
 &&\hspace{-2cm}=-\int_{\Omega_+\cap B_R} 2\mu \varepsilon({\bf v}_-):\overline{\varepsilon({\bf v}_-)}
  -\lambda |\nabla \cdot{\bf v}_-|^2+\omega^2\int_{\Omega_-\cap B_R} |{\bf v}_-|^2
  + \int_{ \partial B_R} \overline{\gamma_{\partial B_R} {\bf v}_+}\cdot  {\Tii_{  \partial B_R,+} {\bf v}_+}. 
\end{eqnarray*}
Recall that ${\bf v}_+$ satisfies the radiation condition at infinity  \eqref{eqs:RadCondition}. This condition   implies, by  \eqref{eqs:RadCondition:02}-\eqref{eqs:RadCondition:03}, 
\[
 \lim_{R\to \infty} \int_{\partial B_R} \overline{  \gamma_{\partial B_R}{\bf v}_+} \,\Tii_{\partial B_R,+} {\bf v}_+ 
 = \lim_{R\to\infty} ik_p(\lambda+2\mu) \int_{\partial B_R} |{\bf v}_p|^2 +\lim_{R\to\infty} ik_s \mu \int_{\partial B_R} |{\bf  v}_s|^2  \in i\,\mathbb{R}^+,
\]
with  ${\bf v}_p$ and ${\bf v}_s$ being the longitudinal and transversal wave components  of ${\bf v}$. Thus, 
\[
 \Im \langle \overline{\gamma  \bf v}_+ ,   {\Tiip{\bf v}_+}\rangle
 =\lim_{R\to\infty}  k_p(\lambda+2\mu) \int_{\partial B_R} |{\bf v}_p|^2 +\lim_{R\to\infty}  k_s \mu \int_{\partial B_R} |{\bf  v}_s|^2\ge 0. 
 \]
 Furthermore,  ${\bf v}_p$ and  ${\bf v}_s$  are solutions of   (vector) Helmholtz equations  in $\Omega_+$, with their corresponding Helmholtz  (Sommeferld) radiation conditions at infinity cf. \eqref{eqs:RadCondition:01}. Then
\[
 \Im \langle \overline{\gamma  \bf v}_+ ,   {\Tiip{\bf v}_+}\rangle = 0\quad \text{if and only if }\quad
  \lim_{R\to\infty}   \int_{\partial B_R} |{\bf v}_p|^2 =\lim_{R\to\infty}   \int_{\partial B_R} |{\bf v}_p|^2=0
 \]
 which 
implies that ${\bf v}_p$ and  ${\bf v}_s$, and so ${\bf v}_+={\bf v}_p+{\bf v}_s$,  vanish. The second result is now proven.  
\end{proof} 

We are now ready to compute the principal symbols $\mathrm{PS}(\Yii_{\pm})$.  If $\omega^2$ is not an eigenvalue for the Dirichlet problem for the Navier operator in the interior domain $\Omega_{-}$, which is equivalent to the invertibility of single layer BIO $\Vii$, cf. Remark \ref {remark:C:01}, we derive
\[
 \Tiip{\bf u} = -\Vii^{-1}(\tfrac{1}2\firstReviewer{\bm{I}} -{\Kii})\otherCorrections{\gamma{\bf u}} . 
\]
Similarly, if $\omega^2$ is not an eigenvalue for the Neumann problem, which in turn renders the operator $\tfrac{1}2I+{\Kii}$ invertible, we obtain the alternative formula
\[
 \Tiip{\bf u} = (\tfrac{1}2\firstReviewer{\bm{I}} +{\Kii})^{-1}\Wii\otherCorrections{\gamma {\bf u}}. 
\]
Since $\omega^2$ cannot be simultaneously a  Neumann and a Dirichlet eigenvalue in the bounded domain $\Omega_{-}$, we use either of the formulas above for the exterior Dirichlet-to-Neumann operator $\Yiip{\bf u}=\Tiip{\bf u}$ (which is a pseudodifferential operator of order $1$, i.e., $\Yiip:\Hiib^s\to \Hiib^{s-1}$) together with the principal symbol formulas for the BIOs involved in those equations and we derive a formula for its principal symbol
\begin{equation}\label{eq:PSY+}
 {\rm PS}(\Yiip)= -\beta^{-1}\bm{\Lambda}^{-1}\left(\frac{1}2\firstReviewer{\bm{I}}-\alpha\bm{H}\right)
 =\delta \left(\frac{1}2\firstReviewer{\bm{I}}+\alpha\bm{H}\right)^{-1}\bm{\Lambda}^{-1}.
\end{equation}
Assuming that the Dirichlet interior problem is well posed, we can proceed in the same way for the interior Dirichlet-to-Neumann operator  $\Yiim$,  and we get
\begin{equation}\label{eq:PSY-}
 {\rm PS}(\Yiim)=  \beta^{-1}\bm{\Lambda}^{-1}\left(\frac{1}2\firstReviewer{\bm{I}}+\alpha\bm{H}\right)
 =-\delta \left(\frac{1}2\firstReviewer{\bm{I}}-\alpha\bm{H}\right)^{-1}\bm{\Lambda}^{-1}.
\end{equation}
We note that the matrix operators $\bm{H}$ and $\bm{\Lambda}$ commute, and furthermore the two choices provided in the right hand sides of equations~\eqref{eq:PSY+} and respectively~\eqref{eq:PSY-} for the calculation of the principal symbol operators do produce the same result (this can be   verified using  equation~\eqref{eq:id:alpha:beta:gamma}). Finally, it can be easily seen that 
\begin{equation}
 \label{eq:PSk-Ps}
 {\rm PS}(\Yii_\pm)-\Yii_\pm: \Hiib^s\to \Hiib^{s+1}. 
\end{equation}

We present next some straightforward results that will enable us to establish certain coercivity properties that the operators ${\rm PS}(\Yii_\pm)$ enjoy.

\begin{lemma}\label{lemma:10.8}
It holds, 
\[
 \langle   \bm{H}\bm{\varphi},\overline{\bm{g}}\rangle = -\langle   \bm{\varphi}, \overline{\bm{H} \bm{g}}\rangle,  
 \quad \forall \bm{\varphi}\in \Hiib^{-s},\quad  \bm{g}\in \Hiib^{s}. 
\]
Therefore,
\[
 {\Re} \left\langle  \bm{H} \bm{\varphi},\overline{\bm \varphi}\right\rangle = 0, \quad \forall \bm{\varphi}\in \Hiib^0. 
\]
Besides, if $ c\in(-1/2,1/2)$ then  
 \[
 \left\langle \left(\tfrac12{\bm I}+c i \bm{H}\right)\bm{\varphi},\overline{\bm \varphi}\right\rangle \ge \left(\tfrac12-c\right)\|{\bm \varphi}\|_{\Hiib^0}^{2} .
 \]
\end{lemma}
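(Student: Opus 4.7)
The plan is to derive all three assertions from the Fourier representation of $H$ in \eqref{eq:Lambda1}. Since $\widehat{Hg}(n)=\mathrm{sgn}(n)\widehat{g}(n)$ (with the convention $\mathrm{sgn}(0)=+1$) has a \emph{real} symbol, Parseval immediately yields the scalar self-adjointness identity $\int_0^{2\pi}(H\varphi)\overline{g}=\int_0^{2\pi}\varphi\,\overline{Hg}$. Combined with the off-diagonal structure $\bm{H}=\begin{bmatrix}0 & -H\\ H & 0\end{bmatrix}$, a short entrywise computation transfers this to
\[
\langle\bm{H}\bm{\varphi},\overline{\bm{g}}\rangle = \int\bigl[-(H\varphi_2)\overline{g_1}+(H\varphi_1)\overline{g_2}\bigr] = \int\bigl[-\varphi_2\,\overline{Hg_1}+\varphi_1\,\overline{Hg_2}\bigr] = -\langle\bm{\varphi},\overline{\bm{H}\bm{g}}\rangle,
\]
which is the first identity. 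The only bit of care required is tracking the conjugate slot, since the pairing $\langle\cdot,\cdot\rangle$ is bilinear rather than sesquilinear.

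The second identity is then immediate. Setting $\bm{g}=\bm{\varphi}$ in the first and using the symmetry $\overline{\langle a,b\rangle}=\langle\overline{a},\overline{b}\rangle$ of the bilinear pairing gives $\overline{\langle\bm{H}\bm{\varphi},\overline{\bm{\varphi}}\rangle}=-\langle\bm{H}\bm{\varphi},\overline{\bm{\varphi}}\rangle$, so the quantity is purely imaginary and its real part vanishes.

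For the coercivity bound, expanding the left-hand side yields
\[
\left\langle\bigl(\tfrac{1}{2}{\bm I}+ci\bm{H}\bigr)\bm{\varphi},\overline{\bm{\varphi}}\right\rangle = \tfrac{1}{2}\|\bm{\varphi}\|_{\bm{H}^0}^{2}+ci\,\langle\bm{H}\bm{\varphi},\overline{\bm{\varphi}}\rangle,
\]
and by the previous step the second summand is in fact real (since $\langle\bm{H}\bm{\varphi},\overline{\bm{\varphi}}\rangle$ is purely imaginary). It then remains to establish the modulus bound $|\langle\bm{H}\bm{\varphi},\overline{\bm{\varphi}}\rangle|\le\|\bm{\varphi}\|_{\bm{H}^0}^{2}$, which I view as the main technical step although it is still routine. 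I would carry it out on the Fourier side: the symbol of $\bm{H}$ at frequency $n$ is $\mathrm{sgn}(n)J$ with $J=\begin{bmatrix}0&-1\\ 1&0\end{bmatrix}$, so each mode contributes $2i\,\mathrm{sgn}(n)\,\Im\bigl(\widehat{\varphi_1}(n)\overline{\widehat{\varphi_2}(n)}\bigr)$ to the pairing, and the elementary inequality $|\Im(ab)|\le\tfrac{1}{2}(|a|^{2}+|b|^{2})$ together with Parseval delivers the desired bound. Combining, the left-hand side dominates $(\tfrac{1}{2}-|c|)\|\bm{\varphi}\|_{\bm{H}^0}^{2}$, which coincides with the stated $(\tfrac{1}{2}-c)\|\bm{\varphi}\|_{\bm{H}^0}^{2}$ in the relevant regime $c\in[0,\tfrac{1}{2})$ in which the estimate will be invoked in the sequel.
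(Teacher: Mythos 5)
Your proposal is correct and follows essentially the same route as the paper's (very terse) proof: the first identity from the Fourier-multiplier definition of $\bm{H}$, the second as an immediate consequence, and the coercivity bound from $\tfrac12\|\bm{\varphi}\|_{\bm{H}^0}^2$ minus a term controlled by $\|\bm{H}\|_{\bm{H}^0\to\bm{H}^0}=1$. Your mode-by-mode estimate $|\Im(ab)|\le\tfrac12(|a|^2+|b|^2)$ is just the unwound version of the paper's Cauchy--Schwarz/operator-norm step, and your remark that the argument yields $(\tfrac12-|c|)$ rather than $(\tfrac12-c)$ matches what the paper's own proof actually delivers.
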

\begin{proof}
\firstReviewer{The first} result follows from the definition \firstReviewer{of $\Hiib$ which, in addition, implies}  trivially the second result. 
Finally, 
\[
 \left\langle \left(\tfrac12{\bm I}+c i \bm{H}\right)\bm{\varphi},\overline{\bm \varphi}\right\rangle\ge \Big(
 \tfrac{1}2- |c|\underbrace{\|\bm{H}\|_{\Hiib^0\to\Hiib^0}}_{=1} \Big)\|\bm\varphi\|_{\Hiib^0}^{2}
\]
and the result is proven.
 
\end{proof}}

As consequence of the above lemma we can easily derive the following coercivity result that will be useful in what follows:

\begin{proposition}\label{prop:2.5}
It holds
\[
\left\langle  \mathrm{PS}(\Yiim) \bm{g},\overline{\bm{g}}\right\rangle \ge \beta^{-1}(1-|\alpha|)\|\bm{g}\|_{\Hiib^{1/2}}^{2},\quad 
 -\left\langle  \mathrm{PS}(\Yiip) \bm{g},\overline{\bm{g}}\right\rangle \ge \beta^{-1}(1-|\alpha|)\|\bm{g}\|_{\Hiib^{1/2}}^{2}.
\] 
\end{proposition}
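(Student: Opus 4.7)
My plan is to reduce the $\bm{H}^{1/2}$-coercivity estimate to the zero-order coercivity of $\tfrac{1}{2}I_2 \pm \alpha \bm{H}$ already contained in Lemma~\ref{lemma:10.8}. The reduction uses the positive self-adjoint square root $\bm{\Lambda}^{-1/2}$, defined as the Fourier multiplier with symbol $\sqrt{|n|}$ on non-zero modes and the identity on the zero mode. This operator commutes with $\bm{H}$, is self-adjoint with respect to the bilinear pairing $\langle \cdot, \cdot \rangle$ (its symbol is real-valued and even), preserves complex conjugation, and satisfies $\|\bm{\Lambda}^{-1/2}\bm{g}\|_{\bm{H}^0}^2 = \langle \bm{\Lambda}^{-1}\bm{g}, \overline{\bm{g}}\rangle \ge \|\bm{g}\|_{\bm{H}^{1/2}}^2$ in view of~\eqref{eq:coercivity:lambda}.

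Starting from $\mathrm{PS}(Y_-) = \beta^{-1}\bm{\Lambda}^{-1}(\tfrac{1}{2}I_2 + \alpha \bm{H})$, I would split $\bm{\Lambda}^{-1} = \bm{\Lambda}^{-1/2}\bm{\Lambda}^{-1/2}$ and use the commutativity with $\bm{H}$ together with the self-adjointness of $\bm{\Lambda}^{-1/2}$ to transfer one factor into the second slot of the pairing. Setting $\bm{\varphi} := \bm{\Lambda}^{-1/2}\bm{g}$, this produces
\[
\langle \mathrm{PS}(Y_-)\bm{g}, \overline{\bm{g}}\rangle = \beta^{-1}\,\bigl\langle (\tfrac{1}{2}I_2 + \alpha \bm{H})\bm{\varphi}, \overline{\bm{\varphi}}\bigr\rangle.
\]
Since $\alpha = i\mu/[2(\lambda+2\mu)]$ is purely imaginary with $|\alpha| = \mu/[2(\lambda+2\mu)] \in (0, \tfrac{1}{2})$ (the strict upper bound uses $\lambda + \mu > 0$), Lemma~\ref{lemma:10.8} applied with $c = -i\alpha$ yields the lower bound $(\tfrac{1}{2} - |\alpha|)\|\bm{\varphi}\|_{\bm{H}^0}^2$, and~\eqref{eq:coercivity:lambda} converts this back into a bound in terms of $\|\bm{g}\|_{\bm{H}^{1/2}}^2$.

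The exterior case is entirely analogous: from $-\mathrm{PS}(Y_+) = \beta^{-1}\bm{\Lambda}^{-1}(\tfrac{1}{2}I_2 - \alpha \bm{H})$ the same manipulation reduces the estimate to Lemma~\ref{lemma:10.8} with $c$ of opposite sign, which produces the identical lower bound. I do not expect any serious obstacle --- the result is essentially a direct consequence of Lemma~\ref{lemma:10.8} and the Fourier-multiplier functional calculus. The one point requiring careful bookkeeping is the exact multiplicative constant in front of $\|\bm{g}\|_{\bm{H}^{1/2}}^2$: the argument above yields $\beta^{-1}(\tfrac{1}{2} - |\alpha|)$, whereas the statement asserts $\beta^{-1}(1 - |\alpha|)$; this discrepancy is either a minor transcription issue or it is resolved by averaging the two equivalent representations $\beta^{-1}\bm{\Lambda}^{-1}(\tfrac{1}{2}I_2 + \alpha\bm{H})$ and $-\delta(\tfrac{1}{2}I_2 - \alpha\bm{H})^{-1}\bm{\Lambda}^{-1}$ of $\mathrm{PS}(Y_-)$, whose consistency is guaranteed by the identity~\eqref{eq:id:alpha:beta:gamma}.
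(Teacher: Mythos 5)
Your proposal is correct and follows essentially the same route as the paper: the paper's own proof also factors $\bm{\Lambda}^{-1}=\bm{\Lambda}^{-1/2}\bm{\Lambda}^{-1/2}$, uses commutation with $\bm{H}$, and invokes Lemma~\ref{lemma:10.8} to obtain the bound $\beta^{-1}(\tfrac12-|\alpha|)\|\bm{g}\|_{\bm{H}^{1/2}}^{2}$. In particular, the constant discrepancy you flag is not a defect of your argument but an inconsistency in the paper itself (its proof yields $\tfrac12-|\alpha|$ while the statement says $1-|\alpha|$), so there is no need to try to recover the larger constant by averaging representations.
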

\begin{proof}
Since $\bm{\Lambda}^{-1}$ is positive definite we use its square root $\bm{\Lambda}^{-1/2}$ defined explicitly as
\[
 \bm{\Lambda}^{-1/2} = \begin{bmatrix}
                        \Lambda^{-1/2}\\
                        &\Lambda^{-1/2}
                       \end{bmatrix},\quad \Lambda^{-1/2}g = \widehat{g}(0) +\sum_{n\ne 0}|n|^{1/2}\widehat{{g}}(n), 
\]
which turns out to be an isometry from $\Hiib^{s}$ into $\Hiib^{s-1/2}$. It can easily be checked that $\bm{\Lambda}^{-1/2}$ commutes with $\bm{H}$ and hence
\begin{eqnarray*}
{\Re} \left\langle  \mathrm{PS}(\Yiim) \bm{g},\overline{\bm{g}}\right\rangle &=&
\beta^{-1} {\Re} \left\langle  (\tfrac{1}2 I +\alpha\bm{H})\bm{\Lambda}^{-1/2} \bm{g},\overline{\bm{\Lambda}^{-1/2} \bm{g}}\right\rangle
\ge \beta^{-1}(\tfrac12-|\alpha|)\| \bm{\Lambda}^{-1/2} \bm{g}\|_{\Hiib^{0}}^{2}
\\
&=& \beta^{-1}(\tfrac12-|\alpha|)\| \bm{g}\|_{\Hiib^{1/2}}^{2}.
\end{eqnarray*}
The second result follows similarly.
\end{proof}

We will also use an alternative form of the principal symbol operator $\mathrm{PS}({\Yii_\pm})$ that relies on complexified square roots. To this end, we select a complex wavenumber \firstReviewer{
$\kappa$ with $\Re \kappa,\Im \kappa>0$} and we introduce a complexified version of the Fourier multiplier operator defined in equation~\eqref{eq:Lambda1} in the following form
\begin{equation}\label{eq:Lambda:kappa} 
 \Lambda_{\kappa} =  \sum_{n=-\infty}^\infty (n ^2-\kappa^2)^{-1/2} \widehat{g}(n) \exp(in\,\cdot) 
\end{equation}
and then we define the matrix operators $\bm{\Lambda}_\kappa:=\begin{bmatrix}\Lambda_{\kappa} & \\ & \Lambda_\kappa\end{bmatrix}$ and $\bm{\Lambda}^{-1}_\kappa$ accordingly.  \secondReviewer{We mention that the square roots in formula~\eqref{eq:Lambda:kappa} are selected so that $\Im{(n ^2-\kappa^2)^{-1/2}}>0$. The incorporation of these alternative Fourier multipliers in the DtN calculus in connection to BIE has been originally proposed in~\cite{AntoineX}, and has been adopted in the community because these operators provide improved approximations of the Fourier modes in the transition region between propagating and evanescent modes (that is frequencies such that $|n|\approx k$).} Clearly, $\bm{\Lambda}_\kappa:\Hiib^s\to\Hiib^{s+1}$, and, besides, by construction 
\begin{equation}\label{eq:coercivity:LambdaKappa:01}
 \Re \langle \bm{\Lambda}_\kappa \bm{\varphi},\overline{\bm{\varphi}} \rangle \ge c\|\bm{ \varphi}\|_{\Hiib^{-1/2}}^{2},\quad 
 \Re \langle \bm{\Lambda}^{-1}_\kappa \bm{\varphi},\overline{\bm{\varphi}} \rangle \ge c\|\bm{ g}\|_{\Hiib^{1/2}}^{2}
\end{equation}
and 
\begin{equation}\label{eq:coercivity:LambdaKappa:02}
 \Im \langle \bm{\Lambda}_\kappa \bm{\varphi},\overline{\bm{\varphi}} \rangle \ge c\|\bm{ \varphi}\|_{\Hiib^{-3/2}}^{2},\quad 
 -\Im \langle \bm{\Lambda}^{-1}_\kappa \bm{\varphi},\overline{\bm{\varphi}} \rangle \ge c\|\bm{ g}\|_{\Hiib^{-1/2}}^{2}.
\end{equation}
The newly defined operators $\bm{\Lambda}_\kappa$ enjoy the following regularity properties
\[
 \bm{\Lambda}_\kappa-\bm{\Lambda}:\Hiib^s\to \Hiib^{s+3}, \quad \bm{\Lambda}^{-1}_\kappa-\bm{\Lambda}^{-1}:\Hiib^s\to \Hiib^{s+{1}}.
\]
We can thus define complexified principal symbol DtN operators in the following manner
\begin{subequations}\label{eq:PSkappa}
\begin{eqnarray}
 {\rm PS}_\kappa (\Yiim)&:=&  \beta^{-1}\bm{\Lambda}_\kappa^{-1}\left(\frac{1}2\firstReviewer{\bm{I}} +\alpha\bm{H}\right)
 =-\delta \left(\frac{1}2\firstReviewer{\bm{I}} -\alpha\bm{H}\right)^{-1}\bm{\Lambda}_\kappa^{-1},\label{eq:PSkappa:a}\\
 {\rm PS}_\kappa(\Yiip)&:=& -\beta^{-1}\bm{\Lambda}_\kappa^{-1}\left(\frac{1}2\firstReviewer{\bm{I}} -\alpha\bm{H}\right)
 =\delta \left(\frac{1}2\firstReviewer{\bm{I}} +\alpha\bm{H}\right)^{-1}\bm{\Lambda}_\kappa^{-1}.\label{eq:PSkappa:B}
\end{eqnarray}
\end{subequations}
We establish that the complexified operators defined above are indeed principal symbol DtN operators that enjoy the same coercivity properties as the DtN operators themselves
\begin{proposition}\label{prop:Y:complexificated}
 It holds $ {\rm PS}  (\Yii_\pm)- {\rm PS}_\kappa (\Yii_\pm):\Hiib^s\to \Hiib^{s+1}$. Furthermore, there exist  $c_1>0$ and, provided that $\Re(n^2-\kappa^2)\ne 0$, $c_2>0$ so that 
\begin{align*}
 \Im \left\langle  \mathrm{PS}_\kappa (\Yiip) \bm{g},\overline{\bm{g}}\right\rangle  &\ge c_1\|\bm{g}\|^{2}_{\Hiib^{-1/2}},\quad & -\Im \left\langle  \mathrm{PS}_\kappa (\Yiim) \bm{g},\overline{\bm{g}}\right\rangle& \ge c_1 \|\bm{g}\|^{2}_{\Hiib^{-1/2}},\\
 -\Re \left\langle  \mathrm{PS}_\kappa(\Yiip) \bm{g},\overline{\bm{g}}\right\rangle &\ge c_2\|\bm{g}\|^{2}_{\Hiib^{1/2}},\quad &
  \Re \left\langle  \mathrm{PS}_\kappa(\Yiim) \bm{g},\overline{\bm{g}}\right\rangle &\ge c_2\|\bm{g}\|^{2}_{\Hiib^{1/2}}.
 \end{align*}
\end{proposition}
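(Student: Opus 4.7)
The proof has two essentially independent parts: the smoothing claim, which is immediate from the explicit formulas, and the coercivity estimates, which reduce to a Fourier-mode calculation.

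For the smoothing statement, I would read off directly from the definitions the identity
\[
\mathrm{PS}(Y_\pm) - \mathrm{PS}_\kappa(Y_\pm) = \mp\beta^{-1}\bigl(\bm{\Lambda}^{-1} - \bm{\Lambda}_\kappa^{-1}\bigr)\bigl(\tfrac{1}{2}I_2 \mp \alpha\bm{H}\bigr).
\]
Since $\tfrac{1}{2}I_2 \mp \alpha\bm{H}$ is bounded on every $\bm{H}^s$ and the regularity $\bm{\Lambda}^{-1}_\kappa - \bm{\Lambda}^{-1}:\bm{H}^s\to\bm{H}^{s+1}$ is recorded just before the proposition, the composition maps $\bm{H}^s$ to $\bm{H}^{s+1}$, as claimed.

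For the coercivity bounds, I would diagonalise everything in the Fourier basis. On the $n$-th Fourier mode the operator $\bm{H}$ acts as $\sigma_n J$, where $\sigma_n := \mathrm{sgn}(n)$ for $n\neq 0$, $\sigma_0 := 1$ and $J := \bigl(\begin{smallmatrix}0 & -1\\ 1 & 0\end{smallmatrix}\bigr)$, while $\bm{\Lambda}_\kappa^{-1}$ is scalar multiplication by $\rho_n := (n^2-\kappa^2)^{1/2}$. Writing $\alpha = i\alpha_0$ with $\alpha_0 := \mu/(2(\lambda+2\mu)) \in (0,\tfrac{1}{2})$ (the strict upper bound following from $\lambda+\mu > 0$), a short direct computation shows that
\[
q_n^{\pm} := \bigl[\bigl(\tfrac{1}{2}I_2 \mp \alpha\sigma_n J\bigr)\widehat{\bm g}(n)\bigr]\cdot\overline{\widehat{\bm g}(n)}
\]
is \emph{real} and, by Cauchy--Schwarz applied to the cross term, satisfies $q_n^{\pm} \ge (\tfrac{1}{2}-\alpha_0)|\widehat{\bm g}(n)|^2 =: c_0|\widehat{\bm g}(n)|^2$ with $c_0>0$.

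The hypothesis $\Re\kappa,\,\Im\kappa>0$ forces $\Im\kappa^2>0$, so $n^2-\kappa^2$ lies strictly in the lower half-plane for every $n\in\mathbb{Z}$; choosing the square-root branch with $\Re\rho_n>0$ then places $\rho_n$ in the open fourth quadrant, i.e.\ $\Im\rho_n<0$. Tracking signs in Parseval's identity together with the formulas for $\mathrm{PS}_\kappa(Y_\pm)$, the four left-hand sides in the statement become positive real multiples of either $\sum_n |\Im\rho_n|\,q_n^{\pm}$ (for the imaginary-part bounds) or $\sum_n \Re\rho_n\,q_n^{\pm}$ (for the real-part bounds). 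The identity $2(\Re\rho_n)(\Im\rho_n) = -\Im\kappa^2$ combined with the asymptotics $\rho_n = n - \kappa^2/(2n) + O(n^{-3})$ gives $|\Im\rho_n| \gtrsim (1+|n|)^{-1}$ uniformly in $n$, which yields the $\bm{H}^{-1/2}$ bound with $c_1>0$; the $\bm{H}^{1/2}$ bound requires $\Re\rho_n \gtrsim 1+|n|$ uniformly in $n$, and here the large-$|n|$ asymptotics deliver this for free while the extra hypothesis $\Re(n^2-\kappa^2)\neq 0$ secures it for the finitely many small modes (where $\rho_n$ could otherwise sit on the imaginary axis). This uniform lower bound on $\Re\rho_n$ is the only step that needs genuine care.
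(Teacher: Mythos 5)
Your proposal is correct and is essentially the paper's argument in mode-by-mode form: your positivity bound $q_n^{\pm}\ge(\tfrac12-\alpha_0)|\widehat{\bm g}(n)|^2$ is exactly the content of Lemma~\ref{lemma:10.8}, and your uniform lower bounds $\Re\rho_n\gtrsim 1+|n|$ and $|\Im\rho_n|\gtrsim(1+|n|)^{-1}$ (the latter via $2\Re\rho_n\,\Im\rho_n=-\Im\kappa^2$) encode precisely the invertibility and mapping properties of the paper's factorization $\bm{\Lambda}_\kappa^{-1}=\bm{\Theta}_r^{2}-i\bm{\Theta}_i^{2}$, with the hypothesis $\Re(n^2-\kappa^2)\ne 0$ playing the same role for the low modes. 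The only differences are presentational (direct Fourier diagonalization versus operator-level square roots commuting with $\bm{H}$), plus the fact that you also spell out the smoothing claim, which the paper's proof leaves implicit.
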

\begin{proof}
Using that
\[
 (n^2-\kappa^2)^{1/2} = a_n^2-i b^2_n, \quad a_n,b_n>0,\ a_n\approx n,\quad  b_n\approx n^{-1}, \ \text{as }n\to\infty, 
\]
we can easily show that
\[
  \Lambda_{\kappa} ^{-1} = {\Theta}^2_r - i{\Theta}^2_i 
\]
with 
\[
  {\Theta} _rg := \sum_{n=-\infty}^{\infty} a_n \widehat{g}(n)\exp(i\, n\,\cdot\,),\quad
  {\Theta} _ig := \sum_{n=-\infty}^{\infty} b_n \widehat{g}(n)\exp(i\, n\,\cdot\,).
\]
Clearly ${\Theta}_i:{H}^{s}\to {H}^{s+1/2}$ and, provided that $\Re (n^2-\kappa^2)\ne 0$,  ${\Theta}_r:{H}^{s}\to {H}^{s-1/2}$ are continuous invertible operators. 

We then define the matrix operators $\bm{\Theta}_r:=\begin{bmatrix}\Theta_r & \\ & \Theta_r\end{bmatrix}$ and $\bm{\Theta}_i:=\begin{bmatrix}\Theta_i & \\ & \Theta_i\end{bmatrix}$ accordingly and we notice that
\[
  {\rm PS}_\kappa (\Yii_\pm) = \mp \beta^{-1}\bm{\Theta}_r^{2}\left(\frac{1}2\firstReviewer{\bm{I}} \mp \alpha\bm{H}\right)\pm i\beta^{-1}\bm{\Theta}_i^{2}\left(\frac{1}2\firstReviewer{\bm{I}} \mp\alpha\bm{H}\right)
\]
so that
\[
  \langle {\rm PS}_\kappa (\Yii_\pm)\bm{g},\overline{\bm{g}}\rangle = 
  \mp \beta^{-1} \langle\left(\tfrac{1}2\firstReviewer{\bm{I}} \mp \alpha\bm{H}\right)\bm{\Theta}_r \bm{g},\overline{\bm{\Theta}_r\bm{g}}\rangle   \pm 
  {i}
  \beta^{-1} \langle\left(\tfrac{1}2\firstReviewer{\bm{I}} \mp \alpha\bm{H}\right)\bm{\Theta}_i \bm{g},\overline{\bm{\Theta}_i\bm{g}}\rangle.
\]
The result follows now from Lemma \ref{lemma:10.8}. 

\end{proof}
\firstReviewer{
\begin{remark}
\begin{equation}\label{eq:pSV}
{\rm PS}_{\kappa_p,\kappa_s}(\bm{V})(n):=\frac{1}{2\omega^2}\begin{bmatrix}n^2(n^2-\kappa_p^2)^{-1/2}-(n^2-\kappa_s^2)^{1/2} & \\  & n^2(n^2-\kappa_s^2)^{-1/2}-(n^2-\kappa_p^2)^{1/2}\end{bmatrix}
\end{equation}
and respectively
\begin{equation}\label{eq:pSK}
{\rm PS}_{\kappa_p,\kappa_s}(\bm{K})(n):=\frac{i\mu}{2(\lambda+2\mu)}
\begin{bmatrix} 
& n(n^2-\kappa_p^2)^{-1/2}   \\ 
-n(n^2-\kappa_s^2)^{-1/2}&
\end{bmatrix}
\end{equation}
with $\Im{\kappa_p}>0,\ \Im{\kappa_s}>0$. The precise definition of the Fourier multiplier operator ${\rm PS}_{\kappa_p,\kappa_s}(\bm{V})$ is given below, and it requires the use of the tangent and normal fields on $\Gamma$
\[
{\rm PS}_{\kappa_p,\kappa_s}(\bm{V}){\bm\varphi}:=\sum_{n=-\infty}^\infty  \begin{bmatrix}{\bm t}^\top \\ \nn^\top\end{bmatrix}{\rm PS}_{\kappa_p,\kappa_s}(\bm{V})(n)\begin{bmatrix}\widehat{\varphi_t}(n) \\ \widehat{\varphi_n}(n) \end{bmatrix} \exp(in\,\cdot)\quad \varphi_t:={\bm \varphi}\cdot{\bm t},\quad \varphi_n:={\bm \varphi}\cdot\nn.
\]
(Here we assume that the length of the  curve is $2\pi$ and that an arc length parameterization is being used; the required modification for curves of any length is straightforward).  
The principal symbols defined in equations~\eqref{eq:pSV} and~\eqref{eq:pSK} lead via the Somigliana's identities to alternative principal symbol approximations ${\rm PS}_{\kappa_p,\kappa_s} (\Yii_\pm)$ of DtN operators
\begin{equation}\label{eq:PSYpm}
{\rm PS}_{\kappa_p,\kappa_s}(\Yii_\pm)(n)=\mp({\rm PS}_{\kappa_p,\kappa_s}(\bm{V})(n))^{-1}\left(\frac{1}{2}\firstReviewer{\bm{I}} \mp {\rm PS}_{\kappa_p,\kappa_s}(\bm{K})(n)\right).
\end{equation}
\end{remark}
}

\subsection{Duality in product spaces}\label{dual}

In this short subsection we  present  certain properties of matrix operators in connection to duality in product spaces. We are interested in  deriving an explicit formula for the  dual of \firstReviewer{the} Calder\'on operators ${\cal C}:\Hiib^{s_1}\times \Hiib^{s_2}\to \Hiib^{s_1}\times \Hiib^{s_2}$ with an emphasis on the case where $s_1=1/2=-s_2$. 
 To this end, for  general matrix operators 
\[
 {\cal A}:\begin{bmatrix}
           \firstReviewer{\textbf{A}}_{11}&\firstReviewer{\textbf{A}}_{12}\\
           \firstReviewer{\textbf{A}}_{21}&\firstReviewer{\textbf{A}}_{22}\\
          \end{bmatrix}:\Hiib^{s_1}\times \Hiib^{s_2}\to \Hiib^{t_1}\times \Hiib^{t_2},\quad \text{(that is, $\firstReviewer{\textbf{A}}_{ij}:\Hiib^{s_j}\to \Hiib^{t_i}$)}
\] 
 we work with the non-standard representation: $(\Hiib^{s }\times \Hiib^{t})' = \Hiib^{-t }\times \Hiib^{-s}$ via the non-standard duality product
\begin{equation}\label{eq:dualproductTwo}
 [(\bm{g}_1,\bm{\varphi}_1), (\bm{g}_{2},\bm{\varphi}_2) ] =  \langle \bm{g}_1,\bm{\varphi}_2\rangle
 - \langle \bm{g}_2,\bm{\varphi}_1\rangle.
\end{equation}
Notice that, with this convention:   
\[
 \left[ {\cal A}\begin{bmatrix}\bm{g}_1\\ \bm{\varphi}_1 \end{bmatrix}, \begin{bmatrix}\bm{g}_2\\ \bm{\varphi}_2\end{bmatrix} \right] =    \left[ \begin{bmatrix}\bm{g}_1\\ \bm{\varphi}_1 \end{bmatrix}, {\cal A}^\top \begin{bmatrix}\bm{g}_2\\ \bm{\varphi}_2\end{bmatrix}  \right],\quad  \text{with\ } {\cal A}^\top :=\begin{bmatrix}
            \firstReviewer{\textbf{A}}^\top_{22} &-\firstReviewer{\textbf{A}}^\top_{12} \\
           -\firstReviewer{\textbf{A}}^\top_{21} & \firstReviewer{\textbf{A}}^\top_{11}
          \end{bmatrix}.
\]
In particular,  we find that 
\[
 {\cal C}^\top = -{\cal C},\quad \left(\frac{1}2 {\cal I} \pm  {\cal C} \right)^\top =
 \left(\frac{1}2 {\cal I} \mp  {\cal C} \right)^\top 
\]
i.e., the transpose of the interior   \firstReviewer{Calderón} operator is the exterior one and vice versa. This property will simplify the analysis of boundary integral formulations for transmission problems as we will \firstReviewer{see in section 5. 
}

\section{Boundary integral formulations. Impenetrable case}\label{BIEf}

     
     We present in what follows various strategies to derive BIE formulations of scattering problems. Besides the classical combined field formulations CFIE we derive regularized formulations that rely on the use of approximations of the DtN operators. The design of the regularized formulations for elastodynamic scattering and transmission problems follows the blueprint from the Helmholtz case~\cite{turc1}.
\subsection{Dirichlet boundary conditions}
 
Let us consider $\Omega_+$ the exterior of a closed, smooth, simply connect curve $\Gamma$. 
\begin{equation}\label{eq:Dirichlet}
 \left|
 \begin{array}{rcl}
  {\bf u} \in \Hiib_{\rm loc}^1(\Omega_+)\\
   {\nabla}\cdot\bm{\sigma}({\bf u} )+\omega^2{\bf u}  &=&0,  \quad \text{in $\Omega_+$},\\ 
 \gamma_{\Gamma}{\bf u} &=&  {\bf f}_\Gamma \\
 \multicolumn{3}{l}{\text{+ RC }}
 \end{array}
 \right.
\end{equation}
%
 
Just like in the Helmholtz case~\cite{BrackhageWerner}, the classical approach~\firstReviewer{\cite{chaillat2008fast,chaillat2017fast,CoSt:1990}} in the case of Dirichlet boundary conditions is to look for a scattered field in the form of a Combined Field representation
\[
{\bf u}(\x):=(\DLii{\bm{\varphi}})(\x)-i\eta_D (\SLii{\bm{\varphi}})(\x),\quad \x\in\Omega_+
\]
where the coupling parameter $\eta_D\neq 0$ and \otherCorrections{the $2\pi$-periodic function $\bm{\varphi}:\mathbb{R}\to\mathbb{C}^2$ is the solution of} the Combined Field Integral Equation (CFIE)
\begin{equation}\label{eq:CFIE_D}
  \frac{1}{2}{\bm{\varphi}}+\Kii{\bm{\varphi}}-i\eta_D \Vii{\bm{\varphi}}={\bf f}.
\end{equation}
 (With, as taken throughout this paper, ${\bf f}:={\bf f}_\Gamma\circ{\bf x}$). 
The question of selecting a value of the coupling parameter $\eta_D$ that leads to formulations with superior spectral properties (and thus faster convergence rates for iterative solver solutions) can be settled via Dirichlet-to-Neumann arguments~\cite{chaillat2017fast}. We begin with Somigliana's identities \eqref{eq:Somigliana:2} which  we rewrite considering as ${\bf u}|_\Gamma$ as the primary unknown boundary density and incorporating the DtN operator in the form
\[
{\bf u}(\x)= \DLii[\gamma {\bf u}](\x)-  \SLii [ \Yiip \gamma {\bf u}] (\x),\quad \x\in\Omega_+.
\]
The main idea in constructing regularized formulations is to use easily constructable approximations $\RD$ of the DtN operator $\Yiip
$ and look for combined field representations in the form
\begin{equation}\label{eq:CFIER_D_potential}
  {\bf u}(\x):=(\DLii{\bm g})(\x)-(\SLii[\RD{\bm g}])(\x),\quad \x\in\Omega_+
\end{equation}
leading to the Combined Field Regularized Integral Equation (CFIER)
\begin{equation}\label{eq:CFIER_D}
  \frac{1}{2}{\bm g}+\Kii{\bm g}-\Vii\RD{\bm g}={\bf f}\quad {\rm on}\ \Gamma.
\end{equation}
We show in what follows that under certain assumptions on the regularizing operator $\RD$, the CFIER {is} well posed: 

\begin{theorem} \label{theo:9.1} Consider a preconditioner $\RD$ that satisfies the following two properties: (a) $\RD-\Yiip:\Hiib^s\to \Hiib^{s}$ {is continuous} and (b) the  non-null  condition {holds} 
\[
\Im \langle \RD \bm{\varphi},\overline{\bm{\varphi}}\rangle\ne 0,\quad 
{ \bm{\varphi}\ne 0.}
\]
Then the CFIER operator $\frac12 \firstReviewer{\bm{I}}+\Kii-\Vii[\RD] $
is an invertible compact perturbation of the identity with continuous inverse.

In particular, \otherCorrections{in view of \eqref{eq:PSk-Ps} and Proposition \ref{prop:Y:complexificated}}, ${\rm PS}_\kappa(\Yiip)$ is a valid election. 
\end{theorem}

\begin{proof} Notice that by construction: 
\[
\frac12 \firstReviewer{\bm{I}}+\Kii -\Vii\RD -\left(\frac12 \firstReviewer{\bm{I}}+\alpha\bm{H}-\beta\bm{\Lambda} \mathrm{PS}(\Yiip)\right): \Hiib^s \to \Hiib^{s+1}.
\]
Furthermore, by \eqref{eq:PSY+}
\[
\frac12 \firstReviewer{\bm{I}}+\alpha\bm{H}-\beta\bm{\Lambda}\, \mathrm{PS}(\Yiip) = \frac12 \firstReviewer{\bm{I}}+\alpha\bm{H}
\otherCorrections{+}\beta\bm{\Lambda}\left[\beta^{-1}\bm{\Lambda}^{-1}\left(\frac12 \firstReviewer{\bm{I}}-\alpha\bm{H}\right)\right]=I 
\]
%
Consequently, the operator is a compact perturbation of the identity, and, as such, owing to Fredholm alternative, it suffices to establish the  injectivity in order to complete the proof. To this end, let us assume that $\bm{g}\in \Hiib^s$ is in the kernel of the operator  and define
\[
 {\bf v}(\x):=(\DLii\bm{g})(\x)-(\SLii[\RD\bm{g}])(\x),\quad \x\in\mathbb{R}^2\setminus\Gamma.
\]
Clearly ${\bf v}_+={\bf v}|_{\Omega_+}$ is a  \firstReviewer{radiating} solution of the Navier equation in $\Omega_+$ with zero Dirichlet boundary conditions on $\Gamma$, which implies that ${\bf v}_+$ is identically zero in $\Omega_+$. Hence,  
\[
\left( \frac{1}2{\cal I}+{\cal C}\right)\begin{bmatrix}\bm{g}\\
                              \RD\bm{g} 
                             \end{bmatrix}={\bf 0},\quad \text{and so }\quad
\left(\frac{1}2{\cal I}-{\cal C}\right)\begin{bmatrix}\bm{g}\\
                              \RD\bm{g} 
                             \end{bmatrix}=\begin{bmatrix}\bm{g}\\
                              \RD\bm{g} 
                             \end{bmatrix}.\quad 
\]
In other words, ${\bf v}_-={\bf v}|_{\Omega_-}$ is a solution of the Navier equation in the domain $\Omega_-$ that satisfies 
\[
 \gamma{\bf v}_- = \bm{g}, \quad \Tiim{\bf v}_- = \RD\bm{g}  
\]
and so
\begin{eqnarray*}
\langle \RD{\bm g},\overline{{\bm g}}\rangle&=&
\langle T{\bf v}_-,\overline{{\gamma}{\bf v}_-}\rangle\in\mathbb{R}
\end{eqnarray*}
by Lemma \ref{lemma:im:Tpm}.
The proof is concluded on the basis of the assumption (b).
\end{proof}

The regularizing operator $\RD= {\rm PS}_\kappa(\Yiip)$ is a pseudodifferential operator of order $1$ whose numerical evaluation is consequently more involved. Using the high-frequency approximation $|\kappa|\to\infty$ in the definition of the Fourier multiplier $\Lambda_{\kappa}^{-1}$, we can construct a simple regularizing operator
\[
\RD_1=-\frac{2\mu(\lambda+2\mu)}{\lambda+3\mu}i\kappa,
\]
which, incidentally, can be interpreted as delivering a quasi-optimal choice for the coupling parameter $\eta_D$ in the CFIE formulation
\begin{equation}\label{eq:eta_D}
\eta_D^{\rm opt}=\frac{2\mu(\lambda+2\mu)}{\lambda+3\mu}k_s
\end{equation}
if we choose $\kappa=k_s$. \firstReviewer{We observed in practice that this choice of the coupling constant $\eta_D^{\rm opt}$ appears to deliver consistently superior iterative behavior of GMRES solvers}. We remark that a similar, easily implementable low-order approximation of the DtN operator was proposed in~\cite{chaillat2017fast} as a regularizing operator in the Dirichlet case.

\subsection{Neumann boundary conditions}
In the case of Neumann boundary conditions, i.e., 
\begin{equation}\label{eq:Neumann}
 \left|
 \begin{array}{rcl}
  {\bf u} \in \Hiib_{\rm loc}^1(\Omega_+)\\
   {\nabla}\cdot\bm{\sigma}({\bf u} )+\omega^2{\bf u}  &=&0,\quad \text{in $\Omega_+$},\\ 
 \Tii_{+,\Gamma}{\bf u}  &=& {\bm \lambda}_\Gamma\\
 \multicolumn{3}{l}{\text{+ RC }}
 \end{array}
 \right.
\end{equation}
we can look for a scattered field in the form of a Combined Field representation akin to the Burton-Miller formulation in the Helmholtz case~\cite{BurtonMiller}
\[
{\bf u}(\x):=-(\SLii\bm{\varphi})(\x) + i\eta_N (\DLii\bm{\varphi})(\x),\quad \x\in\Omega_+
\]
where the coupling parameter $\eta_N\neq 0$, leading to the Combined Field Integral Equation (CFIE)
\begin{equation}\label{eq:CFIE_N}
  \frac{1}{2}\bm{\varphi}-\Kii^\top \bm{\varphi}+i\eta_N \Wii\bm{\varphi}=\bm{\lambda}.
\end{equation}
Here again we start by recasting the Somigliana's identities looking at $ T{\bf u} $ as the primary unknown boundary density and making use of the the Neumann-to-Dirichlet (NtD) operator (which is the inverse of the DtN operator)
\[
{\bf u}(\x)= (\DLii [\Yiip^{-1}\{T  {\bf u}\} ])(\x)- (\SLii \{T {\bf u}\} )(\x) ,\quad \x\in\Omega_+\firstReviewer{.}
\]
The construction of regularized formulations relies again on available approximations $\RN $ of the NtD operator $Y^{-1}$ via looking for combined field representations in the form
\begin{equation}\label{eq:CFIER_N_potential}
  {\bf u}(\x):=(\DLii[\RN \bm{\varphi}])(\x)-(\SLii\bm{\varphi})(\x),\quad \x\in\Omega_+
\end{equation}
leading to the Combined Field Regularized Integral Equation (CFIER)
\begin{equation}\label{eq:CFIER_N}
  \frac{1}{2}\bm{\varphi}-\Kii^\top\bm{\varphi}+\Wii\RN \bm{\varphi}={\bm\lambda}\quad {\rm on}\ \Gamma.
\end{equation}
Again here, the choice $\RN = [{\rm PS}_\kappa(\Yiip)]^{-1}$  leads to well posed CFIER formulations, at least in the case when the boundary $\Gamma$ is smooth enough.  Indeed, the following result can be established analogously to that in Theorem~\ref{theo:9.1}
\begin{theorem} \label{theo:9.2} Consider a preconditioner $\RN $ that satisfies the following two properties: (a) $\RN -[\Yiip]^{-1}:\Hiib^s\to \Hiib^{s+1}$ and (b) the  non-null  condition 
\[
\Im \langle \RN  \bm{\bm{\varphi}},\overline{ \bm{\varphi}}\rangle\ne 0,\quad 
\bm{\varphi}\ne 0.
\]
Then the CFIER operator 
\[
 \frac12 \firstReviewer{\bm{I}}-\Kii^\top+\Wii[\RN ] 
\]
is an invertible  compact perturbation of the identity. 

In particular, $\RN =({\rm PS}_\kappa(\Yiip))^{-1}$ is a valid choice. 
\end{theorem}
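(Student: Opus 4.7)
The plan is to adapt the proof of Theorem~\ref{theo:9.1} directly, exchanging the roles of $V, K$ and $W, K^\top$ to reflect the switch from Dirichlet to Neumann boundary data.

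First I would compute the principal symbol of the CFIER operator $\tfrac{1}{2}I - K^\top + W\operatorname{R}^{\rm N}$. Since $\mathrm{PS}(K^\top) = \alpha\bm{H}$ and $\mathrm{PS}(W) = \delta\bm{\Lambda}^{-1}$ (as read off from $\mathcal{C}_0$), and since hypothesis (a) together with the mapping properties in \eqref{eq:principal:symbols} ensures that
\[
\tfrac{1}{2}I - K^\top + W\operatorname{R}^{\rm N} - \bigl(\tfrac{1}{2}I - \alpha\bm{H} + \delta\bm{\Lambda}^{-1}\mathrm{PS}(Y_+^{-1})\bigr)
\]
is smoothing and thus compact on $\bm{H}^s$, it suffices to verify that the principal symbol equals $I$. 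Using the second representation in \eqref{eq:PSY+}, namely $\mathrm{PS}(Y_+) = \delta(\tfrac{1}{2}I + \alpha\bm{H})^{-1}\bm{\Lambda}^{-1}$, I obtain $\mathrm{PS}(Y_+^{-1}) = \delta^{-1}\bm{\Lambda}(\tfrac{1}{2}I + \alpha\bm{H})$; since the scalar Fourier multipliers in $\bm{\Lambda}$ and $\bm{H}$ commute, $\delta\bm{\Lambda}^{-1}\mathrm{PS}(Y_+^{-1}) = \tfrac{1}{2}I + \alpha\bm{H}$, so the principal symbol collapses to $I$. Hence the CFIER operator is a compact perturbation of the identity, and the Fredholm alternative reduces the problem to injectivity.

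Second I would establish injectivity. Suppose $\bm{\varphi}$ lies in the kernel and set ${\bf v} := \operatorname{DL}[\operatorname{R}^{\rm N}\bm{\varphi}] - \operatorname{SL}\bm{\varphi}$ on $\mathbb{R}^2\setminus\Gamma$. The jump relations combined with the kernel condition give $T_+{\bf v}_+ = \tfrac{1}{2}\bm{\varphi} - K^\top\bm{\varphi} + W\operatorname{R}^{\rm N}\bm{\varphi} = 0$; together with the radiation condition, well-posedness of the exterior Neumann problem forces ${\bf v}_+ \equiv 0$ in $\Omega_+$. The Calder\'{o}n projector identity from Remark~\ref{remark:C:01} then yields
\[
\left(\tfrac{1}{2}\mathcal{I} - \mathcal{C}\right)\begin{bmatrix}\operatorname{R}^{\rm N}\bm{\varphi}\\ \bm{\varphi}\end{bmatrix} = \begin{bmatrix}\operatorname{R}^{\rm N}\bm{\varphi}\\ \bm{\varphi}\end{bmatrix},
\]
so that the interior traces of ${\bf v}$ satisfy $\gamma_-{\bf v}_- = -\operatorname{R}^{\rm N}\bm{\varphi}$ and $T_-{\bf v}_- = -\bm{\varphi}$. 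Lemma~\ref{lemma:im:Tpm} then forces $\langle \operatorname{R}^{\rm N}\bm{\varphi},\overline{\bm{\varphi}}\rangle \in \mathbb{R}$, contradicting hypothesis (b) unless $\bm{\varphi}=0$.

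For the specific choice $\operatorname{R}^{\rm N} = [\mathrm{PS}_\kappa(Y_+)]^{-1}$, I would verify (a) via a resolvent-identity argument: Proposition~\ref{prop:Y:complexificated} gives that $\mathrm{PS}_\kappa(Y_+) - \mathrm{PS}(Y_+)$ is smoothing and, combined with the fact that $\mathrm{PS}(Y_+)$ is a principal symbol of $Y_+$, this regularity gap transfers to the inverses. For (b), the coercivity $\Im\langle\mathrm{PS}_\kappa(Y_+)\bm{g},\overline{\bm{g}}\rangle\neq 0$ from Proposition~\ref{prop:Y:complexificated}, applied at $\bm{g} = [\mathrm{PS}_\kappa(Y_+)]^{-1}\bm{\varphi}$, together with the conjugation properties of the non-Hermitian pairing from Section~\ref{dual}, yields a sign-definite imaginary part for $\operatorname{R}^{\rm N}$. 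I expect the main technical subtlety to be the careful bookkeeping of orders in this last step to make the resolvent-identity argument rigorous.
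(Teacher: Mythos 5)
Your proposal is correct and follows essentially the same route as the paper's proof: the principal-symbol identity $\delta\bm{\Lambda}^{-1}\bigl[\delta(\tfrac12 I+\alpha\bm{H})^{-1}\bm{\Lambda}^{-1}\bigr]^{-1}=\tfrac12 I+\alpha\bm{H}$ reduces the CFIER operator to identity plus compact, and injectivity is obtained exactly as you describe, by forming the potential, using exterior Neumann uniqueness, reading off the interior Cauchy data, and invoking Lemma \ref{lemma:im:Tpm} together with hypothesis (b), with Proposition \ref{prop:Y:complexificated} validating the choice $\operatorname{R}^{\rm N}=({\rm PS}_\kappa(Y_+))^{-1}$. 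The only difference is cosmetic: the paper verifies compactness directly for the specific complexified choice rather than through $\mathrm{PS}(Y_+^{-1})$, which your version handles with the same algebra.
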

\begin{proof}
From 
\[
  \frac12 \firstReviewer{\bm{I}}-\alpha \bm{H}+\delta \bm{\Lambda}^{-1}\left[\delta \Big(\frac{1}2\firstReviewer{\bm{I}} +\alpha\bm{H}\Big)^{-1}\bm{\Lambda}^{-1} \right]^{-1} = I 
\]
we can easily see that  $\frac12 \firstReviewer{\bm{I}}-\Kii^\top+\Wii[{\rm PS}_\kappa(\Yiip)]^{-1}$ is a compact perturbation of the identity. 

The unicity follows from a similar argument as in Theorem \ref{theo:9.1}: take $\bm{\varphi}$ in the kernel of the operator, define
\[
  {\bf w}(\x):=(\DLii[\RN \bm{\varphi}])(\x)-(\SLii\bm{\varphi})(\x),\quad \x\in\mathbb{R}^2\setminus\Gamma 
\]
 and  derive next ${\bf w}$ vanishes in $\Omega_+$. Therefore, with ${\bf w}_-= {\bf w}|_{\Omega_-}$  and $\bm{g}={\rm PS}_\kappa(\Yiip) ^{-1}\bm{\varphi}$ 
\[
 \langle {\rm PS}_\kappa(\Yiip) \bm{g},\overline{\bm{g}}\rangle = \langle \bm{\varphi},\overline{\RN \bm{\varphi}}\rangle = 
\langle T{\bf w}_-,\overline{\gamma {\bf w}_-}\rangle
\]
and the result is consequence of Lemma \ref{lemma:im:Tpm} and Proposition \ref{prop:Y:complexificated}.

\end{proof}

Similarly, using high-frequency approximations we can construct a simple regularizing operator
\[
\RN _1=i\frac{\lambda+3\mu}{2\mu(\lambda+2\mu)}\kappa^{-1},
\]
which, delivers a quasi-optimal choice for the coupling parameter $\eta_N$ in the CFIE formulation
\begin{equation}\label{eq:eta_N}
\eta_N^{\rm opt}=\frac{\lambda+3\mu}{2\mu(\lambda+2\mu)}k_s^{-1}.
\end{equation}
We remark that similar low-order approximations of NtD operators have been used in~\cite{chaillat2020analytical} to construct CFIE formulations with superior spectral properties.

Similarly, we can construct \emph{direct} regularized formulations in the case of Neumann boundary conditions following the ideas in~\cite{turc_corner_N}. Assuming that a smooth incident field ${\bf u}^{\rm inc}$ (which is a solution of the Navier equation in the whole $\mathbb{R}^2$) impinges on the obstacle $\Omega$, we will derive these BIEs in terms of unknown boundary quantity %
 $\gamma {\bf u}^{\rm tot}=\gamma ({\bf u}+{\bf u}^{\rm inc})$. We obtain from Somigliana's identities by taking into account the fact that $T{\bf u}^{\rm tot}=0$ on $\Gamma$
\begin{equation}\label{eq:S1}
  {\bf u}(\x)=(\DLii{\bf u}^{\rm tot})(\x),\quad \x\in\Omega_+.
\end{equation}
Applying the Dirichlet trace on $\Gamma$ to formula~\eqref{eq:S1} we obtain
\begin{equation}\label{eq:S11}
  \frac{1}{2}\gamma {\bf u}^{\rm tot} - \Kii\gamma {\bf u}^{\rm tot} =\gamma {\bf u}^{\rm inc} 
\end{equation}
while applying the traction operator to formula~\eqref{eq:S1} we get
\begin{equation}\label{eq:S12}
   \Wii\gamma {\bf u}^{\rm tot}  =-T{\bf u}^{\rm inc}.
\end{equation}
We combine BIE~\eqref{eq:S11} and a preconditioned (on the left) version of the BIE~\eqref{eq:S12} to arrive at the DCFIER
\begin{equation}\label{eq:S22}
  \frac{1}{2}\gamma {\bf u}^{\rm tot}(\x)- \Kii\gamma  {\bf u}^{\rm tot}  +([\RN \Wii]\gamma  {\bf u}^{\rm tot})  =\gamma {\bf u}^{\rm inc} -\RN  T{\bf u}^{\rm inc}  , 
\end{equation}
We note that the operators on the left hand side of the DCFIER formulation is the  transpose
of the operator in the CFIER formulation, a situation that is similar to that in the Helmholtz case~\cite{turc_corner_N}.  Direct formulations can be then used  in the case when $\Omega$ is a Lipschitz domain in order to take advantage of the increased regularity of $\gamma {\bf u}^{\rm tot}$.

\section{The penetrable case}\label{BIEt}
 
We consider in this section the penetrable case with transmission conditions, as formulated in  \eqref{eq:Transmission:0}: 
\begin{equation}\label{eq:Transmission}
 \left|
 \begin{array}{rcl}
 \multicolumn{3}{l}{  {\bf u}_- \in \Hiib^1(\Omega_-),\quad {\bf u}_+ \in \Hiib_{\rm loc}^1(\Omega_+),}\\
   {\nabla}\cdot\bm{\sigma}_-({\bf u}_-)+\omega_-^2{\bf u}_-  &=&0,\quad \mbox{in }\Omega_-,\\ 
   {\nabla}\cdot\bm{\sigma}_+({\bf u}_+ )+\omega_+^2{\bf u}_-  &=&0,\quad \mbox{in }\Omega_+,\\ 
 \gamma {\bf u}_+-\gamma {\bf u}_- &=& -\gamma {\bf u}^{\rm inc},\\ 
 \Tiip{\bf u}_+-\Tiim{\bf u}_- &=& -\Tiip{\bf u}^{\rm inc},\\
 \multicolumn{3}{l}{\text{+ RC}\firstReviewer{.}}
 \end{array}
 \right.
\end{equation}

We present two approaches. In the first one we analyze two classical formulations, the Costabel-Stephan \eqref{eq:stephan:form} and Kress-Roach formulation \eqref{eq:KiptmanKress} and introduce two new ones, extensions of that firstly introduced in \cite{dominguez2016well} for Helmholtz equation.  

In the second subsection we analyze the Optimized Schwarz formulation.

\subsection{Regularized formulations}

In the first approach, we seek to solve for the Cauchy data $(\gamma {\bf u}_\pm,T_\pm{\bf u})$  and reconstruct the solution via
\[
 {\bf u}_{\pm} = \pm \DLii_{\pm}\gamma {\bf u}_\pm  \mp \SLii_{\mp}\otherCorrections{T_\pm{\bf u}},
\]
 where the $\pm$ notation is used for denoting the corresponding boundary layer potential operators for the exterior/interior problem.  Hence, denoting by
\[
 {\cal C}_{\pm} := \begin{bmatrix}
              \Kii_{\pm}  & -\Vii_{\pm}\\
              \Wii_{\pm}  & -\Kii_{\pm}^\top   
            \end{bmatrix}
\] 
we have that
\[
\left(\frac{1}2 {\cal I} + {\cal C}_+\right)\begin{bmatrix}
                                         \gamma {\bf u}_+\\
                                         \Tiip{\bf u}_+ 
                                        \end{bmatrix}=\begin{bmatrix}
                                         \gamma {\bf u}_+\\
                                         \Tiip{\bf u}_+ 
                                        \end{bmatrix},\quad
\left(\frac{1}2 {\cal I} - {\cal C}_-\right)\begin{bmatrix}
                                         \gamma {\bf u}_-\\
                                         \Tiim{\bf u}_- 
                                        \end{bmatrix}=\begin{bmatrix}
                                         \gamma {\bf u}_-\\
                                         \Tiim{\bf u}_-
                                        \end{bmatrix}.
\]
These last two properties together with the fact that  
\[ \left(\frac{1}2 {\cal I} + {\cal C}_+\right)\begin{bmatrix}
                                         \gamma {\bf u}^{\rm inc}\\
                                         \Tiip{\bf u}^{\rm inc} 
                                        \end{bmatrix}={\bf 0} 
\]
can be combined to derive a system of BIE equations for the solution of penetrable Navier problems 
\begin{equation}\label{eq:stephan:form}
{\cal L}_{\rm SC} \begin{bmatrix}\gamma {\bf u}_-\\
                   \Tiim{\bf u}_-
               \end{bmatrix} := -({\cal C}_+ + {\cal C}_-)
               \begin{bmatrix}
                  \gamma {\bf u}_-\\
                   \Tiim{\bf u}_-
               \end{bmatrix}=\begin{bmatrix}
                  \gamma {\bf u}^{\rm inc}\\
                   \Tiip{\bf u}^{\rm inc} 
               \end{bmatrix}\firstReviewer{.}
\end{equation}
We refer in what follows to the formulation~\eqref{eq:stephan:form} as the Stephan-Costabel  \firstReviewer{as first introduced in cf. \cite{CoSt:1990} 
which is  in turn the counterpart of the acoustic formulation introduced by these two authors in \cite{CoSt:1985}.}
 

 Besides, since
\[
  \left(\frac{1}2 {\cal I} + {\cal C}_-\right)\begin{bmatrix}
                  \gamma {\bf u}_-\\
                   \Tiim{\bf u}_-
               \end{bmatrix}={\bf 0}
\]
we find also that 
\begin{equation}\label{eq:DCFIER}
   \left(\frac{1}2 {\cal I} + {\cal C}_-  
   -{\cal R}^{\top }({\cal C}_+ + {\cal C}_-)\right)\begin{bmatrix}
                  \gamma {\bf u}_-\\
                   \Tiim{\bf u}_-
               \end{bmatrix}={\cal R}^\top \begin{bmatrix}
                  \gamma {\bf u}^{\rm inc}\\
                   \Tiip{\bf u}^{\rm inc} 
               \end{bmatrix} 
\end{equation}
 for any suitable operator
 \[
{\cal R}=\begin{bmatrix} 
          \firstReviewer{\bm{R}}_{11} & \firstReviewer{\bm{R}}_{12}\\
          \firstReviewer{\bm{R}}_{21} & \firstReviewer{\bm{R}}_{22}
         \end{bmatrix}
         \quad (\text{recall that } {\cal R}^\top =\begin{bmatrix} 
           \firstReviewer{\bm{R}}^\top_{22} & -\firstReviewer{\bm{R}}^\top _{12}\\
          -\firstReviewer{\bm{R}}^\top_{21} &  \firstReviewer{\bm{R}}^\top_{11}
         \end{bmatrix}).
 \]
Alternatively, we can work with the adjoint of the formulation~\eqref{eq:DCFIER}, in the sense discussed in Section~\ref{dual}
 \begin{equation}\label{eq:DCFIERt}
   \Big(\frac{1}2 {\cal I} - {\cal C}_- +({\cal C}_+ + {\cal C}_-){\cal R}\Big)\begin{bmatrix}
                  \bm{g}\\
                    \bm{\varphi}
               \end{bmatrix}=\begin{bmatrix}
                  \gamma {\bf u}^{\rm inc}\\
                   \Tiip{\bf u}^{\rm inc} 
               \end{bmatrix}
\end{equation}
Clearly, owing to duality arguments, the first formulation~\eqref{eq:DCFIER} is uniquely solvable if and only if so is the second (dual) formulation~\eqref{eq:DCFIERt}. The connection between the boundary densities $( \bm{g},
\bm{\varphi})$ which are the solution of the adjoint formulation~\eqref{eq:DCFIERt} and the fields ${\bf u}_\pm$ is as follows
\begin{equation}\label{eq:CFIERsolutions}
\begin{aligned}
 {\bf u}_+ &= \begin{bmatrix} \DLii_+  & -\SLii_+ \end{bmatrix} {\cal R}  \begin{bmatrix}
                   \bm{g}\\
                   \bm{\varphi}
               \end{bmatrix} &&= \DLii_+(\firstReviewer{\bm{R}}_{11} \bm{g}+\firstReviewer{\bm{R}}_{12}\bm{\varphi})
                     -\SLii_+(\firstReviewer{\bm{R}}_{21} \bm{g}+ \firstReviewer{\bm{R}}_{22} \bm{\varphi})\\
 {\bf u}_- &= \begin{bmatrix}
                 -\DLii_- & \SLii_- 
                \end{bmatrix}( {\cal R}  -I)\begin{bmatrix}
                   \bm{g}\\
                   \bm{\varphi}
               \end{bmatrix}&&= \DLii_-(\bm{g} -\firstReviewer{\bm{R}}_{11} \bm{g}-\firstReviewer{\bm{R}}_{12}\bm{\varphi})
                     - \SLii_-(\bm{\varphi}-\firstReviewer{\bm{R}}_{21}  \bm{g}-\firstReviewer{\bm{R}}_{22}\bm{\varphi}).\\
\end{aligned}
\end{equation}
In other words, formulations~\eqref{eq:DCFIER} and respectively~\eqref{eq:DCFIERt} can be viewed as regularized direct and indirect BIE formulations of elastodynamic transmission problems.

 In particular, selecting ${\cal R} =\frac{1}2 {\cal I}$ in either of the formulations~\eqref{eq:DCFIER} or~\eqref{eq:DCFIERt} we obtain the Kress-Roach type formulation
\begin{equation}\label{eq:KiptmanKress}
  {\cal L}_{\rm KR} \begin{bmatrix}
                  \gamma {\bf u}_-\\
                   \Tiim{\bf u}_-
               \end{bmatrix}:= ( {\cal I} + {\cal C}_-- {\cal C}_+  )\begin{bmatrix}
                  \gamma {\bf u}_-\\
                   \Tiim{\bf u}_-
               \end{bmatrix}= 2\begin{bmatrix}
                  \gamma {\bf u}^{\rm inc}\\
                   \Tiip{\bf u}^{\rm inc}
               \end{bmatrix}. 
\end{equation}
\firstReviewer{This formulation was first introduced, for acoustic problems, by Kress and Roach in \cite{KressRoach}. We also refer   the reader to \cite[Theorem 2.1]{Lo:2015} for the analysis of this formulation in 3D elastodynamics problems. As we will show below, Proposition~\ref{propLSC0}  and Theorem~\ref{th:stephan-costabel}, the well posedness of  \eqref{eq:stephan:form}--\eqref{eq:KiptmanKress} can be stated in an easier way for 2D problems, obtaining, in addition, crucial information about the eigenvalues distribution of the associated operator in the complex plane.}  We stress that, unlike the Kress-Roach formulations for the Helmholtz transmission problems,  the BIE formulation~\eqref{eq:KiptmanKress} is not a second kind formulation since the double layer BIOs are no longer compact in this setting.

We will establish next that the Stephan-\firstReviewer{Costabel} and the Kress-Roach type formulations are well posed.  The proof follows the ideas used in the seminal work \cite{CoSt:1985,KressRoach} for which the well-posedness of the transmission problem with reversed material properties
\begin{equation}\label{eq:Adjoint}
 \left|
 \begin{array}{rcl}
 \multicolumn{3}{l}{  {\bf v}_- \in \Hiib^1(\Omega_-),\quad {\bf v}_+ \in \Hiib_{\rm loc}^1(\Omega_+)}\\
   {\nabla}\cdot\bm{\sigma}_-({\bf v}_+)+\omega_-^2{\bf v}_+  &=&0,\quad \mbox{in }\Omega_+,\\ 
   {\nabla}\cdot\bm{\sigma}_+({\bf v}_- )+\omega_+^2{\bf v}_-  &=&0,\quad \mbox{in }\Omega_-,\\ 
 \gamma {\bf v}_+  -\gamma{\bf v}_- &=& -{\bf u}^{\rm inc},\\ 
 \Tiip{\bf v}_+ -\Tiim{\bf v}_-&=& -\Tiip{\bf u}^{\rm inc}, \\
 \multicolumn{3}{l}{\otherCorrections{\text{+ RC}}}
 \end{array}
 \right.
\end{equation}
will be essential. 

Let us introduce (with $\alpha_{\pm}\in i\mathbb{R}$, $\beta_\pm>0>\delta_{\pm}$ as in \eqref{eq:C0}) operators
 \begin{eqnarray*}
{\cal L}_{0,\rm SC} &:=&  -( {\cal C}_{+,0}+  {\cal C}_{-,0})= -\begin{bmatrix}
               (\alpha_++\alpha_-) \bm{H} & -(\beta_++\beta_-)\bm{\Lambda}  \\
              (\delta_++\delta_-)  \bm{\Lambda}^{-1}  & -(\alpha_++\alpha_-) \bm{H}\\
              \end{bmatrix},\\
{\cal L}_{0,{\rm KR}} &:=& {\cal I} + {\cal C}_{-,0} -{\cal C}_{+,0}= \otherCorrections{{\cal I}}+\begin{bmatrix}
               (\alpha_--\alpha_+) \bm{H} & -(\beta_--\beta_+)\bm{\Lambda}  \\
              (\delta_--\delta_+)  \bm{\Lambda}^{-1}  & -(\alpha_--\alpha_+) \bm{H}\\
              \end{bmatrix}
\end{eqnarray*}
which are simply the principal parts  for ${\cal L}_{\rm SC}$ and ${\cal L}_{\rm KR}$.

We can show easily that 
\begin{equation}\label{eq:10:23}
 {\cal L}_{0,\rm SC}^2 = \rho {\cal I}
\end{equation}
where 
 \begin{eqnarray}
 \rho &=&  -{ (\beta_++\beta_-)(\delta_++\delta_-)-(\alpha_++\alpha_-)^2}\nonumber
  \\
 &=&
 \frac{(\lambda_+ (\mu_++\mu_-)+\mu_+ (\mu_++3 \mu_-)) (\lambda_- (\mu_++\mu_-)+\mu_- (3 \mu_++\mu_-))}{4 \mu_+ \mu_- (\lambda_++2 \mu_+) (\lambda_-+2 \mu_-)}\label{eq:10:23b}\\
 &=&  F({\lambda_+},{\mu_+},\frac{{\lambda_-}}{{\lambda_+}},\frac{{\mu_-}}{ {\mu_+}} )=
  F({\lambda_-},{\mu_-},\frac{{\lambda_+}}{{\lambda_-}},\frac{{\mu_+}}{{\mu_-}}),\nonumber
\end{eqnarray}
with 
\[
 F(x,y,r,s) := \frac{\left((s+1) x y+(3 s+1) y^2\right) \left(r (s+1) x y+s (s+3) y^2\right)}{4 s y^2 (x+2 y) (r x+2 s y)}.
\]
We prove in the next lemma that $\rho\ge 1$ with $\rho =1$ if and only if $\mu_+=\mu_-$.

\begin{lemma}\label{lemma:F}
For all $(x,y,r,s)\in\mathbb{R}^4_+$, $
 F(x,y,r,s)\ge 1$ with $F(x,y,r,s)=1$ if and only if $s=1$.
%
\end{lemma}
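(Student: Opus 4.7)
The plan is to clear denominators and reduce the claim to an explicit polynomial inequality. I would first form $(F-1)\cdot 4sy^2(x+2y)(rx+2sy)$ and expand $\bigl((s+1)xy+(3s+1)y^2\bigr)\bigl(r(s+1)xy+s(s+3)y^2\bigr)$ against $4sy^2(x+2y)(rx+2sy)$. By homogeneity the difference is a degree-four polynomial in $(x,y)$ with coefficients depending on $(r,s)$, and a one-shot expansion collapses it to
\[
(F-1)\cdot 4sy^2(x+2y)(rx+2sy)=y^2\,Q(x,y;r,s),
\]
where
\[
Q(x,y;r,s):=r(s-1)^2 x^2+(s-1)\bigl(s(s-3)+r(3s-1)\bigr)xy+3s(s-1)^2 y^2.
\]
Since the factor multiplying $F-1$ on the left is strictly positive on $\mathbb{R}_+^4$, the claim $F\ge 1$ is equivalent to $Q(x,y;r,s)\ge 0$ on $\mathbb{R}_+^4$, and the equality statement to locating the zeros of $Q$.

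The equality half will then be essentially free once $Q\ge 0$ is in hand. If $s=1$, each of the three coefficients of $Q$ vanishes, so $Q\equiv 0$ and $F\equiv 1$. Conversely, if $s\ne 1$ then the extreme coefficients $r(s-1)^2$ and $3s(s-1)^2$ are strictly positive whenever $r,s>0$, which rules out $Q(x,y)=0$ with $x,y>0$ and gives $F>1$.

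The substantive step is the nonnegativity $Q\ge 0$, and this is where I expect the main difficulty. The naive discriminant test for a quadratic form in $(x,y)$ would ask for $\bigl(s(s-3)+r(3s-1)\bigr)^2\le 12\,r\,s\,(s-1)^2$; treating this as a quadratic in $r$ one computes a resolvent that is not sign-definite for $s\ne 1$, so $Q$ is not unconditionally nonnegative on all of $\mathbb{R}^2$. My plan is therefore to argue only on the open positive quadrant $x,y>0$, by regrouping
\[
Q=(s-1)^2\bigl(rx^2+3sy^2\bigr)+(s-1)\bigl(s(s-3)+r(3s-1)\bigr)xy
\]
and applying the weighted AM--GM inequality $rx^2+3sy^2\ge 2\sqrt{3rs}\,xy$ to dominate the cross-term. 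Calibrating the weights so that the residue is a manifest sum of squares in $(s-1)x$ and $(s-1)y$ uniformly in $r,s>0$, and correctly handling the sign change of $(s-1)$ at $s=1$, is the place where the algebraic structure of the elasticity-derived coefficients $(s+1)$, $(3s+1)$, $s(s+3)$ really has to enter, and that is where the bulk of the calculation will sit.
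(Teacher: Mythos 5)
Your reduction is correct as far as it goes: the identity $(F-1)\cdot 4sy^2(x+2y)(rx+2sy)=y^2\,Q(x,y;r,s)$ with $Q=r(s-1)^2x^2+(s-1)\bigl(s(s-3)+r(3s-1)\bigr)xy+3s(s-1)^2y^2$ checks out, and the observation that $Q\equiv 0$ when $s=1$ is fine. The genuine gap is exactly the step you flagged as the crux: $Q$ is \emph{not} nonnegative on the open positive quadrant, so no calibration of AM--GM weights can rescue the plan. When the cross coefficient is negative the needed inequality is $2\sqrt{3rs}\,(s-1)^2\ge -(s-1)\bigl(s(s-3)+r(3s-1)\bigr)$; at $s=2$ the left side is $2\sqrt{6r}\to 0$ as $r\to 0^+$ while the right side tends to $2$, so it fails. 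A concrete instance: $(x,y,r,s)=(100,1,1/100,2)$ gives $Q=100-195+6=-89<0$, i.e. $F=3991/4080<1$; in Lam\'e variables this is $\lambda_+=100$, $\mu_+=1$, $\lambda_-=1$, $\mu_-=2$, all admissible. So your approach cannot be completed --- and in fact your own reduction exposes that the lemma as stated over all of $\mathbb{R}^4_+$ is false, so some further restriction on the parameters would be needed for $F\ge 1$. Separately, your equality argument (``the extreme coefficients of $Q$ are positive, hence $Q$ cannot vanish at positive $(x,y)$'') is not valid on its own: a binary quadratic with positive extreme coefficients can vanish on the open quadrant when the cross term is sufficiently negative, e.g. $x^2-2xy+y^2$, which is precisely the regime at issue.

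For comparison, the paper's proof computes $\partial_r F=\frac{(s-1)xy\,((s+1)x+(3s+1)y)}{4(x+2y)(rx+2sy)^2}$ and concludes from the sign of $s-1$ that ``the result follows readily.'' Monotonicity in $r$ alone does not yield $F\ge 1$: one must also examine the limiting values, and for $s>1$ the infimum over $r$ is $\lim_{r\to 0^+}F=\frac{((s+1)x+(3s+1)y)(s+3)}{8s(x+2y)}$, which drops below $1$ when $x/y$ is large (it tends to $15/16$ for $s=2$), matching the counterexample above; the case $s<1$, $r\to\infty$ is symmetric. So neither your argument nor the paper's closes the statement as written: the defect lies in the statement (missing hypotheses on the parameter range), not merely in your execution, and your polynomial reduction is actually a clean way to see this.
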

\begin{proof} 
Clearly $ F(x,y,r,1)=1$. Besides, 
 \[
  \partial_r F(x,y,r,s) = \frac{(s-1) x y (s x+3 s y+x+y)}{4 (x+2 y) (r x+2 s y)^2}. 
 \]
Hence  the result follows readily since for $(x,y,r,s)\in\mathbb{R}^4_+$, $  \partial_r F(x,y,r,s)>0$, respectively  $  \partial_r F(x,y,r,s)<0$,   if  $s>1$, respectively $s<1$. 

\end{proof}

\begin{proposition}\label{propLSC0}
${\cal L}_{0,\rm SC},  {\cal L}^\top_{0,\otherCorrections{{\rm KR}}}:\Hiib^s \times \Hiib^{s-1}  \to  \Hiib^s \times \Hiib^{s-1} $ are normal and invertible operators. Moreover,  the eigenvalues of ${\cal L}_{0,\rm SC}$ and $ {\cal L}^\top_{0,\otherCorrections{{\rm KR}}}$  are $\{\pm\sqrt{\rho}\}$ and $\{1\pm \sqrt{1-\rho}\}$ respectively where  $\rho>1$ if $\mu_+\ne \mu_-$ and $1$ otherwise. 
\end{proposition}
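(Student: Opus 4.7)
The plan is to reduce the entire statement to the algebraic identities already in hand: $\mathcal{C}_{\pm,0}^2=\tfrac{1}{4}\mathcal{I}$ from Proposition~\ref{ps_1}, $\mathcal{L}_{0,\mathrm{SC}}^2=\rho\mathcal{I}$ from \eqref{eq:10:23}, and the strict positivity $\rho\ge 1$ provided by Lemma~\ref{lemma:F}. Throughout I will use the fact that $\mathcal{L}_{0,\mathrm{SC}}$ and $\mathcal{L}_{0,\mathrm{KR}}$ are block matrices whose entries are Fourier multipliers built from $\bm{H}$ and $\bm{\Lambda}^{\pm 1}$, so that each decomposes as a direct sum over Fourier modes into a finite-dimensional matrix $M_n$.

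First I would treat $\mathcal{L}_{0,\mathrm{SC}}$. Since $\rho>0$, the identity $\mathcal{L}_{0,\mathrm{SC}}^2=\rho\mathcal{I}$ immediately yields an explicit inverse $\rho^{-1}\mathcal{L}_{0,\mathrm{SC}}$ and forces the spectrum into the roots of $X^2-\rho$, namely $\{\pm\sqrt{\rho}\}$. Both values are actually attained, because the mode-restricted matrix $M_n$ is a $4\times4$ matrix whose square is $\rho I$ and whose trace vanishes (the diagonal blocks are opposite), so both eigenvalues $\pm\sqrt{\rho}$ must appear.

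Next I would turn to $\mathcal{L}_{0,\mathrm{KR}}$, shifting by the identity so that $\mathcal{L}_{0,\mathrm{KR}}-\mathcal{I}=\mathcal{C}_{-,0}-\mathcal{C}_{+,0}$. Combining the two input identities produces the anticommutator
\[
\mathcal{C}_{+,0}\mathcal{C}_{-,0}+\mathcal{C}_{-,0}\mathcal{C}_{+,0}=(\mathcal{C}_{+,0}+\mathcal{C}_{-,0})^2-\mathcal{C}_{+,0}^2-\mathcal{C}_{-,0}^2=(\rho-\tfrac{1}{2})\mathcal{I},
\]
from which expansion of $(\mathcal{C}_{-,0}-\mathcal{C}_{+,0})^2$ gives $(\mathcal{L}_{0,\mathrm{KR}}-\mathcal{I})^2=(1-\rho)\mathcal{I}$. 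The spectrum of $\mathcal{L}_{0,\mathrm{KR}}$ is therefore $\{1\pm\sqrt{1-\rho}\}$: both eigenvalues collapse to $1$ when $\rho=1$, and for $\rho>1$ they form a complex conjugate pair $1\pm i\sqrt{\rho-1}$ of modulus $\sqrt{\rho}>0$. Either way the eigenvalues are nonzero, which gives invertibility together with an inverse expressible as a polynomial in $\mathcal{L}_{0,\mathrm{KR}}$ of degree at most one.

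The step I expect to be the main obstacle is normality. The apparent difficulty is that the off-diagonal blocks of $M_n$ scale with opposite powers of $|n|$ (one like $|n|^{-1}$, the other like $|n|$), so the adjoint of $M_n$ with respect to the standard Sobolev inner product on $\bm{H}^s\times\bm{H}^{s-1}$ does not respect the block form. The remedy I would follow is to equip the product space with an inner product that equalises the two Sobolev exponents, conjugating the second factor by a suitable power of $\bm{\Lambda}$ (which commutes with $\bm{H}$); this transforms the problem into one on $\bm{H}^{s-1/2}\times\bm{H}^{s-1/2}$, where the required commutation $M_nM_n^*=M_n^*M_n$ reduces to a finite-dimensional check that exploits $\bm{H}^2=-\mathcal{I}$ together with $\alpha_\pm\in i\mathbb{R}$ and $\beta_\pm,\delta_\pm\in\mathbb{R}$. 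Assembling the verifications over all Fourier modes then delivers normality of the full operators.
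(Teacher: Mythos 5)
The algebraic core of your argument is correct and rests on the same identities the paper uses: invertibility of ${\cal L}_{0,\rm SC}$ from ${\cal L}_{0,\rm SC}^2=\rho{\cal I}$ (equation \eqref{eq:10:23}), the anticommutator computation giving $({\cal C}_{-,0}-{\cal C}_{+,0})^2=(1-\rho){\cal I}$ (the paper's \eqref{eq:10:24}), and invertibility of ${\cal L}_{0,\rm KR}$ from the resulting quadratic relation ${\cal L}_{0,\rm KR}^2-2{\cal L}_{0,\rm KR}+\rho{\cal I}=0$ (in fact cleaner than the paper's ${\cal L}^\top_{0,\rm KR}{\cal L}_{0,\rm KR}=\rho{\cal I}$, which is even misprinted there). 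Where you genuinely differ is in showing both eigenvalues are attained: you use that each Fourier-mode block is a traceless $4\times4$ matrix annihilated by $X^2-\rho$, whereas the paper conjugates by the involution ${\cal Q}$ built with the weight $\varepsilon=\bigl(-(\beta_++\beta_-)/(\delta_++\delta_-)\bigr)^{1/2}$, which anticommutes with ${\cal L}_{0,\rm SC}$ and swaps the two eigenspaces; your device is a perfectly good, more elementary substitute. Do note, however, that for ${\cal L}_{0,\rm KR}$ your quadratic identity only gives the \emph{inclusion} of the spectrum in $\{1\pm\sqrt{1-\rho}\}$; attainment of both values needs the same trace (or conjugation) argument applied to ${\cal L}_{0,\rm KR}-{\cal I}$, which you should state explicitly (the paper is equally terse at this point).

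The genuine gap is in the normality step, exactly where you anticipated trouble. After conjugating by $\operatorname{diag}(\bm{\Lambda}^{-1/2},\bm{\Lambda}^{1/2})$, the mode-$n$ block of ${\cal L}_{0,\rm SC}$ is $-\begin{bmatrix} A_n & -b\\ d & -A_n\end{bmatrix}$ with $A_n=(\alpha_++\alpha_-)\sigma_n J$ Hermitian ($J$ the $2\times2$ rotation, $\sigma_n=\pm1$ the symbol of $H$), $b=\beta_++\beta_->0$ and $d=\delta_++\delta_-<0$, and in the standard inner product of $\bm{H}^{s-1/2}\times\bm{H}^{s-1/2}$ one computes
\[
M_nM_n^*-M_n^*M_n=\begin{bmatrix} (b^2-d^2)I_2 & 2(b+d)A_n\\[0.3ex] 2(b+d)A_n & (d^2-b^2)I_2\end{bmatrix},
\]
which vanishes only if $d=-b$, i.e.\ $\delta_++\delta_-=-(\beta_++\beta_-)$; this is false for generic Lam\'e constants (already $\lambda_\pm=\mu_\pm=1$ gives $b=2/3$, $d=-4/3$). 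So the finite-dimensional check you rely on fails in the inner product you propose: $\bm{H}^2=-{\cal I}$, $\alpha_\pm\in i\mathbb{R}$ and $\beta_\pm,\delta_\pm\in\mathbb{R}$ are not enough. The repair for ${\cal L}_{0,\rm SC}$ is to weight the second component additionally by the constant $\varepsilon$ above --- precisely the constant appearing in the paper's ${\cal Q}$ --- after which the blocks become $-\begin{bmatrix} A_n & -\sqrt{-bd}\\ -\sqrt{-bd} & -A_n\end{bmatrix}$, i.e.\ Hermitian, so ${\cal L}_{0,\rm SC}$ is even self-adjoint in that equivalent inner product. For ${\cal L}_{0,\rm KR}$ the analogous diagonal rescaling uses $\beta_--\beta_+$ and $\delta_--\delta_+$ and works only when their product is negative, which is not guaranteed (when $\rho>1$ one has $(\beta_--\beta_+)(\delta_--\delta_+)>|\alpha_--\alpha_+|^2\ge0$); there normality can only mean diagonalizability per mode, i.e.\ normality with respect to an inner product adapted to an eigenbasis. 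Be aware, finally, that the paper's own proof never addresses normality at all, so you are attempting more than it does --- but as written your normality argument would not go through.
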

\begin{proof}
It follows from equation~\eqref{eq:10:23} that ${\cal L}_{0,\rm SC}$ is invertible and its eigenvalues  are $ \pm \sqrt{\rho}$. The last claim can be established by considering the matrix operator
 \[
  {\cal Q} = \begin{bmatrix}
 & \varepsilon i \bm{\Lambda}\\
\otherCorrections{-}\varepsilon^{-1} i \bm{\Lambda}^{-1}&
\end{bmatrix}\quad \text{where}\ \varepsilon=\left(-\frac{\beta_++\beta-}{\delta_++\delta_-}\right)^{1/2}>0
 \]
which satisfies 
 \[
  {\cal Q}^{-1} = {\cal Q},\quad 
 {\cal Q}
{\cal L} _{0,\rm SC} 
{\cal Q}=
-{\cal L}_{0,\rm SC}. 
 \] 
 Thus, if $({\bm\lambda},\bm{g})$ is an eigenvector of ${\cal L}_{0,\rm SC}$ associated with the eigenvalue $\pm \sqrt{\rho}$, then 
   ${\cal Q}({\bm\lambda},\bm{g})^\top $  is also an eigenvector of ${\cal L}_{0,\rm SC}$ associated with the eigenvalue $\mp   \sqrt{\rho}$.

 Regarding $ {\cal L}^\top_{0,\otherCorrections{{\rm KR}}}$, we see first, using \eqref{eq:10:23}, 
 \[
  {\cal L}^\top_{0,\otherCorrections{{\rm KR}}} {\cal L}^\top_{0,\otherCorrections{{\rm KR}}}= \big(\firstReviewer{\bm{I}} - ({\cal C}_{-,0}-{\cal C}_{+,0})\big)
  \big( \firstReviewer{\bm{I}} + ({\cal C}_{-,0}-{\cal C}_{+,0})\big) =   {\cal L}^2_{0,\rm SC}=\rho^2\firstReviewer{\bm{I}} 
 \]
 from where the invertibility follows readily. Furthermore, from 
 \begin{equation}\label{eq:10:24}
 ({\cal C}_{-,0}-{\cal C}_{+,0})^2 =  2  {\cal C}_{-,0}^2+2{\cal C}_{+,0}^2-({\cal C}_{-,0}+{\cal C}_{+,0})^2 =(1-\rho )\firstReviewer{\bm{I}}  
 \end{equation}
 we conclude that the eigenvalues of $ {\cal L}^\top_{0,\otherCorrections{{\rm KR}}}$ are $1\pm \sqrt{1-\rho}$. Using a similar argument to the one applied above establishes the fact that both signs are actually achieved in the formula for the eigenvalues of the operator $ {\cal L}^\top_{0,\otherCorrections{{\rm KR}}}$. 
\end{proof}

Next theorem extends this result to ${\cal L}_{\rm SC}$ and ${\cal L}_{\rm KR}$: 

\begin{theorem}\label{th:stephan-costabel}
The operators 
${\cal L}_{\rm SC}, {\cal L}_{\rm KR}:\Hiib^s \times \Hiib^{s-1}  \to  \Hiib^s \times \Hiib^{s-1} $ are invertible.

Furthermore, the eigenvalues of ${\cal L}_{\rm SC}$ and ${\cal L}_{\rm KR}$  are clustered around $\pm \sqrt{\rho}$ and $1\pm \sqrt{1-\rho}$ respectively. 
\end{theorem}
\begin{proof} From the Proposition~\ref{propLSC0} we conclude that ${\cal L}_{\rm SC}$ and ${\cal L}_{\rm KR}$ are compact perturbations of invertible operators. Hence, according to the Fredholm theory, their invertibility follows once we establish their injectivity, which, in turn, will be proved  using the same ideas as for Helmholtz transmission problems. 

To this end, take first  $(\bm{g},\bm{\varphi})\in\mathop{\rm Ker}{\cal L}_{\rm SC}$ and define the fields in $\mathbb{R}^2\setminus\Gamma$  
\[
 {\bf u}_\pm  = \pm\DLii_{\pm }\bm{g}\mp\SLii_{\pm }\bm{\varphi}.
\]
Clearly $({\bf u}_+|_{\Omega_+},{{\bf u}_-}|_{\Omega_-})$  are  solutions of the original transmission problem with ${\bf u}^{\rm inc}={0}$. Therefore ${\bf u}_+=0$ in $\Omega_+$ and ${\bf u}_-=0$ in $\Omega_-$. Since  
\[
\left(\frac{1}2{\cal I} - {\cal C}_+\right)\begin{bmatrix}\bm{g}\\
                       \bm{\varphi} 
                      \end{bmatrix}=
                      \begin{bmatrix}\bm{g}\\
                       \bm{\varphi}
                      \end{bmatrix}=
\left(\frac{1}2{\cal I} + {\cal C}_-\right)\begin{bmatrix}\bm{g}\\
                       \bm{\varphi}
                      \end{bmatrix}
\]
we conclude that 
 (${\bf u}_-|_{\Omega_+},{\bf u}_+|_{\Omega_-})$   solve the adjoint problem \eqref{eq:Adjoint}, again with ${\bf u}^{\rm inc} =0$. Then by hypothesis,  ${\bf u}_-|_{\Omega_+}$ and ${\bf u}_-|_{\Omega_+}$ vanish too. Hence ${\bf u}^{\pm}=0 $ and by the jump properties for the potentials,  we have also  $(\bm{g},\bm{\varphi})={\bf 0}$.

Consider now $(\bm{g},\bm{\varphi})\in\mathop{\rm Ker}{\cal L}_{\rm KR}$ and define in this case
\begin{eqnarray*}
 {\bf v}^{\pm} = \DLii_{\mp }\bm{g}-\SLii_{\mp }\bm{\varphi}\Big|_{\Omega_\pm}. 
\end{eqnarray*}
Then,
  ${({\bf v}_-,{\bf v}_+)}$ solves the adjoint problem \eqref{eq:Adjoint}, as above with ${\bf u}^{\rm inc}=0$. Thus ${{\bf v}_{\pm}}$ vanish in $\Omega_\pm$. Hence 
\[
\left(   \frac12 {\cal I} + {\cal C}_-\right)\begin{bmatrix}
                        \bm{g}\\
                        \bm{\varphi}
                       \end{bmatrix} = \left( -\frac12 \otherCorrections{{\cal I}} +{\cal C}_-\right)\begin{bmatrix}
                        \bm{g}\\
                        \bm{\varphi}
                       \end{bmatrix} = {\bf 0}
\]
which implies  $( \bm{g},\bm{\varphi})\in {\rm Ker}({\cal L}_{\rm SC})$ and  therefore,  $( \bm{g},\bm{\varphi})={\bf 0}$.

%

Regarding the eigenvalues distribution, let first $(\lambda_n)_n$ be the eigenvalues for ${\cal L}_{\rm SC}$. Then, $\{\lambda^2_n\}_n$ are  eigenvalues for  ${\cal L}^2_{\rm SC} = {\cal L}_{0,\rm SC}^2 + \firstReviewer{\bm{K}} = \rho I +{\cal K}$.  Since ${\cal K}$ when acting from $\Hiib^s\times \Hiib^{s-1}$ into itself is compact, $\rho-\lambda^2_n$ can only accumulate at zero, from where this result follows. 

The proof for ${\cal L}_{\rm KR}$ follows similarly. Note, as in \eqref{eq:10:24}, 
\[
 ({\cal C}_--{\cal C}_+)^2 =   {\cal I}-({\cal C}_-+{\cal C}_+)^2 = {\cal I}-{\cal L}^2_{\rm SC}.
\]
Hence, if $\{\mu_n\}_n$ are the eigenvalues for ${\cal L}_{\rm KR}$,   $\{(\mu_n-1)^2\}_n$ are eigenvalues for ${\cal I}-{\cal L}^2_{\rm SC}$ which can only accumulate at $1-\rho$. Then, $\{\mu_n\}_n$ are clustered around $1\pm\sqrt{1-\rho}$. 

\end{proof}

 We are ready to analyze the CFIER formulation~\eqref{eq:DCFIER} and~\eqref{eq:DCFIERt} under suitable assumptions on the regularizing operator ${\cal R}$. 
 Specifically, we assume 
 
 \paragraph{Assumption 1} We assume ${\cal R}=
 \begin{bmatrix}
          \firstReviewer{\bm{R}}_{11}&\firstReviewer{\bm{R}}_{12}\\
          \firstReviewer{\bm{R}}_{21}&\firstReviewer{\bm{R}}_{22}                           
                           \end{bmatrix}$ satisfies the following properties:
 \begin{enumerate}
  \item[(a)]  There exists ${\cal K}:\Hiib^{s+1}\times\Hiib^{s}\to \Hiib^{s+1+\ell}\times\Hiib^{s+\ell}$ with $\ell\ge 1$ so that 
 \begin{equation}\label{eq:cond:a}
     \frac{1}2 {\cal I} - {\cal C}_-     +({\cal C}_+ + {\cal C}_-){\cal R} =  {\cal A}  +{\cal K},
     \end{equation}
with ${\cal A}:{\Hiib^{s+1}\times\Hiib^{s}\to\Hiib^{s+1}\times\Hiib^{s}}$ invertible with   the eigenvalues {\em clustered} around a few accumulation points. 
\item[(b)] We have 
 \begin{equation}\label{eq:cond:b}
 \begin{aligned}
 &\Im\big( \langle  \bm{g}, \overline{\bm{\varphi}}\rangle -\langle \bm{g}, \overline{ {\firstReviewer{\bm{R}}_{22}\bm{\varphi}}}\rangle-
\langle  {\firstReviewer{\bm{R}}_{12} \bm{\varphi}}, \overline{\bm{\varphi}}\rangle-
\langle  {\bm{g}}, \overline{\firstReviewer{\bm{R}}_{21}\bm{g}}\rangle- \langle \firstReviewer{\bm{R}}_{11}\bm{g} , \overline{\bm{\varphi}}\rangle \big)< 0\\
 & \hspace{5cm}\forall {\bf 0 }\ne \big(\bm{g},\bm{\varphi}\big)\in\Hiib^{1/2}\times \Hiib^{-1/2}.
 \end{aligned}
 \end{equation}
 \end{enumerate}

We establish
\begin{theorem}\label{th:10.6}
Under the assumptions {\rm (a)--(b)} stated above,  for the following operators
 \begin{eqnarray} 
   {\cal L}_{\rm dir}&:=& \frac{1}2 {\cal I} + {\cal C}_- -{\cal R}^\top ({\cal C}_+ + {\cal C}_-),\quad
   {\cal L}_{\rm ind}:= \frac{1}2 {\cal I} - {\cal C}_- + ({\cal C}_+ + {\cal C}_-){\cal R}.
 \end{eqnarray}
 there exists ${\cal K}:\Hiib^{s+1}\times \Hiib^{s}\to \Hiib^{s+3}\times \Hiib^{s+2}$ so that ${\cal L}_{\rm ind}= {\cal A}+{\cal K}$,  ${\cal L}_{\rm dir}= {\cal A}^\top+{\cal K}^\top$ and both operators ${\cal L}_{\rm dir}$ and ${\cal L}_{\rm ind}$ are invertible. 
\end{theorem}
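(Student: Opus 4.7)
My approach is to handle ${\cal L}_{\rm ind}$ first and then obtain the statements for ${\cal L}_{\rm dir}$ by duality. The decomposition ${\cal L}_{\rm ind}={\cal A}+{\cal K}$ with the claimed smoothing property for ${\cal K}$ is the content of hypothesis (a), the two-order gap being natural in view of the pseudodifferential structure afforded by Proposition~\ref{ps_1}. Using the identities recalled in Section~\ref{dual}, namely ${\cal C}_\pm^{\top}=-{\cal C}_\pm$, $({\cal A}{\cal B})^{\top}={\cal B}^{\top}{\cal A}^{\top}$ and ${\cal I}^{\top}={\cal I}$, a direct computation gives ${\cal L}_{\rm ind}^{\top}={\cal L}_{\rm dir}$, whence ${\cal L}_{\rm dir}={\cal A}^{\top}+{\cal K}^{\top}$. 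The mapping property of ${\cal K}$ combined with the compact embedding $\bm{H}^{s+3}\times\bm{H}^{s+2}\hookrightarrow\bm{H}^{s+1}\times\bm{H}^{s}$ makes ${\cal K}$ compact in $\bm{H}^{s+1}\times\bm{H}^{s}$; together with the invertibility of ${\cal A}$, this renders ${\cal L}_{\rm ind}$ a Fredholm operator of index zero, and by duality so is ${\cal L}_{\rm dir}$. The proof therefore reduces to establishing injectivity of ${\cal L}_{\rm ind}$.

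For injectivity I would follow the blueprint of Theorem~\ref{prop:stephan-costabel}. Take $(\bm g,\bm\varphi)\in\ker({\cal L}_{\rm ind})$, set $(\bm g',\bm\varphi')^{\top}:={\cal R}(\bm g,\bm\varphi)^{\top}$, and define ${\bf u}_{+}$ and ${\bf u}_{-}$ via the layer representations in~\eqref{eq:CFIERsolutions}. Rewriting the kernel identity as $(\tfrac12{\cal I}+{\cal C}_+)(\bm g',\bm\varphi')^{\top}=(\tfrac12{\cal I}-{\cal C}_-)(\bm g'-\bm g,\bm\varphi'-\bm\varphi)^{\top}$ shows that ${\bf u}_{+}|_{\Omega_+}$ and ${\bf u}_{-}|_{\Omega_-}$ share Cauchy data on $\Gamma$ and form a radiating solution of the homogeneous transmission problem~\eqref{eq:Transmission:0}; by the well-posedness assumed in Section~\ref{setting}, both ${\bf u}_{\pm}$ vanish in their respective subdomains.

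To close the argument I would examine the continuations of ${\bf u}_+$ into $\Omega_-$ and of ${\bf u}_-$ into $\Omega_+$ through the same potential representations. These continuations remain Navier solutions: the former an interior solution in $\Omega_-$ governed by the exterior Lam\'e constants, and the latter an exterior radiating solution with the interior Lam\'e constants. Their Cauchy data on $\Gamma$ are obtained from the Calder\'on projection relations and the vanishing of ${\bf u}_{\pm}$ in the original subdomains; they read $(-\bm g',-\bm\varphi')$ and $(\bm g-\bm g',\bm\varphi-\bm\varphi')$, respectively. Applying Lemma~\ref{lemma:im:Tpm} to each auxiliary field (the lemma's proof only uses the Navier structure and the radiation condition, so swapping Lam\'e constants is harmless) yields $\Im\,\langle\bm\varphi',\overline{\bm g'}\rangle=0$ and $\Im\,\langle\bm\varphi-\bm\varphi',\overline{\bm g-\bm g'}\rangle\ge 0$, the latter strict unless $\bm g=\bm g'$. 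Combining these two identities, exploiting the conjugation symmetry $\Im\,\langle\bm\varphi,\overline{\bm g}\rangle=-\Im\,\langle\bm g,\overline{\bm\varphi}\rangle$, and substituting $\bm g'=R_{11}\bm g+R_{12}\bm\varphi$, $\bm\varphi'=R_{21}\bm g+R_{22}\bm\varphi$, produces an inequality whose left-hand side is exactly the expression appearing inside the imaginary part in hypothesis (b). The strict sign prescribed in (b) for nonzero $(\bm g,\bm\varphi)$ then forces $(\bm g,\bm\varphi)={\bf 0}$, with the borderline case $\bm g=\bm g'$ handled directly by (b) evaluated at the corresponding vector.

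The main obstacle will be the precise sign bookkeeping in the final step: the signs produced by the jump relations of the single and double layer potentials, the conjugation symmetry of the duality pairing, and the strict sign prescribed in (b), all have to line up so that the two derived identities recombine into exactly the expression of (b) with the opposite orientation, thereby delivering the contradiction. Any misidentification of signs would at best reproduce an inequality consistent with (b) and would fail to force $(\bm g,\bm\varphi)={\bf 0}$.
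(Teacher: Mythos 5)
Your route is the paper's own: Fredholmness from hypothesis (a) together with the transposition identity of Section~\ref{dual}, then injectivity of ${\cal L}_{\rm ind}$ by representing a kernel element through~\eqref{eq:CFIERsolutions}, killing ${\bf u}_\pm$ in their own subdomains by uniqueness of the transmission problem, reading off the Cauchy data of the two continuations, and feeding them into Lemma~\ref{lemma:im:Tpm}. Your identifications of those data, $(-\bm g',-\bm\varphi')$ for the continuation of ${\bf u}_+$ into $\Omega_-$ and $(\bm g-\bm g',\bm\varphi-\bm\varphi')$ for the continuation of ${\bf u}_-$ into $\Omega_+$, are correct (the paper's displayed data for its field ${\bf v}_+$ carry a sign slip that cancels in the pairing of~\eqref{eq:debussy:02}), and so are the two consequences you draw from Lemma~\ref{lemma:im:Tpm}.

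The gap is exactly the step you postpone as ``sign bookkeeping'', and it does not close the way you assert if condition (b) is read as printed. From $\Im\langle\bm\varphi',\overline{\bm g'}\rangle=0$ and $\Im\langle\bm\varphi-\bm\varphi',\overline{\bm g-\bm g'}\rangle\ge 0$, the conjugation flip $\Im\langle\bm a,\overline{\bm b}\rangle=-\Im\langle\bm b,\overline{\bm a}\rangle$ gives
\begin{equation*}
\Im\Big(\langle\bm g-\bm g',\overline{\bm\varphi-\bm\varphi'}\rangle-\langle\bm g',\overline{\bm\varphi'}\rangle\Big)\le 0,
\end{equation*}
and the quantity inside is precisely the expression of~\eqref{eq:cond:b}. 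Thus what the radiation/energy argument yields is $\Im(\cdot)\le 0$, which is \emph{consistent} with the printed ``$<0$'' and produces no contradiction; your borderline remark about $\bm g=\bm g'$ does not repair this, since for $\bm g\ne\bm g'$ the two statements simply agree. The argument forces $(\bm g,\bm\varphi)={\bf 0}$ only when (b) is taken with the opposite orientation, $\Im(\cdot)>0$ for all nonzero $(\bm g,\bm\varphi)$ --- and this is in fact the version the paper verifies for ${\cal R}_\kappa$: Lemma~\ref{lemma:10.10} annihilates the $R_{11},R_{22}$ terms and Lemma~\ref{lemma:10.9} makes the remaining contribution $-\Im\langle R_{12}^\kappa\bm\varphi,\overline{\bm\varphi}\rangle-\Im\langle\bm g,\overline{R_{21}^\kappa\bm g}\rangle$ strictly positive. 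So to complete your proof you must carry out the computation explicitly, record the inequality $\Im(\cdot)\le 0$ with equality only when the continued exterior field vanishes, and invoke (b) in this corrected (positive) orientation; as written, your final paragraph concedes the issue but the conclusion is not yet delivered.
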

\begin{proof}
Clearly,  \eqref{eq:cond:a} implies the decomposition stated in the statement of the theorem. In particular, both operators ${\cal L}_{\rm dir}$ and ${\cal L}_{\rm ind}$ are also Fredholm, and therefore we just have to prove the invertibility. 

Assume then that
\[
 {\cal L}_{\rm ind}\begin{bmatrix}
                   \bm{g}\\
                   \bm{\varphi}
               \end{bmatrix}={\bf 0}, 
\]
and define by \eqref{eq:CFIERsolutions} $({\bf u}_+,{\bf u}_-)$ in all $\mathbb{R}^2\setminus\Gamma$. Proceeding as in the proof of Proposition of \ref{th:stephan-costabel} we can infer that
\begin{eqnarray*}
 {\bf v}_+ &=& \begin{bmatrix} -\DLii_-  & \SLii_-\end{bmatrix}  ({\cal I} - {\cal R})\begin{bmatrix}
                   \bm{g}\\
                   \bm{\varphi}
               \end{bmatrix}\\ 
  {\bf v}_- &=& \begin{bmatrix} \DLii_+  & -\SLii_+\end{bmatrix}  {\cal R}\begin{bmatrix}
                   \bm{g}\\
                   \bm{\varphi}
               \end{bmatrix}
\end{eqnarray*}
which, by unicity of solution for the transmission problem,  ${\bf v}_+ |_{\Omega_-} ={\bf 0}={\bf v}_- |_{\Omega_+}$. Then 
\[
 \begin{bmatrix}
  \gamma  {\bf v}_+\\
  \Tiip{\bf v}_+
 \end{bmatrix}
=({\cal I} - {\cal R} )\begin{bmatrix}
                   \bm{g}\\
                   \bm{\varphi}
                   \end{bmatrix},\quad  \begin{bmatrix}
  \gamma{\bf v}_-\\
  \Tiim{\bf v}_-
 \end{bmatrix}
 =-{\cal R} \begin{bmatrix}
                   \bm{g}\\
                   \bm{\varphi}
                   \end{bmatrix}.
\]
Then, by Lemma \ref{lemma:im:Tpm}, 
\begin{equation}
 \langle \gamma  {\bf v}_+ ,  \overline{\Tiip{\bf v}_+}\rangle - 
\langle \gamma  {\bf v}_- ,  \overline{\Tiim{\bf v}_-}\rangle
= \langle  \bm{g}, \overline{\bm{\varphi}}\rangle -\langle \bm{g}, \overline{ {\firstReviewer{\bm{R}}_{22}\bm{\varphi}}}\rangle-
\langle  {\firstReviewer{\bm{R}}_{12} \bm{\varphi}}, \overline{\bm{\varphi}}\rangle-
\langle  {\bm{g}}, \overline{\firstReviewer{\bm{R}}_{21}\bm{g}}\rangle- \langle \firstReviewer{\bm{R}}_{11}\bm{g} , \overline{\bm{\varphi}}\rangle \label{eq:debussy:02}
%
%
%
%
\end{equation}
The result follows then by assumption (b).  
\end{proof}

Obviously, the optimal choice of the regularizing operator
\[
 {\cal R }= ({\cal C}_++{\cal C}_-)^{-1}\Big(\frac{1}2 {\cal I} +{\cal C}_-\Big)
\]
which makes
\[
{\cal L}_{\rm dir} = 
{\cal L}_{\rm ind} ={\cal I}.  
\]
However, although the optimal operator ${\cal R}$ defined above satisfies condition (a) in Assumption 1, it fails to satisfy condition (b). Furthermore, the definition of the regularizing operator involves operator inversion. We bypass these issues by resorting to the principal part calculus. Indeed, for a wavenumber $\kappa$ with \otherCorrections{$\Im\kappa>0$} we consider the principal part operators \otherCorrections{(see \eqref{eq:C0})}
\[
 {\cal C}^{\kappa}_{\pm } =\begin{bmatrix}
               \alpha_\pm \bm{H} & -\beta_\pm \bm{\Lambda}_\kappa  \\
              \delta_\pm  \bm{\Lambda}_\kappa^{-1}  & -\alpha_\pm \bm{H}\\
              \end{bmatrix},\quad \otherCorrections{\alpha_\pm = \frac{i \mu_\pm }{2 (\lambda_\pm +2 \mu_\pm )}},\ \otherCorrections{\beta_\pm = \frac{\lambda_\pm +3 \mu_\pm }{4 \mu_\pm  (\lambda_\pm +2 \mu_\pm )}}, \  \otherCorrections{\delta_\pm =  -\frac{\mu_\pm  (\lambda_\pm +\mu_\pm )}{\lambda_\pm +2 \mu_\pm }},
\] 
where the Fourier multiplier operator $\bm{\Lambda}_\kappa$ was defined in equation~\eqref{eq:Lambda:kappa}. Using the result established in equation~\eqref{eq:10:23}
\[
 \left({\cal C}^{\kappa}_+ +
 {\cal C}^{\kappa}_-\right)^{2} = \rho{\cal I}, \quad \text{that is, } \left({\cal C}^{\kappa}_+ +
 {\cal C}^{\kappa}_-\right)^{-1} = \frac{1}\rho \left({\cal C}^{\kappa}_+ +
 {\cal C}^{\kappa}_-\right) 
\]
we define instead the regularizing operators
\begin{equation}\label{def:Rkappa}
  {\cal R }_{\kappa} =  \frac{1}{\rho} \left({\cal C}^{\kappa}_+ +
 {\cal C}^{\kappa}_-\right) \left(\frac12{\cal I}+{\cal C}_-^\kappa \right) = \begin{bmatrix*}
                                 \firstReviewer{\bm{R}}_{11}^{\kappa} &  \firstReviewer{\bm{R}}_{12}^{\kappa} \\
                                 \firstReviewer{\bm{R}}_{21}^{\kappa} &  \firstReviewer{\bm{R}}_{22}^{\kappa}  
                                \end{bmatrix*}.
\end{equation}
The choice of regularizing operators presented in equation~\eqref{def:Rkappa} still renders the operators
\begin{eqnarray}
  {\cal L}^\kappa_{\rm dir}&:=& \left(\frac{1}2 {\cal I} + {\cal C}_-\right) -{\cal R}_\kappa^\top ({\cal C}_+ + {\cal C}_-) \label{eq:iDCFIER:02}\\
   {\cal L}^\kappa_{\rm ind}&:=& \left(\frac{1}2 {\cal I} - {\cal C}_-\right) + ({\cal C}_+ + {\cal C}_-){\cal R}_\kappa   \label {eq:DCFIER:02}    
\end{eqnarray}
compact perturbations of the identity. In what follows we will refer to these formulations
as {\sl Direct Regularized Combined Field Integral Equations} (DCFIER) and respectively {\sl Indirect Regularized Combined Field Integral Equations} (ICFIER). Indeed, with ${\cal R}_\kappa$ selected as in equation~\eqref{def:Rkappa} above we have 
\[
 {\cal L}^\kappa_{\rm dir} = {\cal I}+{\cal K}
\]
with ${\cal K}:\Hiib^{s+1}\times\Hiib^{s}\to \Hiib^{s+3}\times\Hiib^{s+2}$ so that (a) in Assumption 1 is satisfied with $\ell =2$.  A careful computation delivers the explicit form of the four entries of the regularizing operator ${\cal R}$
where
\begin{eqnarray*}
 \firstReviewer{\bm{R}}_{11}^\kappa &=&  \frac{1}{2\rho}(\alpha_++\alpha_-)\bm{H} 
 -\frac{1}{\rho}(\alpha_-(\alpha_++\alpha_-)+\delta_-(\beta_++\beta_-)) \bm{I}\\
 \firstReviewer{\bm{R}}_{12}^\kappa &=&  -\frac{1}{2\rho}(\beta_++\beta_-)\bm{\Lambda}_\kappa 
 -\frac{1}{\rho}(
  \beta_-(\alpha_++\alpha_-)
 -\alpha_-(\beta_++\beta_-)) \bm{\Lambda}_\kappa \bm{H}\\ 
 &=& -\frac{1}{2\rho}(\beta_++\beta_-)\bm{\Lambda}_\kappa -\frac{1}{\rho}(
  \beta_-\alpha_+-\alpha_-\beta_+) \bm{\Lambda}_\kappa \bm{H}\\
  &=&-\frac{\beta_++\beta_-}{\rho} \bm{\Lambda}_\kappa\left(\frac12\firstReviewer{\bm{I}}  +\frac{
  \alpha_+\beta_-
 -\alpha_-\beta_+}{\beta_++\beta_-} \bm{H}\right)\\ 
 \firstReviewer{\bm{R}}_{21}^\kappa &=&\frac{1}{2\rho}(\delta_++\delta_-)\bm{\Lambda}_\kappa^{-1}+\frac{1}{\rho}(\alpha_-(\delta_++\delta_-)-
  \delta_-(\alpha_++\alpha_-)
) \bm{\Lambda}_\kappa^{-1} \bm{H}\\
 \\
 &=&     \frac{\delta_++\delta_-}{\rho} \bm{\Lambda}_\kappa^{-1}\left(\frac12\firstReviewer{\bm{I}}  -\frac{
  \alpha_+\delta_-
 -\alpha_-\delta_+}{\delta_++\delta_-} \bm{H}\right)\\
 \firstReviewer{\bm{R}}_{22}^\kappa &=& -\frac{1}{2\rho}(\alpha_++\alpha_-)\bm{H} -\frac{1}{\rho}(\alpha_-(\alpha_++\alpha_-)+\beta_-(\delta_++\delta_-)) \bm{I}.\\
\end{eqnarray*}

\begin{lemma}\label{lemma:10.9}
There exists $c_1,c_2>0$ so that for any $(\bm{\varphi},\ \bm{g})\in\Hiib^{1/2}\times \Hiib^{-1/2}$ it holds 
 \[
-\Re \langle \firstReviewer{\bm{R}}^\kappa_{12}\bm{\varphi},\overline{\bm{\varphi}}\rangle\ge c_1\|\bm{\varphi}\|^2_{\Hiib^{-1/2}}  \quad  -\Re \langle \firstReviewer{\bm{R}}^\kappa_{21} \bm{g},\overline{\bm{g}}\rangle  \ge c_1 \|\bm{g}\|^2_{\Hiib^{1/2}}.
 \] 
 and 
 \[
-\Im \langle \firstReviewer{\bm{R}}^\kappa_{12}\bm{\varphi},\overline{\bm{\varphi}}\rangle\ge c_1\|\bm{\varphi}\|^2_{\Hiib^{-3/2}}  \quad   \Im \langle \firstReviewer{\bm{R}}^\kappa_{21} \bm{g},\overline{\bm{g}}\rangle  \ge c_1 \|\bm{g}\|^2_{\Hiib^{-1/2}}. 
 \]
\end{lemma}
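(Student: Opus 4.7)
The plan is to mirror the argument in the proof of Proposition~\ref{prop:Y:complexificated}. A preliminary observation is that $\alpha_\pm=ia_\pm$ with $a_\pm=\mu_\pm/(2(\lambda_\pm+2\mu_\pm))\in(0,1/2)$ real (the bound follows from $\lambda_\pm>-\mu_\pm$), while $\beta_\pm>0>\delta_\pm$ are real. In this notation,
\[
R_{12}^\kappa=-\tfrac{\beta_++\beta_-}{\rho}\bm{\Lambda}_\kappa\bigl(\tfrac12I+ic_\beta\bm{H}\bigr),\qquad R_{21}^\kappa=\tfrac{\delta_++\delta_-}{\rho}\bm{\Lambda}_\kappa^{-1}\bigl(\tfrac12I-ic_\delta\bm{H}\bigr),
\]
with $c_\beta:=(a_+\beta_--a_-\beta_+)/(\beta_++\beta_-)\in\mathbb{R}$ and $c_\delta:=(a_+\delta_--a_-\delta_+)/(\delta_++\delta_-)\in\mathbb{R}$. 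The first step is to check $|c_\beta|,|c_\delta|<1/2$, which reduces to the elementary weighted-average inequalities $\beta_+(1-2a_-)+\beta_-(1-2a_+)>0$ and $|\delta_+|(1-2a_-)+|\delta_-|(1-2a_+)>0$, both strict because $a_\pm<1/2$.

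The second step is to split $\bm{\Lambda}_\kappa^{\pm1}$ into a positive real Fourier multiplier squared plus or minus $i$ times another positive real Fourier multiplier squared. For $\bm{\Lambda}_\kappa^{-1}$ I use the decomposition $\bm{\Lambda}_\kappa^{-1}=\bm{\Theta}_r^2-i\bm{\Theta}_i^2$ already produced in the proof of Proposition~\ref{prop:Y:complexificated}, with $\|\bm{\Theta}_r\bm{g}\|_{\bm{H}^0}^2\simeq\|\bm{g}\|_{\bm{H}^{1/2}}^2$ and $\|\bm{\Theta}_i\bm{g}\|_{\bm{H}^0}^2\simeq\|\bm{g}\|_{\bm{H}^{-1/2}}^2$. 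In parallel, writing $(n^2-\kappa^2)^{-1/2}=\tilde a_n^2+i\tilde b_n^2$ with positive $\tilde a_n\sim|n|^{-1/2}$ and $\tilde b_n\sim|n|^{-3/2}$ as $n\to\infty$ yields $\bm{\Lambda}_\kappa=\tilde{\bm{\Theta}}_r^2+i\tilde{\bm{\Theta}}_i^2$ with $\|\tilde{\bm{\Theta}}_r\bm{\varphi}\|_{\bm{H}^0}^2\simeq\|\bm{\varphi}\|_{\bm{H}^{-1/2}}^2$ and $\|\tilde{\bm{\Theta}}_i\bm{\varphi}\|_{\bm{H}^0}^2\simeq\|\bm{\varphi}\|_{\bm{H}^{-3/2}}^2$. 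All four multipliers $\bm{\Theta}_r,\bm{\Theta}_i,\tilde{\bm{\Theta}}_r,\tilde{\bm{\Theta}}_i$ are real and symmetric in Fourier, hence they commute with $\bm{H}$ and with conjugation, and they are self-transposable for the pairing $\langle\cdot,\cdot\rangle$.

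The third step expands the pairings. For $R_{12}^\kappa$, setting $M_\beta:=\tfrac12I+ic_\beta\bm{H}$ and using the commutation properties above,
\[
\langle\bm{\Lambda}_\kappa M_\beta\bm{\varphi},\overline{\bm{\varphi}}\rangle=\bigl\langle M_\beta\tilde{\bm{\Theta}}_r\bm{\varphi},\overline{\tilde{\bm{\Theta}}_r\bm{\varphi}}\bigr\rangle+i\bigl\langle M_\beta\tilde{\bm{\Theta}}_i\bm{\varphi},\overline{\tilde{\bm{\Theta}}_i\bm{\varphi}}\bigr\rangle.
\]
By Lemma~\ref{lemma:10.8} each of the two brackets on the right is \emph{real} (because $\Re\langle\bm{H}\psi,\overline\psi\rangle=0$) and bounded below by $(\tfrac12-|c_\beta|)\|\cdot\|_{\bm{H}^0}^2$. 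Reading off real and imaginary parts and multiplying by the positive constant $(\beta_++\beta_-)/\rho$ delivers both estimates for $R_{12}^\kappa$, the overall minus sign in $R_{12}^\kappa$ absorbing into the $-\Re$ and $-\Im$ on the left-hand side of the lemma. The treatment of $R_{21}^\kappa$ is structurally identical with $M_\delta:=\tfrac12I-ic_\delta\bm{H}$ and $\bm{\Lambda}_\kappa^{-1}=\bm{\Theta}_r^2-i\bm{\Theta}_i^2$: the minus sign in the $\bm{\Lambda}_\kappa^{-1}$ decomposition swaps the roles of real and imaginary parts, and combined with the negative prefactor $(\delta_++\delta_-)/\rho<0$ it reproduces exactly the asymmetric sign pattern written in the lemma (one $-\Re$ against one $+\Im$).

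The only real obstacle is the sign bookkeeping: verifying $|c_\beta|,|c_\delta|<1/2$ uniformly in admissible Lam\'e parameters, and tracking the three independent sign sources (the sign of $(\beta_++\beta_-)/\rho$ versus $(\delta_++\delta_-)/\rho$, the sign of $c_\beta$ versus $-c_\delta$ in the middle factors, and the $\pm i$ in the two Fourier decompositions). Beyond Lemma~\ref{lemma:10.8} and the square-root splittings already used in Proposition~\ref{prop:Y:complexificated}, no genuinely new analytic estimate is required.
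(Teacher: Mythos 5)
Your proposal is correct and follows essentially the same route as the paper: the paper likewise rewrites $R^\kappa_{12}$ and $R^\kappa_{21}$ as $\bm{\Lambda}_\kappa^{\pm 1}$ composed with $\tfrac12\bm{I}+c\,i\bm{H}$ for constants of modulus less than $1/2$, splits $\bm{\Lambda}_\kappa=\bm{\Theta}_r^2+i\bm{\Theta}_i^2$ and $\bm{\Lambda}_\kappa^{-1}=\bm{\Omega}_r^2-i\bm{\Omega}_i^2$ into real, even, commuting Fourier multipliers, and invokes Lemma~\ref{lemma:10.8} on each of the resulting real brackets, exactly as you do. Your explicit verification that $|c_\beta|,|c_\delta|<1/2$ and your careful sign bookkeeping only make explicit what the paper asserts without detail.
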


\begin{proof}
The proof  it is very similar to that of Proposition~\ref{prop:Y:complexificated}. Notice that we can take 
 \[
  \bm{\Lambda}_{\kappa} = 
  \bm{\Theta}^2_r  + i\bm{\Theta}^2_i, \qquad   {\bm{\Lambda}^{-1}_{\kappa}} = 
  \bm{\Omega}^2_r  - i\bm{\Omega}^2_i  
 \]
 where $\bm{\Theta}_r:\Hiib^s\to\Hiib^{s-1/2}$,  $\bm{\Theta}_i:\Hiib^s\to\Hiib^{s+1/2}$, 
 $\bm{\Omega}_r:\Hiib^s\to\Hiib^{s+1/2}$,  $\bm{\Omega}_i:\Hiib^s\to\Hiib^{s+3/2}$
 are  invertible Fourier multiplier  operators that commute with $\bm{H}$.  Given that $  \bm{\Theta}_r$ and   $\bm{\Theta}_i$ commute with $\bm{H}$ we have 
\begin{eqnarray*}
\langle \firstReviewer{\bm{R}}^\kappa_{12}\bm{\varphi},\overline{\bm{\varphi}}\rangle &=& -\frac{\beta_++\beta_-}{\rho} \left(
\left\langle \left(\tfrac12\bm{I}-c_{12} i \bm{H}\right)\bm{\Theta}_r\bm{\varphi},\overline{\bm{\Theta}_r\bm{\varphi}}\right\rangle
+ i \left\langle \left(\tfrac12\bm{I}-c_{12} i \bm{H}\right)\bm{\Theta}_i\bm{\varphi},\overline{\bm{\Theta}_i\bm{\varphi}}\right\rangle
\right)\\
\langle \firstReviewer{\bm{R}}^\kappa_{21}\bm{\varphi},\overline{\bm{\varphi}}\rangle &=& \frac{\delta_++\delta_-}{\rho} \left(
\left\langle \left(\tfrac12\bm{I}+c_{21} i \bm{H}\right) \bm{\Omega}_r\bm{\varphi},\overline{ \bm{\Omega}_r\bm{\varphi}}\right\rangle
- i \left\langle \left(\tfrac12\bm{I}+c_{21} i \bm{H}\right) \bm{\Omega}_i\bm{\varphi},\overline{ \bm{\Omega}_i\bm{\varphi}}\right\rangle
\right) 
\end{eqnarray*}
where
\[
 c_{12}:=  \frac{
  \alpha_+\beta_-
 -\alpha_-\beta_+}{\beta_++\beta_-}i,\quad   c_{21}:= \frac{
  \alpha_+\delta_-
 -\alpha_-\delta_+}{\delta_++\delta_-}i .
\]
Since  $-i\alpha_{\pm}\in(0,1/2)$  and  $\beta_\pm>0>\delta_{\pm}$,  $c_{12},c_{21}\in(-1/2,1/2)$. 
The result is therefore a consequence of Lemma \ref{lemma:10.8}. 
\end{proof}

Finally, we need an additional result
\begin{lemma}\label{lemma:10.10}
 For any $ (\bm{g},\bm{\varphi})\in \Hiib^0\times \Hiib^0$ it holds
 \[
 \langle \bm{g}, \overline{\bm{\varphi}}\rangle -\langle \bm{g},\overline{\firstReviewer{\bm{R}}_{22}^\kappa  {\bm \varphi}}\rangle-
\langle \firstReviewer{\bm{R}}_{11}^\kappa \bm{g} , \overline{\bm{\varphi}}\rangle   = 0.
 \]
\end{lemma}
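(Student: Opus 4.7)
The plan is to substitute the closed-form expressions for $R_{11}^\kappa$ and $R_{22}^\kappa$ displayed just above the lemma and reduce the identity to a purely algebraic one that follows from \eqref{eq:id:alpha:beta:gamma}. The essential observation is that neither $\bm{\Lambda}_\kappa$ nor $\bm{\Lambda}_\kappa^{-1}$ appears in the diagonal entries $R_{jj}^\kappa$: both are of the form $\pm\frac{\alpha_++\alpha_-}{2\rho}\bm{H}+c_{jj}\bm{I}$, where $c_{jj}\in\mathbb{R}$ since $\beta_\pm,\delta_\pm,\rho\in\mathbb{R}$ and $\alpha_-(\alpha_++\alpha_-)\in\mathbb{R}$ as $\alpha_\pm\in i\mathbb{R}$. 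Explicitly, $c_{11}=-\tfrac{1}{\rho}\bigl(\alpha_-(\alpha_++\alpha_-)+\delta_-(\beta_++\beta_-)\bigr)$ and $c_{22}=-\tfrac{1}{\rho}\bigl(\alpha_-(\alpha_++\alpha_-)+\beta_-(\delta_++\delta_-)\bigr)$.

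Then I would apply the antisymmetry identity $\langle\bm{H}\bm{g},\overline{\bm{\varphi}}\rangle = -\langle\bm{g},\overline{\bm{H}\bm{\varphi}}\rangle$ from Lemma~\ref{lemma:10.8}, together with the conjugation rule $\overline{(\alpha_++\alpha_-)/(2\rho)}=-(\alpha_++\alpha_-)/(2\rho)$, to rewrite
\[
 \langle R_{11}^\kappa\bm{g},\overline{\bm{\varphi}}\rangle
 = -\frac{\alpha_++\alpha_-}{2\rho}\langle\bm{g},\overline{\bm{H}\bm{\varphi}}\rangle + c_{11}\langle\bm{g},\overline{\bm{\varphi}}\rangle,
 \qquad
 \langle\bm{g},\overline{R_{22}^\kappa\bm{\varphi}}\rangle
 = \frac{\alpha_++\alpha_-}{2\rho}\langle\bm{g},\overline{\bm{H}\bm{\varphi}}\rangle + c_{22}\langle\bm{g},\overline{\bm{\varphi}}\rangle.
\]
Adding these two expressions, the $\bm{H}$-terms cancel exactly---this is precisely the reason the diagonal entries of ${\cal R}_\kappa$ carry opposite signs in front of $\bm{H}$---and the identity to prove collapses to the purely scalar equation $c_{11}+c_{22}=1$, namely
\[
  \rho + 2\alpha_-(\alpha_++\alpha_-) + \beta_-(\delta_++\delta_-) + \delta_-(\beta_++\beta_-) = 0.
\]

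Finally, substituting the expression $\rho = -(\alpha_++\alpha_-)^2-(\beta_++\beta_-)(\delta_++\delta_-)$ deduced from ${\cal L}_{0,\mathrm{SC}}^2=\rho{\cal I}$ in \eqref{eq:10:23}, I would expand and collect terms in $\pm$ indices, which reduces the left-hand side to $(\alpha_-^2+\beta_-\delta_-)-(\alpha_+^2+\beta_+\delta_+)$. This vanishes because $\alpha_\pm^2+\beta_\pm\delta_\pm=-\tfrac14$ in each medium by the consistency relation \eqref{eq:id:alpha:beta:gamma}. The only mild obstacle is the careful bookkeeping of complex conjugates arising from the imaginary $\alpha_\pm$ versus the real $\beta_\pm,\delta_\pm,\rho$; once this is tracked the argument is entirely algebraic and does not require any further functional-analytic input beyond Lemma~\ref{lemma:10.8}.
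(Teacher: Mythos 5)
Your proposal is correct and follows essentially the same route as the paper's own proof: you substitute the explicit formulas for $R_{11}^\kappa$ and $R_{22}^\kappa$, use that $\alpha_\pm\in i\mathbb{R}$ together with the antisymmetry of $\bm{H}$ from Lemma~\ref{lemma:10.8} to cancel the $\bm{H}$-contributions, and reduce the claim to the scalar identity $2\alpha_-(\alpha_++\alpha_-)+\delta_-(\beta_++\beta_-)+\beta_-(\delta_++\delta_-)=-\rho$, which you verify exactly as the paper does via $\rho=-(\alpha_++\alpha_-)^2-(\beta_++\beta_-)(\delta_++\delta_-)$ and $\alpha_\pm^2+\beta_\pm\delta_\pm=-\tfrac14$ from \eqref{eq:id:alpha:beta:gamma}. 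No gaps; your bookkeeping of the conjugations (imaginary $\alpha_\pm$ versus real $\beta_\pm,\delta_\pm,\rho$) matches the paper's argument.
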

\begin{proof} Note that since $\alpha_\pm \in i{\mathbb R}$   
 \begin{eqnarray*}
\langle  {\firstReviewer{\bm{R}}_{11}^\kappa\bm{g}},\overline{\bm{\varphi}}\rangle&=&    \frac{1}{2\rho}(\alpha_++\alpha_-) \langle 
  \bm{H}\bm{g},\overline{\bm{\varphi}}\rangle 
-\frac{1}{\rho}(\alpha_-(\alpha_++\alpha_-)+\delta_-(\beta_++\beta_-))\langle \bm{g}, \overline{\bm{\varphi}}\rangle\\
\langle  \bm{g} , \overline{\firstReviewer{\bm{R}}_{22}^\kappa{\bm \varphi} }\rangle&=&   -\frac{1}{2\rho}(\alpha_++\alpha_-) \langle 
  \bm{H}\bm{g},\overline{\bm{\varphi}}\rangle 
-\frac{1}{\rho}(\alpha_-(\alpha_++\alpha_-)
+\beta_-(\delta_++\delta_-)) 
\langle{\bm{g}}, \overline{\bm{\varphi}}\rangle. 
 \end{eqnarray*}
Therefore 
\[
\langle \bm{g},\overline{\firstReviewer{\bm{R}}_{22}^\kappa  {\bm \varphi}}\rangle+
\langle \firstReviewer{\bm{R}}_{11}^\kappa \bm{g} , \overline{\bm{\varphi}}\rangle
 =   \langle \bm{g},\overline{\bm{\varphi}} \rangle
\]
because, after some   calculations, one can show the following identity (cf \eqref{eq:id:alpha:beta:gamma}) 
\begin{eqnarray*}
&& 2  \alpha_-(\alpha_++\alpha_-)+  \delta_-(\beta_++\beta_-)+\beta_-(\delta_++\delta_-) \\
&&\qquad = (\alpha_++\alpha_-)^2+(\beta_++\beta_-)(\delta_++\delta_-)+(\underbrace{\alpha_-^2+\beta_-\delta_--\alpha_+^2-\beta_+\delta_+}_{=0}) = -\rho.
\end{eqnarray*} 
\end{proof}

We are now in the position to prove the main result on the well posedness of the regularized BIE formulations for transmission problems

\begin{theorem}\label{thm_cfier}
 With ${\cal R}_\kappa$ defined in \eqref{def:Rkappa} there exists ${\cal K}:\Hiib^{s+1}\times \Hiib^{s}\to \Hiib^{s+3}\times \Hiib^{s+2}$ such  that the operators ${\cal L}^\kappa_{\rm ind}= \firstReviewer{\bm{I}} +{\cal K}$,  ${\cal L}^\kappa_{\rm dir}= \firstReviewer{\bm{I}} +{\cal K}^\top:\Hiib^{s+1}\times \Hiib^{s}\to \Hiib^{s+1}\times \Hiib^{s}$ are both   invertible. 
\end{theorem}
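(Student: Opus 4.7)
The plan is to apply Theorem~\ref{th:10.6} with the regularizing operator ${\cal R}={\cal R}_\kappa$ of \eqref{def:Rkappa} and target operator ${\cal A}={\cal I}$, which will immediately give the invertibility of ${\cal L}^\kappa_{\rm ind}$; the statement for ${\cal L}^\kappa_{\rm dir}$ is then recovered by the duality formalism of Section~\ref{dual}, since ${\cal C}_\pm^\top=-{\cal C}_\pm$ implies ${\cal L}^\kappa_{\rm dir}=({\cal L}^\kappa_{\rm ind})^\top$. The work is therefore concentrated on verifying the two conditions (a) and (b) of Assumption~1 for ${\cal R}_\kappa$.

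For (a), the starting point is the observation that, since $\bm{H}$ and $\bm{\Lambda}_\kappa$ commute and $\bm{H}^2=-\bm{I}$, the block-matrix computation that produced \eqref{eq:10:23} goes through verbatim with $\bm{\Lambda}_\kappa$ in place of $\bm{\Lambda}$, yielding
\[
({\cal C}^{\kappa}_++{\cal C}^{\kappa}_-)^2=\rho\,{\cal I}.
\]
Substituting the definition of ${\cal R}_\kappa$ into ${\cal L}^\kappa_{\rm ind}$ and replacing each ${\cal C}^{\kappa}_\pm$ by ${\cal C}_\pm$ modulo smoothing---which is legitimate by the mapping properties of $\bm{\Lambda}_\kappa^{\pm 1}-\bm{\Lambda}^{\pm 1}$ recalled just before \eqref{eq:Lambda:kappa}---the factor $({\cal C}_++{\cal C}_-){\cal R}_\kappa$ collapses to $\tfrac{1}{2}{\cal I}+{\cal C}_-$ up to a smoothing remainder, and the whole expression reduces to ${\cal I}+{\cal K}$ with ${\cal K}$ a smoothing operator of order at least two. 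This is (a) with ${\cal A}={\cal I}$ and a single eigenvalue accumulation point at $1$.

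For (b), the decisive simplification is Lemma~\ref{lemma:10.10}: its algebraic identity forces the three terms of \eqref{eq:cond:b} that contain $R_{11}^\kappa$, $R_{22}^\kappa$ and the diagonal pairing $\langle\bm{g},\overline{\bm{\varphi}}\rangle$ to cancel exactly. What remains is the imaginary part of $-\langle R_{12}^\kappa\bm{\varphi},\overline{\bm{\varphi}}\rangle-\langle\bm{g},\overline{R_{21}^\kappa\bm{g}}\rangle$ which, after rewriting the second pairing via $\langle\bm{g},\overline{R_{21}^\kappa\bm{g}}\rangle=\overline{\langle R_{21}^\kappa\bm{g},\overline{\bm{g}}\rangle}$, becomes
\[
-\Im\langle R_{12}^\kappa\bm{\varphi},\overline{\bm{\varphi}}\rangle+\Im\langle R_{21}^\kappa\bm{g},\overline{\bm{g}}\rangle,
\]
and Lemma~\ref{lemma:10.9} bounds both summands below by $c_1\|\bm{\varphi}\|^2_{\bm{H}^{-3/2}}$ and $c_1\|\bm{g}\|^2_{\bm{H}^{-1/2}}$ respectively. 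This yields the strict sign separation required by Assumption~1(b) for every $(\bm{g},\bm{\varphi})\neq{\bf 0}$.

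With (a) and (b) in hand, Theorem~\ref{th:10.6} gives simultaneously the Fredholm form ${\cal L}^\kappa_{\rm ind}={\cal I}+{\cal K}$ and its invertibility, and the duality identity then delivers ${\cal L}^\kappa_{\rm dir}={\cal I}+{\cal K}^\top$ together with its invertibility. The main technical obstacle is the symbol-calculus bookkeeping in step (a): one must keep track of the several nested differences $({\cal C}_\pm-{\cal C}_{\pm,0})$, $({\cal C}_\pm-{\cal C}^{\kappa}_\pm)$ and $(\bm{\Lambda}_\kappa^{\pm 1}-\bm{\Lambda}^{\pm 1})$ and certify that their compounded smoothing order is large enough for the final remainder ${\cal K}$ to be compact on $\bm{H}^{s+1}\times\bm{H}^s$. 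The sign analysis in (b) is conceptually the heart of the matter, but once Lemmas~\ref{lemma:10.9} and~\ref{lemma:10.10} are brought in it reduces to a mechanical inequality.
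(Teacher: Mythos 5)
Your argument is correct and follows essentially the same route as the paper's own proof: condition (a) of Assumption~1 with ${\cal A}={\cal I}$ is exactly what the paper establishes in the discussion preceding the theorem (via $({\cal C}^{\kappa}_++{\cal C}^{\kappa}_-)^2=\rho\,{\cal I}$ and the smoothing properties of $\bm{\Lambda}_\kappa^{\pm1}-\bm{\Lambda}^{\pm1}$), and condition (b) is verified just as you do, by combining the cancellation identity of Lemma~\ref{lemma:10.10} with the sign estimates of Lemma~\ref{lemma:10.9}, after which Theorem~\ref{th:10.6} and the duality of Section~\ref{dual} give both ${\cal L}^\kappa_{\rm ind}$ and ${\cal L}^\kappa_{\rm dir}$. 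Your explicit use of $\langle\bm{g},\overline{R_{21}^\kappa\bm{g}}\rangle=\overline{\langle R_{21}^\kappa\bm{g},\overline{\bm{g}}\rangle}$ is only a slightly more careful bookkeeping of the same non-vanishing argument the paper uses.
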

\begin{proof}
We have already seen that (a) in Assumption 1 is verified with ${\cal A}={\cal I}$. 
On the other hand,  condition (b) for $(\bm{g},\bm{\varphi})\ne {\bf 0}$ is a consequence of Lemma~\ref{lemma:10.9}, which implies
 \[
  \Im  
\langle  {\firstReviewer{\bm{R}}_{12}^\kappa \bm{\varphi}}, \overline{\bm{\varphi}}\rangle,\quad 
  \Im \langle  {\bm{g}}, \overline{\firstReviewer{\bm{R}}_{21}^\kappa\bm{g}}\rangle <0,
 \]
 and Lemma \ref{lemma:10.10}, which established
 \[
 \langle \bm{g}, \overline{\bm{\varphi}}\rangle -\langle \bm{g},\overline{\firstReviewer{\bm{R}}_{22}^\kappa  {\bm \varphi}}\rangle-
\langle \firstReviewer{\bm{R}}_{11}^\kappa \bm{g} , \overline{\bm{\varphi}}\rangle =0.
 \]
We invoke the result  in Theorem~\ref{th:10.6} to conclude the proof.  
\end{proof}

It is relatively straightforward to refine the regularity property of the operators ${\cal K}$ in Theorem~\ref{thm_cfier}:
\begin{corollary}
 Under the same hypothesis of  Theorem~\ref{thm_cfier}, it holds that ${\cal K}:\Hiib^{s}\times \Hiib^{s}\to \Hiib^{s+1}\times \Hiib^{s+1}$ and  the operators ${\cal L}^\kappa_{\rm ind}= I+{\cal K}$,  ${\cal L}^\kappa_{\rm dir}= I+{\cal K}^\top:\Hiib^{s}\times \Hiib^{s}\to \Hiib^{s}\times \Hiib^{s}$ are both   invertible. 
\end{corollary}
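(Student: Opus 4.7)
The plan is to refine the mapping property of the compact remainder ${\cal K}$ from Theorem~\ref{thm_cfier} and then bootstrap invertibility from the $\bm{H}^{s+1}\times\bm{H}^s$ setting already handled there. Recall from the proof of that theorem that ${\cal L}^\kappa_{\rm ind} = {\cal I} + {\cal K}$ with
\[
{\cal K} = -{\cal E}_- + ({\cal E}_+ + {\cal E}_-)\,{\cal R}_\kappa,\qquad {\cal E}_\pm := {\cal C}_\pm - {\cal C}^\kappa_\pm,
\]
the identity ${\cal I}$ arising from the cancellation $({\cal C}^\kappa_+ + {\cal C}^\kappa_-)\,{\cal R}_\kappa = \tfrac{1}{2}{\cal I} + {\cal C}^\kappa_-$.

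First, I would read off the entry-wise mapping properties of ${\cal E}_\pm$ from the principal-symbol splittings in~\eqref{eq:principal:symbols}, combined with $\bm{\Lambda}_\kappa - \bm{\Lambda}:\bm{H}^s\to\bm{H}^{s+3}$ and $\bm{\Lambda}^{-1}_\kappa - \bm{\Lambda}^{-1}:\bm{H}^s\to\bm{H}^{s+1}$. As a $2\times 2$ block matrix, each ${\cal E}_\pm$ has entries of orders $-2,-3,-1,-2$, and hence
\[
  {\cal E}_\pm:\bm{H}^s\times\bm{H}^s \longrightarrow \bm{H}^{s+2}\times\bm{H}^{s+1}.
\]
The explicit expressions for $R^\kappa_{ij}$ given right before Lemma~\ref{lemma:10.9} show that $R^\kappa_{11},R^\kappa_{22}$ have order $0$, $R^\kappa_{12}$ has order $-1$, and $R^\kappa_{21}$ has order $+1$, so ${\cal R}_\kappa:\bm{H}^s\times\bm{H}^s\to\bm{H}^s\times\bm{H}^{s-1}$. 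Applying ${\cal E}_\pm$ to the latter space and tracking each block yields ${\cal E}_\pm{\cal R}_\kappa:\bm{H}^s\times\bm{H}^s\to\bm{H}^{s+2}\times\bm{H}^{s+1}$, and therefore
\[
  {\cal K}:\bm{H}^s\times\bm{H}^s\longrightarrow \bm{H}^{s+2}\times\bm{H}^{s+1}\hookrightarrow \bm{H}^{s+1}\times\bm{H}^{s+1},
\]
which is the first assertion of the corollary.

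Next, compactness of the Sobolev embedding $\bm{H}^{s+1}\times\bm{H}^{s+1}\hookrightarrow\bm{H}^s\times\bm{H}^s$ implies that ${\cal K}$ is compact on $\bm{H}^s\times\bm{H}^s$, so ${\cal I}+{\cal K}$ is Fredholm of index zero and invertibility reduces to injectivity. If $(\bm{g},\bm{\varphi})\in \bm{H}^s\times\bm{H}^s$ satisfies ${\cal L}^\kappa_{\rm ind}(\bm{g},\bm{\varphi})=0$, then $(\bm{g},\bm{\varphi})=-{\cal K}(\bm{g},\bm{\varphi})$ forces $(\bm{g},\bm{\varphi})\in \bm{H}^{s+1}\times\bm{H}^{s+1}\subset \bm{H}^{s+1}\times\bm{H}^s$, where Theorem~\ref{thm_cfier} already asserts injectivity. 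The assertion for ${\cal L}^\kappa_{\rm dir}={\cal I}+{\cal K}^\top$ follows by duality with respect to the pairing of Section~\ref{dual}: the bound ${\cal K}:\bm{H}^s\times\bm{H}^s\to\bm{H}^{s+1}\times\bm{H}^{s+1}$ transposes, for $t=-s-1$, to ${\cal K}^\top:\bm{H}^t\times\bm{H}^t\to\bm{H}^{t+1}\times\bm{H}^{t+1}$, and since $s$ is arbitrary the same Fredholm-plus-injectivity argument applies.

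The chief obstacle lies in the composition step: neither ${\cal R}_\kappa$ nor ${\cal E}_\pm$ individually preserves $\bm{H}^s\times\bm{H}^s$, because $R^\kappa_{21}$ contains $\bm{\Lambda}^{-1}_\kappa$ and thereby loses a derivative, while ${\cal E}_\pm$ only acquires its "natural" smoothing on the asymmetric space $\bm{H}^{s+1}\times\bm{H}^s$. What makes the argument work is that ${\cal R}_\kappa$ lands precisely in the shifted asymmetric space $\bm{H}^s\times\bm{H}^{s-1}$, on which a subsequent application of ${\cal E}_\pm$ produces the symmetric gain $\bm{H}^{s+2}\times\bm{H}^{s+1}$; this exact matching of indices is what delivers the refined smoothing of ${\cal K}$ needed to transfer invertibility from the asymmetric setting of Theorem~\ref{thm_cfier} to the symmetric one.
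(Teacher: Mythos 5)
Your proposal is correct, and it is essentially the argument the authors intend: the paper gives no written proof of this corollary (it is introduced only as a ``relatively straightforward'' refinement of Theorem~\ref{thm_cfier}), and your route --- writing ${\cal K}=-{\cal E}_-+({\cal E}_++{\cal E}_-){\cal R}_\kappa$, tracking the block-wise orders of ${\cal E}_\pm$ and ${\cal R}_\kappa$ to get the symmetric gain ${\cal K}:\bm{H}^{s}\times\bm{H}^{s}\to\bm{H}^{s+1}\times\bm{H}^{s+1}$, and then combining compactness (Fredholm of index zero) with a bootstrap of injectivity into the asymmetric spaces covered by Theorem~\ref{thm_cfier}, plus the duality of Section~\ref{dual} for ${\cal L}^\kappa_{\rm dir}$ --- is exactly that refinement.
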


\subsection{Optimized Schwarz Domain Decomposition methods}\label{OS}

We finally consider Optimized Schwarz Domain Decomposition Methods~\cite{boubendir2017domain} \otherCorrections{(OS)} for the solution of the transmission problem whereby we connect the \firstReviewer{solutions ${\bf u}_+$ and ${\bf u}_-$ of the exterior and interior time harmonic problems Navier equations via certain transmission operators $\firstReviewer{\bm{\Upsilon}}_\pm:\Hiib^{1/2}\to\Hiib^{-1/2}$:}
\begin{eqnarray}\label{eq:ddm1}
  {\nabla}\cdot\bm{\sigma}_+({\bf u}_+)+\omega^2{\bf u}_+&=&0\quad{\rm in}\ \Omega_+\nonumber\\
  \Tiip[{\bf u}_+ + {\bf u}^{\rm inc}]+\firstReviewer{\bm{\Upsilon}}_+\gamma [{\bf u}_+ + {\bf u}^{\rm inc}]&=&\Tiim {\bf u}_- +\firstReviewer{\bm{\Upsilon}}_+ \otherCorrections{\gamma}{\bf u}_- 
\end{eqnarray}
and
\begin{eqnarray}\label{eq:ddm2}
  {\nabla}\cdot\bm{\sigma}_-({\bf u}_-)+\omega^2{\bf u}_-&=&0\quad{\rm in}\ \Omega\nonumber\\
  \Tiim{\bf u}_- +\firstReviewer{\bm{\Upsilon}}_- \otherCorrections{\gamma} {\bf u}_- &=&\Tiip[{\bf u}_++ {\bf u}^{\rm inc}]+\firstReviewer{\bm{\Upsilon}}_-\gamma  [{\bf u}_+ + {\bf u}^{\rm inc}].
\end{eqnarray}
\firstReviewer{We assume from now on that $\firstReviewer{\bm{\Upsilon}}_{\pm}$ satisfies the following property: 
\begin{equation}\label{eq:cond_coerc}
\Im{\langle\firstReviewer{\bm{\Upsilon}}_+\bm{g},\overline{\bm{g}}\rangle}>0\qquad  - \Im{\langle\firstReviewer{\bm{\Upsilon}}_-\bm{g},\overline{\bm{g}}\rangle}>0.
\end{equation}  }

\begin{proposition}\label{prop:OSM:01}
 Under assumption \eqref{eq:cond_coerc}, the exterior and interior Navier problems with generalized Robin conditions 
 \begin{subequations}\label{eq:01:prop:OSM:01}
\begin{equation}\label{eq:01a:prop:OSM:01}
\left|
 \begin{array}{rcl}
   \multicolumn{3}{l}{{\bf u}_+ \in \Hiib_{\rm loc}^1(\Omega_+),} \\
   {\nabla}\cdot\bm{\sigma}({\bf u}_+ )+\omega^2{\bf u}_+  &=&0,\quad \text{in }\Omega_+,\\  
   \Tiip {\bf u}_+   +\firstReviewer{\bm{\Upsilon}}_+\gamma  {\bf u}_+   &=& {\bm\lambda}_+,\\
   \multicolumn{3}{l}{\otherCorrections{\rm +RC}} 
 \end{array}
 \right. 
\end{equation}
 and
\begin{equation}\label{eq:01b:prop:OSM:01}
\left|
 \begin{array}{rcl}
   \multicolumn{3}{l}{{\bf u}_- \in \Hiib^1(\Omega_-), }  \\
   {\nabla}\cdot\bm{\sigma}({\bf u}_- )+\omega^2{\bf u}_-  &=&0,\quad \text{in }\Omega_-, \\  
   \Tiim {\bf u}_-   +\firstReviewer{\bm{\Upsilon}}_-\otherCorrections{\gamma} {\bf u}_-   &=&  {\bm\lambda}_-,
 \end{array}
 \right.
\end{equation}
\end{subequations}
have a unique solution for any $\otherCorrections{\bm{\lambda}}_{\pm}\in \Hiib^{-1/2}(\Gamma)$.  

Furthermore, \eqref{eq:ddm1}--\eqref{eq:ddm2} are equivalent to the original transmission problem \eqref{eq:NavD}. 
\end{proposition}
\begin{proof}
 The unicity of solution, and by ellipticity the existence, follows from standard arguments in boundary problems for elliptic partial differential equations. 
 
 Notice that it suffices $\firstReviewer{\bm{\Upsilon}}_+-\firstReviewer{\bm{\Upsilon}}_-$ to be injective to show the equivalence between \eqref{eq:ddm1}--\eqref{eq:ddm2} and the transmission problem. But, by hypothesis
 \begin{equation}\label{eq:cond_coercF}
\Im{\langle(\firstReviewer{\bm{\Upsilon}}_+-\firstReviewer{\bm{\Upsilon}}_-)\bm{g},\overline{\bm{g}}\rangle}>0,\quad \forall \bm{g}\ne 0, 
\end{equation} 
which implies as byproduct the injectivity. 
\end{proof}


The \otherCorrections{OS} system~\eqref{eq:ddm1} and~\eqref{eq:ddm2} is typically recast in terms of generalized elastodynamic Robin data on $\Gamma$
\begin{equation}\label{eq:ddm_data}
  \bm{\lambda}_+:= \Tiip{\bf u}_++\firstReviewer{\bm{\Upsilon}}_+\gamma  {\bf u}_+ ,  \quad \bm{\lambda}_-:= \Tiim{\bf u}_-+\firstReviewer{\bm{\Upsilon}}_- \otherCorrections{\gamma}{\bf u}_-  
\end{equation}
in a form that involves the Schwarz iteration operator
\begin{equation}\label{eq:DDM_rtr}
  \begin{bmatrix}
                   I & -\Sii_-
  \\ -\Sii_+ & I 
  \end{bmatrix}\begin{bmatrix}\bm{\lambda}_+\\ \bm{\lambda}_-\end{bmatrix}=
  \begin{bmatrix}
  -\left(T {\bf u}^{\rm inc} +\firstReviewer{\bm{\Upsilon}}_+ \gamma {\bf u}^{\rm inc} \right) \\ 
  \left(T {\bf u}^{\rm inc} +\firstReviewer{\bm{\Upsilon}}_- \gamma {\bf u}^{\rm inc} \right)
  \end{bmatrix}.
  \end{equation}
  \firstReviewer{
In equation~\eqref{eq:DDM_rtr} the operators $\Sii_{\pm}$ are the Robin-to-Robin (RtR) operators associated to Navier equations with generalized Robin boundary conditions whose precise definition is given  by
\begin{subequations}\label{eq:Sii}
\begin{eqnarray}\label{eq:Siia}
 \Sii_+ \bm{\lambda}_+&:=& \Tiip {\bf u}_+ +\firstReviewer{\bm{\Upsilon}}_+\gamma  {\bf u}_+,\quad \\
 \label{eq:Siib}
 \Sii_- \bm{\lambda}_-&:=& \Tiim {\bf u}_-  +\firstReviewer{\bm{\Upsilon}}_-\gamma {\bf u}_-  ,\quad 
\end{eqnarray}
\end{subequations}
 where ${\bf u}_{\pm}$ 
are the solutions of the exterior and interior problems in 
\eqref{eq:01:prop:OSM:01} in Proposition \ref{prop:OSM:01} with data $\bm{\lambda}_\pm$.}

The optimal choice of transmission operators with respect to the iterative solution of the \otherCorrections{OS} formulation~\eqref{eq:DDM_rtr} is given by $\firstReviewer{\bm{\Upsilon}}_+=-\Yiim$ and $\firstReviewer{\bm{\Upsilon}}_-=-\Yiip$ (that is the DtN operators corresponding to each domain $\Omega_\pm$), a ubiquitous pattern in the case of Optimized Schwarz methods involving two subdomains~\cite{Nataf,boubendir2017domain}. Optimized Schwarz methods employ approximations of the DtN operators in the formulation above. As such, we use the following transmission operators
\begin{equation}\label{eq:opt_choice}
  \firstReviewer{\bm{\Upsilon}}_{\mp}:=-\mathrm{PS}_{\kappa}(Y^{\pm}) = 
 \pm \frac{1}{\beta_\pm }\bm{\Lambda}_\kappa^{-1} \left(\frac{1}{2}\firstReviewer{\bm{I}} \mp\alpha_{\pm}\Hiib\right) = 
 \mp \gamma_{\pm }\bm{\Lambda}_\kappa^{-1} \left(\frac{1}{2}\firstReviewer{\bm{I}} \pm\alpha_{\pm}\Hiib\right)^{-1}.  
\end{equation}

\firstReviewer{The Optimized Schwarz algorithm is straightforward to implement as it amounts to the  evaluation of $\Sii_+ \bm{\lambda}_+$ operators. To this end, first we find ${\bf u}_+$, the solution the generalized Robin problem \eqref{eq:01a:prop:OSM:01} with data $\bm{\lambda}_+$; second plug ${T}_+{\bf u}_+$ and $\gamma {\bf u}_+$ into the formula \eqref{eq:Siia} with \eqref{eq:opt_choice}. The  evaluation of $\Sii_- \bm{\lambda}_-$ is identical with problem $\eqref{eq:01b:prop:OSM:01}$ and  \eqref{eq:Siib}
 instead. RtR maps, on the other hand, can be computed in a stable manner using boundary integral equations.}

 We present in what follows: (a)  a simple \firstReviewer{procedure} to compute in a robust manner RtR maps, and therefore to accomplish the evaluation of $\Sii_\pm \bm{\lambda}_\pm$ (Theorems \ref{thm:form_1}-\ref{thm:form_3}); (b) a proof of the well-posedness of \eqref{eq:DDM_rtr} (Theorem \ref{thm:SpSm}).
 
 \firstReviewer{We start with $\Sii_-$, i.e.,  the case of the bounded domain $\Omega_{-}$, as the situation is somewhat simpler. Hence, given $\bm{\lambda}_-$ we write the boundary condition in \eqref{eq:01b:prop:OSM:01} as }
\[
  \begin{bmatrix}
  0&0&\\
 -\firstReviewer{\bm{\Upsilon}}_-&-\firstReviewer{\bm{I}}
 \end{bmatrix}
 \begin{bmatrix}
 \gamma {\bf u}_-\\
 T{\bf u}_-
 \end{bmatrix}=-
 \begin{bmatrix}
  {\bf 0}\\
  \bm{\lambda}_-
 \end{bmatrix}.
\]
On the other hand, we use the Calder\'on projection in the following way
\[
 \begin{bmatrix}
  -\tfrac{1}2 \firstReviewer{\bm{I}} -\Kii_-&\Vii_-\\
   \Wii_-&\tfrac{1}2 \firstReviewer{\bm{I}}-\Kii^\top_- 
 \end{bmatrix}
 \begin{bmatrix}
 \gamma {\bf u}_-\\
 T{\bf u}_-
 \end{bmatrix}=
 \begin{bmatrix}
  {\bf 0}\\
  {\bf 0}
 \end{bmatrix}.
\]
Adding these two matrix equations we derive the following direct BIE formulation of elastodynamic equations with generalized Robin boundary values: 
\begin{equation}\label{eq:RT_formulation} 
  {\cal A}_-\begin{bmatrix}\gamma {\bf u}_-\\ T{\bf u}_-\end{bmatrix}:=  \begin{bmatrix}
 -\frac{1}{2}\firstReviewer{\bm{I}}-\Kii_-   &     \Vii_-\\ 
-\firstReviewer{\bm{\Upsilon}}_-  + \Wii_-&   -\frac{1}{2}\firstReviewer{\bm{I}}-\Kii_-^\top  \end{bmatrix} 
\begin{bmatrix}
\gamma {\bf u}_-\\ 
T{\bf u}_-
\end{bmatrix} 
= 
-\begin{bmatrix} 
0\\
\bm{\lambda}_-
\end{bmatrix}.
  %
  %
\end{equation}

Notice that ${\cal A}_-:\Hiib^{s }\times \Hiib^{s+1}\to \Hiib^{s }\times \Hiib^{s+1}$.
For the following results, recall the duality product $[\cdot,\cdot]$ cf.  \eqref{eq:dualproductTwo} which makes possible the realisation 
 $(\Hiib^{s }\times \Hiib^{t})' = \Hiib^{-t }\times \Hiib^{-s}$. 

\begin{theorem}\label{thm:form_1} The matrix operator $\mathcal{A}_-: \Hiib^{s}\times\Hiib^{s-1}\to \Hiib^{s}\times\Hiib^{s-1}$ is a compact perturbation of 
\[
 \mathrm{PS} ({\cal A}_-) =  \begin{bmatrix}
                            -\frac12 \firstReviewer{\bm{I}} -\alpha_-\bm{H} & \beta_-\bm{\Lambda} \\
                                -\firstReviewer{\bm{\Upsilon}}_-+\otherCorrections{\delta_-}\bm{\Lambda}^{-1}   & -\frac12 \firstReviewer{\bm{I}} -\alpha_-\bm{H}&\\
                           \end{bmatrix}
\]
which is a coercive operator: There exists $c>0$ such that   
\[
 \Re [ \mathrm{PS} ({\cal A}_-)(\bm{g},\bm{\psi}) ,\overline{(\bm{g}, \bm{\psi})}]\ge c \left[
 \|\bm{g}\|_{\Hiib^{1/2}}+ \|\bm{\varphi}\|_{\Hiib^{-1/2}}
 \right]. 
\]
 Furthermore, the operator
$\mathcal{A}_-$ is invertible. 
\end{theorem}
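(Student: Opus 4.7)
The plan is to verify three facts in order: first that ${\cal A}_-$ is a compact perturbation of $\mathrm{PS}({\cal A}_-)$, second that $\mathrm{PS}({\cal A}_-)$ is coercive, and third that ${\cal A}_-$ is injective; Fredholm theory will then assemble these into invertibility. The compactness follows directly from the mapping properties in \eqref{eq:principal:symbols}, read off entry by entry: $-K_-+\alpha_-\bm{H}$, $V_--\beta_-\bm{\Lambda}$, $W_--\delta_-\bm{\Lambda}^{-1}$ (the $-\Upsilon_-$ contribution cancels in the $(2,1)$ entry), and $-K_-^\top+\alpha_-\bm{H}$ all smooth by at least two Sobolev orders, so Rellich delivers compactness of ${\cal A}_--\mathrm{PS}({\cal A}_-)$ at the scale $\bm{H}^s\times\bm{H}^{s-1}\to\bm{H}^s\times\bm{H}^{s-1}$.

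For coercivity, set $A:=-\tfrac12 I-\alpha_-\bm{H}$ and expand the non-standard bracket \eqref{eq:dualproductTwo}:
\begin{equation*}
\bigl[\mathrm{PS}({\cal A}_-)(\bm{g},\bm{\varphi}),\overline{(\bm{g},\bm{\varphi})}\bigr]=\bigl(\langle A\bm{g},\overline{\bm{\varphi}}\rangle-\langle\overline{\bm{g}},A\bm{\varphi}\rangle\bigr)+\beta_-\langle\bm{\Lambda}\bm{\varphi},\overline{\bm{\varphi}}\rangle+\langle\Upsilon_-\bm{g},\overline{\bm{g}}\rangle-\delta_-\langle\bm{\Lambda}^{-1}\bm{g},\overline{\bm{g}}\rangle.
\end{equation*}
The identity $\overline{\langle\bm{g},\overline{\bm{\varphi}}\rangle}=\langle\overline{\bm{g}},\bm{\varphi}\rangle$ shows that the $-\tfrac12 I$ piece of the $A$-bracket is purely imaginary; the anti-symmetry $\langle\bm{H}\bm{g},\overline{\bm{\varphi}}\rangle=-\langle\bm{g},\overline{\bm{H}\bm{\varphi}}\rangle$ from Lemma \ref{lemma:10.8}, combined with $\alpha_-\in i\mathbb{R}$, shows the $\alpha_-\bm{H}$ piece is purely imaginary as well. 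Taking real parts leaves the three remaining contributions, and these are bounded below by $c(\|\bm{g}\|^2_{\bm{H}^{1/2}}+\|\bm{\varphi}\|^2_{\bm{H}^{-1/2}})$ via \eqref{eq:coercivity:lambda}, the sign conditions $\beta_->0>\delta_-$, and Proposition \ref{prop:Y:complexificated} applied to $\Upsilon_-=-\mathrm{PS}_\kappa(Y_+)$.

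For injectivity, let $(\bm{g},\bm{\varphi})\in\ker{\cal A}_-$ and define $\mathbf{u}:=\operatorname{DL}_-\bm{g}-\operatorname{SL}_-\bm{\varphi}$ on $\mathbb{R}^2\setminus\Gamma$. The first row of the system is exactly $\gamma_+\mathbf{u}=0$, so uniqueness of the exterior Dirichlet Navier problem forces $\mathbf{u}|_{\Omega_+}\equiv 0$; in particular $T_+\mathbf{u}=0$. Computing $T_+\mathbf{u}$ via the jump relations of the potentials and substituting the second row of ${\cal A}_-(\bm{g},\bm{\varphi})=0$ produces $T_+\mathbf{u}=\Upsilon_-\bm{g}+\bm{\varphi}$, so $\bm{\varphi}=-\Upsilon_-\bm{g}$. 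The interior Cauchy data $(\gamma_-\mathbf{u},T_-\mathbf{u})=(-\bm{g},-\bm{\varphi})$ thus satisfy the homogeneous Robin condition $T_-\mathbf{u}+\Upsilon_-\gamma_-\mathbf{u}=0$, while Green's identity (cf.\ the proof of Lemma \ref{lemma:im:Tpm}) forces $\langle T_-\mathbf{u},\overline{\gamma_-\mathbf{u}}\rangle\in\mathbb{R}$, whence $\mathrm{Im}\langle\Upsilon_-\bm{g},\overline{\bm{g}}\rangle=0$; the strict sign in Proposition \ref{prop:Y:complexificated} then yields $\bm{g}=0$ and hence $\bm{\varphi}=0$. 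The delicate step is the cancellation of the $A$-cross terms in the coercivity computation, where one must simultaneously use Lemma \ref{lemma:10.8} and the purely imaginary character of $\alpha_-$; once this is in hand, the rest is a routine synthesis of the coercivity results \eqref{eq:coercivity:lambda}, Lemma \ref{lemma:im:Tpm}, and Proposition \ref{prop:Y:complexificated}, combined with the Fredholm alternative.
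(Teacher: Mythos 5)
Your proof is correct and follows essentially the same route as the paper: the same splitting of ${\cal A}_-$ into $\mathrm{PS}({\cal A}_-)$ plus a smoothing (hence compact) remainder via \eqref{eq:principal:symbols}, the same real-part computation of the bracket \eqref{eq:dualproductTwo} in which the $-\tfrac12 I-\alpha_-\bm{H}$ cross terms cancel (Lemma \ref{lemma:10.8}, $\alpha_-\in i\mathbb{R}$) leaving the three coercive terms controlled by \eqref{eq:coercivity:lambda} and Proposition \ref{prop:Y:complexificated}, and the same potential-theoretic injectivity argument finished by the Fredholm alternative. The only divergence is the endgame of the injectivity step: the paper asserts $\Upsilon_-\bm{g}_0=0$ directly, whereas you derive $\bm{\varphi}=-\Upsilon_-\bm{g}$ and close via the interior Green identity (Lemma \ref{lemma:im:Tpm}) together with the strict imaginary-part coercivity of $\mathrm{PS}_\kappa(Y_+)$ — a slightly longer but tighter route to the same conclusion, consistent with the standard half-jump relations.
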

\begin{proof}
Clearly, ${\cal A}_--\mathrm{PS} ({\cal A}_-): \Hiib^{s}\times\Hiib^{s-1}\to
 \Hiib^{s+2}\times\Hiib^{s+1}$ is continuous. Notice also 
\begin{eqnarray*}
[\mathrm{PS} ({\cal A}_-)(\bm{g},{\bm \varphi}),\overline{(\bm{g},{\bm \varphi})}]&=&
-\langle \tfrac{1}2\bm{g}+\alpha_-\bm{H}\bm{g},\overline{\bm{\varphi}}\rangle +
\beta_-\langle  \bm{\Lambda}\bm{\varphi},\overline{\bm{\varphi}}\rangle\\
&&
-\otherCorrections{\delta_-}\langle \bm{\Lambda}^{-1}\bm{g},\overline{\bm{g}}\rangle - \langle \mathrm{PS}_{\kappa}(\Yiip) \bm{g},\overline{\bm{g}}\rangle+
\langle  \tfrac{1}2\bm{\varphi}+\alpha_-\bm{H}\bm{\varphi},\overline{\bm{g}}\rangle
\end{eqnarray*}
and so
\[
 \Re [\mathrm{PS} ({\cal A}_-)(\bm{g},{\bm \varphi}),\overline{(\bm{g},{\bm \varphi})}] =  
  \beta_- \Re \langle  \bm{\Lambda}\bm{\varphi},\overline{\bm{\varphi}}\rangle 
-\otherCorrections{\delta_-} \Re \langle \bm{\Lambda}^{-1}\bm{g},\overline{\bm{g}}\rangle - \Re \langle \mathrm{PS}_{\kappa}(\Yiip) \bm{g},\overline{\bm{g}}\rangle.
\]
On account of relation~\eqref{eq:coercivity:lambda} established in Lemma~\ref{lemma:10.8} (recall that $\beta_->0>\delta_-$) and the result  in Proposition~\ref{prop:Y:complexificated} we conclude the proof of the first result in the theorem. 

Following the Fredholm paradigm, since $\mathrm{PS}( {\cal A}_-): \Hiib^{s}\times\Hiib^{s-1}\to \Hiib^{s}\times\Hiib^{s-1}$ is invertible  we just have to show that $\mathcal{A}_- :\Hiib^{s}\times\Hiib^{s-1}\to \Hiib^{s}\times\Hiib^{s-1}$ is injective.  Let $(\bm{g}_0,{\bm \varphi}_0)\in \mathop{\rm Ker}(\mathcal{A}_-)$ and define the field
\[
{\bf w}(\x):=(\DLii_- \bm{g}_0)(\x)- (\SLii_-{\bm \varphi}_0)(\x),\quad \x\in\mathbb{R}^2\setminus\Gamma.
\]
Clearly $ {\bf w}|_{\Omega_+}$  is a  \firstReviewer{radiating} solution of the Navier equation in $\Omega_+$ with zero Dirichlet boundary conditions on $\Gamma$, and therefore $ {\bf w}$ vanishes in $\Omega_+$. Also, ${\bf w}_-={\bf w}|_{\Omega_-}$ is a solution of the Navier equation in $\Omega_-$ whose Cauchy boundary data on $\Gamma$ \firstReviewer{are}
\[
\otherCorrections{\gamma {\bf w}_-}  = -\bm{g}_0,\qquad \Tiim {\bf w}_-   = -{\bm \varphi}_0.
\]
However, we derive from its definition that
\[
\Tiim {\bf w}_-   =\Wii_-\bm{g}_0+\frac{1}{2}{\bm \varphi}_0-\Kii_-^\top{\bm \varphi}_0
\]
and thus we obtain
\[
\Wii_-\bm{g}_0=\frac{1}{2}{\bm \varphi}_0+\Kii_-^\top{\bm \varphi}_0 .
\]
Given that $(\bm{g}_0,{\bm \varphi}_0)\in \mathop{\rm Ker}(\mathcal{A}_-)$, we also have that
\[
\Wii_-\bm{g}_0-\firstReviewer{\bm{\Upsilon}}_-\bm{g}_0=\frac{1}{2}{\bm \varphi}_0+\Kii_-^\top{\bm \varphi}_0 .
\]
We derive from the last two relations that $\firstReviewer{\bm{\Upsilon}}_-\bm{g}_0=0$, which, in the light of the coercivity properties established in Proposition \ref{prop:Y:complexificated} implies that $\bm{g}_0=0$. This, in turn, implies that the Cauchy data of ${\bf w}_-$ vanishes on $\Gamma$, and thus ${\bf w}_-=0$ in $\Omega_-$. Consequently, $ \bm{\varphi}_0=0$, which completes the proof.
\end{proof}

\firstReviewer{Regarding $\Sii_+\bm{\lambda}$, i.e.,}  the exterior Robin-to-Robin problem, we can derive a similar formulation as in \eqref{eq:RT_formulation} proceeding in the same way, namely 
 
 \begin{equation}\label{eq:RT_formulation_p} 
  {\cal A}_+\begin{bmatrix}\gamma {\bf u}_-\\ T{\bf u}_-\end{bmatrix}:=  \begin{bmatrix}
  \frac12 \firstReviewer{\bm{I}}-\Kii_+   &      \Vii_+\\ 
  \firstReviewer{\bm{\Upsilon}}_+  + \Wii_+&    \frac12 \firstReviewer{\bm{I}}-\Kii_+^\top  \end{bmatrix} \begin{bmatrix}\gamma {\bf u}_-\\ T{\bf u}_-\end{bmatrix} = 
 \begin{bmatrix} 0\\\bm{\lambda}_+\end{bmatrix}.
\end{equation}

\begin{theorem} 
The matrix operator $\mathcal{A}_+: \Hiib^{s}\times\Hiib^{s-1}\to \Hiib^{s}\times\Hiib^{s-1}$ is a compact perturbation of 
\[
 \mathrm{PS} ({\cal A}_+) =  \begin{bmatrix}
                             \frac12 \firstReviewer{\bm{I}} -\alpha_+\bm{H} & \beta_+\bm{\Lambda} \\
                                 \firstReviewer{\bm{\Upsilon}}_+-\otherCorrections{\delta_+}\bm{\Lambda}^{-1}   &  \frac12 \firstReviewer{\bm{I}} -\alpha_+\bm{H}&\\
                           \end{bmatrix}
\]
which is a coercive operator: There exists $c>0$ such \firstReviewer{that}  
\[
 \Re [  \mathrm{PS} ({\cal A}_+)(\bm{g},\bm{\psi}),\overline{(\bm{g}, \bm{\psi})}]\ge c \left[
 \|\bm{g}\|_{\Hiib^{1/2}}+ \|\bm{\varphi}\|_{\Hiib^{-1/2}}
 \right],\qquad (\bm{g},\bm{\psi})^\top\ne 0. 
\]
 Furthermore, if $\omega^2$ is not an eigenvalue of the Dirichlet Navier operator with material parameters $(\mu_+,\lambda_+)$ in the domain $ \Omega_-$, the operator $\mathcal{A}_+$ is invertible. 
\end{theorem}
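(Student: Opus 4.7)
The plan is to mirror the three-step strategy used for Theorem~\ref{thm:form_1}, adapting each step to the exterior geometry, the interior-type regularizer $\Upsilon_+=-\mathrm{PS}_\kappa(Y_-)$, and the new non-resonance hypothesis.

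First, to identify $\mathrm{PS}(\mathcal{A}_+)$ and verify the compact-perturbation property, I would read off the principal parts of the four exterior boundary integral operators $V_+,K_+,K_+^\top,W_+$ from~\eqref{eq:principal:symbols}, and invoke Proposition~\ref{prop:Y:complexificated} to replace $\Upsilon_+$ by its principal-symbol representation modulo operators gaining one degree of regularity. Assembling these four entrywise smoothing estimates yields $\mathcal{A}_+-\mathrm{PS}(\mathcal{A}_+):\bm{H}^{s}\times\bm{H}^{s-1}\to\bm{H}^{s+2}\times\bm{H}^{s+1}$, which is compact by Rellich's embedding.

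Second, for coercivity I would compute $[\mathrm{PS}(\mathcal{A}_+)(\bm{g},\bm{\varphi}),\overline{(\bm{g},\bm{\varphi})}]$ using the pairing~\eqref{eq:dualproductTwo}. Exactly as in the proof of Theorem~\ref{thm:form_1}, the cross-terms generated by the two identical diagonal blocks $\tfrac12 I-\alpha_+\bm{H}$ have vanishing real part: the $\tfrac12 I$ contributions cancel by antisymmetry of the bracket, and the $\alpha_+\bm{H}$ pieces cancel because $\alpha_+\in i\mathbb{R}$ together with the adjointness of $\bm{H}$ in Lemma~\ref{lemma:10.8}. What survives on the real side is
\begin{equation*}
\beta_+\Re\langle\bm{\Lambda}\bm{\varphi},\overline{\bm{\varphi}}\rangle-\delta_+\Re\langle\bm{\Lambda}^{-1}\bm{g},\overline{\bm{g}}\rangle+\Re\langle\mathrm{PS}_\kappa(Y_-)\bm{g},\overline{\bm{g}}\rangle,
\end{equation*}
after substituting $\Upsilon_+=-\mathrm{PS}_\kappa(Y_-)$. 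Since $\beta_+>0$ and $-\delta_+>0$, the first two summands control $\|\bm{\varphi}\|_{\bm{H}^{-1/2}}^2$ and $\|\bm{g}\|_{\bm{H}^{1/2}}^2$ by~\eqref{eq:coercivity:lambda}, while the third summand provides an additional $\|\bm{g}\|_{\bm{H}^{1/2}}^2$ lower bound via Proposition~\ref{prop:Y:complexificated}.

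Third, Fredholm theory then reduces invertibility to injectivity. Given $(\bm{g}_0,\bm{\varphi}_0)\in\ker\mathcal{A}_+$, form ${\bf w}:=\operatorname{DL}_+\bm{g}_0-\operatorname{SL}_+\bm{\varphi}_0$ on $\mathbb{R}^2\setminus\Gamma$. The first row of $\mathcal{A}_+(\bm{g}_0,\bm{\varphi}_0)=0$ combines with the standard jump relations to force $\gamma_-{\bf w}=0$; the non-resonance hypothesis for $\omega^2$ with respect to the Dirichlet Navier operator on $\Omega_-$ with parameters $(\lambda_+,\mu_+)$ then implies ${\bf w}|_{\Omega_-}=0$, hence also $T_-{\bf w}=0$. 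The resulting identity $W_+\bm{g}_0=(\tfrac12 I+K_+^\top)\bm{\varphi}_0$ substitutes into the second row of the kernel equation and collapses it to $\bm{\varphi}_0+\Upsilon_+\bm{g}_0=0$. The remaining jump identities give $(\gamma_+{\bf w},T_+{\bf w})=(\bm{g}_0,\bm{\varphi}_0)$, so the radiating solution ${\bf w}|_{\Omega_+}$ satisfies $Y_+\bm{g}_0=\bm{\varphi}_0=\mathrm{PS}_\kappa(Y_-)\bm{g}_0$. Pairing with $\overline{\bm{g}_0}$ and taking imaginary parts yields a contradiction when $\bm{g}_0\ne 0$: Lemma~\ref{lemma:im:Tpm} enforces $\Im\langle Y_+\bm{g}_0,\overline{\bm{g}_0}\rangle>0$, whereas Proposition~\ref{prop:Y:complexificated} enforces $\Im\langle\mathrm{PS}_\kappa(Y_-)\bm{g}_0,\overline{\bm{g}_0}\rangle<0$. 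Hence $\bm{g}_0=0$ and then $\bm{\varphi}_0=-\Upsilon_+\bm{g}_0=0$. The main obstacle lies here: the entire argument hinges on arranging that the two imaginary-part controls point in opposite directions, which requires both the correct sign of $\Im\kappa$ in the definition of $\mathrm{PS}_\kappa(Y_-)$ and the non-resonance hypothesis, without which the passage $\gamma_-{\bf w}=0\Rightarrow{\bf w}|_{\Omega_-}=0$ collapses and the identification $\bm{\varphi}_0=-\Upsilon_+\bm{g}_0$ cannot be obtained.
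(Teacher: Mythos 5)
Your argument is correct and follows the paper's overall skeleton (principal-part identification via \eqref{eq:principal:symbols}, coercivity of $\mathrm{PS}(\mathcal{A}_+)$ through Lemma \ref{lemma:10.8}, \eqref{eq:coercivity:lambda} and Proposition \ref{prop:Y:complexificated}, then Fredholm plus injectivity via the layer potential ${\bf w}$ built from a kernel element), but your injectivity step is organized in the reverse order from the paper's. The paper first uses only the two kernel rows and the jump relations to obtain $\gamma_-{\bf w}=0$, $T_+{\bf w}=\Upsilon_+\bm{g}_0$ and $\gamma_+{\bf w}=-\bm{g}_0$, hence the homogeneous generalized Robin condition $T_+{\bf w}+\Upsilon_+\gamma_+{\bf w}=0$ in $\Omega_+$; the coercivity $\Im\langle\Upsilon_+\cdot,\overline{\cdot}\rangle>0$ together with the radiation condition then kills ${\bf w}|_{\Omega_+}$ and gives $\bm{g}_0=0$ \emph{without} invoking the non-resonance hypothesis, which enters only at the very end to conclude ${\bf w}|_{\Omega_-}=0$ and hence $\bm{\varphi}_0=0$. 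You instead invoke non-resonance first to annihilate ${\bf w}|_{\Omega_-}$, extract $\bm{\varphi}_0=-\Upsilon_+\bm{g}_0$ from the second row, and then dispatch the exterior part through the identity $Y_+\bm{g}_0=\mathrm{PS}_\kappa(Y_-)\bm{g}_0$ and the opposing signs of $\Im\langle Y_+\bm{g}_0,\overline{\bm{g}_0}\rangle$ (Lemma \ref{lemma:im:Tpm}) and $\Im\langle\mathrm{PS}_\kappa(Y_-)\bm{g}_0,\overline{\bm{g}_0}\rangle$ (Proposition \ref{prop:Y:complexificated}); the underlying mechanism is the same sign clash, just phrased as a DtN identity rather than as uniqueness for the Robin problem. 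Both routes are valid under the stated hypotheses; what the paper's ordering buys is the sharper bookkeeping that $\bm{g}_0=0$ holds for every $\omega$, the Dirichlet non-resonance assumption being needed only to recover $\bm{\varphi}_0$, whereas your version makes the whole chain depend on that assumption from the outset. One small inessential point: in your first step you do not actually need Proposition \ref{prop:Y:complexificated} to handle the $(2,1)$ entry, since $\Upsilon_+$ appears verbatim in both $\mathcal{A}_+$ and $\mathrm{PS}(\mathcal{A}_+)$ and cancels; the smoothing statements \eqref{eq:principal:symbols} for $V_+,K_+,K_+^\top,W_+$ alone give $\mathcal{A}_+-\mathrm{PS}(\mathcal{A}_+):\bm{H}^{s}\times\bm{H}^{s-1}\to\bm{H}^{s+2}\times\bm{H}^{s+1}$.
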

\begin{proof} The first part of the theorem follows along the same lines as that in Theorem~\ref{thm:form_1}. Let ${({\bm g_0},{\bm \varphi_0})}\in \mathop{\rm Ker}(\mathcal{A}_+)$ and define the field
\[
{\bf w}(\x):=-(\DLii_+  \bm{g}_0)(\x)+(\SLii_+\bm{\varphi}_0)(\x),\quad \x\in\mathbb{R}^2\setminus\Gamma
\]
and denote \otherCorrections{${\bf w}_\pm ={\bf w}|_{\Omega_\pm}$}.
It follows immediately that $\otherCorrections{\gamma{\bf w}_-}  ={\bf 0}$ and $ \Tiip {\bf w}   =\firstReviewer{\bm{\Upsilon}}_+\bm{g}_0$. Given the jump relations of Navier layer potentials it follows that $\otherCorrections{\gamma{\bf w}_+}=-\bm{g}_0$ and thus
\[
 \Tiip {\bf w}_+   +  \firstReviewer{\bm{\Upsilon}}_+ \gamma{\bf w}_+  ={\bf 0}\qquad{\rm on}\ \Gamma.
\]
Now, $\otherCorrections{{\bf w}_+}$ is a  \firstReviewer{radiating} solution of Navier equations in $\Omega_+$, and given that $\Im{\langle\firstReviewer{\bm{\Upsilon}}_+{\bm {\varphi}},{\bm{\varphi}}\rangle}>0$, 
it follows that $\otherCorrections{{\bf w}_+}=0$ in $\otherCorrections{{\bf w}_+}$, and hence $\bm{g}_0={\bf 0}$. Under the  assumption in the theorem, we also have that ${{\bf w}|_{\Omega_-}}={\bf 0}$, and thus 
$\bm{\varphi}_0=\otherCorrections{\Tiim {\bf w}_-}={\bf 0}$ as well.

\end{proof}

%

\firstReviewer{The restriction on $\omega$ not being an eigenvalue of the Dirichlet problem can be however overcome, that is,  } it is possible to derive BIE formulations to solve exterior Navier equations with generalized Robin boundary conditions that are uniquely solvable for all frequencies $\omega$. Indeed, let \otherCorrections{cf. \eqref{eq:PSkappa:a}}
\begin{equation}\label{eq:regeps}
\otherCorrections{ \widetilde{\Vii} = -\frac{1}{\delta_-}\bm{\Lambda}_\kappa \left(\frac{1}2 \bm{I} -\alpha_-\bm{H}\right)
 =
\left(  {\rm PS}_\kappa (\Yiim)\right)^{-1}},
      \end{equation} 
 and consider alternative formulations obtained by adding the regularized boundary condition $\varepsilon \widetilde{\Vii}(T{\bf u}_+)+\varepsilon \widetilde{\Vii}\firstReviewer{\bm{\Upsilon}}_+{\bf u}_+=\varepsilon \widetilde{\Vii}\bm{\lambda}_+$ to the second equation in formulation~\eqref{eq:RT_formulation_p}   
 \begin{equation}\label{eq:RT_formulation_pe}
 \mathcal{A}_+^{\varepsilon}:= \begin{bmatrix}
  \frac12 \firstReviewer{\bm{I}}-\Kii_+  + \varepsilon \widetilde{\Vii}\firstReviewer{\bm{\Upsilon}}_+  &      \Vii_++\varepsilon \widetilde{\Vii}\\ 
  \firstReviewer{\bm{\Upsilon}}_+  + \Wii_+&    \frac12 \firstReviewer{\bm{I}}-\Kii_+^\top  \end{bmatrix} \begin{bmatrix}\gamma {\bf u}_-\\ T{\bf u}_-\end{bmatrix},\qquad
   \mathcal{A}_+^{\varepsilon} \begin{bmatrix}\gamma {\bf u}_-\\ T{\bf u}_-\end{bmatrix}  =\begin{bmatrix}\varepsilon \widetilde{\Vii}{\bm{\lambda}_+} \\\bm{\lambda}_+\end{bmatrix}.
\end{equation}
We establish

\begin{theorem} \label{thm:form_2}The matrix operator $\mathcal{A}_+^\varepsilon:\Hiib^{s}\times\Hiib^{s-1}\to \Hiib^{s}\times\Hiib^{s-1}$ in the left hand side of equation~\eqref{eq:RT_formulation_pe} is a compact perturbation of a coercive operator in the space $\Hiib^{1/2}   \times\Hiib^{-1/2} $ provided $\varepsilon>0$ is small enough. Furthermore, under the same assumption, the operator $\mathcal{A}_+^\varepsilon$ is invertible. 
\end{theorem}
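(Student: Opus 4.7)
My strategy is to follow the blueprint of Theorem~\ref{thm:form_1}: establish that $\mathcal{A}_+^\varepsilon$ is a compact perturbation of a coercive operator on $\bm H^{1/2}\times\bm H^{-1/2}$, and deduce invertibility by a direct injectivity argument. The first step is to compute $\mathrm{PS}(\mathcal{A}_+^\varepsilon)$. Since both $\widetilde V$ and $\Upsilon_+=-\mathrm{PS}_\kappa(Y_-)$ are matrix Fourier multipliers that commute, the identity~\eqref{eq:id:alpha:beta:gamma} yields at once the \emph{exact} relation $\widetilde V\,\Upsilon_+=\Upsilon_+\,\widetilde V=c_0\,I$, where $c_0:=-(\tfrac14+\alpha_-^2)<0$ is a real constant. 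Hence the $\varepsilon \widetilde V\Upsilon_+$ appearing in the $(1,1)$ entry of $\mathcal{A}_+^\varepsilon$ is simply $\varepsilon c_0 I$, the $(1,2)$ entry is $V_++\varepsilon\widetilde V$, and the bottom row is unchanged from $\mathcal{A}_+$. Using~\eqref{eq:principal:symbols} together with $\bm\Lambda_\kappa-\bm\Lambda:\bm H^s\to\bm H^{s+3}$ one reads off $\mathrm{PS}(\mathcal{A}_+^\varepsilon)$ as a matrix of Fourier multipliers, and the remainder $\mathcal{A}_+^\varepsilon-\mathrm{PS}(\mathcal{A}_+^\varepsilon):\bm H^{s}\times\bm H^{s-1}\to\bm H^{s+2}\times\bm H^{s+1}$ is compact by the usual smoothing argument.

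For the coercivity of $\mathrm{PS}(\mathcal{A}_+^\varepsilon)$ in the duality $[\cdot,\cdot]$, I would first handle the unperturbed case $\varepsilon=0$ exactly as in Theorem~\ref{thm:form_1}: the $\bm H$-cross contributions cancel by Lemma~\ref{lemma:10.8}, while the $V_+$ and $W_++\Upsilon_+$ blocks combined with the coercivity estimates~\eqref{eq:coercivity:lambda} for $\bm\Lambda$ and $\bm\Lambda^{-1}$, the sign $\delta_+<0$, and the real-part bound $-\Re\langle\Upsilon_+\bm g,\overline{\bm g}\rangle\ge c\|\bm g\|_{\bm H^{1/2}}^2$ from Proposition~\ref{prop:Y:complexificated} yield
\[
\Re\bigl[\mathrm{PS}(\mathcal{A}_+)(\bm g,\bm\varphi),\overline{(\bm g,\bm\varphi)}\bigr]\ge c\bigl(\|\bm g\|_{\bm H^{1/2}}^{2}+\|\bm\varphi\|_{\bm H^{-1/2}}^{2}\bigr).
\]
The $\varepsilon$-correction amounts to the bounded term $\varepsilon c_0 I$ on the $(1,1)$ entry and the order~$-1$ operator $\varepsilon\beta_-\bm\Lambda(\tfrac12 I-\alpha_-\bm H)$ on the $(1,2)$ entry; both produce mixed contributions bounded by $C\varepsilon(\|\bm g\|_{\bm H^{1/2}}^{2}+\|\bm\varphi\|_{\bm H^{-1/2}}^{2})$ via Cauchy--Schwarz and Young's inequality, so that for $\varepsilon>0$ sufficiently small the coercivity survives. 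Consequently $\mathcal{A}_+^\varepsilon$ is a compact perturbation of a coercive, invertible operator, and therefore Fredholm of index zero on $\bm H^{1/2}\times\bm H^{-1/2}$.

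Injectivity is where the regularization really pays off. Given $(\bm g_0,\bm\varphi_0)\in\operatorname{Ker}\mathcal{A}_+^\varepsilon$, set ${\bf w}:=-\operatorname{DL}_+\bm g_0+\operatorname{SL}_+\bm\varphi_0$ in $\R^2\setminus\Gamma$. The second equation of the system gives $T_+{\bf w}=\Upsilon_+\bm g_0$, and the single layer jump relation yields $T_-{\bf w}=T_+{\bf w}+\bm\varphi_0=\Upsilon_+\bm g_0+\bm\varphi_0$; substituting this into the first equation and using the key identity $\widetilde V\Upsilon_+=c_0 I$ recasts it as the clean interior impedance condition
\[
\gamma_-{\bf w}+\varepsilon\widetilde V\,T_-{\bf w}=0\quad\Longleftrightarrow\quad T_-{\bf w}+\frac{1}{\varepsilon c_0}\,\Upsilon_+\gamma_-{\bf w}=0,
\]
where I used $\widetilde V^{-1}=\Upsilon_+/c_0$. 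Since $\varepsilon c_0<0$ while $\Im\langle\Upsilon_+\bm g,\overline{\bm g}\rangle\ge c\|\bm g\|_{\bm H^{-1/2}}^{2}$ by Proposition~\ref{prop:Y:complexificated}, pairing this condition against $\overline{\gamma_-{\bf w}}$ and invoking the interior Green identity $\Im\langle T_-{\bf w},\overline{\gamma_-{\bf w}}\rangle=0$ forces $\gamma_-{\bf w}=0$ and hence $T_-{\bf w}=0$; Somigliana's representation in $\Omega_-$ then gives ${\bf w}|_{\Omega_-}=0$. The jump relations now produce $\gamma_+{\bf w}=-\bm g_0$ and $T_+{\bf w}=\Upsilon_+\bm g_0$, so ${\bf w}|_{\Omega_+}$ is a radiative Navier field satisfying the homogeneous exterior Robin condition $T_+{\bf w}+\Upsilon_+\gamma_+{\bf w}=0$; Lemma~\ref{lemma:im:Tpm} combined with the positivity of $\Im\Upsilon_+$ yields ${\bf w}|_{\Omega_+}=0$, whence $\bm g_0=-\gamma_+{\bf w}=0$ and $\bm\varphi_0=-\Upsilon_+\bm g_0=0$. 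The main obstacle I anticipate is the coercivity step: one has to check that the $\varepsilon$-dependent mixed terms can be absorbed uniformly on a nontrivial interval $(0,\varepsilon_0]$ and that the new diagonal perturbation $\varepsilon c_0 I$ does not disturb the sign balance that renders the unperturbed principal symbol coercive; everything else is essentially algebraic once the identity $\widetilde V\Upsilon_+=c_0 I$ is in hand.
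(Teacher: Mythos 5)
Your proof is correct and takes essentially the same route as the paper's: coercivity of the principal symbol for $\varepsilon$ small by perturbation of the $\varepsilon=0$ case, Fredholm alternative, and injectivity via the potential ${\bf w}=-\operatorname{DL}_+\bm g_0+\operatorname{SL}_+\bm\varphi_0$, the interior impedance condition $\gamma_-{\bf w}+\varepsilon\widetilde V\,T_-{\bf w}=0$, and Lemma~\ref{lemma:im:Tpm} together with Proposition~\ref{prop:Y:complexificated}. Your explicit identity $\widetilde V\Upsilon_+=-(\tfrac14+\alpha_-^2)I$ is a clean algebraic shortcut that makes the sign analysis of $(\varepsilon\widetilde V)^{-1}$ transparent (and you correctly use that the interior pairing $\langle T_-{\bf w},\overline{\gamma_-{\bf w}}\rangle$ is real), but it does not change the substance of the argument.
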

\begin{proof}
Clearly
\[
  \mathrm{PS} ({\cal A}_+^{\varepsilon})=  \mathrm{PS} ({\cal A}_+)  + {\varepsilon} \begin{bmatrix}
                              \widetilde{\Vii}\firstReviewer{\bm{\Upsilon}}_+  &        \widetilde{\Vii}\\ 
                             0&0
                           \end{bmatrix}
\]
is coervice, in the sense we have stated in previous results, for $\varepsilon$ small enough since so is $\mathrm{PS} ({\cal A}_+)$. 

Now let $ (\bm{g}_0,\bm{\varphi}_0) \in \mathop{\rm Ker}(\mathcal{A}_+^\varepsilon)$ and define the field
\[
{\bf w}(\x):=-(\DLii_+ \bm{g}_0)(\x)+(\SLii_+\bm{\varphi}_0)(\x),\quad \x\in\mathbb{R}^2\setminus\Gamma,\qquad
\otherCorrections{{\bf w}_\pm = {\bf w}|_{\Omega_\pm}}.
\]
It follows immediately that $\gamma{\bf w}_- =-\varepsilon \widetilde{\Vii}(\firstReviewer{\bm{\Upsilon}}_+\bm{g}_0+\bm{\varphi}_0)$ and $\Tiim{\bf w}_-=\firstReviewer{\bm{\Upsilon}}_+\bm{g}_0+\bm{\varphi}_0$. 
Therefore ${\bf w}_-$ is a solution of Navier equations in $\Omega_-$ with generalized Robin boundary conditions 
\[
\Tiim{\bf w}_- +(\varepsilon \widetilde{\Vii})^{-1}  \gamma{\bf w}_-=0, 
\]
and therefore 
\[
 \Im\langle \Tiim{\bf w}_- ,\overline{\gamma {\bf w}}_-\rangle + \Im\langle (\varepsilon \widetilde{\Vii})^{-1}{\gamma{\bf w}_-},\overline{\gamma{\bf w}}_-\rangle=0\firstReviewer{.}
\]
\otherCorrections{This implies that ${\bf w}=0$, since  by Lemma \ref{lemma:im:Tpm} and Proposition \ref{prop:Y:complexificated} we have
\[
 \Im \langle \Tiim{\bf w} ,\overline{\gamma {\bf w}}_-\rangle =0,\quad 
 -\Im \langle (\varepsilon \widetilde{\Vii})^{-1}{\gamma{\bf w}_-},\overline{\gamma{\bf w}_-}\rangle
 = -\Im \langle  \varepsilon^{-1}    {\rm PS}_\kappa (\Yiim)   {\gamma{\bf w}_-},\overline{\gamma{\bf w}_-}\rangle
>c \|{\gamma{\bf w}_-}\|^2_{\Hiib^{1/2}}.
\] 
Therefore, 
\[
\bm{\varphi}_0=-\firstReviewer{\bm{\Upsilon}}_+\bm{g}_0. 
\]
}
The jump conditions of Navier layer potentials yield $ \gamma{\bf w}_+ =-\bm{g}_0$ and $\Tiip {\bf w} =-\bm{\varphi}_0=\firstReviewer{\bm{\Upsilon}}_+\bm{g}_0$. Consequently, ${\bf w}_+$ is a  \firstReviewer{radiating} solution of Navier equations in $\Omega_+$ with generalized Robin boundary conditions 
\[
\Tiip {\bf w}_+ + \firstReviewer{\bm{\Upsilon}}_+ \gamma {\bf w}_+  =0.
\]
Thus,  from Lemma \ref{lemma:im:Tpm}
\[
 0= \Im\langle  \Tiip_+ {\bf w}_+  ,\overline{\gamma{\bf w}_+}\rangle +\Im{\langle\firstReviewer{\bm{\Upsilon}}_+  \gamma{\bf w}_+ ,\overline{  \gamma{\bf w}_+ }\rangle}\ge
 -\Im{\langle {\rm PS}_\kappa (\Yiim) \gamma{\bf w}_+ ,\overline{  \gamma{\bf w}_+ }\rangle} 
\]
from where we infer, Proposition \ref{prop:Y:complexificated}, that $\gamma{\bf w}_+=0$  and thus ${\bm \varphi}_0=\bm{g}_0=0$.
\end{proof}

 Alternatively, instead of working with a full system of BIEs , it is also possible to construct a  single  robust regularized BIE formulations for the solution of exterior Navier equations with generalized Robin boundary conditions. Defining first the operator
\[
\firstReviewer{\bm{R}^{\rm os}}:=({\rm PS}_\kappa(\Yiip)-{\rm PS}_\kappa(\Yiim))^{-1}
\]
we look for a  \firstReviewer{radiating} solution of the Navier equation in $\Omega_+$ of the form
\begin{equation}\label{eq:reg_ext}
{\bf u}_+(\x):=(\DLii_+ [\firstReviewer{\bm{R}^{\rm os}}{\bm \varphi]})(\x)-(\SLii_+[{\rm PS}_\kappa(\Yiip)\firstReviewer{\bm{R}^{\rm os}}{\bm \varphi}])(\x),\quad \x\in\Omega_+.
\end{equation}
We have
\begin{eqnarray*}
\gamma{\bf u}_+&=&\left[\left(\frac{1}{2}+\Kii_+-\Vii_+{\rm PS}_\kappa(\Yiip)\right)\firstReviewer{\bm{R}^{\rm os}}\right]{\bm \varphi}\\
 \Tiip{\bf u}_+ &=&\left[\left(\Wii_++\frac{1}{2}{\rm PS}_\kappa(\Yiip)-\Kii^\top_+{\rm PS}_\kappa(\Yiip)\right)\firstReviewer{\bm{R}^{\rm os}}\right]{\bm \varphi}.
\end{eqnarray*}
It follows than that ${\bf u}_+$ satisfies the boundary condition $T{\bf u}_++\firstReviewer{\bm{\Upsilon}}_+{\bf u}_+=T{\bf u}_+-{\rm PS}_\kappa(\Yiim){\bf u}_+=\bm{\lambda}_+$  if and only if  ${\bm \varphi}$ satisfies the BIE
\begin{equation}
\begin{aligned}
&\firstReviewer{\bm{B}}_+{\bm \varphi}=\bm{\lambda}_+,\\
&\firstReviewer{\bm{B}}_+:=\left[\left(\Wii_++\frac{1}{2}{\rm PS}_\kappa(\Yiip)-\Kii^\top_+{\rm PS}_\kappa(\Yiip)\right)-{\rm PS}_\kappa(\Yiim)\left(\frac{1}{2}\firstReviewer{\bm{I}} +\Kii_+-\Vii_+{\rm PS}_\kappa(\Yiip)\right)\right]\firstReviewer{\bm{R}^{\rm os}}.
\end{aligned}
\end{equation}

\begin{theorem} \label{thm:form_3}The operator $\firstReviewer{\bm{B}}_+:\Hiib^s\to \Hiib^s$ is a compact perturbation of the identity. Furthermore, the operator $\firstReviewer{\bm{B}}_+$ is invertible. 
\end{theorem}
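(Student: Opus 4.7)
The plan is to show that $\mathcal{B}_+$ is a compact perturbation of the identity and is injective; the Fredholm alternative will then close the argument.

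The first step is to compute the principal symbol of $\mathcal{B}_+$ by means of the pseudodifferential calculus developed in Section~\ref{auxr}. Writing $P_\pm := \mathrm{PS}_\kappa(Y_\pm)$ and working modulo smoothing operators, the principal symbols of the individual BIOs read off from \eqref{eq:C0} combined with the explicit formulas for $P_\pm$ immediately yield
\[
 \tfrac{1}{2}I + K_+ - V_+ P_+ \;\sim\; \tfrac{1}{2}I + \alpha_+\bm{H} + (\tfrac{1}{2}I - \alpha_+\bm{H}) = I.
\]
The companion reduction is more subtle:
\[
 W_+ + (\tfrac{1}{2}I - K_+^\top)P_+ \;\sim\; \delta_+\bm{\Lambda}_\kappa^{-1} - \beta_+^{-1}\bm{\Lambda}_\kappa^{-1}\bigl(\tfrac{1}{2}I - \alpha_+\bm{H}\bigr)^2.
\]
Expanding the square via $\bm{H}^2 = -\bm{I}$ and then collapsing the resulting scalar coefficient through the structural identity $\alpha_+^2 + \beta_+\delta_+ + \tfrac{1}{4} = 0$ of \eqref{eq:id:alpha:beta:gamma}, this expression reduces to $-\beta_+^{-1}\bm{\Lambda}_\kappa^{-1}(\tfrac{1}{2}I - \alpha_+\bm{H}) = P_+$. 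Hence the full bracket defining $\mathcal{B}_+$ has principal part $P_+ - P_-$, and composition with $\operatorname{R}^{\rm os} = (P_+ - P_-)^{-1}$ gives $\mathcal{B}_+ = I + \mathcal{K}$ with $\mathcal{K}$ at least one order smoother, hence compact on $\bm{H}^s$.

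For injectivity, suppose $\mathcal{B}_+\bm{\varphi} = 0$ and let $\mathbf{u}_+$ be the radiative field defined by \eqref{eq:reg_ext}. The hypothesis is precisely the generalized Robin condition $T_+\mathbf{u}_+ + \Upsilon_+\gamma_+\mathbf{u}_+ = 0$, i.e.\ $(Y_+ - \mathrm{PS}_\kappa(Y_-))\gamma_+\mathbf{u}_+ = 0$. Testing against $\overline{\gamma_+\mathbf{u}_+}$ and taking imaginary parts gives
\[
\Im\langle Y_+\gamma_+\mathbf{u}_+, \overline{\gamma_+\mathbf{u}_+}\rangle = \Im\langle \mathrm{PS}_\kappa(Y_-)\gamma_+\mathbf{u}_+, \overline{\gamma_+\mathbf{u}_+}\rangle.
\]
Lemma~\ref{lemma:im:Tpm} makes the left-hand side strictly positive unless $\gamma_+\mathbf{u}_+ = 0$, while Proposition~\ref{prop:Y:complexificated} makes the right-hand side strictly negative in the same regime; the only way equality can hold is $\gamma_+\mathbf{u}_+ = 0$. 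Uniqueness for the exterior radiating Dirichlet problem then forces $\mathbf{u}_+ \equiv 0$ in $\Omega_+$, and in particular also $T_+\mathbf{u}_+ = 0$.

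To conclude, set $\mathbf{g}_0 := \operatorname{R}^{\rm os}\bm{\varphi}$ and $\bm{\lambda}_0 := \mathrm{PS}_\kappa(Y_+)\mathbf{g}_0$, and define $\mathbf{u}_-$ as the restriction to $\Omega_-$ of the very same layer-potential representation; this is a Navier solution in $\Omega_-$ with the exterior material parameters $(\lambda_+, \mu_+)$. The jump relations of $\operatorname{DL}$ and $\operatorname{SL}$ together with $\gamma_+\mathbf{u}_+ = T_+\mathbf{u}_+ = 0$ yield $\gamma_-\mathbf{u}_- = -\mathbf{g}_0$ and $T_-\mathbf{u}_- = -2\bm{\lambda}_0$. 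The interior Green identity embodied in Lemma~\ref{lemma:im:Tpm} forces $\Im\langle\mathrm{PS}_\kappa(Y_+)\mathbf{g}_0, \overline{\mathbf{g}_0}\rangle = 0$, and the coercivity bound of Proposition~\ref{prop:Y:complexificated} then gives $\mathbf{g}_0 = 0$; since $\operatorname{R}^{\rm os}$ is invertible, this yields $\bm{\varphi} = 0$. The one delicate point in the plan is the cancellation that reduces $W_+ + (\tfrac{1}{2}I - K_+^\top)P_+$ to $P_+$ modulo smoothing, where the algebraic identity $\alpha_+^2 + \beta_+\delta_+ + \tfrac{1}{4} = 0$ is indispensable; the remainder is a careful application of jump relations, Green's identities, and already-established coercivity inequalities.
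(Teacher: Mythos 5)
Your proof is correct and follows essentially the same route as the paper: the same principal-symbol cancellation based on the identity $\alpha_+^2+\beta_+\delta_++\tfrac14=0$ reduces $\mathcal{B}_+$ to the identity plus a compact (smoothing) remainder, and injectivity is obtained by the same exterior-then-interior potential-theoretic argument combining Lemma~\ref{lemma:im:Tpm}, Proposition~\ref{prop:Y:complexificated}, the jump relations, and the invertibility of $\operatorname{R}^{\rm os}$. The only discrepancy is harmless: with the normalization consistent with the paper's Calder\'on operator (traction jump $\mp\tfrac12\bm{\lambda}+K^\top\bm{\lambda}$, which is what the derivation of $\mathcal{B}_+$ itself uses) the interior traction is $-\bm{\lambda}_0$ rather than $-2\bm{\lambda}_0$, but the positive real factor plays no role in the imaginary-part argument.
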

\begin{proof}
Notice that
\begin{equation}
 \label{eq:R-1}
\firstReviewer{\bm{R}}^{-1}  = - \left(\frac{\beta_++\beta_-}{2\beta_+\beta_-}\firstReviewer{\bm{I}} -
\frac{\alpha_+\beta_--\alpha_-\beta_+}{\beta_-\beta_+}\bm{H}\right) \bm{\Lambda}^{-1}_\kappa, 
\end{equation}
that 
\[
\firstReviewer{\bm{B}}_+=
\firstReviewer{\bm{B}}_{+,0} +\firstReviewer{\bm{K}}
\]
with ${\cal K }:\Hiib^s\to \Hiib^{s+2}$ 
\begin{eqnarray*}
\firstReviewer{\bm{B}}_{+,0} &=&\bigg[\delta_+\Lambda_\kappa^{-1}-
 \left(\tfrac12\firstReviewer{\bm{I}} -\alpha_+\bm{H}\right) \frac{1}{\beta_+}\bm{\Lambda}_\kappa^{-1}\left(\tfrac12\firstReviewer{\bm{I}} -\alpha_+\bm{H}\right)\\
 && -\frac{1}{\beta_-} \bm{\Lambda}_\kappa^{-1}(\tfrac12 \firstReviewer{\bm{I}} +\alpha_-\bm{H})
 \left(\tfrac12\firstReviewer{\bm{I}}  +\alpha_+\bm{H}-\beta_+\bm{\Lambda}_\kappa \left(-\tfrac{1}{\beta_+}\right)\bm{\Lambda}_\kappa^{-1}
 \left(\tfrac12\firstReviewer{\bm{I}}  -\alpha_+\bm{H}\right) 
 \right)\bigg]\firstReviewer{\bm{R}}\\
 &=& \Big[\left(\delta_+-\tfrac{1}{\beta_+}\left(\tfrac14-\alpha_+^2\right)-\tfrac{1}{2\beta_-}
 \right) \firstReviewer{\bm{I}}  +\left(\tfrac{\alpha_+}{\beta_+}-\tfrac{\alpha_-}{\beta_-}\right)\bm{H}\Big]\bm{\Lambda}_\kappa^{-1}\bigg]\firstReviewer{\bm{R}}
\end{eqnarray*}
But since
\[
 \delta_+-\frac{1}{\beta_+}\left(\frac14-\alpha_+^2\right)=-\frac{1}{2\beta_+}  
\]
we can conclude 
\[
\firstReviewer{\bm{B}}_{+,0} = - \Big[\left(\frac{\beta_++\beta_-}{2\beta_-\beta_+}
 \right)\firstReviewer{\bm{I}}  -\left(\frac{\alpha_+\beta_--\alpha_-\beta_+}{\beta_+\beta_-}\right)\bm{H}\Big]\bm{\Lambda}_\kappa^{-1}\firstReviewer{\bm{R}} = \firstReviewer{\bm{I}} 
\]
in view of \eqref{eq:R-1}.
%
%
%
%
 Clearly, the invertibility of the operator $\firstReviewer{\bm{B}}_+$ is then equivalent to its injectivity. Let ${\bm \varphi}_0\in \mathop{\rm Ker}(\firstReviewer{\bm{B}}_+)$ and define the field
 \begin{equation}\label{eq:reg_ext0}
 {\bf w}(\x):=(\DLii_+ [\firstReviewer{\bm{R}^{\rm os}}{\bm \varphi_0]})(\x)-(\SLii_+[{\rm PS}_\kappa(\Yiip)\firstReviewer{\bm{R}^{\rm os}}{\bm \varphi_0}])(\x),\quad \x\in\mathbb{R}^2\setminus\Gamma, \quad\otherCorrections{{\bf w}_\pm ={\bf w}|_{\Omega_\pm}}.
 \end{equation}
 It follows that $ {\bf w}|_{\Omega_+}$ is a  \firstReviewer{radiating} solution of the Navier equations in $\Omega_+$ with generalized Robin conditions $\Tiip {\bf w} +\firstReviewer{\bm{\Upsilon}}_+ \gamma{\bf w}_+=0$ on $\Gamma$, and thus $ {\bf w}_{+}=0$ in $\Omega_+$. Given the jump properties of Navier layer potentials, we obtain
 \[
\gamma{\bf w} =-\firstReviewer{\bm{R}^{\rm os}}{\bm \varphi_0}\qquad \Tiim{\bf w} =-{\rm PS}_\kappa(\Yiip)\firstReviewer{\bm{R}^{\rm os}}{\bm \varphi_0}=\firstReviewer{\bm{\Upsilon}}_-\firstReviewer{\bm{R}^{\rm os}}{\bm \varphi_0}.
 \]
 We have that $ {\bf w}|_{\Omega_-}$ is a solution of the Navier equation with material parameters $(\mu_+,\lambda_+)$ in $\Omega$ that satisfies the generalized Robin boundary condition on $\Gamma$
 \[
 \Tiim{\bf w} +\firstReviewer{\bm{\Upsilon}}_- \otherCorrections{\gamma{\bf w}_-}  =0.
 \]
 We derive then that $ {\bf w}_-={\bf w}|_{\Omega_-}=0$ in $\Omega$ which in turn implies that $\firstReviewer{\bm{R}^{\rm os}}{\bm \varphi_0}=0$ on $\Gamma$. Given that the operator $\firstReviewer{\bm{R}^{\rm os}}$ is invertible, we get that ${\bm \varphi_0}=0$, which was to be proved.
\end{proof}

Having show some different  formulations to 
\firstReviewer{evaluate $\Sii_\pm$},  we are now in the position to prove the main result concerning Schwarz iteration operators:

%

  \begin{theorem} \label{thm:SpSm} The RtR operators $\firstReviewer{\bm{S}}_\pm:\Hiib^{s}\to \Hiib^{s+2}$ are continuous, and so compact when viewed acting in $\Hiib^{s}$ into itself. Furthermore, the operator $I-\firstReviewer{\bm{S}}_-\firstReviewer{\bm{S}}_+:\Hiib^s\to \Hiib^{s}$ is invertible with continuous inverse. As a consequence, the underlying Schwarz iteration operator in the left hand side of equation~\eqref{eq:DDM_rtr} is a invertible compact perturbation of the identity in $\Hiib^s\times\Hiib^s$.   
   \end{theorem}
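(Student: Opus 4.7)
The plan is to rewrite each RtR operator as the residual between the exact DtN operator and its complexified principal symbol, then combine the smoothing-by-one-order properties collected in Section~\ref{auxr} with the Robin-problem regularity established in Theorems~\ref{thm:form_1}, \ref{thm:form_2}, and \ref{thm:form_3}. The final invertibility assertion will be reduced by Fredholm alternative to the well-posedness of the transmission problem \eqref{eq:Transmission}.

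First I would prove that $\operatorname{S}_\pm:\bm{H}^s\to\bm{H}^{s+2}$. Fix $\bm{\lambda}_+\in\bm{H}^s$ and let ${\bf u}_+$ be the associated radiative exterior Robin solution; its well-posedness yields $\gamma_+{\bf u}_+\in\bm{H}^{s+1}$. Since ${\bf u}_+$ solves the Navier equation in $\Omega_+$, one has $T_+{\bf u}_+ = Y_+\gamma_+{\bf u}_+$, and by \eqref{eq:opt_choice} we have $\Upsilon_- = -\mathrm{PS}_\kappa(Y_+)$. Hence
\[
\operatorname{S}_+\bm{\lambda}_+ = T_+{\bf u}_+ + \Upsilon_-\gamma_+{\bf u}_+ = \bigl(Y_+ - \mathrm{PS}_\kappa(Y_+)\bigr)\gamma_+{\bf u}_+.
\]
Combining the gain $\mathrm{PS}(Y_+)-Y_+:\bm{H}^{s+1}\to\bm{H}^{s+2}$ noted in the paragraph following \eqref{eq:PSY-} with the gain $\mathrm{PS}(Y_+)-\mathrm{PS}_\kappa(Y_+):\bm{H}^{s+1}\to\bm{H}^{s+2}$ from Proposition~\ref{prop:Y:complexificated}, the residual $Y_+ - \mathrm{PS}_\kappa(Y_+)$ maps $\bm{H}^{s+1}$ to $\bm{H}^{s+2}$, so $\operatorname{S}_+\bm{\lambda}_+\in\bm{H}^{s+2}$. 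The analogous identity $\operatorname{S}_-\bm{\lambda}_- = (Y_- - \mathrm{PS}_\kappa(Y_-))\gamma_-{\bf u}_-$, together with Theorem~\ref{thm:form_1}, handles the interior RtR map. Compactness of $\operatorname{S}_\pm$ as operators on $\bm{H}^s$ then follows from the compact embedding $\bm{H}^{s+2}\hookrightarrow\bm{H}^s$.

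Next, the Schwarz iteration operator in \eqref{eq:DDM_rtr} is a compact perturbation of the identity in $\bm{H}^s\times\bm{H}^s$, and so is $I - \operatorname{S}_-\operatorname{S}_+$ in $\bm{H}^s$; the Fredholm alternative reduces invertibility to injectivity. If $(\bm{\lambda}_+,\bm{\lambda}_-)$ is in the kernel of the Schwarz operator, writing ${\bf u}_\pm$ for the Robin solutions associated to $\bm{\lambda}_\pm$, the defining relations $\operatorname{S}_\pm\bm{\lambda}_\pm = \bm{\lambda}_\mp$ together with the Robin boundary conditions on ${\bf u}_\pm$ yield
\[
T_+{\bf u}_+ + \Upsilon_\sigma\gamma_+{\bf u}_+ = T_-{\bf u}_- + \Upsilon_\sigma\gamma_-{\bf u}_-\qquad \text{for both }\sigma\in\{+,-\}.
\]
Subtracting these two identities forces $(\Upsilon_+ - \Upsilon_-)(\gamma_+{\bf u}_+ - \gamma_-{\bf u}_-)=0$, and the coercivity \eqref{eq:cond_coercF} implies that $\Upsilon_+ - \Upsilon_-$ is injective; hence $\gamma_+{\bf u}_+=\gamma_-{\bf u}_-$ and consequently $T_+{\bf u}_+=T_-{\bf u}_-$. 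Thus $({\bf u}_+,{\bf u}_-)$ solves the homogeneous transmission problem \eqref{eq:Transmission}, so ${\bf u}_\pm\equiv 0$ and $\bm{\lambda}_\pm = 0$. The same pairing argument, applied with $\bm{\lambda}_+\in\ker(I-\operatorname{S}_-\operatorname{S}_+)$ and $\bm{\lambda}_- := \operatorname{S}_+\bm{\lambda}_+$, establishes the invertibility of $I-\operatorname{S}_-\operatorname{S}_+$.

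The main obstacle will be to justify the residual identity $\operatorname{S}_-\bm{\lambda}_- = (Y_- - \mathrm{PS}_\kappa(Y_-))\gamma_-{\bf u}_-$ when $\omega^2$ happens to be a Dirichlet eigenfrequency of the interior Navier operator, for then $Y_-$ is not a bona fide operator. The remedy is to reinterpret the identity on the range of Cauchy data of interior Navier solutions: the interior Calder\'on relation $W_-\gamma_-{\bf u}_- = (K_-^\top-\tfrac12 I)T_-{\bf u}_-$, combined with the principal-symbol tables \eqref{eq:principal:symbols}--\eqref{eq:C0} and the algebraic identity \eqref{eq:id:alpha:beta:gamma}, supplies $T_-{\bf u}_- \equiv \mathrm{PS}(Y_-)\gamma_-{\bf u}_-$ modulo an operator that smooths by one order more than expected, independently of whether $Y_-$ admits a global definition; this is enough to conclude that $\operatorname{S}_-:\bm{H}^s\to\bm{H}^{s+2}$ in the same way as in the exterior case.
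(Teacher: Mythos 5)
Your proposal is correct and follows essentially the same route as the paper: the smoothing $\operatorname{S}_\pm:\bm{H}^{s}\to\bm{H}^{s+2}$ is obtained, exactly as in the text, by combining the trace estimates furnished by the Robin formulations of Theorems~\ref{thm:form_1}--\ref{thm:form_3} with the one-order gain of $Y_\pm-\mathrm{PS}_\kappa(Y_\pm)$, and your injectivity argument (the two generalized Robin identities, injectivity of $\Upsilon_+-\Upsilon_-$ from \eqref{eq:cond_coercF}, matching of the Cauchy data) is the paper's. The only deviations are minor: at the final step you invoke the assumed uniqueness of the transmission problem \eqref{eq:Transmission}, whereas the paper concludes directly by applying Lemma~\ref{lemma:im:Tpm} to the matched Cauchy data, and your Calder\'on-identity treatment of interior Dirichlet eigenfrequencies (so that only $T_-{\bf u}_--\mathrm{PS}_\kappa(Y_-)\gamma_-{\bf u}_-$, never $Y_-$ itself, is used) makes explicit a point the paper leaves implicit; both variants are sound.
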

  \begin{proof}
  Notice that $I-\firstReviewer{\bm{S}}_-\firstReviewer{\bm{S}}_+$ is the Schur complement of the matrix operator in the left hand side of equation~\eqref{eq:DDM_rtr}. Hence, the last result stated in the theorem is a consequence of the previous statements. Let us recall that in view of~\eqref{eq:opt_choice} we have
  \[
  \firstReviewer{\bm{S}}_{\pm}{\bf f}  = T_\pm {\bf u}_\pm- {\firstReviewer{\bm{\Upsilon}}}_\pm   \gamma_\pm {\bf u}_\pm, 
  \]
  where   ${\bf u}_\pm$ are the solutions of~\eqref{eq:01:prop:OSM:01}.  
The following estimate basically follows from the definition of the operators involved 
  \[ 
\| T_\pm {\bf u}_\pm -  \firstReviewer{\bm{\Upsilon}}_\pm  \gamma_\pm {\bf u}_\pm \|_{\Hiib^{s+1}}\le C
\|   \gamma_\pm {\bf u}_\pm\|_{\Hiib^{s}}    
  \]
  for some $C>0$ independent of ${\bf u}_\pm$. 
Besides, from \eqref{eq:RT_formulation} and \eqref{eq:RT_formulation_p} (or \eqref{eq:RT_formulation_pe}) we derive the estimate
\[
 \|\gamma_\pm {\bf u}_\pm\|_{\Hiib^{s}}\le C '\|\bm{\lambda}_\pm\|_{\Hiib^{s-1}}
\]
with $C'>0$ independent of $\bm{g}$. Gathering these properties we can conclude that $\firstReviewer{\bm{S}}_\pm:\Hiib^{s}\to \Hiib^{s+2}$ is continuous.

  In order to establish the invertibility of the operator $\firstReviewer{\bm{I}}- \firstReviewer{\bm{S}}_-\firstReviewer{\bm{S}}_+$ it suffices to prove its injectivity. Assume that ${\bm \varphi}$ such as ${\bm \varphi} = \firstReviewer{\bm{S}}_-\firstReviewer{\bm{S}}_+{\bm \varphi}$ and solve the Navier problem in the exterior domain $\Omega_+$
  \begin{equation}
   \label{eq:el:del:medio:de:los:chichos}
\begin{aligned}
  {\nabla}\cdot\bm{\sigma}_+( {\bf w}_+)+\omega^2 {\bf w}_+&=&0  &\quad{\rm in}\ \Omega_+,\\
 \Tiip {\bf w}_+  + \firstReviewer{\bm{\Upsilon}}_+ \otherCorrections{\gamma {\bf w}_+}  &=&{\bm \varphi}&\quad{\rm on}\ \Gamma
\end{aligned}
\end{equation}
where $ {\bf w}_+$ is  \firstReviewer{radiating} at infinity, so that
\begin{equation*}
  \firstReviewer{\bm{S}}_+{\bm \varphi}=\left(\Tiip {\bf w}_+ +\firstReviewer{\bm{\Upsilon}}_-  \otherCorrections{\gamma {\bf w}_+}\right) .
\end{equation*}
We also consider the Navier problem in the bounded domain $\Omega$
\begin{eqnarray*}
  {\nabla}\cdot\bm{\sigma}_-({\bf w}_- )+\omega^2{\bf w}_- &=&0\quad{\rm in}\  \Omega_-\nonumber\\
  \Tiim{\bf w}_-+\firstReviewer{\bm{\Upsilon}}_- \otherCorrections{\gamma {\bf w}_-} &=&\firstReviewer{\bm{S}}_+{\bm \varphi}\quad{\rm on}\ \Gamma
\end{eqnarray*}
so that
\begin{equation}
  \firstReviewer{\bm{S}}_-\firstReviewer{\bm{S}}_+{\bm \varphi}=\left(\Tiim{\bf w}_-+\firstReviewer{\bm{\Upsilon}}_+ \gamma {\bf w}_- \right) .
\end{equation}
Given that ${\bm \varphi} = \firstReviewer{\bm{S}}_-\firstReviewer{\bm{S}}_+{\bm \varphi}$ we obtain
\begin{equation}\label{eq:identity_1}
\Tiip {\bf w}_+ +\firstReviewer{\bm{\Upsilon}}_+ \otherCorrections{\gamma} {\bf w}_+= 
\Tiim{\bf w}_-+\firstReviewer{\bm{\Upsilon}}_+ \gamma {\bf w}_-.
\end{equation}
Furthermore, the boundary condition of ${\mathbf{w}_-}$ on $\Gamma$ leads to
\begin{equation}\label{eq:identity_2}
\Tiip {\bf w}_+ +\firstReviewer{\bm{\Upsilon}}_-  \otherCorrections{\gamma}{\bf w}=\Tiim{\bf w}_-+\firstReviewer{\bm{\Upsilon}}_- \gamma{\bf w}_-.
\end{equation}
We note that it follows from equations~\eqref{eq:identity_1} and~\eqref{eq:identity_2} that
\[
(\firstReviewer{\bm{\Upsilon}}_+-\firstReviewer{\bm{\Upsilon}}_-)[\otherCorrections{\gamma}{\bf w}_+-\otherCorrections{\gamma{\bf w}_-}]=0.
\]
which since the operator $\firstReviewer{\bm{\Upsilon}}_+-\firstReviewer{\bm{\Upsilon}}_-$ is injective, in turn implies 
\[
 \otherCorrections{\gamma}{\bf w}_+ = \gamma  {\bf w}_-   
\]
and then, from \eqref{eq:identity_1} , 
\[
 \Tiip{\bf w}_+   =\Tiim{\bf w}_-
\]
which implies 
\[
\Im\langle \Tiip{\bf w}_+  ,\overline{\gamma {\bf w}}_+ \rangle =\Im \langle    \Tiim{\bf w}_-,\overline{\gamma {\bf w}}_- \rangle=0
\]
and hence, by Lemma \ref{lemma:im:Tpm}, $ \otherCorrections{\gamma}{\bf w}_+ =0=\Tiip{\bf w}_+$ which yields, via \eqref{eq:el:del:medio:de:los:chichos}, that $\bm{\varphi}={\bf 0}$
  \end{proof}

Having presented BIE formulations for the various elastodynamic scattering and transmission problems considered in this paper, we turn next to describing Nystr\"om discretizations of those BIEs.

\section{Numerical experiments}
\label{Nd}

\subsection{Nystr\"om discretizations}

We use Nystr\"om discretizations for the numerical solution of the various elastodynamic BIE that rely on global trigonometric interpolation with $2n$ nodes
\[
t_j=\frac{j\pi}{n},\ j=0,1,\ldots,2n-1
\]
onto the space of trigonometric polynomials
\[
\firstReviewer{\mathbb{T}_n}=\left\{\varphi(t)=\sum_{m=0}^na_m\cos{mt}+\sum_{m=1}^{n-1}b_m\sin{mt}\ :{\ a_m,b_m\in\mathbb{C}}\right\}.
\]
Our BIE formulations {works with}  two types of operators: (a) BIO and (b) Fourier multipliers. The latter are discretized in a straightforward manner using trigonometric interpolation and FFTs. With regards to (a), the majority of the kernels of the BIOs discussed in this paper exhibit a logarithmic singularity which is resolved via the Kusmaul-Martensen quadrature~\cite{kusmaul,martensen}
\[
\frac{1}{2\pi}\int_0^{2\pi}H({\tau},{t})\log\left(4\sin^2\frac{\tau-t}{2}\right)\varphi({t})\,{\rm d}t\approx\sum_{m=0}^{2n-1}\firstReviewer{\bm{R}}_m({\tau})H({\tau},{t}_m)\varphi({t}_m)
\]
where
\[
\firstReviewer{\bm{R}}_m({\tau}):=-\frac{1}{n}\sum_{j=1}^{n-1}\frac{1}{j}\cos{j({\tau}-t_m)}-\frac{1}{2n^2}\cos{n({\tau}-t_m)},\ 0\leq m\leq 2n-1.
\]
This turns to be equivalent to consider the approximation for the underlying integral operator given by
\[
\frac{1}{2\pi}\int_0^{2\pi}\log\left(4\sin^2\frac{{t}-{\tau}}{2}\right) P_n(H({\tau},{\cdot})\varphi) (t)\,{\rm d}t
\]
$P_n:C[0,2\pi]\to \firstReviewer{\mathbb{T}_n}$ appearing above is the trigonometric interpolation operator,
since this integral can be computed analytically.

In the case of smooth integrands in the definition of BIE, we use the trapezoidal rule for their discretizations. The hypersingular part of the operator $\Wii$, on the other hand, is treated via the Kress quadrature~\cite{KressH} for the evaluation of $2\pi$ periodic Hilbert transforms 
\[
\frac{1}{2\pi}{\mathrm{p.v.}\,}\int_0^{2\pi}\cot{\frac{{t-\tau}}{2}}\varphi'({t}){\rm d}t\approx\sum_{m=0}^{2n-1}T_m({\tau})\varphi(t_m)
\]
where
\[
T_m({\tau}):=-\frac{1}{n}\sum_{j=1}^{n-1}j\cos{j({\tau}-t_m)}-\frac{1}{2}\cos{n({\tau}-t_m)},\ 0\leq m\leq 2n-1.
\]
Finally, the discretization of the Cauchy {principal value} integral that enters the definition of the principal part of the double layer operators $\Kii$ and $\Kii^\top$, requires more care. Indeed, as discussed in~\cite{Kress}, the use of trigonometric interpolation in the evaluation of Cauchy {principal value} integrals
\[
\frac{1}{2\pi}{\mathrm{p.v.}\,} \int_0^{2\pi}\cot{\frac{{t}-{\tau}}{2}}\varphi(t){\rm d}t\approx\frac{1}{2\pi}{\mathrm{p.v.}\,}\int_0^{2\pi}\cot{\frac{{t}-{\tau}}{2}}[P_n\varphi](t){\rm d}t
\]
leads to 
\[
\frac{1}{2\pi}{\mathrm{p.v.}\,}\int_0^{2\pi}\cot{\frac{{t}-{\tau}}{2}}\varphi({t}){\rm d}t\approx\sum_{m=0}^{2n-1}Q_m(t)\varphi(t_m)
\]
with
\[
Q_m(\tau):=\frac{1}{n}\sum_{j=1}^{n-1} \sin{j(\tau-t_m)}+\frac{1}{2n}\sin{n(\tau-t_m)},\ 0\leq m\leq 2n-1.
\]
However, cf. \eqref{eq:Lambda1}, 
\[
 \frac{1}{2\pi}{\mathrm{p.v.}\,}\int_0^{2\pi}\cot{\frac{{t}-{\tau}}{2}} \cos nt \,{\rm d}t = \sin n\tau
\]
and thus the Nystr\"om discretization of the Cauchy {principal value} operator is not a bijection from $\firstReviewer{\mathcal{T}_n}$ to itself. For these reasons we use the half grid size shifted quadrature method~\cite{Kress}
\[
\frac{1}{2\pi}{\mathrm{p.v.}\,}\int_0^{2\pi}\cot{\frac{ t -\tau}{2}}\varphi({t}){\rm d}t\approx \frac{\pi}{n}\sum_{m=0}^{2n-1}\cot \frac{  t_{m+\frac{1}2}-\tau}{2}\varphi\left( t_{m+\frac{1}2} \right),\quad  t_{m+\frac{1}2} := t_m+\frac{\pi}{2n} .
\]
The evaluation of the density function at the shifted grid points $ t_{m+\frac{1}{2}} $ is readily achieved via trigonometric interpolation. Given that the Nystr\"om discretizations of the principal parts of the elastodynamic operators are available in the literature, the main difficulty of the overall collocation schemes resides in the logarithmic splitting of various kernels. We present in what follows numerical results concerning the various formulations considered in this paper.

\subsection{Numerical results}

In this section we present various results concerning the accuracy of elastodynamic BIE solvers based on Nystr\"om discretizations as well as the rates of convergence of iterative solvers (e.g. GMRES~\cite{SaadSchultz}) for the solution of the linear systems ensuing from the various BIE formulations considered in this text. With regards to the accuracy of our discretizations, we present results concerning far field data. For a scattered elastic field ${\bf u}$ we can define the associated longitudinal wave ${\bf u}_p$ and the transversal wave ${\bf u}_s$  cf. \eqref{eq:up:us}.
The Kupradze radiation conditions~\cite{KupGeBa:1979} take on the form
\[
  {\bf u}_p(\x)=\frac{e^{ik_p|\x|}}{\sqrt{|\x|}}\left({\bf u}_{p,\infty}(\hat{{\x}})+\mathcal{O}\left(\frac{1}{|\x|}\right)\right)\qquad
  {\bf u}_s(\x)=\frac{e^{ik_s|\x|}}{\sqrt{|\x|}}\left({\bf u}_{s,\infty}(\hat{{\x}})+\mathcal{O}\left(\frac{1}{|\x|}\right)\right)
  \]
  as $|\x|\to\infty$ where $\hat{{\x}}=\x/|\x|$. We present in this section the maximum errors $\varepsilon_\infty$ achieved when computing the quantities ${\bf u}_{p,\infty}$ and ${\bf u}_{s,\infty}$ evaluated at fine enough meshes \secondReviewer{(1024  equidistributed points is typically used)} on the unit circle $|\hat{{\x}}|=1$. We also report the size $n$ of the $2n$ dimensional trigonometric polynomial space $\firstReviewer{\mathbb{T}_n}$. Therefore, in the impenetrable case, the size of the linear systems Nystr\"om discretization linear systems of the various (vectorial) BIE is $(4n)\times(4n)$; in the penetrable case, the size increases to $(8n)\times(8n)$. We consider two types of incident fields: (a) elastodynamic point sources (in the method of manufactured solutions); and (b) plane waves. In case (b) the far field errors are computed with respect to reference solutions produced through 
  \secondReviewer{the next level of discretization of the BIEs (that is, the computed values for $2n_{\rm max}$ where $n_{\rm max}$ is highest value of $n$ shown in the numerical results, are taken as {\em exact} solutions.)}. 
  
  We start with an illustration of the accuracy that can be achieved in the far field when the four elastodynamic BIOs are employed in the solution of elastodynamic BIE.   
  
  The domains considered in this numerical section are: (i) the unit circle centered at origin; (ii) the starfish obstacle~\cite{hao2014high} whose paramaterization is given by
    \[
    \x(t)=\left(1+\tfrac{1}{4}\sin{5t}\right)(\cos{t}, \sin{t}),\ 0\leq t<2\pi; 
    \]
    (iii) the cavity-like geometry  given by
    \begin{eqnarray*}
     \x(t) &=& (x_1(t),x_2(t)),\qquad \ 0\leq t<2\pi; \\
    x_1(t)&=& \tfrac{2}{5}(\cos(t)+2\cos(2t)),\\
     x_2(t)&=& \tfrac12\sin(t)+\tfrac12\sin(2t)+\tfrac14\sin(3t)+\tfrac1{48}( 4\sin(t)-7\sin(2t)+6\sin(3t)-2\sin(4t)).
    \end{eqnarray*}
We depict in Figure \ref{fig:geom} a sketch of these domains. 
Notice that the diameters of all of the scatterers considered in this paper are equal to 2. 

  \subsubsection{The method of manufactured solutions}

  The method of manufactured solutions is a reliable test of the accuracy of PDE solvers. In the Dirichlet case, we consider the time-harmonic Navier equation with boundary value data produced by a point source $\x_0$ placed inside the scatter $ \Omega_-$
  \[
    {\bf f}_\Gamma (\x)=\Phi(\x,\x_0),\quad \x\in\Gamma,\ \x_0\in \Omega_-.
    \]
    In this case, the  \firstReviewer{radiating} solution of the Dirichlet scattering problem in the exterior domain $ {\Omega_+}$ is the point source itself, that is ${\bf u}(\x)=\Phi(\x,\x_0)$ for all $\x\in {\Omega_+}$. We look for the solution ${\bf u}$ in the form of either a single or double layer potential
    \[
    {\bf u}(\x)=( \SLii \bm{\varphi} )(\x)\quad {\rm or}\quad{\bf u}(\x)=( \DLii  \bm{g} )(\x),\ \x\in {\Omega_+}
    \]
    and we solve the corresponding BIEs   (with, as usual, ${\bf f} ={\bf f}_\Gamma\circ{\bf x}$) 
    \[
     \Vii \bm{\varphi} ={\bf f}\quad{\rm and}\quad \frac{1}{2}\bm{g}+ \Kii\bm{g}={\bf f}\firstReviewer{.}
    \]
    Assuming that the frequency $\omega$ is selected such as both BIE formulations above are uniquely solvable, we use Nystr\"om discretizations to solve numerically these BIEs and we compare in the far-field the computed solutions with the exact point source solution. We repeat the experiments in the case of Neumann boundary conditions, that is we choose a boundary data in the form
\[
    \bm{\lambda}_\Gamma(\x)=T_\Gamma \Phi(\x,\x_0),\quad \x\in\Gamma,\ \x_0\in \Omega_-
    \]
    and we solve the BIEs
   \[
    -\frac{1}{2} \bm{\varphi} + \Kii^\top\bm{\varphi} =\bm\lambda\quad{\rm and}\quad  \Wii\bm{g} =\bm\lambda\firstReviewer{.} 
    \]
    Again here, we compare the numerical solutions in the far field against the exact point source solution. 
    
We report errors corresponding to (a) the Dirichlet single layer formulation (under the heading $\Vii$, given that we used the single layer $\Vii$ BIO to solve the time-harmonic Navier equation), (b) Dirichlet double layer formulation (under the heading $\Kii$), and (c) Neumann double layer formulation (under the heading $\Wii$). Given that the BIO $\Kii^\top$ is the (real) $L^2$ adjoint of the BIO $\Kii$, its discretization produces identical levels of accuracy to those of $\Kii$, and therefore we chose not to present them.

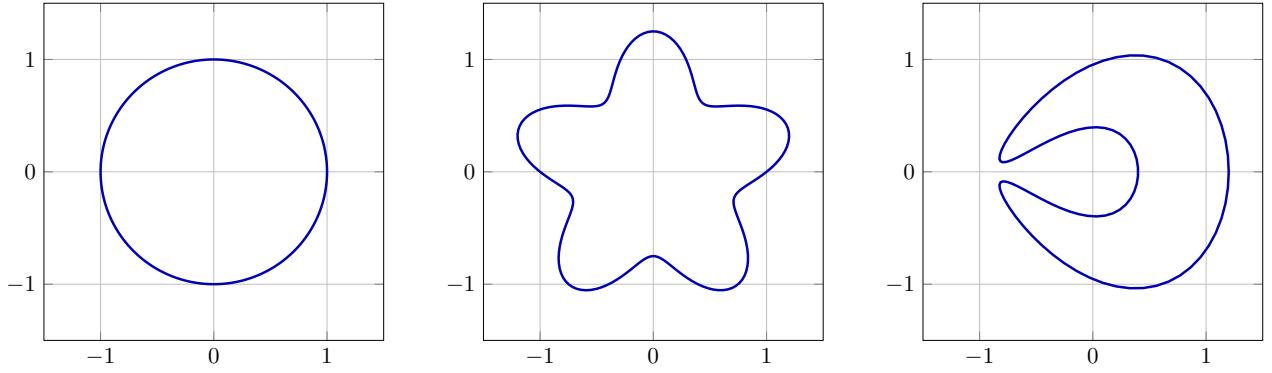
\begin{figure} 
\begin{center}
 \resizebox{0.31\textwidth}{!}{
\begin{tikzpicture} 
[declare function = {f1(\x)=1); }]
\begin{axis}[
  xmin=-1.5 ,xmax=1.5 , 
  ymin=-1.5,ymax=1.5,
  grid=both,  axis equal image,
xtick=    {-2, -1,0, 1,2 },
ytick=    {-2, -1,0, 1,2 },
]
  \addplot[domain=0:360,samples=200,variable=\x,very thick, blue!70!black,data cs=polar](x,{f1(x)});%
\end{axis}
\end{tikzpicture}
}
\quad
\resizebox{0.31\textwidth}{!}{
\begin{tikzpicture}
[declare function = {f1(\x)=1+ 0.25*sin(5*(\x) ); }]
\begin{axis}[
  xmin=-1.5 ,xmax=1.5 , 
  ymin=-1.5,ymax=1.5,
  grid=both,  axis equal image,
xtick=    {-2, -1,0, 1,2 },
ytick=    {-2, -1,0, 1,2 },
]
  \addplot[domain=0:360,samples=200,variable=\x,very thick, blue!70!black,data cs=polar](x,{f1(x)});%
\end{axis}
\end{tikzpicture} 
}
\quad
 \resizebox{0.31\textwidth}{!}{
\begin{tikzpicture}
\begin{axis}[
  xmin=-1.5 ,xmax=1.5 , 
  ymin=-1.5,ymax=1.5,
  grid=both,  axis equal image,
xtick=    {-2, -1,0, 1,2 },
ytick=    {-2, -1,0, 1,2 },
]
  \addplot[domain=0:360,samples=100,variable=\t,very thick, blue!70!black](%
    {(cos( t)+2*cos(2* t))/2.5},
    {(sin( t)+ sin(2*t)+1/2* sin(3*t))/2-(-4* sin(t)+7*sin(2*t)-6*sin(3*t)+2*sin(4*t))/48}%
  ); 
\end{axis}
\end{tikzpicture}
}
\end{center}
\caption{\label{fig:geom} Domains for the experiments: from left to right,  the unit circle,   the starfish domain, and the cavity problem.}
\end{figure}

\begin{table}
   \begin{center}
\begin{tabular}{|c|c|c|c|c|}
\hline
$\omega$ & $n$ &  $\Vii$ & $\Kii$ & $\Wii$ \\
\hline
 & & $\varepsilon_\infty$ & $\varepsilon_\infty$ &  $\varepsilon_\infty$ \\
\hline
16 & 32  & 3.0 $\times$ $10^{-2}$ &9.9$\times$ $10^{-2}$ & 1.3 $\times$ $10^{-1}$\\
16 & 64 & 7.1 $\times$ $10^{-7}$ & 1.1 $\times$ $10^{-3}$ & 1.6 $\times$ $10^{-6}$\\
16 & 128 & 5.5 $\times$ $10^{-14}$ & 4.2 $\times$ $10^{-13}$& 1.6 $\times$ $10^{-9}$\\
\hline
\hline
32 & 64  & 3.0 $\times$ $10^{-2}$ &2.5 $\times$ $10^{-1}$ &3.4 $\times$ $10^{-1}$ \\
32 & 128 & 2.2 $\times$ $10^{-7}$ & 2.4 $\times$ $10^{-4}$ &5.0 $\times$ $10^{-8}$\\
32 & 256 & 1.0 $\times$ $10^{-15}$ & 8.3 $\times$ $10^{-13}$ & 5.4 $\times 10^{-12}$ \\
\hline
\end{tabular}
\caption{Errors in the method of manufactered solution for the smooth starfish geometry for different values of the frequency $\omega$ and parameter values $\lambda=1$, $\mu=1$, at various levels of discretization.\label{comp5}}
\end{center}
\end{table}

\subsubsection{Iterative behavior of BIE formulations in the impenetrable case}

We present in this section various numerical experiments regarding the iterative behavior of the various BIE formulations for the solution of elastodynamic impenetrable scattering problems. We consider incident plane waves of the form
\begin{equation}\label{eq:plwave}
  {\bf u}^{\rm inc}(\x)=\frac{1}{\mu}e^{ik_s\x\cdot{\bm d}}({\bm d}\times{\bm p})\times{\bm d}+\frac{1}{\lambda+2\mu}e^{ik_p\x\cdot{\bm d}}({\bm d}\cdot{\bm p}){\bm d}
\end{equation}
where the direction ${\bm d}$ has unit length $|{\bm d}|=1$. If the vector ${\bm p}$ is chosen such as ${\bm p}=\pm{\bm d}$, the incident plane is a pressure wave or P-wave. In the case when ${\bm p}$ is orthogonal to the direction of propagation ${\bm p}$, the incident plane wave is referred to as a shear wave or S-wave. We considered plane waves of direction ${\bm d}=\begin{bmatrix}0 & -1\end{bmatrix}^\top$ in all of our numerical experiments; in the case of S-wave incidence we selected ${\bm p}=\begin{bmatrix}1 &0\end{bmatrix}^\top$. We observed that other choices of the direction ${\bm d}$ and of the vector ${\bm p}$ lead to qualitatively similar results.

\paragraph{Dirichlet boundary conditions}

We investigated the iterative behavior of Dirichlet integral solvers based on three formulations: (1) the CFIE formulation~\eqref{eq:CFIER_D} with the coupling constant $\eta_D=1$; (2) the CFIE formulation with the optimal coupling constant $\eta_D$ given in equation~\eqref{eq:eta_D}---which we refer to by the acronym ``CFIE $\eta_D$ opt''; and (3) the CFIER formulation~\eqref{eq:CFIER_D} with the choice $\RD={\rm PS}_\kappa(\Yiip)$ with $\kappa=k_s+0.4\ i\ k_s^{1/3}$---similar choices were presented in the literature~\cite{chaillat2020analytical}. We report the number of GMRES iterations required by each of these three BIE formulations to reach relative residuals of $10^{-8}$ and corresponding discretizations that deliver results with accuracies at the $10^{-7}$ level in the case of smooth scatterers. 
As it can be seen from the results presented in Tables~\ref{comp6}-\ref{comp8}, the CFIE formulation with the optimal coupling parameter $\eta_D$ exhibits the best iterative behavior in the high-frequency regime. We note that although the double layer operator $\Kii$ is not compact, all of the three BIE formulations considered in the numerical experiments behave like integral equations of the second kind, that is, the numbers of GMRES iterations required to reach a certain residual do not increase with more refined discretizations. \firstReviewer{ We note that we considered numerical experiments for which the ration $k_s/k_p=2$; qualitatively similar results were observed for other aspect ratios between the shear and pressure wavenumbers.}

\begin{table}
   \begin{center}  
\begin{tabular}{|c|c|c|c|c|}
\hline
$\omega$ & $n$ & \# iter CFIE~\eqref{eq:CFIER_D} $\eta_D=1$ & \# iter CFIE $\eta_D$ opt & \# iter CFIER $\RD={\rm PS}_\kappa(\Yiip)$ \\
\hline
\hline
10 & 64 & 31 & 22 & 21\\
20 & 128 & 50 & 27 & 30\\
40 & 256 & 97 & 28 & 38\\
80 & 512 & 231 & 27 & 39\\
160 & 1024 & 438 & 32 & 43\\
\hline
\end{tabular}
\caption{\firstReviewer{Numbers of GMRES iterations to reach residuals of $10^{-8}$ for various BIE} formulations of Dirichlet elastodynamic scattering problems at high frequencies in the case when $\Gamma$ is a unit circle. The material parameters are $\lambda=2$ and $\mu=1$, and the incidence was S-wave. The discretizations used in these numerical experiments delivered results accurate at the level of $10^{-7}$.\label{comp6}}
\end{center}
\end{table}

\begin{table}
   \begin{center}  
\begin{tabular}{|c|c|c|c|c|}
\hline
$\omega$ & $n$ & \# iter CFIE~\eqref{eq:CFIER_D} $\eta_D=1$ & \# iter CFIE $\eta_D$ opt & \# iter CFIER $\RD={\rm PS}_\kappa(\Yiip)$ \\
\hline
\hline
10 & 64 & 41 & 32 & 26\\
20 & 128 & 85 & 39 & 36\\
40 & 256 & 166 & 46 & 56\\
80 & 512 & 353 & 51 & 76\\
160 & 1024 & 782 & 57 & 100\\
\hline
\end{tabular}
\caption{\firstReviewer{Numbers of GMRES iterations to reach residuals of $10^{-8}$ for various BIE} formulations of Dirichlet elastodynamic scattering problems at high frequencies in the case when $\Gamma$ is the starfish contour. The material parameters are $\lambda=2$ and $\mu=1$, and the incidence was S-wave. The discretizations used in these numerical experiments delivered results accurate at the level of $10^{-7}$.\label{comp7}}
\end{center}
\end{table}

\begin{table}
   \begin{center}  
\begin{tabular}{|c|c|c|c|c|}
\hline
$\omega$ & $\otherCorrections{n}$ & \# iter CFIE~\eqref{eq:CFIER_D} $\eta_D=1$ & \# iter CFIE $\eta_D$ opt & \# iter CFIER $\RD={\rm PS}_\kappa(\Yiip)$\\
\hline
\hline
10 & 64 & 73 & 50 & 52\\
20 & 128 & 137 & 80 & 89\\
40 & 256 & 263 & 144 & 116\\
80 & 512 & 511 & 185 & 235\\
160 & 1024 & 1008 & 320 & 406\\
\hline
\end{tabular}
\caption{\firstReviewer{Numbers of GMRES iterations to reach residuals of $10^{-8}$ for various BIE} formulations of Dirichlet elastodynamic scattering problems at high frequencies in the case when $\Gamma$ is the cavity contour. The material parameters are $\lambda=2$ and $\mu=1$, and the incidence was an S-wave. The discretizations used in these numerical experiments delivered results accurate at the level of $10^{-7}$.\label{comp8}}
\end{center}
\end{table}

\paragraph{Neumann boundary conditions}

We present next in Tables~\ref{comp9} and~\ref{comp10} results related to the iterative behavior of the BIE formulations considered in this paper for the solution of elastodynamic problems with Neumann boundary conditions in the case of smooth scatterers. Specifically, we consider (a) the classical CFIE formulation~\eqref{eq:CFIE_N} with the coupling parameter $\eta_N=1$, (b) the same formulation with the optimal choice of the coupling constant $\eta_N$ given in equation~\eqref{eq:eta_N}, and (b) the CFIER formulation with the regularizing operator $\RN =({\rm PS}_\kappa(\Yiip))^{-1}$ as well as  $\RN =({\rm PS}_{\kappa_p,\kappa_s}(\Yiip))^{-1}$  defined in equation~\eqref{eq:PSYpm}. For the latter regularizer we used $\kappa_p=k_p+0.4 i H^{2/3}k_p^{1/3}$  and $\kappa_s=k_s+0.4 i H^{2/3}k_s^{1/3}$ where $H$ is the maximum absolute value of the curvature of the boundary curve $\Gamma$. We present results for two level of discretizations, a coarser one that delivers results within the $10^{-6}$ range accuracy, and a finer one that produces results in the $10^{-7}$ accuracy range. We observe that given that the BIO $\Wii$ is a pseudodifferential operator of order one, the CFIE formulations require more GMRES iterations to reach the same residual for finer discretizations, whereas the CFIER formulations appear to behave like an integral equation of the second kind. The use of an optimized coupling constant $\eta_N$ is beneficial for CFIE formulations, and the use of the regularizing operator $\RN =({\rm PS}_\kappa(\Yiip))^{-1}$ in the CFIER formulations gives rise to savings of a factor of $1.7$ in the numbers of GMRES iterations when finer discretizations are applied. Given the fact that the Fourier multiplier $({\rm PS}_\kappa(\Yiip))^{-1}$ can be effected efficiently with FFTs, the savings in computational times are of the same magnitude.

\begin{table}
   \begin{center}  
   {

\begin{tabular}{|c|c|c|c|c|c|}
\hline
$\omega$ & $n$ & \specialcell[t]{\# iter CFIE~\eqref{eq:CFIE_N}\\ $\eta_N=1$} &
\specialcell[t]{\# iter CFIE\\ $\eta_N$ opt}& 
\specialcell[t]{\# iter CFIER \\ $\RN =({\rm PS}_\kappa(\Yiip))^{-1}$} &
\specialcell[t]{\# iter CFIER \\ \firstReviewer{$\RN= {\rm PS}_\kappa(\Yiip))_{\kappa_p,\kappa_s}^{-1}$ }}\\
\hline
\hline
10 & 64/128 & 86/114 & 49/63          & 29/29   &\firstReviewer{34/34}\\
20 & 128/256 & 166/224 & 64/81        & 41/41   &\firstReviewer{46/46}\\
40 & 256/512 & 300/407 & 90/117       & 63/63   &\firstReviewer{71/71}\\
80 & 512/1024 & 646/890 & 164/223     & 109/109 &\firstReviewer{113/113}\\
160 & 1024/2048 & 1358/3079 & 319/420 & 198/198 &\firstReviewer{230/230}\\
\hline
\end{tabular}
}
\caption{\firstReviewer{Numbers of GMRES iterations to reach residuals of $10^{-8}$ for various BIE} formulations of Neumann elastodynamic scattering problems at high frequencies in the case when $\Gamma$ is the starfish contour. The material parameters are $\lambda=2$ and $\mu=1$, and the incidence was an S-wave. The coarser/finer discretizations used in these numerical experiments delivered results accurate at the level of $10^{-6}$ and $10^{-8}$ respectively.\label{comp9}}
\end{center}
 \end{table}

\begin{table}
   \begin{center}  
   {
\begin{tabular}{|c|c|c|c|c|c|}
\hline
$\omega$ & $n$ & \specialcell[t]{\# iter CFIE~\eqref{eq:CFIE_N}\\ $\eta_N=1$} &
\specialcell[t]{\# iter CFIE\\ $\eta_N$ opt}& 
\specialcell[t]{\# iter CFIER \\ $\RN =({\rm PS}_\kappa(\Yiip))^{-1}$} &
\specialcell[t]{\# iter CFIER \\ \firstReviewer{$\RN= {\rm PS}_\kappa(\Yiip))_{\kappa_p,\kappa_s}^{-1}$ }}\\
\hline
\hline
10 & 64/128 & 136/185 & 84/107          & 62/62  &\firstReviewer{57/57} \\
20 & 128/256 & 239/324 & 133/170        & 101/101&\firstReviewer{86/86}\\
40 & 256/512 & 462/629 & 242/312        & 181/181&\firstReviewer{146/146}\\
80 & 512/1024 & 864/1177 & 389/504      & 294/294&\firstReviewer{255/255}\\
160 & 1024/2048 & 2480/4101 & 707/914   & 514/514&\firstReviewer{453/453}\\
\hline
\end{tabular}
}
\caption{\firstReviewer{Numbers of GMRES iterations to reach residuals of $10^{-8}$ for various BIE} formulations of Neumann elastodynamic scattering problems at high frequencies in the case when $\Gamma$ is the cavity contour. The material parameters are $\lambda=2$ and $\mu=1$, and the incidence was an S-wave. The coarser/finer discretizations used in these numerical experiments delivered results accurate at the level of $10^{-6}$ and $10^{-8}$ respectively.\label{comp10}}
\end{center}
 \end{table}


\subsubsection{Transmission problems}

\firstReviewer{We conclude the numerical results section with five  experiments concerning elastodynamic transmission problems involving four BIE formulations, namely (a) the  Kress-Roach  
type CFIE formulation~\eqref{eq:KiptmanKress}; (b) the Costabel-Stephan formulation \eqref{eq:stephan:form};  (c) the CFIER formulation~\eqref{eq:DCFIER} that incorporates the regularizing operators defined in equation~\eqref{def:Rkappa}; (d) the Optimized Schwarz formulation~\eqref{eq:DDM_rtr} with transmission operators defined in equation~\eqref{eq:opt_choice} and (e) the Optimized Schwarz formulation~\eqref{eq:DDM_rtr} with transmission operators  ${\bm{\Upsilon}}_{\mp}:=-\mathrm{PS}_{\kappa_p^\pm,\kappa_s^\pm}(Y^{\pm})$. These formulations are tested in the starfish domain   and the cavity problem. We observe that they behave in practice like formulations of the second kind, that is the numbers of GMRES iterations required to reach a certain residual do not grow with discretization size. As it can be seen from the data presented in Tables~\ref{comp15} and~\ref{comp16}, while the CFIER formulation delivers important reductions in the numbers of GMRES iterations over the CFIE and the Costabel-Stephan formulation, it is the Optimized \firstReviewer{Schwarz} (OS) formulation~\eqref{eq:DDM_rtr} that has the best iterative behavior in the high-frequency high-contrast regime.}

\firstReviewer{This better performance of the OS formulations could  (partial and/or empirically) be explained by the distribution of the eigenvalues of the matrices of the respective linear systems in the complex plane as can be observed in Figures \ref{fig:02} and \ref{fig:03}. Indeed, in these Figures the OS formulation seems to be the one which  yields the most compact distribution of the eigenvalues around the accumulation point(s).}

\begin{table}  
\begin{center}  
\resizebox{\textwidth}{!}
   {
\begin{tabular}{|c|c|c|c|c|c|c|c|}
\hline
$\omega$ & $n$ & 
\specialcell[t]{ \# iter CFIE \\ \eqref{eq:KiptmanKress}} &
\specialcell[t]{\# iter CoSt  \\ \otherCorrections{\eqref{eq:stephan:form}} }& 
\specialcell[t]{\# iter CFIER  \\ \otherCorrections{\eqref{eq:DCFIER:02}}} &
\specialcell[t]{\# iter OS\\ \eqref{eq:DDM_rtr}  $\mathrm{PS}_{\kappa}(Y^{\pm})$}&
\specialcell[t]{\# iter OS \\ \eqref{eq:DDM_rtr} $\firstReviewer{\mathrm{PS}_{\kappa_p^\pm,\kappa_s^\pm}(Y^{\pm})}$} 
\\
\hline
\hline
10 & 64/128    & 90/62   & 63/63     & 44/44    &  27/27   & 34/34 \\
20 & 128/256   & 146/114 & 114/116   & 71/72    &  36/36   & 43/43\\
40 & 256/512   & 236/207 & 213/215   & 133/133  &  60/60   & 66/66\\
80 & 512/1024  & 412/372 & 377/380   & 220/220  &  77/77   & 74/74\\
160 & 1024/2048& 647/594 & 615/617   & 358/358  & 106/106  & 87/87\\
\hline
\end{tabular}
}
\caption{Numbers of GMRES iterations of various formulations to reach residuals of $10^{-6}$ for various BIE formulations of transmission elastodynamic problems at high frequencies in the case when $\Gamma$ is the starfish geometry. The material parameters are $\otherCorrections{\lambda_+}=2$, $\otherCorrections{\mu_+}=8$ and $\otherCorrections{\lambda_-}=1$, $\otherCorrections{\mu_-}=1$, and the incidence was an P-wave. The discretizations used in these numerical experiments delivered results accurate at the level of $10^{-6}$.\label{comp15}}
\end{center}
\end{table}

\begin{table}
   \begin{center}  
\resizebox{\textwidth}{!}
   {
\begin{tabular}{|c|c|c|c|c|c|c|}
\hline
$\omega$ & $n$ & 
\specialcell[t]{ \# iter CFIE \\ \eqref{eq:KiptmanKress}} &
\specialcell[t]{\# iter CoSt  \\ \otherCorrections{\eqref{eq:stephan:form}} }& 
\specialcell[t]{\# iter CFIER  \\ \otherCorrections{\eqref{eq:DCFIER:02}}} &
\specialcell[t]{\# iter OS\\ \eqref{eq:DDM_rtr}  $\mathrm{PS}_{\kappa}(Y^{\pm})$}&
\specialcell[t]{\# iter OS \\ \eqref{eq:DDM_rtr} $\firstReviewer{\mathrm{PS}_{\kappa_p^\pm,\kappa_s^\pm}(Y^{\pm})}$} 
\\
\hline     
\hline
10 & 64/128     & 122/122   & 125/125    & 79/80   &  34/ 34 &  46 /46  \\
20 & 128/256    & 212/212   & 219/219    & 132/134 &  48/ 48 &  58 /58  \\
40 & 256/512    & 342/345   & 355/355    & 203/204 &  64/ 64 &  75 /75  \\
80 & 512/1024   & 598/609   & 623/623    & 336/342 &  91/ 91 &  99 /99  \\
160 & 1024/2048 & 1000/1005 &  1041/1041 & 493/491 & 108/108 &  125/125 \\
\hline
\end{tabular}
}
\caption{Numbers of GMRES iterations of various formulations to reach residuals of $10^{-6}$ for various BIE formulations of transmission elastodynamic problems at high frequencies in the case when $\Gamma$ is the cavity geometry. The material parameters are $\otherCorrections{\lambda_+}=2$, $\otherCorrections{\mu_+}=8$ and $\otherCorrections{\lambda_-}=1$, $\otherCorrections{\mu_-}=1$, and the incidence was an P-wave. The discretizations used in these numerical experiments delivered results accurate at the level of $10^{-5}$.\label{comp16}}  
\end{center} 
\end{table}

\begin{figure}[h]
\begin{center}
\includegraphics[width=0.45\textwidth]{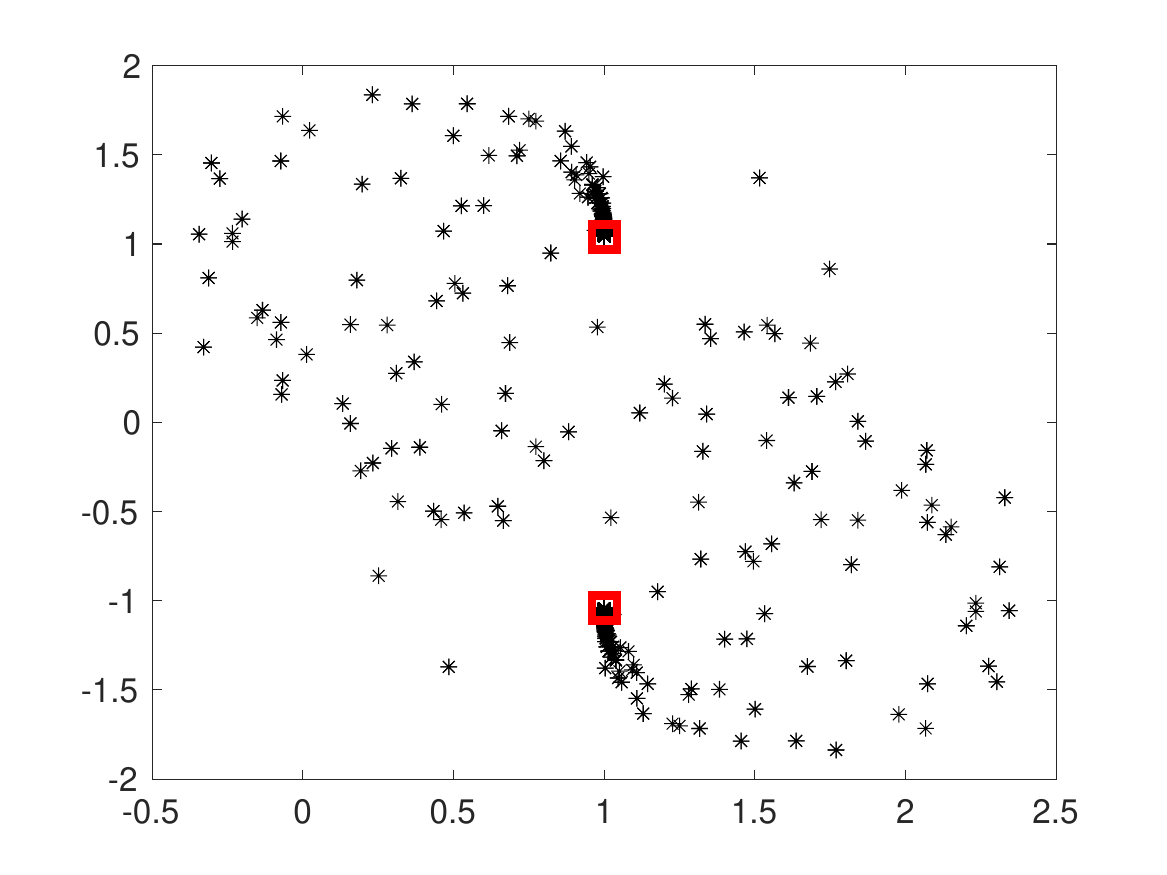}
\includegraphics[width=0.45\textwidth]{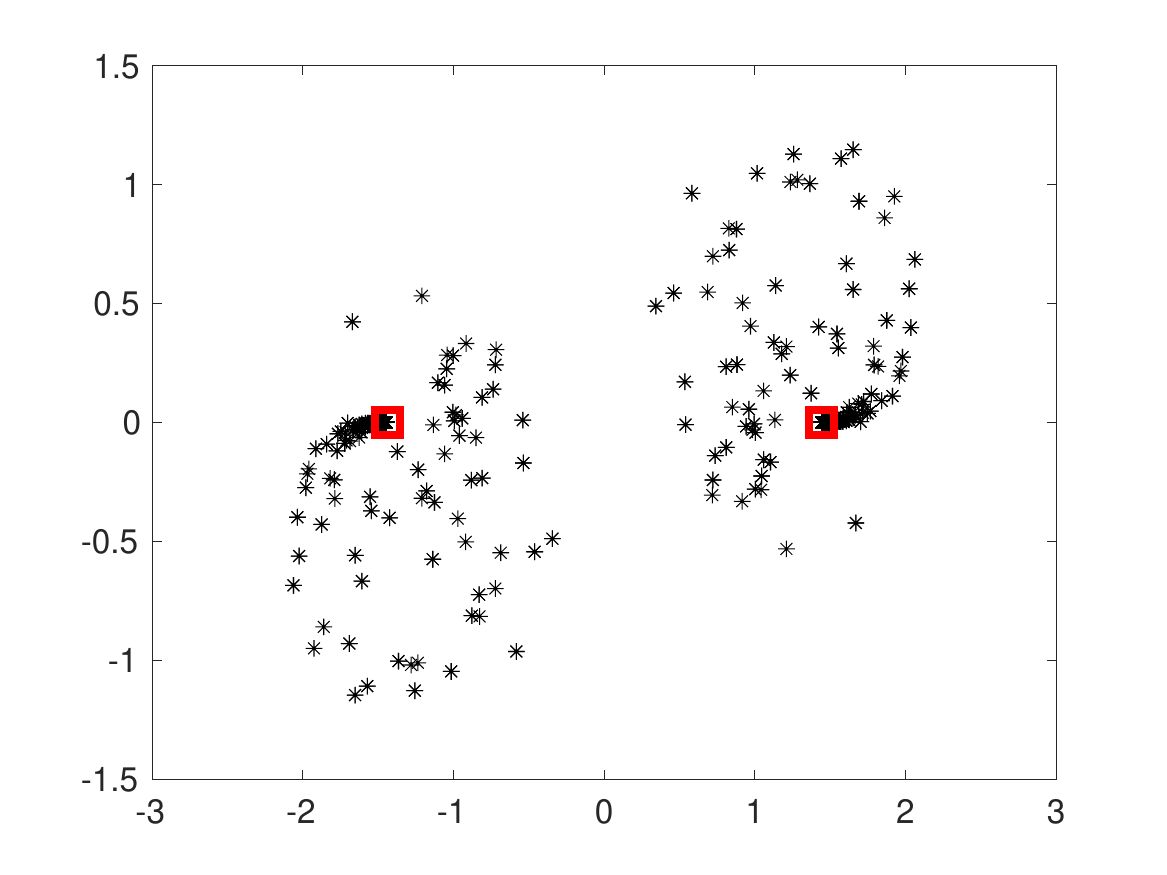}\\
\includegraphics[width=0.45\textwidth]{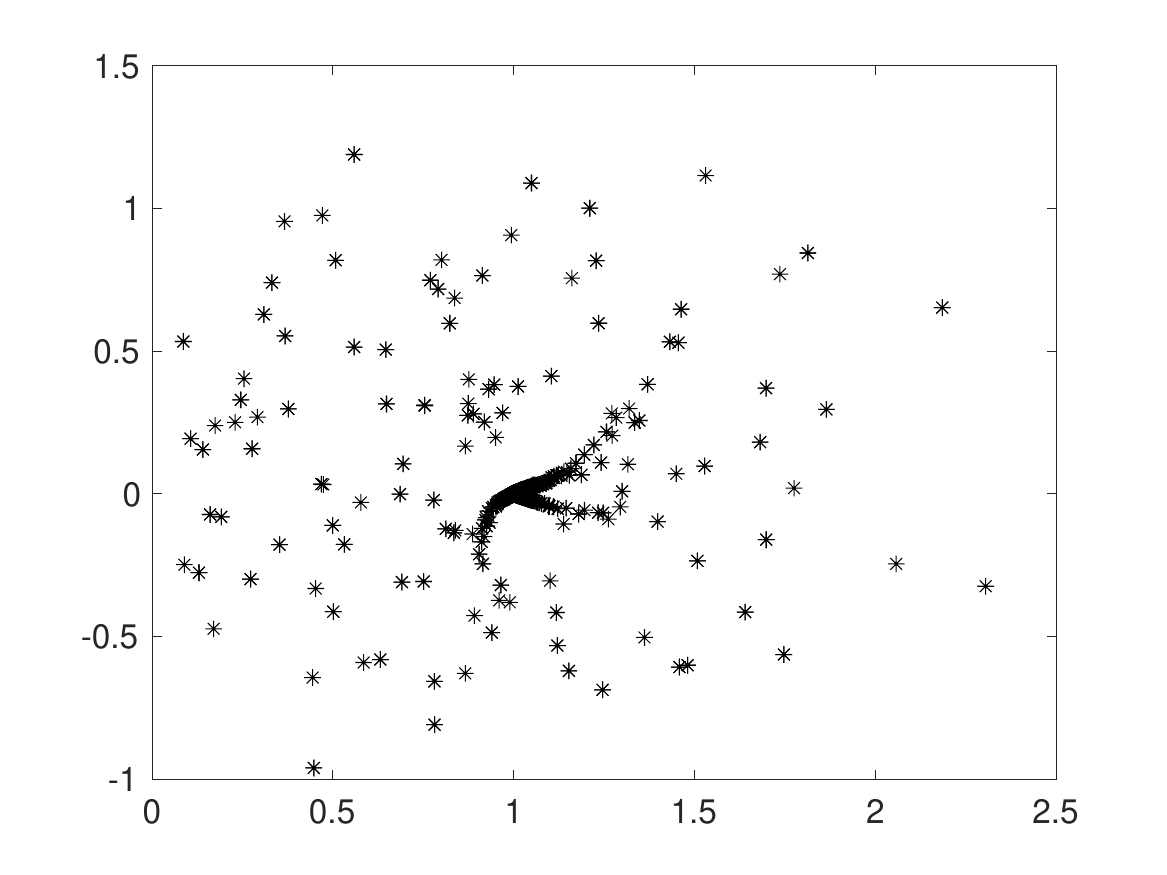} 
\includegraphics[width=0.45\textwidth]{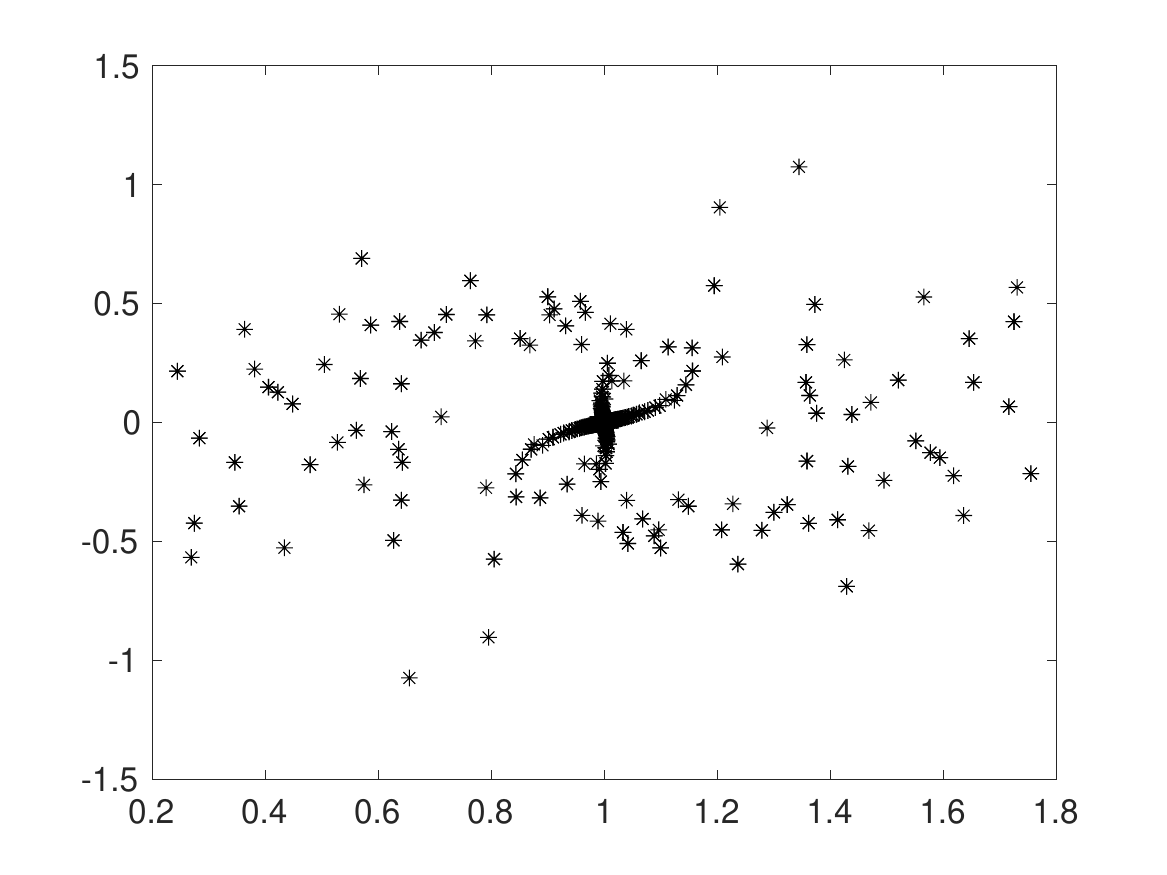}\\
\end{center}
\caption{\label{fig:02}Eigenvalue distributions in the complex plane for the matrix of the Kress-Roach, Costabel-Stephan (top, from left to right) and CFIER and Schwarz (below) formulations with \eqref{eq:opt_choice} for $\omega=20$, $\lambda_+=2$, $\mu_+=8$, $\lambda_-=\mu_-=1$. According to Theorem \ref{th:stephan-costabel}, and since $\rho= 901/432$, the eigenvalues accumulate at  $1\pm 1.04 i$  in case of the Kress-Roach formulation, at $\pm 1.44$ in the case of the  Costabel-Stephan formulation and at 1 for the CFIER and Schwarz formulations respectively. Notice that the eigenvalues are clustered closely to $1$ for the Optimized Schwarz formulations  which could explain the faster convergence of GMRES in this case.}
\end{figure}
\begin{figure}[h]
\begin{center}
\includegraphics[width=0.45\textwidth]{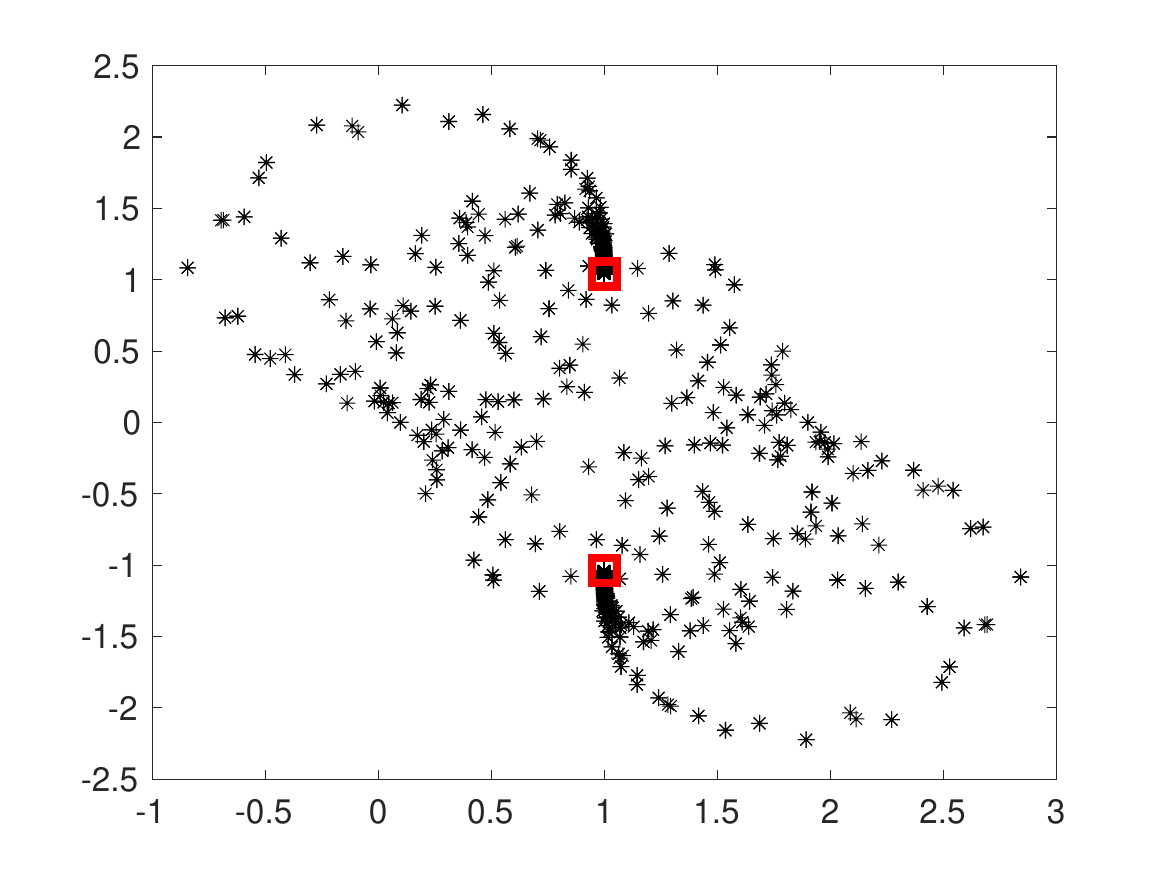}
\includegraphics[width=0.45\textwidth]{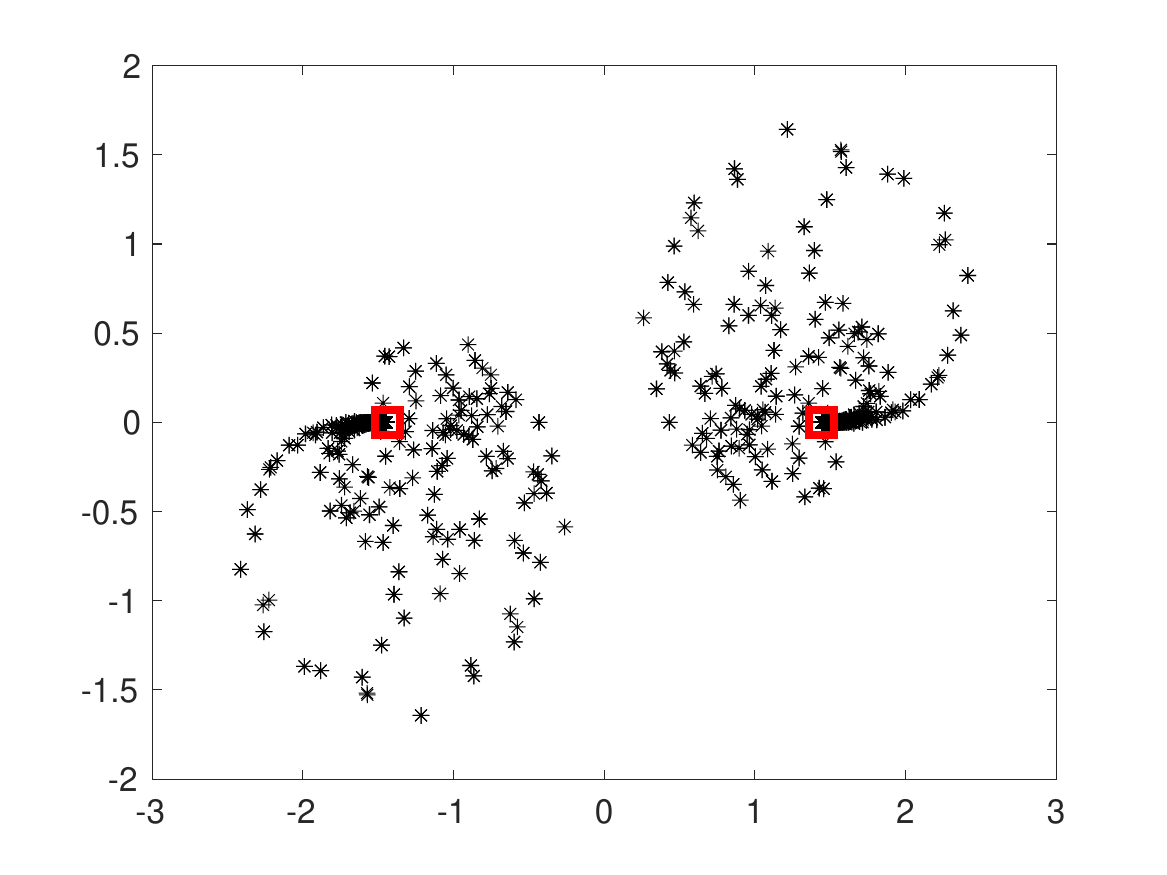}\\
\includegraphics[width=0.45\textwidth]{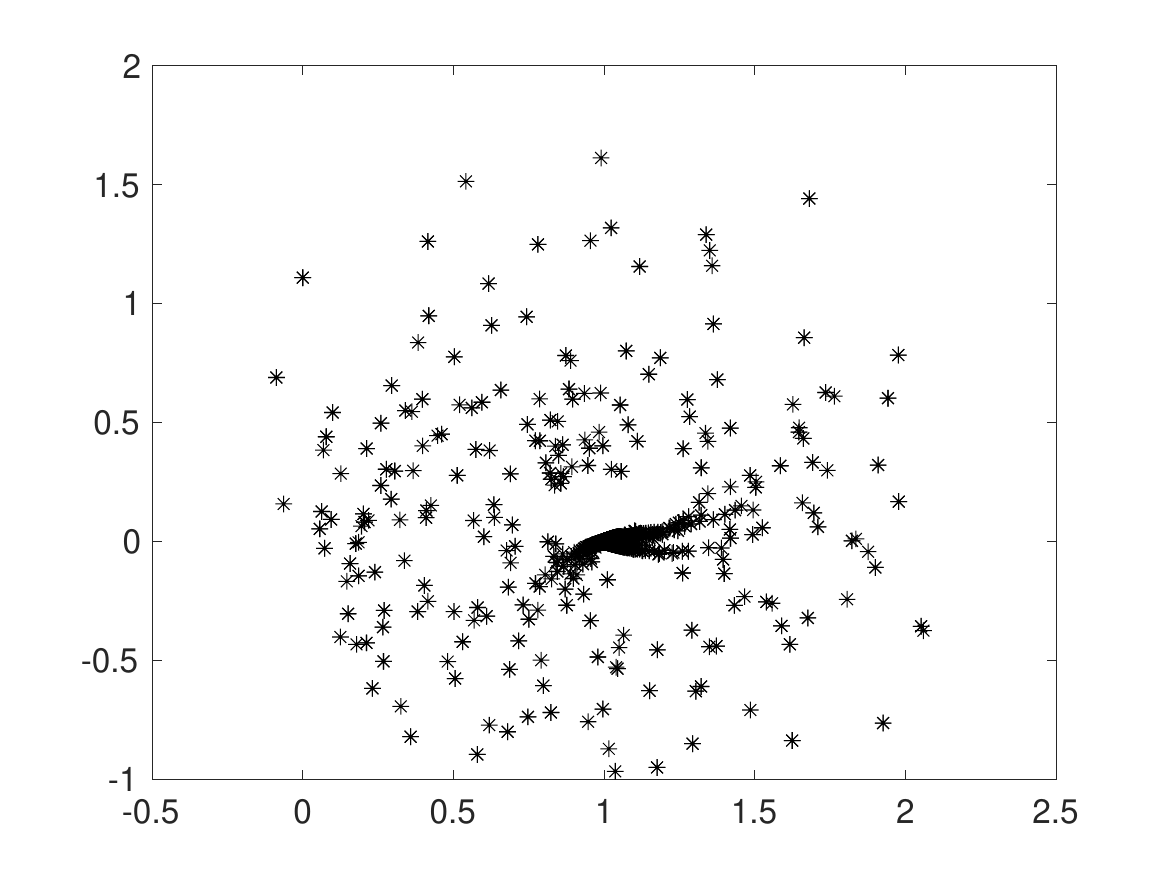} 
\includegraphics[width=0.45\textwidth]{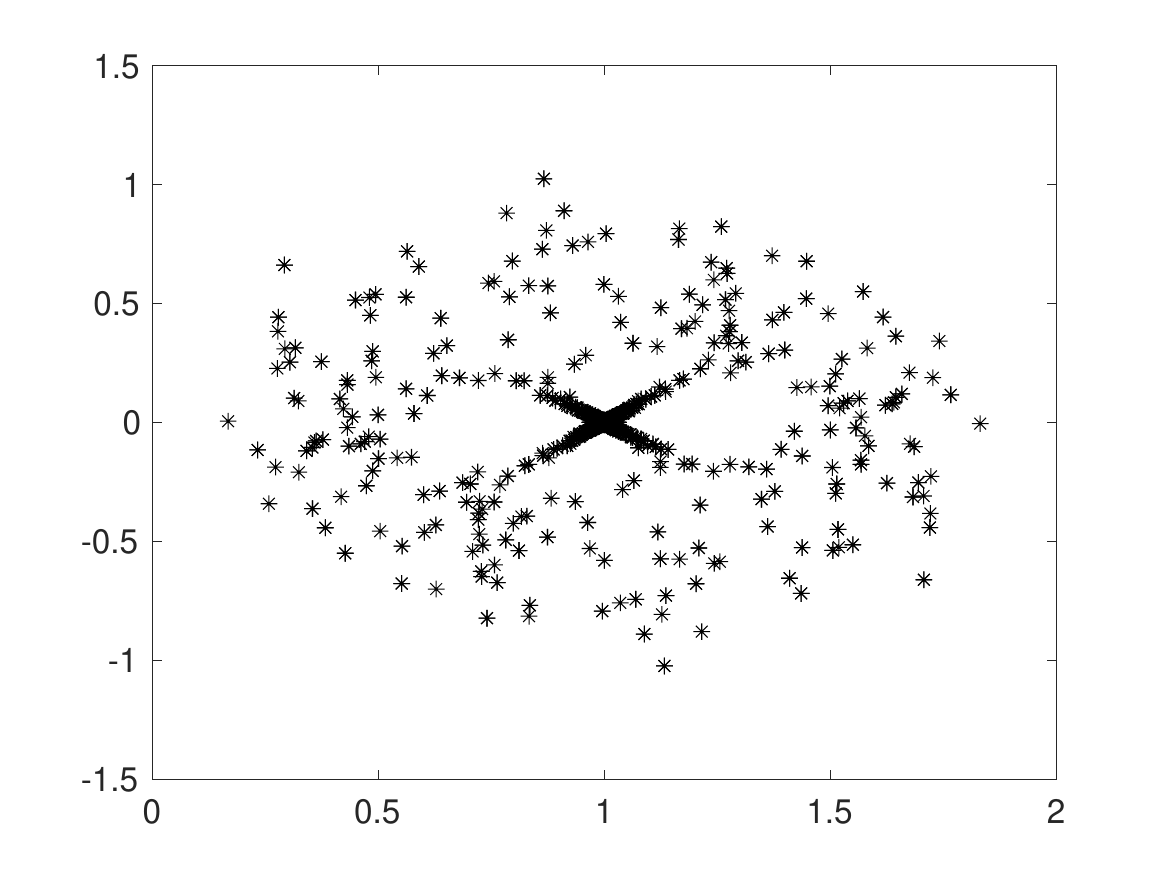}\\
\end{center}
\caption{\label{fig:03}Eigenvalue distributions in the complex plane for the matrix of the Kress-Roach, Costabel-Stephan (top, from left to right) and CFIER and Schwarz (below) formulations with \eqref{eq:opt_choice} for $\omega=40$, $\lambda_+=2$, $\mu_+=8$, $\lambda_-=\mu_-=1$. According to Theorem \ref{th:stephan-costabel}, and since $\rho= 901/432$, the eigenvalues accumulate at $1\pm 1.04 i$  in case of the Kress-Roach formulation, at $\pm 1.44$ in the case of the  Costabel-Stephan formulation and at 1 for the CFIER and Schwarz formulations respectively. Notice that the eigenvalues are clustered closely to $1$ for the Optimized Schwarz formulations which could explain the faster convergence of GMRES in this case. }
\end{figure}

\section{Conclusions}
   
We analyzed various boundary integral formulations of elastodynamic scattering and transmission problems in smooth two dimensional domains, including regularized formulations and Optimized Schwarz methods that rely on approximations of Dirichlet to Neumann (DtN) operators. We presented a singularity splitting based high-order Nystr\"om framework for the discretization of the four boundary integral operators associated with time-harmonic Navier equations in two dimensions. We provided extensive numerical evidence that the regularized formulations lead to faster convergence rates when iterative solvers such as GMRES are used for the solution of the numerical solution of the linear systems that result from the Nystr\"om discretization of elastodynamic scattering and transmission problems. Extensions to Lipschitz domains and three dimensional configurations are currently underway. {With regards to 3D extensions, the authors plan to use the kernel independent density interpolation strategy recently introduced in~\cite{faria2021general} which resolves weakly singular, singular, and strongly singular kernels encountered in the elastodynamic BIOs. The implementation of the Fourier principal calculus in the case of surfaces in 3D, on the other hand, is subject of current research investigation.}

\section*{Acknowledgments}
 Catalin Turc gratefully acknowledges support from NSF through contract DMS-1614270 and DMS-1908602. 
 \otherCorrections{We would like to thank the reviewers for their thorough work in reviewing this manuscript and for their valuable comments and suggestions that certainly have helped us to improve both  the analysis and exposition of the results.}

\bibliography{biblioEL}

\begin{thebibliography}{10}

\bibitem{alpert1999hybrid}
B.K. Alpert.
\newblock Hybrid gauss-trapezoidal quadrature rules.
\newblock {\em SIAM Journal on Scientific Computing}, 20(5):1551--1584, 1999.

\bibitem{AmKaLe:2009}
H.~Ammari, H.~Kang, and H.~Lee.
\newblock {\em Layer potential techniques in spectral analysis}, volume 153 of
  {\em Mathematical Surveys and Monographs}.
\newblock American Mathematical Society, Providence, RI, 2009.

\bibitem{turc_corner_N}
A.~Anand, J.~S. Ovall, and C.~Turc.
\newblock Well-conditioned boundary integral equations for two-dimensional
  sound-hard scattering problems in domains with corners.
\newblock {\em J. Integral Equations Appl.}, 24(3):321--358, 2012.

\bibitem{AntoineX}
X.~Antoine and M.~Darbas.
\newblock Alternative integral equations for the iterative solution of acoustic
  scattering problems.
\newblock {\em Quart. J. Mech. Appl. Math.}, 58(1):107--128, 2005.

\bibitem{Antoine}
X.~Antoine and M.~Darbas.
\newblock Generalized combined field integral equations for the iterative
  solution of the three-dimensional {H}elmholtz equation.
\newblock {\em M2AN Math. Model. Numer. Anal.}, 41(1):147--167, 2007.

\bibitem{boubendir2015regularized}
Y.~Boubendir, V.~Dom\'{\i}nguez, D.~Levadoux, and C.~Turc.
\newblock Regularized combined field integral equations for acoustic
  transmission problems.
\newblock {\em SIAM Journal on Applied Mathematics}, 75(3):929--952, 2015.

\bibitem{boubendir2017domain}
Y.~Boubendir, C.~Jerez-Hanckes, C.~P{\'e}rez-Arancibia, and C.~Turc.
\newblock Domain decomposition methods based on quasi-optimal transmission
  operators for the solution of {H}elmholtz transmission problems.
\newblock {\em arXiv preprint arXiv:1710.02694}, 2017.

\bibitem{turc1}
Y.~Boubendir and C.~Turc.
\newblock Wave-number estimates for regularized combined field boundary
  integral operators in acoustic scattering problems with {N}eumann boundary
  conditions.
\newblock {\em IMA Journal of Numerical Analysis}, 33(4):1176--1225, 2013.

\bibitem{BrackhageWerner}
H.~Brakhage and P.~Werner.
\newblock \"{U}ber das {D}irichletsche {A}ussenraumproblem f\"ur die
  {H}elmholtzsche {S}chwingungsgleichung.
\newblock {\em Arch. Math.}, 16:325--329, 1965.

\bibitem{BrPa:2008}
J.H. Bramble and J.E. Pasciak.
\newblock A note on the existence and uniqueness of solutions of frequency
  domain elastic wave problems: a priori estimates in {$\mathbf{H}^1$}.
\newblock {\em J. Math. Anal. Appl.}, 345(1):396--404, 2008.

\bibitem{BurtonMiller}
A.J. Burton and G.F. Miller.
\newblock The application of integral equation methods to the numerical
  solution of some exterior boundary-value problems.
\newblock {\em Proc. Roy. Soc. London. Ser. A}, 323:201--210, 1971.
\newblock A discussion on numerical analysis of partial differential equations
  (1970).

\bibitem{chaillat2008fast}
S.~Chaillat, Marc Bonnet, and Jean-Fran{\c{c}}ois Semblat.
\newblock A fast multipole accelerated bem for 3-d elastic wave computation.
\newblock {\em European Journal of Computational Mechanics/Revue Europ{\'e}enne
  de M{\'e}canique Num{\'e}rique}, 17(5-7):701--712, 2008.

\bibitem{chaillat2015approximate}
S.~Chaillat, M.~Darbas, and F.~Le~Lou{\"e}r.
\newblock Approximate local {D}irichlet-to-{N}eumann map for three-dimensional
  time-harmonic elastic waves.
\newblock {\em Computer Methods in Applied Mechanics and Engineering},
  297:62--83, 2015.

\bibitem{chaillat2017fast}
S.~Chaillat, M.~Darbas, and F.~Le~Lou{\"e}r.
\newblock Fast iterative boundary element methods for high-frequency scattering
  problems in 3{D} elastodynamics.
\newblock {\em Journal of Computational Physics}, 341:429--446, 2017.

\bibitem{chaillat2020analytical}
S.~Chaillat, M.~Darbas, and F.~Le~Lou{\"e}r.
\newblock Analytical preconditioners for {N}eumann elastodynamic boundary
  element methods.
\newblock {\em {SN Partial Differential Equations and Applications}}, 2(22),
  2021.

\bibitem{chapko2000numerical}
{R.} Chapko, {R}. Kress, and {L}. M\!onch.
\newblock On the numerical solution of a hypersingular integral equation for
  elastic scattering from a planar crack.
\newblock {\em IMA journal of numerical analysis}, 20(4):601--619, 2000.

\bibitem{KressColton}
D.~Colton and R.~Kress.
\newblock {\em Integral equation methods in scattering theory}.
\newblock Pure and Applied Mathematics (New York). John Wiley \& Sons Inc., New
  York, 1983.
\newblock A Wiley-Interscience Publication.

\bibitem{CoSt:1985}
M.~Costabel and E.~Stephan.
\newblock A direct boundary integral equation method for transmission problems.
\newblock {\em J. Math. Anal. Appl.}, 106(2):367--413, 1985.

\bibitem{CoSt:1990}
M.~Costabel and E.~P. Stephan.
\newblock Integral equations for transmission problems in linear elasticity.
\newblock {\em J. Integral Equations Appl.}, 2(2):211--223, 1990.

\bibitem{DaLo:2015}
M.~Darbas and F.~Le~Lou\"{e}r.
\newblock Well-conditioned boundary integral formulations for high-frequency
  elastic scattering problems in three dimensions.
\newblock {\em Math. Methods Appl. Sci.}, 38(9):1705--1733, 2015.

\bibitem{NIST:DLMF}
{\it NIST Digital Library of Mathematical Functions}.
\newblock http://dlmf.nist.gov/, Release 1.1.2 of 2021-06-15.
\newblock F.~W.~J. Olver, A.~B. {Olde Daalhuis}, D.~W. Lozier, B.~I. Schneider,
  R.~F. Boisvert, C.~W. Clark, B.~R. Miller, B.~V. Saunders, H.~S. Cohl, and
  M.~A. McClain, eds.

\bibitem{dominguez2016well}
V.~Dom\'{\i}nguez, M.~Lyon, and C.~Turc.
\newblock Well-posed boundary integral equation formulations and {N}ystr{\"o}m
  discretizations for the solution of {H}elmholtz transmission problems in
  two-dimensional {L}ipschitz domains.
\newblock {\em Journal of Integral Equations and Applications}, 28(3):395--440,
  2016.

\bibitem{DoSaSa:2015}
V.~Dom\'{\i}nguez, T.~S\'{a}nchez-Vizuet, and F.-J. Sayas.
\newblock A fully discrete {C}alder\'{o}n calculus for the two-dimensional
  elastic wave equation.
\newblock {\em Comput. Math. Appl.}, 69(7):620--635, 2015.

\bibitem{faria2021general}
{L}.M. Faria, {C}. P{\'e}rez-Arancibia, and {M}. Bonnet.
\newblock General-purpose kernel regularization of boundary integral equations
  via density interpolation.
\newblock {\em Computer Methods in Applied Mechanics and Engineering},
  378:113703, 2021.

\bibitem{hao2014high}
S.~Hao, A.~H Barnett, P.-G. Martinsson, and P~Young.
\newblock High-order accurate methods for {N}ystr{\"o}m discretization of
  integral equations on smooth curves in the plane.
\newblock {\em Advances in Computational Mathematics}, 40(1):245--272, 2014.

\bibitem{hsiao2008boundary}
G.C. Hsiao and W.L. Wendland.
\newblock {\em Boundary integral equations}.
\newblock Springer, 2008.

\bibitem{KressH}
R.~Kress.
\newblock On the numerical solution of a hypersingular integral equation in
  scattering theory.
\newblock {\em J. Comput. Appl. Math.}, 61(3):345--360, 1995.

\bibitem{Kress}
R.~Kress.
\newblock {\em Linear integral equations}, volume~82 of {\em Applied
  Mathematical Sciences}.
\newblock Springer-Verlag, New York, second edition, 1999.

\bibitem{KressRoach}
R.~Kress and G.F. Roach.
\newblock Transmission problems for the {H}elmholtz equation.
\newblock {\em J. Mathematical Phys.}, 19(6):1433--1437, 1978.

\bibitem{KupGeBa:1979}
V.D. Kupradze, T.~G. Gegelia, M.~O. Bashele\u{\i}shvili, and T.~V. Burchuladze.
\newblock {\em Three-dimensional problems of the mathematical theory of
  elasticity and thermoelasticity}, volume~25 of {\em North-Holland Series in
  Applied Mathematics and Mechanics}.
\newblock North-Holland Publishing Co., Amsterdam-New York, russian edition,
  1979.
\newblock Edited by V.D.. Kupradze.

\bibitem{kusmaul}
R.~Kussmaul.
\newblock Ein numerisches {V}erfahren zur {L}\"osung des {N}eumannschen
  {A}ussenraumproblems f\"ur die {H}elmholtzsche {S}chwingungsgleichung.
\newblock {\em Computing (Arch. Elektron. Rechnen)}, 4:246--273, 1969.

\bibitem{Lo:2015}
Fr\'{e}d\'{e}rique Le~Lou\"{e}r.
\newblock A domain derivative-based method for solving elastodynamic inverse
  obstacle scattering problems.
\newblock {\em Inverse Problems}, 31(11):115006, 27, 2015.

\bibitem{martensen}
{E.} Martensen.
\newblock \"{U}ber eine {M}ethode zum r\"aumlichen {N}eumannschen {P}roblem mit
  einer {A}nwendung f\"ur torusartige {B}erandungen.
\newblock {\em Acta Math.}, 109:75--135, 1963.

\bibitem{Nataf}
F.~Nataf.
\newblock Interface connections in domain decomposition methods.
\newblock In {\em Modern methods in scientific computing and applications
  ({M}ontr\'eal, {QC}, 2001)}, volume~75 of {\em NATO Sci. Ser. II Math. Phys.
  Chem.}, pages 323--364. Kluwer Acad. Publ., Dordrecht, 2002.

\bibitem{SaadSchultz}
Y.~Saad and M.H. Schultz.
\newblock G{MRES}: a generalized minimal residual algorithm for solving
  nonsymmetric linear systems.
\newblock {\em SIAM J. Sci. Statist. Comput.}, 7(3):856--869, 1986.

\bibitem{Saranen}
J.~Saranen and G.~Vainikko.
\newblock {\em Periodic integral and pseudodifferential equations with
  numerical approximation}.
\newblock Springer Monographs in Mathematics. Springer-Verlag, Berlin, 2002.

\end{thebibliography}

 \appendix
\section{Appendix: Detailed factorization of the kernels of the elastodynamic boundary integral operators}

We show in this section explicit factorizations of the kernels of the \otherCorrections{BIOs} for elastodynamic. The aim is twofold. First, it supports the implementation of the Nystr\"om method {sketched in Section 5}, and, secondly, provides justification of the derivation of the principal part of the operators. 

  For these purposes, let us assume that $\Gamma$ is parameterized, such as it has been assumed  throughout  this paper, by a smooth $2\pi-$periodic 
parameterization ${\bf x}$.  Recall that  
\[
  {\bm r}:={\bm r}(\tau,t)={\bf x}(\tau)-{\bf x}(t),\quad  r=r(\tau,t)=|{\bf r}|. 
\]
The fundamental solution of the Navier equations
\begin{eqnarray*}
 \Phi(\x,\y)&:=&\frac{1}{\mu}\phi_0(k_s |\x-\y|)\firstReviewer{\bm{I}} +\frac{1}{\omega^2}\nabla_{\x}\nabla_{\x}^\top(\phi_0(k_s |\x-\y|)-\phi_0(k_p | \x-\y|))
\end{eqnarray*}
can be factorized as 
 \[ 
\Phi(\x,\y)=\Phi_1(|\x-\y|)I_2+\Phi_2(|\x-\y|) G( \x-\y )
\] 
where $I_2$ is the identity matrix of order 2, 
\begin{eqnarray*}
\Phi_1(z)&:=& \frac{k_s^2}{\omega^2}\phi_0(k_s z)-\frac{1}{\omega^2} \psi(z)\\
\Phi_2(z)&:=& \frac{1}{\omega^2}\left(k_p^2\phi_0(k_p z)-k_s^2\phi_0(k_s z)\right)+\frac{2}{\omega^2}\psi(z)\\
\psi(z)  &:=&\left[k_s^2 \frac{\phi_1(k_s z)}{k_s z}-k_p^2 \frac{\phi_1(k_p z)}{k_p z}\right]\\
G({\bm x})&:=&\frac{1}{|\bm{x}|^2}\bm x \bm{x}^\top=\frac{1}{x_1^2+x_2^2}\begin{bmatrix}
  x_1^2   & x_1 x_2\\
  x_1 x_2 & x_2^2
   \end{bmatrix}.
\end{eqnarray*}
where
 \begin{eqnarray*}
  \phi_j(z)&:=&\frac{i}4 H_j^{(1)}(z)\\
  k_p^2&=&\frac{\omega^2}{\lambda+2\mu},\quad k_s^2=\frac{\omega^2}{\mu}.
 \end{eqnarray*} 
 
By definition
 \[
  \phi_j=\frac{i}4\big(J_j+{i}\:Y_j\big)
 \]
 where $J_j, Y_j$ are the Bessel functions of first and second kind and order $j$. Moreover cf. 
 \cite[\S 10.6]{NIST:DLMF}
\begin{eqnarray*}
\frac{\rm d}{{\rm d}z}\left[\phi_0(k z)\right]=-k \phi_1(kz),\qquad
\frac{\rm d}{{\rm d}z}\left[\frac{\phi_1(k z)}{kz}\right]=\frac{kz\phi_0(kz)-2\phi_1(kz)}{kz^2} 
\end{eqnarray*}
%
and \cite[\S 10.8]{NIST:DLMF}
 \[
  \phi_r(k z)=-\frac{1}{2\pi} J_r(k z)\log z +z^rC_{r,k}(z)+
  \begin{cases}
        0,&r=0,\\[2ex]
        \displaystyle\frac{1}{2\pi k z},&r=1,\\[2ex]
        \displaystyle\frac{1}{\pi k^2 z^2}+\frac{1}{4\pi},&r=2,\\[2ex]
        \displaystyle\frac{4}{\pi k^3 z^3}+\frac{1}{2\pi k z}+\frac{k z}{16\pi},&r=3,
                                                            \end{cases}
 \]
with $C_{r,k}(z)$  smooth functions as $z\ge 0$. 

We are ready to state the first result for the kernel of the single layer operator: 
 


\begin{lemma}\label{lemma:Ap:01}
Let $V$ be the kernel of the (parameterized) single layer operator \eqref{eq:BLO:SL}. Then 
\begin{eqnarray*}
V(\tau,t)
&=&V_0(\tau,t) + a_{\rm log}^{(2)}(r)r^2\log r \:   I_2+a_{\rm log}^{(3)}(r)r^2\log r \: G(\rr)+a_{\rm reg}^{(4)}(r) I_2 + a_{\rm reg}^{(5)}(r) r^2 G(\rr) 
\end{eqnarray*} 
where
\begin{eqnarray*}
a_{\rm log}^{(2)}(z)&:=&\frac{1}{2\pi z^2 \omega^2}\left(-k_p^2 \frac{J_1(k_p z)}{k_p z}-k_s^2 J_0(k_s z)+k_s^2 \frac{J_1(k_s z)}{k_s z}+ \frac{k_s^2+k_p^2}{2}\right)\\
a_{\rm log}^{(3)}(z)&:=&  \frac{1}{2\pi z^2\omega^2}\left(-k_p^2 J_0(k_p z)+2k_p^2\frac{J_1(k_p z)}{k_p z} +k_s^2 J_0(k_s z)-2k_s^2\frac{J_1(k_s z)}{k_s z}
\right) \\
a_{\rm reg}^{(4)}(z)&:=&\frac{1}{\omega^2}\left( k_p C_{1,k_p}(z)+k_s^2 C_{0,k_s}(z)-k_s C_{1,k_s}(z)\right)\\
a_{\rm reg}^{(5)}(z)&:=&\frac{1}{\omega^2 z^2 }\left(k_p^2 {C_{0,k_p}(z)}-2 k_p C_{1,k_p}(z)-k_s^2 {C_{0,k_s}(z)+2 k_s C_{1,k_s}(z)}+\frac{1}{4\pi}(k_p^2-k_s^2)
\right)
\end{eqnarray*}
and
\[
 V_0(\tau,t):= V_0(\bm{r})=
- \frac{\lambda +3 \mu }{ \mu  (\lambda +2 \mu )}\frac{1}{\otherCorrections{4\pi}} \log r \: I_2
 + \frac{\lambda + \mu }{\mu  (\lambda +2 \mu )}\frac{1}{4\pi }G(\rr).
\]
Furthermore, 
\begin{eqnarray*}
a_{\rm log}^{(2)}(z)& =&\frac1{32\pi \omega^2}\left(k_p^4+3 k_s^4\right)
+{\cal O}(z^2)\\
a_{\rm log}^{(3)}(z)& =&\frac1{16\pi \omega^2}\left(k_p^4-k_s^4\right)
+{\cal O}(z^2)\\
a_{\rm reg}^{(4)}(z)& =&
\frac{(1-2 E +{i}\pi ) k_p^2-2 k_p^2 \log \left(\frac{k_p}{2}\right)+(-1-2 E +{i}\pi )
   k_s^2-2 k_s^2 \log \left(\frac{k_s}{2}\right)}{8 \pi  \omega^2} 
 +{\cal O} (z^2).
\\
a_{\rm reg}^{(5)}(z)& =& \frac{ (-3+4 E -2\pi i  )
   \left(k_p^4-k_s^4\right)+4 k_p^4 \log \left(\frac{k_p}{2}\right)-4 k_s^4 \log
   \left(\frac{k_s}{2}\right) }{64 \pi\omega^2 } 
   +{\cal O}(z^{2})
\end{eqnarray*}
where $E\approx 0.5772$ is the Euler-Mascheroni constant.
\end{lemma}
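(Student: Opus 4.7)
The starting point is the factorization $\Phi(\x,\y) = \Phi_1(r)I_2 + \Phi_2(r)G(\rr)$ recorded at the opening of the appendix, so the claim reduces to producing parallel decompositions of the scalar functions $\Phi_1(z)$ and $\Phi_2(z)$ separately. My approach is purely computational: substitute the NIST expansions
\[
\phi_0(kz) = -\tfrac{1}{2\pi}J_0(kz)\log z + C_{0,k}(z),\qquad \phi_1(kz) = -\tfrac{1}{2\pi}J_1(kz)\log z + z\, C_{1,k}(z) + \tfrac{1}{2\pi kz}
\]
into the definitions of $\Phi_1$, $\Phi_2$, and $\psi$, then separate the coefficient of $\log z$ from the smooth remainder in each scalar function.

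The first delicate step is the cancellation of singular terms. Dividing $\phi_1(kz)$ by $kz$ produces a $\tfrac{1}{2\pi k^2 z^2}$ pole, and the combination $\psi(z) = k_s^2\phi_1(k_sz)/(k_sz) - k_p^2\phi_1(k_pz)/(k_pz)$ is engineered precisely so that these two poles annihilate, leaving
\[
\psi(z) = \tfrac{1}{2\pi z}\bigl[k_pJ_1(k_pz) - k_sJ_1(k_sz)\bigr]\log z + k_sC_{1,k_s}(z) - k_pC_{1,k_p}(z).
\]
Because $kJ_1(kz)/z$ is smooth and even in $z$ (as $J_1$ is odd), the log coefficient of $\psi$ is itself smooth. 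Substituting back into $\Phi_1 = \tfrac{k_s^2}{\omega^2}\phi_0(k_sz) - \tfrac{1}{\omega^2}\psi$ and $\Phi_2 = \tfrac{1}{\omega^2}[k_p^2\phi_0(k_pz)-k_s^2\phi_0(k_sz)] + \tfrac{2}{\omega^2}\psi$ then yields clean decompositions of the form (smooth log coefficient)$\times\log z$ plus a smooth remainder.

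I would next identify the principal part $V_0$ by evaluating at $z=0$. Using $J_0(0)=1$ and $J_1(kz)/(kz)=1/2+O(z^2)$, the log coefficient of $\Phi_1$ at $z=0$ evaluates to $-(k_p^2+k_s^2)/(4\pi\omega^2)=-(\lambda+3\mu)/(4\pi\mu(\lambda+2\mu))$, which produces the $I_2\log r$ piece of $V_0$. The log coefficient of $\Phi_2$ at $z=0$ vanishes by a direct check (the Bessel expansions combine to cancel both the constant and the leading $1/z$ contribution), so it factors as $z^2 a_{\rm log}^{(3)}(z)$ with $a_{\rm log}^{(3)}(0)=(k_p^4-k_s^4)/(16\pi\omega^2)$. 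The smooth part of $\Phi_2$ at $z=0$ evaluates to $(\lambda+\mu)/(4\pi\mu(\lambda+2\mu))$, producing the $G(\rr)$ piece of $V_0$. The analogous vanishing of the $z$-linear correction to the $\Phi_1$ log coefficient, which follows because only even powers of $z$ appear in Bessel expansions of definite parity, guarantees the factorization $z^2 a_{\rm log}^{(2)}(z)\log z$ for its remainder.

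Finally, the explicit formulas for $a_{\rm log}^{(2)},a_{\rm log}^{(3)},a_{\rm reg}^{(4)},a_{\rm reg}^{(5)}$ are read off by regrouping the terms generated in the previous step, and their Taylor expansions at $z=0$ follow from the standard Frobenius series of $J_0,J_1$ together with $C_{r,k}(0)$, the latter involving the Euler--Mascheroni constant $E$ through the series expansion of the Bessel function of the second kind $Y_r$. The main obstacle is not conceptual but the book-keeping: every $1/z$ and $1/z^2$ singularity must be verified to cancel, and the residual log coefficients must vanish to order $z^2$ so that the factor $r^2\log r$ emerges rather than $\log r$ alone.
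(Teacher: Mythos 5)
Your proposal is correct and takes essentially the same route as the paper: the appendix establishes the lemma exactly by substituting the stated expansions of $\phi_0,\phi_1$ into the factorization $\Phi=\Phi_1 I_2+\Phi_2 G(\rr)$, observing the cancellation of the $1/z^2$ poles in $\psi$, splitting each scalar into a smooth-coefficient $\log$ part and a smooth remainder, and reading off the constants from the Frobenius series of $J_0,J_1$ and the constants $C_{r,k}(0)$ (which bring in the Euler--Mascheroni constant), which is precisely your bookkeeping. Note that your computed coefficient $-(k_p^2+k_s^2)/(4\pi\omega^2)$ for the $\log r\, I_2$ term agrees with $V_0$ as written in Proposition~\ref{prop:reg:sing:decomposition}; the additional factor $\tfrac{1}{2\pi}$ in the appendix display of $V_0$ appears to be a typographical slip rather than a defect of your argument.
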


For the \otherCorrections{double layer} operator \eqref{eq:BLO:DL}, we need the following vector fields (we follow the notation from \cite{chapko2000numerical}):
\begin{eqnarray*}
 U_1(t,\tau)&=&\lambda \bm{\nu}(t)\:\rr^\top+\mu \rr\: \bm{\nu}(t)^\top+\mu (\bm{\nu}(t)\cdot  \rr)I_2\\
 U_2(t,\tau)&=&(\lambda+2\mu) \bm{\nu}(t)\:{\rr}^\top+\mu {\rr}\: \bm{\nu}(t)^\top+\mu (\bm{\nu}(t)\cdot  \rr)(I_2-4G(\rr)).
\end{eqnarray*}
(Recall  that ${\bm \nu}(t)=(x_2'(t),-x_1'(t))={\bm n\circ{\bf x}(t)}|{\bf x}'(t)|$ the non-normalized vector field given by the parameterization.) 
Using the identities (recall the definition of the normal stress tensor in \eqref{eq:def:T} and its parameterized version in \eqref{eq:def:T:2})
\begin{eqnarray}
T(fF)&=&T(f I_2)F+f\:T(F),\quad \text{for any pair $f$ and $F$ of scalar and matrix functions}\nonumber\\
{[}T_t(f(r)\, I_2){]}(\rr)&=&-\frac{f'(r)}{r} U_1(t,\tau),\quad 
{[}T_t G{]}(\rr)\ =\ -\frac{1}{r^2} U_2(t,\tau)\label{propT}
\end{eqnarray}
it is possible to prove 
\begin{lemma}\label{lemma:Ap:02}
Let $K(\tau,t)$ the kernel of the \otherCorrections{double layer} operator \eqref{eq:BLO:DL}. Then 
\begin{eqnarray*}
K(\tau,t)
&=&K_0(\tau,t)
        \\
&& +\left[b_{\log}^{(2)}(r) U_1^\top(\tau,t)+b_{\log}^{(3)}(r)  U_1^\top(\tau,t) G(\rr)+
b_{\log}^{(4)}(r) U_2^\top(\tau,t)\right] \:  \log r\nonumber\\
&&+ b_{\rm reg}^{(5)}(r) U_1^\top(\tau,t)+b_{\rm reg}^{(6)}(r) U_1^\top(\tau,t) G(\rr)
+ b_{\rm reg}^{(7)}(r) U_2^\top(\tau,t)
\end{eqnarray*} 
where the functions 
\begin{eqnarray*}
b_{\rm log}^{(2)}(z)&:=&\frac{1}{2\pi\omega^2}\left(-k_p^4\frac{J_2(k_p z)}{(k_p z)^2}-k_s^4\frac{J_1(k_s z)}{k_s z}+k_s^4\frac{J_2(k_s z)}{(k_s z)^2}\right)\\
b_{\rm log}^{(3)}(z)&:=&\frac{1}{2\pi\omega^2}\left(-k_p^4\frac{J_1(k_p z)}{k_p z}+
2k_p^4\frac{J_2(k_p z)}{(k_p z)^2}+k_s^4\frac{J_1(k_s z)}{k_s z}-
2k_s^4\frac{J_2(k_s z)}{(k_s z)^2}
\right)\\
b_{\rm log}^{(4)}(z)&:=&-a_{\rm log}^{(3)}(z)=-\frac{1}{2\pi\omega^2 z^2}\left(
-k_p^2 J_0(k_p z) 
+ 2k_p^2\frac{J_1(k_p z)}{k_p z}
+k_s^2 J_0(k_s z) 
-2k_s^2\frac{J_1(k_s z)}{k_s z}\right) \\
b_{\rm reg}^{(5)}(z)&:=&-\frac{1}{\omega^2}\left(
-k_s^3 C_{1,k_s}(z) + k_s^2 C_{2,k_s}(z) - k_p^2 C_{2,k_p}(z)\right) \\
b_{\rm reg}^{(6)}(z)&:=&-\frac{1}{\omega^2}\left(
k_s^3 C_{1,k_s}(z) - 2 k_s^2 C_{2,k_s}(z)  - 
     k_p^3 C_{1,k_p}(z)  + 2 k_p^2 C_{2,k_p}(z)   \right)\\
b_{\rm reg}^{(7)}(z)&:=&-a_{\rm reg}^{(5)}(z)=-\frac{1}{\omega^2 z^2 }\left(k_p^2 {C_{0,k_p}(z)}-2 k_p C_{1,k_p}(z)-k_s^2 {C_{0,k_s}(z)+2 k_s C_{1,k_s}(z)}+\frac{1}{4\pi}(k_p^2-k_s^2)
\right)
\end{eqnarray*}
are smooth for $r\ge 0$ and 
\[
 K_0(\tau,t)=\frac{\mu}{\lambda+2\mu} \left(\frac{\partial}{\partial t}
        \frac{1}{2\pi }\log r\right) \: \begin{bmatrix}
                                         &-1\\
                                         1&
                                        \end{bmatrix}
+\frac{1}{2\pi r^2}\left({\bm \nu}(t)\cdot\rr\right) \left(\frac{\mu}{\lambda+2\mu}I_2+
 2\frac{\lambda+\mu}{\lambda+2\mu}G(\rr)
 \right). 
\] 
Furthermore, 
\begin{eqnarray*}
b^{(2)}_{\log}(z)&=&-\frac{1}{16  \pi  \omega^2 }(k_p^4+3 k_s^4)
+{\cal O}\left(z^2\right)\\
b^{(3)}_{\log}(z)&=&\frac{1}{8 \pi  \omega^2}(k_s^4-k_p^4)
+{\cal O}\left(z^2\right)\\
b^{(4)}_{\log}(z)&=&-\frac{1}{16 \pi  \omega^2}(k_p^4-k_s^4) +{\cal O}\left(z^2\right)\\
b^{(5)}_{\rm reg}(z)&=&\frac{(3-4 E +2\pi i  ) k_p^4-4 k_p^4 \log \left(\frac{k_p}{2}\right)+(5-12 E +6\pi i  ) k_s^4-12 k_s^4 \log
   \left(\frac{k_s}{2}\right)}{64 \pi  \omega^2}+{\cal O}\left(z^2\right)\\
b^{(6)}_{\rm reg}(z)&=&   -\frac{(-1+4 E -2\pi i  ) \left(k_p^4-k_s^4\right)+4 k_p^4 \log \left(\frac{k_p}{2}\right)-4 k_s^4 \log
   \left(\frac{k_s}{2}\right)}{32 \pi \omega^2} 
  +{\cal O}\left(z^2\right)\\
b^{(7)}_{\rm reg}(z)&=&-\frac{(-3+4 E -2\pi i  ) \left(k_p^4-k_s^4\right)+4 k_p^4 \log \left(\frac{k_p}{2}\right)-4 k_s^4 \log
   \left(\frac{k_s}{2}\right)}{64 \pi \omega^2} 
 +{\cal O}\left(z^2\right)
\end{eqnarray*}

\end{lemma}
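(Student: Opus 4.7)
The plan is to start from the definition $K(\tau,t)=[T_t\Phi(\rr)]^\top$ with the factorization $\Phi(\x,\y)=\Phi_1(r)I_2+\Phi_2(r)G(\rr)$ recorded at the opening of the appendix. Using the product rule $T(fF)=T(fI_2)F+f\,T(F)$ together with the two derivative identities $[T_t(f(r)I_2)](\rr)=-r^{-1}f'(r)\,U_1(t,\tau)$ and $[T_t G](\rr)=-r^{-2}U_2(t,\tau)$ from \eqref{propT}, we obtain the closed-form scalar-valued decomposition
\begin{equation*}
T_t\Phi(\rr)=-\frac{\Phi_1'(r)}{r}\,U_1(t,\tau)-\frac{\Phi_2'(r)}{r}\,U_1(t,\tau)\,G(\rr)-\frac{\Phi_2(r)}{r^2}\,U_2(t,\tau).
\end{equation*}
Transposing and using $G^\top=G$ already reduces the kernel $K(\tau,t)$ to a combination of the three matrix building blocks $U_1^\top$, $U_1^\top G$ (equivalently $GU_1^\top$), $U_2^\top$, each multiplied by a purely radial scalar function of $r$. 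At this point the problem is entirely scalar.

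The second step is to split each of $\Phi_1'(r)/r$, $\Phi_2'(r)/r$ and $\Phi_2(r)/r^2$ into a $\log r$--singular piece, an $r^{-2}$ elastostatic piece, and a smooth remainder. For this I substitute the Hankel expansion $\phi_j(kz)=-(2\pi)^{-1}J_j(kz)\log z+z^jC_{j,k}(z)+p_j(kz)$ (with $p_j$ containing only the explicit inverse powers and constants listed in the appendix preamble) into the definitions of $\Phi_1,\Phi_2,\psi$, differentiate once using the recurrences $\frac{d}{dz}\phi_0(kz)=-k\phi_1(kz)$ and $\frac{d}{dz}[\phi_1(kz)/(kz)]=(kz\phi_0(kz)-2\phi_1(kz))/(kz^2)$, and divide by $r$ or $r^2$. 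A key intermediate check is that the $1/z^2$ contributions from the $k_s$ and $k_p$ pieces of $\psi$ cancel exactly, so that after differentiation only genuine $\log r$ singularities and bona fide $r^{-2}$ elastostatic singularities survive.

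The third step is to assemble the three pieces. The $\log r$ coefficients multiplying $U_1^\top$, $U_1^\top G$ and $U_2^\top$ respectively produce the stated $b_{\log}^{(2)},b_{\log}^{(3)},b_{\log}^{(4)}$ as Bessel-function combinations; the smooth contributions involving $C_{0,k},C_{1,k},C_{2,k}$ produce $b_{\rm reg}^{(5)},b_{\rm reg}^{(6)},b_{\rm reg}^{(7)}$. The remaining $r^{-2}$ pieces — which cannot be absorbed into either the log or the smooth terms — must be matched against the elastostatic double-layer kernel: the algebra of $U_1,U_2,G$ in two dimensions, combined with the identities $(\bm\nu(t)\cdot\rr)/r^2$ and $\partial_t\log r = -(\rr\cdot \mathbf{x}'(t))/r^2$, collapses them into the closed form $K_0(\tau,t)$ displayed in the lemma. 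Smoothness of the $b$-coefficients at $r=0$ is then automatic, because each is a quotient of an analytic $r$-series by a compensating power of $r$ that has been verified to cancel; the explicit Taylor expansions follow from $J_0(0)=1$, $J_1(kz)/(kz)\to\tfrac12$, $J_2(kz)/(kz)^2\to\tfrac18$, together with the known small-$z$ values $C_{0,k}(0)=(1-2E+i\pi)/(4\pi)-\log(k/2)/(2\pi)$ and its analogues for $C_{1,k},C_{2,k}$.

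The main obstacle will be the bookkeeping in the third step: the product rule shuffles logarithmic and constant terms between the $I_2$ and $G$ slots, and one must verify that the inverse-power leftovers reassemble exactly into the specific combination defining $K_0$ rather than leaving a spurious residue. The cleanest way I see to check this identification is to recognize $K_0$ as the known Kelvin (elastostatic) double-layer kernel, written in the Kupradze form using $-\tfrac{1}{2\pi}\Delta\log r=\delta$ and the decomposition $\lambda+\mu,\,\lambda+2\mu$ of the Lam\'e combinations already appearing in $V_0$; once $K_0$ is pinned down this way, the verification of the formulas for $b_{\log}^{(j)}$ and $b_{\rm reg}^{(j)}$ reduces to direct substitution and the asymptotic expansions can be read off term by term.
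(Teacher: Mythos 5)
Your proposal coincides with the route the paper itself intends (no written proof is given beyond the identities \eqref{propT}): apply the product rule $T(fF)=T(fI_2)F+fT(F)$ and the two traction identities to $\Phi=\Phi_1(r)I_2+\Phi_2(r)G(\rr)$, transpose, and split the radial coefficients $\Phi_1'(r)/r$, $\Phi_2'(r)/r$, $\Phi_2(r)/r^2$ via $\phi_j(kz)=-\tfrac{1}{2\pi}J_j(kz)\log z+z^jC_{j,k}(z)+(\text{explicit inverse powers})$, with the leftover inverse-power pieces reassembling into the elastostatic kernel $K_0$ exactly as you describe (including the correct observation that the $z^{-2}$ parts of $\psi$ cancel between the $k_p$ and $k_s$ contributions). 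One small correction: after transposition the middle building block is $G(\rr)\,[U_1(t,\tau)]^\top=[U_1(t,\tau)\,G(\rr)]^\top$, which is \emph{not} equal to $[U_1(t,\tau)]^\top G(\rr)$ — the two differ by $\lambda\,\rr\,\bm{\nu}(t)^\top-\mu\,\bm{\nu}(t)\,\rr^\top+(\mu-\lambda)\,(\bm{\nu}(t)\cdot\rr)\,G(\rr)$ — so rather than asserting their equivalence you should carry the transposed order through the computation and reconcile it with the lemma's notational convention for $U_1^\top(\tau,t)$.
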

The hypersingular kernel requires to consider the action of operator $T_s$ on $U_1^\top (\tau,t)$ and $U_2^\top (\tau,t)$.  Hence, we use that with
\begin{eqnarray*}
R_1(\tau,t) &:=&2\lambda(\lambda+2\mu) {\bm\nu}(\tau)\:{\bm\nu}^\top (t)+2\mu^2\left({\bm\nu}(t)\:{\bm\nu}^\top (\tau)+({\bm\nu}(\tau)\cdot{\bm\nu}(t))I_2\right) \\
R_2(\tau,t) &:=& 2(\lambda+2\mu)(\lambda+\mu){\bm\nu}(\tau)\:{\bm\nu}^\top (t)\\
&&+2\mu\left(\mu {\bm\nu}(t) {\bm\nu}^\top(\tau)+\lambda {\bm\nu}(\tau){\bm\nu}^\top (t)+\mu({\bm\nu}(\tau)\cdot{\bm\nu}(t))I\right)(I-2 G(\rr))\\
&&+4\mu ( \rr\cdot{\bm\nu}(t))\frac{1}{r^2} U_2(t,\tau)\\
R_3(\tau,t) &:=& 2\lambda(\lambda+\mu){\bm\nu}(\tau){\bm\nu}^\top (t)\\
&&+2\mu\left(\mu {\bm\nu}(t)\:{\bm\nu}^\top (\tau)+\lambda {\bm\nu}(\tau)\:{\bm\nu}^\top (t)+\mu ({\bm\nu}(\tau)\cdot{\bm\nu}(t))I\right)G(\rr) \\
&&-2\mu  (\rr\cdot{\bm\nu}(t)) \frac{1}{r^2}U_2(t,\tau) 
\end{eqnarray*}
it holds
\[
 T_s[U_1^\top(\tau,t)]=R_1(\tau,t),\quad  T_s[U_2^\top(\tau,t)]=R_2(\tau,t),\quad T_s\big(G(\rr)U_1^\top(\tau,t)\big)=R_3(\tau,t)
\]

\begin{lemma}\label{lemma:Ap:03}
Let $W(\tau,t)$ the kernel of the hypersingular operator \eqref{eq:BLO:Hyp}. Then 
 \begin{eqnarray*}
  W(\tau,t) &=& W_0(\tau,t) \\
  &&+\Big[ c_{\rm log}^{(1)}(r) \frac{1}{r^2}U_1(t,\tau)U_1^\top(\tau,t) +
c_{\rm log}^{(2)}(r) R_1(\tau,t) + c_{\rm  log}^ {(3)}(r) \frac{1}{r^2}U_1(t,\tau)G(\rr)U_1^\top(\tau,t)  \\
&& \qquad +c_{\rm log}^{(4)}(r) R_3(\tau,t) +c_{\rm log}^{(5)}(r) \frac{1}{r^2}U_1(t,\tau)U_2^\top(\tau,t) +c_{\rm log}^{(6)}(r) R_2(\tau,t)
\Big]\log r\\
&& +\Big[ c_{\rm reg}^{(7)}(r)\frac{1}{r^2} U_1(t,\tau)U_1^\top(\tau,t) +
c_{\rm reg}^{(8)}(r) R_1(\tau,t)  + c_{\rm  reg}^ {(9)}(r)\frac{1}{r^2}U_1(t,\tau)G(\rr)U_1^\top(\tau,t)\\
&&\qquad +c_{\rm reg}^{(10)}(r) R_3(\tau,t)+c_{\rm reg}^{(11)}(r) \frac{1}{r^2}U_1(t,\tau)U_2^\top(\tau,t) +c_{\rm reg}^{(12)}(r) R_2(\tau,t)
\Big] 
 \end{eqnarray*}
where the functions 
\begin{eqnarray*}
 c_{\rm log}^{(1)}(z)&:=&-\frac{z^2}{2\pi\omega^2}\left( k_p^6\frac{J_3(k_p z)}{(k_p z)^3}+k_s^6 \frac{J_2(k_s z)}{(k_s z)^2}-k_s^6\frac{J_3(k_s z)}{(k_s z)^3}
\right)\\
 c_{\rm log}^{(2)}(z)&:=&-b_{\rm log}^{(2)}(z)= -\frac{1}{2\pi\omega^2}\left(-k_p^4\frac{J_2(k_p z)}{(k_p z)^2}-k_s^4\frac{J_1(k_s z)}{k_s z}+k_s^4\frac{J_2(k_s z)}{(k_s z)^2}\right)\\
 c_{\rm log}^{(3)}(z)&:=&-\frac{z^2}{2\pi\omega^2}\left(k_p^6
 \frac{J_2(k_p z)}{(k_p z)^2}-2k_p^6\frac{J_3(k_p z)}{(k_p z)^3}-k_s^6\frac{J_2(k_s z)}{(k_s z)^2}+2k_s^6\frac{J_3(k_s z)}{(k_p z)^3}
\right)\\          
 c_{\rm log}^{(4)}(z)&:=&-b_{\rm log}^{(3)}(z)=-\frac{1}{2\pi\omega^2}\left(-k_p^4\frac{J_1(k_p z)}{k_p z}+
2k_p^4\frac{J_2(k_p z)}{(k_p z)^2}+k_s^4\frac{J_1(k_s z)}{k_s z}-
2k_s^4\frac{J_2(k_s z)}{(k_s z)^2}
\right)\\ 
 c_{\rm log}^{(5)}(z)&:=&  -\frac{1}{2\pi\omega^2 z^2}\Big(-2 k_p^2 J_0(k_p z)-k_p^3 r J_1(k_p z)+4 k_p^2\frac{
   J_1(k_p z)}{k_p z}-2 k_p^2 J_2(k_p z)\\
   &&\qquad+2 k_s^2 J_0(k_s z)+k_s^3 r J_1(k_s z)-4 k_s^2\frac{J_1(k_s z)}{k_s z}-2 k_s^2 J_2(k_s
   r)\Big) \\
 c_{\rm log}^{(6)}(z)&:=&-b_{\rm log}^{(4)}(z)=-\frac{1}{2\pi\omega^2 z^2}\left(
k_p^2 J_0(k_p z) -
2k_p^2\frac{J_1(k_p z)}{k_p z}-
k_s^2 J_0(k_s z) +
2k_s^2\frac{J_1(k_s z)}{k_s z}\right)\\
  c_{\rm reg}^{(7)}(z)&:=&\frac{1}{16\pi\omega^2}(3k_s^4+k_p^4)+\frac{z^2}{\omega^2}(k_p^3 C_{3,k_p}(z)+k_s^4 C_{2,k_s}(z)-k_s^3C_{3,k_s}(z))\\
 c_{\rm reg}^{(8)}(z)&:=&-b_{\rm reg}^{(5)}(z)=
 \frac{1}{\omega^2}\left(
-k_s^3 C_{1,k_s}(z) + k_s^2 C_{2,k_s}(z) - k_p^2 C_{2,k_p}(z)\right)\\
 c_{\rm reg}^{(9)}(z)&:=& -\frac{z^2}{\omega^2} (2 k_p^3C_{3,k_p}(z) - k_p^4 C_{2,k_p}(z) - 
      2 k_s^3 C_{3,k_s}(z) + k_s^4 C_{2,k_s}(z)) + 
  \frac{k_p^4 - k_s^4}{8\pi \omega^2} ;
 \\
 c_{\rm reg}^{(10)}(z)&:=&- b_{\rm reg}^{(6)}(z)= \frac{1}{\omega^2}\left(
k_s^3 C_{1,k_s}(z) - 2 k_s^2 C_{2,k_s}(z)  - 
     k_p^3 C_{1,k_p}(z)  + 2 k_p^2 C_{2,k_p}(z)   \right)\\
 c_{\rm reg}^{(11)}(z)&:=&
 \frac{1}{z^2 \omega^2} \bigg(-k_p^3  z^2  C_{1,k_p}(z)+2 k_p^2 z^2 C_{2,k_p}(z)-2 k_p^2 C_{0,k_p}(z)+4
   k_p C_{1,k_p}(z)\\
   &&\qquad +k_s^3 z^2 C_{1,k_s}(z)-2 k_s^2 z^2 C_{2,k_s}(z)+2 k_s^2
   C_{0,k_s}(z)-4 k_s C_{1,k_s}(z)\\
   && \quad -\frac{z^2 \left(k_p^4-k_s^4\right)}{16 \pi
   }-\frac{k_p^2-k_s^2}{2 \pi }\bigg) + \frac{k_p^4-k_s^4}{16 \pi  \omega^2}
 \\
 c_{\rm reg}^{(12)}(z)&:=&-b_{\rm log}^{(7)}(z)=\frac{1}{z^2 \omega^2}\left(k_p^2 C_{0,k_p}(z)-2 k_p C_{1,k_p}(z)-k_s^2 C_{0,k_s}(z)+2 k_s
   C_{1,k_s}(z)+\frac{k_p^2-k_s^2}{4 \pi }\right)
\end{eqnarray*}  
are smooth for $r\ge 0$ and 
 \[
    W_0(\tau,t)={-\frac{\mu(\lambda+\mu)}{\lambda+2\mu} \frac{\partial^2}{\partial \tau \partial t}\frac{1}{{\pi}}\left(-\log r\:I_2+G(\rr)\right)}.
 \]
Furthermore, 
\begin{eqnarray*} 
  c_{\rm log}^{(1)}(z)&=&{\cal O}\left(z^2\right)\\
  c_{\rm log}^{(2)}(z)&=& \frac{ 1}{16 \pi  \omega^2}(k_p^4+3 k_s^4)+{\cal O}\left(z^2\right)\\
  c_{\rm log}^{(3)}(z)&=&{\cal O}\left(z^2\right)\\
  c_{\rm log}^{(4)}(z)&=&-\frac{1}{8 \pi  \omega^2}(k_s^4-k_p^4)+{\cal O}\left(z^2\right)\\
  c_{\rm log}^{(5)}(z)&=& {\cal O}\left(z^2\right) \\
  c_{\rm log}^{(6)}(z)&=&
    -\frac{1}{16 \pi  \omega^2}(-k_p^4+  k_s^4) +{\cal O}\left(z^2\right)\\
c_{\rm reg}^{(7)}(z)&=& \frac{k_p^4+3 k_s^2}{16 \pi  \omega^2} +{\cal O}\left(z^2\right)
\\
c_{\rm reg}^{(8)}(z)&=&-\frac{-4 k_p^4 \log k_p +k_p^4 (3-4 E +2 \pi i +\log (16))-12 k_s^4 \log k_s +k_s^4 (5-12 E +6 \pi i +\log    (4096))}{64 \pi  \omega^2} 
\\
   &&+{\cal O}\left(z^2\right)\\
c_{\rm reg}^{(9)}(z)&=&\frac{k_p^4-k_s^4}{8 \pi  \omega^2} +{\cal O}\left(z^2\right)\\
\\
c_{\rm reg}^{(10)}(z)&=&-\frac{(-1+4 E -2 \pi i -4 \log (2)) \left(k_p^4-k_s^4\right)+4 k_p^4 \log k_p -4 k_s^4 \log k_s }{32 \pi 
   \omega^2 }+{\cal O}\left(z^2\right)\\
\\
c_{\rm reg}^{(11)}(z)&=&\frac{k_p^4-k_s^4}{16 \pi  \omega^2}+{\cal O}\left(z^2\right)\\
\\
c_{\rm reg}^{(12)}(z)&=& \frac{(-3+4 E -2 \pi i -4 \log (2)) \left(k_p^4-k_s^4\right)+4 k_p^4 \log k_p -4 k_s^4 \log k_s }{64  \pi    \omega^2} +{\cal O}\left(z^2\right)\\
\end{eqnarray*}

\end{lemma}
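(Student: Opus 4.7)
The plan is to obtain $W(\tau,t)$ by applying the parameterized traction $T_\tau$ to the transpose of the double‐layer kernel $K(\tau,t)$ whose splitting is already established in Lemma~\ref{lemma:Ap:02}. Starting from
\[
W(\tau,t)=T_\tau\bigl[K^\top(\tau,t)\bigr]=T_\tau\bigl[K_0^\top(\tau,t)\bigr]+T_\tau\Bigl\{\bigl(b_{\log}^{(2)}U_1+b_{\log}^{(3)}G\,U_1+b_{\log}^{(4)}U_2\bigr)\log r\Bigr\}+T_\tau\{\text{smooth part}\},
\]
I would attack each piece with the Leibniz rule $T(fF)=T(fI_2)F+fT(F)$, together with the two base identities \eqref{propT} for $T_s[f(r)I_2]$ and $T_s[G(\rr)]$, and the three newly introduced identities $T_s[U_1^\top]=R_1$, $T_s[U_2^\top]=R_2$, $T_s[G U_1^\top]=R_3$. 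Taking the derivative of the logarithm is what generates the cross term $U_1(t,\tau)V^\top(\tau,t)/r^2$ for $V\in\{U_1,\,G U_1,\,U_2\}$ (via $T_\tau[\log r\,I_2]=-U_1(\tau,t)/r^2$ up to sign, and then contracted against the $U_j$ factor on the right), whereas differentiating $V^\top(\tau,t)$ itself gives the matrices $R_1,R_2,R_3$ multiplied by the corresponding scalar $b_{\log}^{(j)}(r)\log r$; finally differentiating $b_{\log}^{(j)}(r)$ produces lower order contributions which will be absorbed in the smooth $c_{\rm reg}^{(k)}$ coefficients.

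With this scheme in place, the scalar coefficients of the six tensorial building blocks are read off directly. Specifically, the $\log r$ coefficients $c_{\log}^{(k)}$ come from differentiating $\log r$ in the logarithmic part of $K^\top$, so each $c_{\log}^{(k)}$ is (up to an $r^{-2}$ factor and a sign) a derivative of the corresponding Bessel‐function combination appearing in $b_{\log}^{(2)},b_{\log}^{(3)},b_{\log}^{(4)}$. Using the recurrence relations from \cite[\S 10.6]{NIST:DLMF} one sees that taking $z^{-1}\partial_z$ of $J_n(kz)/(kz)^n$ produces $-J_{n+1}(kz)/(kz)^{n+1}$, which explains the appearance of $J_2,J_3$ in $c_{\log}^{(1)},c_{\log}^{(3)}$ and yields the alignments $c_{\log}^{(2)}=-b_{\log}^{(2)}$, $c_{\log}^{(4)}=-b_{\log}^{(3)}$, $c_{\log}^{(6)}=-b_{\log}^{(4)}$ announced in the lemma. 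Smoothness at $r=0$ is not a miracle but a consequence of the analyticity of $J_n(kz)/(kz)^n$; the small‐$r$ expansions for the $c_{\log}^{(k)}$ and $c_{\rm reg}^{(k)}$ are then just truncations of the Taylor expansions of Bessel functions, computable term by term. The principal part $W_0$ finally comes from the non–logarithmic, $r^{-2}$ pieces left over after the subtraction, and one checks that it agrees with the static (elasticity) hypersingular kernel
\[
W_0(\tau,t)=-\tfrac{\mu(\lambda+\mu)}{\pi(\lambda+2\mu)}\,\partial_\tau\partial_t\bigl(-\log r\,I_2+G(\rr)\bigr),
\]
by comparing the coefficients of $R_1,R_2,R_3$ and of $U_1U_1^\top/r^2,\,GU_1U_1^\top/r^2,\,U_1U_2^\top/r^2$.

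The substantive obstacle is neither an existence nor a regularity issue but rather the bookkeeping: there are six tensorial slots, each of which receives contributions from (a) $T_\tau$ differentiating the scalar amplitude $b_{\log}^{(j)}(r)\log r$ through $U_1(\tau,t)/r^2$, (b) $T_\tau$ acting on the matrix factor $V^\top\in\{U_1^\top,\,GU_1^\top,\,U_2^\top\}$ giving $R_j$, and (c) contributions from the smooth remainder in the splitting of $K^\top$. One must verify that the negative power singularities produced by these three channels combine to produce only a $\log r$–singularity (plus the principal part $W_0$), and that the leading logarithmic coefficients and smooth coefficients obtained match the stated formulas. This is essentially a careful accounting, for which the asymptotic expansions of Bessel functions supplied in Lemma~\ref{lemma:Ap:02} together with the elementary differentiation rules listed before that lemma give an algorithmic if somewhat tedious path; I would carry out the six coefficient identifications in turn, using computer algebra to verify the small‐$r$ expansions quoted in the statement.
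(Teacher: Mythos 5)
Your proposal follows essentially the same route the paper intends: the appendix offers no separate argument for Lemma~\ref{lemma:Ap:03} beyond the identities \eqref{propT} and $T_s[U_1^\top]=R_1$, $T_s[U_2^\top]=R_2$, $T_s\big(G(\rr)U_1^\top\big)=R_3$ stated immediately before it, so the lemma is exactly the outcome of applying $T_\tau$, via the Leibniz rule, to the splitting of the double-layer kernel from Lemma~\ref{lemma:Ap:02}, collecting the six tensorial blocks, and reading the small-$r$ behavior off the Bessel series, which is precisely your plan. The only slip is notational: since $K(\tau,t)=[T_t\Phi]^\top$ already carries the transpose, the hypersingular kernel is $W(\tau,t)=T_\tau K(\tau,t)$ rather than $T_\tau[K^\top(\tau,t)]$, and the listed identities (which act on the transposed fields $U_j^\top(\tau,t)$ appearing in $K$) are tailored to this, so your computation self-corrects once you use them.
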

 

\begin{remark} \label{remark:3.4}The functions $V_0(\tau,t)$, $K_0(\tau,t)$, $K_0^\top (t,\tau)$ y $W_0(\tau,t)$ are the kernel of the single layer, double layer, adjoint double layer and hypersingular operator for elasticity operator. It can be easily seen, from Lemmas \ref{lemma:Ap:01}-\ref{lemma:Ap:03}, that
\begin{eqnarray*}
 V(\tau,t)&=&  -\frac{\lambda +3 \mu }{4 \mu  (\lambda +2 \mu )}\frac{1}{2\pi} \log\left(4e^{-1}\sin^2\frac{\tau-t}2\right) I_2+ A(\tau,t) \sin^2\frac{\tau-t}{2}\log \sin^2\frac{\tau-t}2+B(\tau,t)\\
 K(\tau,t)&=&  \frac{\mu}{\lambda+2\mu} \left(\frac{\partial}{\partial t}
        \frac{1}{4\pi }\log\sin^2\frac{\tau-t}2  \right) \: \begin{bmatrix}
                                         &-1\\
                                         1&
                                        \end{bmatrix}
+ C(\tau,t)  \sin(\tau-t)\log \sin^2\frac{\tau-t}2+D(\tau,t)\\
W(\tau,t)&=&  -\frac{2\mu(\lambda+\mu)}{\lambda+2\mu} \frac{\partial^2}{\partial \tau \partial t}\left(-\frac{1}{2{\pi}}\log \sin^2\frac{\tau-t}2\right)\:I_2+
   E(\tau,t)  \log \sin^2\frac{\tau-t}2+F(\tau,t)\\
  &=& {  \frac{ \mu(\lambda+\mu)}{\lambda+2\mu}  \csc ^2\left(\frac{\tau-t}{2}\right)\:I_2+
   E(\tau,t)  \log \sin^2\frac{\tau-t}2+F(\tau,t)}
   . 
\end{eqnarray*}
with $A,B,C,D,E$ and $F$ smooth biperiodic functions. 

Indeed, for the single layer operator the result follows from the fact that
\[
 \frac{r^2}{\sin^2(\tau-t)/2}
\]
is a smooth non-vanishing function. For $K(\tau,t)$ we just have to start from Lemma \ref{lemma:Ap:02} and notice that
\[
 U_1(\tau,t), U_2(\tau,t) ={\cal O}(\tau-t),\quad \text{as }\tau-t\to 0
\]
from where one derives easily that, for $\psi$ a smooth cut-off function with support, say,  in $[-\pi/3,\pi/3]$ and $\psi(\tau)=1$ for $s\in[-\pi/4,\pi/4]$, the functions  
\[
 \frac{1}{\sin (\tau-t)}\psi(\tau-t) U_1(\tau,t),\quad 
 \frac{1}{\sin (\tau-t)}\psi(\tau-t) U_2(\tau,t)
\]
are  smooth and $2\pi-$periodic.   For $W$ the result follows from similar ideas. 

{Furthermore, it is a well established result, see for instance the excellent textbook \cite{Saranen}, that the integral operators 
\begin{eqnarray*}
 V_1\varphi &:=& \int_0^{2\pi} A(\cdot,t) \sin(\cdot-t)\log \sin^2\frac{\cdot-t}{2}\,{\rm d}t \\
 V_2\varphi &:=& \int_0^{2\pi} B(\cdot,t) \sin^2\frac{\cdot-t}2 \log \sin^2\frac{\cdot-t}{2}\,{\rm d}t, 
\end{eqnarray*}
with $A,B$ above being 
smooth $2\pi-$periodic functions in both variables, 
can be extended to define continuous pseudodifferential operators $ V_1:H^r\to H^{r+2}$ and $ V_2:H^r\to H^{r+3}$ where $H^r$ is the $2\pi-$periodic Sobolev space of order $r$. In other words, with 
\[ 
\bm{\Lambda} =\begin{bmatrix}
               \Lambda& \\
               & \Lambda  
              \end{bmatrix},\quad \bm{\Lambda}^{-1} =\begin{bmatrix}
               \Lambda^{-1}& \\
               & \Lambda^{-1}  
              \end{bmatrix},\quad 
\bm{H} = 
              \begin{bmatrix}
               & -H\\
               H &
              \end{bmatrix}=\bm{H}^\top
\]
where 
\begin{eqnarray*}
 \Lambda \varphi  &=&  -\frac{1}{2\pi}\int_{0}^{2\pi}\log\Big(4 e^{-1} \sin^2\frac{\cdot-t}2\Big)\varphi(t)\,{\rm d}t\\ 
\mathrm{H} g    &=&   - i\Lambda g' +\widehat{g}(0) = \mathrm{p.v.}\, \frac{1}{2\pi i}\int_{0}^{2\pi} \cot\frac{t\,-\,\cdot}2 g(t)\,{\rm d}t +\frac{1}{2\pi}\int_0^{2\pi}  g(t)\,{\rm d}t,  
\end{eqnarray*}
and noticing that 
\[
  \Lambda^{-1}g =  \operatorname{f.p.}  \frac{1}{4\pi}    \int_{0}^{2\pi}   \csc^2\left(\frac{\,\cdot\,-t}{2}\right)g(t)\,{\rm d}t +\int_0^{2\pi} g(t)\,{\rm d}t
\]
we can conclude that for any $r\in\mathbb{R}$ 
\[
 \begin{aligned}
 \Kii-\alpha\bm{H}&:H^r\times H^r\to H^{r+2}\times H^{r+2}&
 \Vii-\beta\bm{\Lambda}&:H^r\times H^r\to H^{r+3}\times H^{r+3}\\ 
 \Wii-\delta\bm{\Lambda}^{-1}&:H^r\times H^r\to H^{r+1}\times H^{r+1}&
 \Kii^\top-\alpha\bm{H}&:H^r\times H^r\to H^{r+2}\times H^{r+2}
 \end{aligned}
 \]
 where  
\[
  \alpha = \frac{i \mu }{2 (\lambda +2 \mu )},\quad \beta = \frac{\lambda +3 \mu }{4 \mu  (\lambda +2 \mu )}, \quad \delta =  -\frac{\mu  (\lambda +\mu )}{\lambda +2 \mu }. 
\]   }

\end{remark}

\end{document}